\newfont {\cyr} {wncyr10}
\renewcommand{\labelenumi}{{(\roman{enumi})}}
\newtheorem{theorem}{Theorem}[section]
\newtheorem{lemma}[theorem]{Lemma}
\newtheorem{corollary}[theorem]{Corollary}
\newtheorem{definition}[theorem]{Definition}
\newcounter{claim}[theorem]
\renewcommand{\theclaim}{\noindent{(\thesection.\arabic{theorem}.\arabic{claim})}}
\newcounter{cclaim}[theorem]
\def \lcm{\mathrm {lcm}}
\def \Spin{\mathrm {Spin}}
\def \udot {{}^{\textstyle .}}
\newcommand{\E}{\mathrm{E}}\newcommand{\SU}{\mathrm{SU}}
\newcommand{\F}{\mathrm{F}}\newcommand{\A}{\mathrm{A}}\newcommand{\B}{\mathrm{B}}\newcommand{\M}{\mathcal{M}}
\newcommand{\G}{\mathrm{G}}
\newcommand{\Aut}{\mathrm{Aut}}
\newcommand{\Syl}{\mathrm{Syl}}
\newcommand{\GF}{\mathrm{GF}}
\newcommand{\GL}{\mathrm{GL}}
\newcommand{\Sp}{\mathrm{Sp}}
\newcommand{\SL}{\mathrm{SL}}
\newcommand{\0}{\emptyset}
\newcommand{\PSL}{\mathrm{PSL}}\newcommand{\PSp}{\mathrm{PSp}}
\newcommand{\Sym}{\mathrm{Sym}}
\newcommand{\Alt}{\mathrm{Alt}}
\newcommand{\U}{\mathrm{U}}
\newcommand{\ATLAS}{\textsc{Atlas\ }}
\def \L {\hbox {\rm L}}
\def \POmega {\mathrm {P}\Omega}
\def \GU {\mbox {\rm GU}}
\def \Syl {\hbox {\rm Syl}}
\def \ov {\overline}
\def \wt {\widetilde}
\def \Aut{ \mathrm {Aut}}
\def \Fi {\mbox {\rm Fi}}
\def \J{\mbox {\rm J}}
\def \Ly{\mbox {\rm Ly}}
\def \B{\mbox {\rm B}}
\def \M{\mbox {\rm M}}
\def \HN{\mbox {\rm HN}}
\def \HS{\mbox {\rm HS}}
\def \Th{\mbox {\rm Th}}
\def \ON {\mbox {\rm O'N}}
\def \Co {\mbox {\rm Co}}
\def \Ru {\mbox {\rm Ru}}
\def \Suz{\mbox {\rm Suz}}
\def \McL{\mbox {\rm McL}}
\def \He {\mbox {\rm He}}\def \PSU {\mbox {\rm PSU}}
\begin{document}
\renewcommand{\labelenumi}{(\roman{enumi})}

\title  {Generation of finite simple groups with an application to groups acting on Beauville surfaces}
\author{Ben Fairbairn} \author{Kay Magaard}\author{Christopher Parker}

\address{
School of Mathematics\\
University of Birmingham\\
Edgbaston\\
Birmingham B15 2TT\\
United Kingdom} \email{c.w.parker@bham.ac.uk}

\email {}

\date{\today}

\maketitle \pagestyle{myheadings}

\markright{{\sc Generation of simple groups: Beauville Groups}} \markleft{{\sc Fairbairn, Magaard and Parker}}

\def \Chev {\mathrm {Chev}}\def \Spor {\mathrm {Spor}}
\def \K {\mathcal K}
\def \I {\mathcal I}
\def \LCp {\mathcal {LC}_p}
\def \LTp {\mathcal {LT}_p}
\def \diag{\mathrm {diag}}
\def \LC {\mathcal {LC}}
\section{Introduction}

Recently,
 Bauer, Catanese and Grunewald \cite{2, 3, 5} have  initiated
the study of Beauville surfaces.
They are 2-dimensional complex algebraic varieties which
are rigid, in the sense of admitting no deformations. These surfaces  are defined over the field $\ov {\mathbb{Q}}$ of
algebraic numbers, and provide a geometric action of the absolute Galois group $\mathrm {Gal}(\bar{\mathbb{Q}}/\mathbb{Q})$.
By generalizing Beauville's original example \cite[p.159]{4}, such surfaces can be constructed from finite
groups acting on suitable pairs of algebraic curves. Formally, following Catanese \cite{5},  we make the following definition.

\begin{definition}\label{deff} Suppose that a
finite group $G$  acts by holomorphic transformations  on two  algebraic curves $\mathcal{C}_1$ and $\mathcal{C}_2$  of genus at least $2$. Let $G$ act diagonally on $\mathcal C_1 \times \mathcal C_2$ and assume that
\begin{enumerate}
\item [(a)] $G$ acts effectively on  $\mathcal{C}_1$ and $\mathcal C_2$ so that both $\mathcal{C}_1/G$ and $\mathcal C_2/G$  are isomorphic to the projective line and the coverings $\mathcal{C}_i\rightarrow\mathcal{C}_i/G$. $i=1,2$,  are ramified over at most three points; and
\item[(b)]  $G$ acts freely on  $\mathcal{C}_1\times\mathcal{C}_2$.
\end{enumerate} Then the surface $(\mathcal C_1 \times \mathcal C_2)/G$ is a \emph{Beauville surface of unmixed type}.
\end{definition}

 Condition (a) in Definition~\ref{deff} is equivalent to $\mathcal C_1$ and $\mathcal C_2$
 admitting a regular dessin in the sense of Grothendieck's theory of dessins
d'enfants \cite{6, 11, 23}, or equivalently an orientably regular hypermap \cite{14}, with $G$ acting as
the orientation-preserving automorphism group.

A  particularly attractive feature of this class of surfaces  is the fact that the above definition can be translated into more finitary combinatorial terms that ``internalize" the structure of the surface  within the group $G$ in the following way (see \cite{3}).

\begin{definition}\label{maindef}
Let $G$ be a group. An \emph{unmixed Beauville structure}, or simply a \emph{Beauville structure} of $G$ is a pair triples $(x_1,y_1,z_1), (x_2,y_2,z_2)\in G\times G \times G$ such that for $i=1,2$  the following hold.
\begin{enumerate}
\item $G = \langle x_i,y_i,z_i\rangle$ and $x_iy_iz_i = 1$;
\item $1/o(x_i) + 1 /o(y_i) + 1/o(z_i) < 1$; and
\item no non-identity power of $x_1$, $y_1$ or $z_1$ is conjugate in $G$ to a power of $x_2$, $y_2$ or $z_2$.
\end{enumerate}
The  Beauville structure then has \emph{type} $$((o(x_1),o(y_1),o(z_1)),(o(x_2),o(y_2),o(z_2))).$$ We call a group possessing a Beauville structure a \emph{Beauville group}.
\end{definition}

Property (i) is equivalent to condition ($a$), with $x_i$, $y_i$ and $z_i$ representing the ramification over the three points, property (ii) is equivalent to each of the curves $\mathcal{C}_i$ having genus at least 2 (arising as a smooth quotient of the hyperbolic plane), and property (iii) is equivalent to $G$ acting freely on the product $\mathcal C_1 \times \mathcal C_2$. Note that this last condition is always satisfied if $l_1m_1n_1$ is coprime to $l_2m_2n_2$ -- a useful observation we shall use many times.

For a group $G$, a triple of elements $(x_1,y_1,z_1)$ which generate $G$ and satisfy (i) and (ii) of Definition~\ref{maindef}  is  called a \emph{hyperbolic triple.}

%Being determined by discrete combinatorial data makes Beauville surfaces unusually easy to handle. Beauville surfaces, and more general surfaces isogenous to a higher product of curves, thus provide a wide class of surfaces that are quite manageable in order to test conjectures, and also offer `cheap' counterexamples of various problems - most notably the Friedman-Morgan speculation made in the late 1980s. This asserted that differentiable equivalence and deformation equivalence coincide for surfaces \cite{FM}. The surfaces that provided counterexamples to this assertion were often of the form considered here \cite{2,Cat03,CW04,KK02,Man01}. Surfaces defined in this way also have fundamental groups that are unusually easy to compute \cite{fund} and have modulii spaces that are exceptionally well behaved \cite{Cat03,GP,Man01}.
%
%A comprehensive survey article discussing these and a wide variety of closely related geometric/topological matters is given by Bauer, Catanese and Pignatelli in \cite{BCP}.

In this paper we address a  conjecture of
Bauer, Catanese and Grunewald which asserts  that all non-abelian finite simple  groups except for the alternating group $\Alt(5)$ are Beauville groups \cite[Conjecture 7.17]{3}.

Our main theorem confirms their judgement. Recall that a finite group $G$ is \emph{quasisimple} provided $G/Z(G)$ is a non-abelian simple group and $G=[G,G]$.

\begin{theorem}\label{main}
With the exceptions of $\SL_2(5)$ and $\PSL_2(5)(\cong\Alt(5) \cong \SL_2(4)$), every finite quasisimple group is a Beauville group.
\end{theorem}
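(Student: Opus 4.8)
The plan is to prove that every finite quasisimple group (apart from the two stated exceptions) admits a Beauville structure. The overall strategy is a case division according to the classification of finite simple groups, since the target $G/Z(G)$ ranges over all non-abelian finite simple groups. I would organize the argument into the following families: the alternating groups $\Alt(n)$ for $n \geq 6$; the sporadic simple groups together with the Tits group; and the finite groups of Lie type, the latter being by far the largest and most delicate family. For each family the goal is the same: exhibit two hyperbolic triples $(x_1,y_1,z_1)$ and $(x_2,y_2,z_2)$ satisfying conditions (i) and (ii) of Definition~\ref{maindef}, and then verify the disjointness condition (iii), namely that no non-identity power of an element in the first triple is conjugate to a power of an element in the second.

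The key reduction I would exploit is the coprimality observation recorded after Definition~\ref{maindef}: if the orders $l_1 m_1 n_1$ of the first triple are coprime to the orders $l_2 m_2 n_2$ of the second, then condition (iii) holds automatically, since conjugate elements have equal order and powers of an element of order $m$ all have order dividing $m$. This transforms the hard conjugacy-avoidance condition into the purely arithmetic task of finding two generating triples whose element orders are coprime in pairs (or at least whose relevant order-sets are disjoint). Thus for each family the work splits into two subtasks: first, a \emph{generation} result guaranteeing that $G$ can be generated by a triple $(x,y,z)$ with $xyz=1$ where $x,y,z$ have prescribed (or controllable) orders; and second, an arithmetic argument producing two such triples with coprime orders and each satisfying the hyperbolicity inequality $1/o(x)+1/o(y)+1/o(z)<1$. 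For the groups of Lie type I would handle the coprimality by selecting elements of order dividing $q-1$ or $q+1$ (maximal tori / regular semisimple elements) for one triple and elements of the opposite torus type, or unipotent/prime-order elements, for the other, so that the two order-sets share no common prime; the small rank cases and small fields, where few element orders are available, will need ad hoc treatment. The alternating and sporadic cases I would dispatch using explicit generating triples, appealing to known generation results and character-theoretic structure constants (or direct computation in the sporadic case).

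For the quasisimple but non-simple groups $G$ with $Z=Z(G)\neq 1$, I would reduce to the simple quotient $\bar G = G/Z$: given a Beauville structure on $\bar G$, I would lift the generating triples to $G$, checking that generation is preserved (using $G=[G,G]$ and that $Z$ is central) and that the orders can be controlled so that hyperbolicity survives the lift, while arranging coprimality to $|Z|$ so that the disjointness condition is not spoiled by central elements. The genuinely delicate points here are the two excluded cases and their immediate neighbours: $\PSL_2(5)\cong\Alt(5)$ is excluded by hypothesis (and indeed has too few element orders to build two disjoint hyperbolic triples), and its cover $\SL_2(5)$ likewise fails, so these must be separated out and the arguments for the remaining small $\PSL_2(q)$ and their covers verified by hand.

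The main obstacle I expect is the uniform treatment of the groups of Lie type of small rank over small fields. The coprimality trick needs at least two comfortably large and arithmetically independent sets of element orders, and for families such as $\PSL_2(q)$, $\PSL_3(q)$, $\PSU_3(q)$ and the small exceptional groups over $\GF(q)$ with $q$ tiny, the available element orders are too few and too entangled for a clean generic argument. These low cases will likely require either sharpened generation lemmas (controlling the precise orders of generating triples, for instance via results on $(2,3)$- or $(p,q,r)$-generation and structure-constant estimates) or explicit computer-assisted verification; isolating exactly which $(\text{family},q)$ pairs fall outside the generic argument, and confirming by direct construction that each such group other than the two exceptions really is Beauville, is where the bulk of the technical effort will concentrate.
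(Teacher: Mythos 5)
Your overall architecture coincides with the paper's: a CFSG case division, two hyperbolic triples per group, the coprimality observation to dispose of condition (iii), and computer checks for small cases. But as it stands the plan has two concrete gaps, both in the part you correctly identify as the bulk of the work, namely the groups of Lie type.

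First, you never supply the mechanism that produces a triple $(x,y,z)$ with $xyz=1$ and \emph{prescribed} element orders in a group of Lie type. This is the engine of the whole argument, and it is not a routine structure-constant estimate: the paper uses Gow's theorem (Theorem~\ref{G}), which says that if $C_1,C_2$ are classes of \emph{regular semisimple} elements then $C_1C_2$ contains every non-central semisimple element; the proof goes through the Steinberg character and $p$-block theory. Moreover, your proposed choice of elements ``of order dividing $q-1$ or $q+1$'' is too weak on the generation side: such elements lie in a great many maximal subgroups, and $\langle x,y\rangle$ would rarely be all of $G$. What actually works is to take elements whose orders involve large Zsigmondy primes, i.e.\ primitive prime divisors of $q^e-1$ with $e>d/2$ (Singer-type tori), and then invoke the Guralnick--Penttila--Praeger--Saxl classification of irreducible subgroups of $\GL_d(q)$ containing such elements (the content of Section~\ref{SS3}, culminating in Theorem~\ref{maingeneration}) to force $\langle x,y\rangle \geq \SL_d(q)$ (resp.\ the relevant classical group). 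For the large-rank exceptional groups the analogous role is played by the Liebeck--Saxl--Seitz results on overgroups of maximal tori. Without naming some such tool, the step ``first, a generation result guaranteeing\ldots'' is exactly the theorem you are supposed to be proving.

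Second, the reduction from a quasisimple $G$ to $\bar G=G/Z$ is not the routine lift-and-check you describe. If $\bar x\bar y=\bar z$ in $\bar G$, arbitrary lifts satisfy $xy=zc$ for some $c\in Z$, and one cannot in general simultaneously fix the product, control the orders, and keep the cyclic groups $\langle x_i\rangle$ disjoint from $Z$ so that condition (iii) survives. The paper sidesteps this by stating Gow's theorem directly for quasisimple groups of Lie type and performing all conjugations \emph{inside} $G$, and it still needs special care for the spin groups (whose action on the natural orthogonal module is not faithful) and separate explicit computations for the double covers of $\Alt(n)$ and the exceptional Schur covers, none of which follow from the Beauville property of the simple quotient.
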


Before continuing, we mention that our notation for group extensions is consistent with that in the {\sc {Atlas}} \cite{ATLAS}. The notation for the simple groups is mostly self-explanatory.

A number of special cases of the conjecture were verified in \cite{3}. Since then there have been a number of contributions towards a proof of the conjecture. The primary contributions are as follows.
\begin{itemize}
\item The alternating groups $\Alt(n)$ for $n\geq6$ (and the symmetric groups $\Sym(n)$ for $n\geq5$) were shown to be Beauville groups in \cite{2,5} and \cite{FG} for extensions of the results.
\item The groups $\PSL_2(q)$ were shown to be Beauville groups by Fuertes and Jones in \cite[Section 2]{FJ} (who also considered the groups $\SL_2(q)$) and using very different methods by Garion and Penegini in \cite[Section 3.3]{GP}.
\item The Suzuki groups $^2\B_2(2^{2n+1})$ as well as the small Ree groups $^2\mathrm G_2(3^{2n+1})$ were shown to be Beauville groups by Fuertes and Jones in \cite[Section 6]{FJ}.
\item The groups $\mathrm G_2(q)$, $^3\mathrm D_4(q)$, $\PSL_3(q)$ and $\PSU_3(q)$ were shown to be Beauville groups when $q$ is sufficiently large by Garion and Penegini in \cite[Section 3.4]{GP}.
\end{itemize}

A recent article by  Garion,  Larsen and  Lubotzky \cite{GLL} in which they prove the conjecture for sufficiently large groups.
We mention that a the conjecture of
Bauer, Catanese and Grunewald has also been confirmed by Guralnick and Malle and they have also recently released a preprint presenting the proof \cite{GMal}.

The approach we take to the conjecture builds on two fundamental results. First, a beautiful result of Gow \cite{Gow} (see Theorem~\ref{G} below). This result shows us that if $C_1$ and $C_2$ are conjugacy classes of  regular semisimple elements in a finite Lie type group $G$, then the set $C_1C_2$ contains every non-central semisimple element of $G$.
The second is a theorem of Guralnick, Pentilla, Praeger and Saxl \cite{GPPS} concerning subgroups of $\GL_d(q)$, $q=p^a$ containing elements which they call primitive prime divisors elements,  ${\mathrm{ppd }}(e,q)$, for some $d/2< e \le d$.  The investigations of the classical groups (including the spin groups)  leads us to develop various consequences of the main theorem of \cite{GPPS}. We do this in Section~\ref{SS3}, to provide a collection of consequences of the main theorem in \cite{GPPS} which we believe to be of independent interest and motivates the first part of the title of the paper. These result go beyond what is required for the application we have in  this paper.
We recall the definition of a Zsigmondy prime in Section~\ref{SS2} and then, following Feit \cite{Feit}, define large Zsigmondy divisors. The Zsigmondy primes are examples of primitive prime divisors. Thus we consider what happens if a subgroup of $\GL_d(q)$ contains two Zsigmondy primes (Theorem~\ref{twoZsig}), a large Zsigmondy divisor  (Lemma~\ref{3.2}), and two large Zsigmondy divisors  (Theorem~\ref{exotictwolarge}). The main result is  then our  generation theorem Theorem~\ref{maingeneration} which, for example, guarantees that certain pairs of elements of $\GL_d(q)$  generate $\GL_d(q)$.
We present here the statement of Theorem~\ref{maingeneration} for subgroups of $\GL_d(q)$.

\begin{theorem}\label{twoMMMMM}
Suppose that $q= p^a$, $d \ge 4$, $H \le G=\GL_d(q)$ is irreducible and  $d/2< e<f\le d$. Let $r= \lambda_{ea,p}$, $s= \lambda_{fa,p}$ and assume that $H$ has elements of order $r$ and elements of order $s$.
If $\gcd(d,e,f)=1$ and at least one of $e$ and $f$ is odd, then either $H \ge \SL_d(q)$ or $d=4$, $q=2$  and $H \cong \Alt(7)$.
\end{theorem}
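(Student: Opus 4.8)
The plan is to use the classification of irreducible subgroups of $\GL_d(q)$ containing primitive prime divisor elements, as given by the main theorem of \cite{GPPS}, which is the natural tool here since $r=\lambda_{ea,p}$ and $s=\lambda_{fa,p}$ are (by the discussion in Section~\ref{SS2}) orders of primitive prime divisor elements $\mathrm{ppd}(ea,p)$ and $\mathrm{ppd}(fa,p)$, equivalently $\mathrm{ppd}(e,q)$ and $\mathrm{ppd}(f,q)$ in the base field $\GF(q)$. Since $d/2 < e < f \le d$, both exponents lie in the critical range $d/2 < e,f \le d$ to which the \cite{GPPS} machinery (and the earlier results Theorem~\ref{twoZsig}, Lemma~\ref{3.2}, Theorem~\ref{exotictwolarge} of the excerpt) applies. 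So the first step is to invoke Theorem~\ref{twoZsig} (the two-Zsigmondy-prime case), or more generally the examples list from \cite{GPPS}, to reduce $H$ to one of a short, explicit list of possibilities: $H$ contains $\SL_d(q)$; $H$ is contained in a classical subgroup (symplectic, unitary, orthogonal) preserving a form; $H$ normalizes a subfield or field-extension subgroup; $H$ is an imprimitive/tensor-induced example; or $H$ lies in one of the finitely many ``exotic'' nearly simple examples. The conclusion $H \ge \SL_d(q)$ is what we want, so the work is in eliminating every other entry on this list using the two arithmetic hypotheses $\gcd(d,e,f)=1$ and ``one of $e,f$ is odd.''

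The core of the argument is then a case-by-case elimination. For the classical-form and field-extension cases, the point is that preserving a form or lying in $\GammaL_{d/b}(q^b)$ forces the exponents $e$ and $f$ to satisfy divisibility or parity constraints incompatible with the hypotheses: for a field-extension subgroup of degree $b>1$ the relevant ppd exponents become multiples of $b$, so having ppd elements for \emph{both} $e$ and $f$ with $\gcd(d,e,f)=1$ typically forces $b=1$; for symplectic and orthogonal groups the invariant $e$'s are constrained to be even (or to have a fixed parity relative to $d$), which is precisely where ``at least one of $e,f$ is odd'' does the cutting. The imprimitive and tensor cases are dispatched because the presence of a $\mathrm{ppd}(f,q)$ element with $f > d/2$ forces the module to be (close to) irreducible of the full dimension — a tensor or induced structure cannot accommodate a primitive prime divisor element of such a large exponent relative to the factor dimensions. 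This is exactly the kind of consequence that the earlier Theorems~\ref{twoZsig} and~\ref{exotictwolarge} are engineered to deliver.

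What remains is the finite list of exotic nearly simple examples in the \cite{GPPS} tables. These are handled by direct inspection: each has fixed small $d$ and a known defining characteristic and dimension, so one checks the pair $(e,f)$ against the actual ppd-exponents that occur in that representation. The single surviving genuine exception is $d=4$, $q=2$, $H \cong \Alt(7)$, where $\Alt(7) < \GL_4(2) \cong \Alt(8)$ acts irreducibly and does contain elements realizing the required orders $r$ and $s$ for the admissible pair of exponents with $d=4$ (here $e=3$, $f=4$ satisfy $\gcd(4,3,4)=1$ with $e$ odd). I expect the main obstacle to be precisely this last step: the bookkeeping required to verify that \emph{no other} exotic example survives the twin constraints $\gcd(d,e,f)=1$ and the parity condition, and that $\Alt(7)$ genuinely does survive. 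This is where care with the \cite{GPPS} tables — matching each listed example's characteristic, dimension, and set of realizable ppd-exponents against the hypotheses — is essential, and where an oversight would either wrongly admit an extra exception or wrongly exclude the $\Alt(7)$ case. The hypothesis $d \ge 4$ is used to keep us out of the low-dimensional degeneracies (e.g. $\SL_2$, $\SL_3$) where the form and field-extension subgroups proliferate and the clean conclusion fails.
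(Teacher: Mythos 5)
Your proposal follows essentially the same route as the paper: the paper proves this as part (i) of Theorem~\ref{maingeneration} by first using Lemma~\ref{imprimitive} (via the parity hypothesis) to force primitivity, then feeding the two Zsigmondy divisors into Theorem~\ref{exotictwolarge} (itself built on \cite{GPPS}), killing extension-field subgroups because $b$ must divide $\gcd(d,e,f)=1$, killing $\Sp_d(q)$, $\Omega_d^\varepsilon(q)$ and $\SU_d(q^{1/2})$ because their Zsigmondy primes all come from even exponents, and finally inspecting Table~\ref{exotic} to find that only $\Alt(7)\le\GL_4(2)$ with $(e,f)=(3,4)$ survives. Your identification of where each hypothesis does its work and of the surviving exception matches the paper's argument.
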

We use Theorem~\ref{twoMMMMM} to produce hyperbolic triples as follows. Let $G= \SL_d(q)$ with $d\ge  5$ and $d/2 < e< f \le d$ be as in the theorem. Select an element $x$ of order $\lambda_{ea,p}$ contained in the subgroup $\SL_{e}(q)$ of $G$. We extend it to a regular semisimple element of $\SL_d(q)$ by multiplying it by a element $x_1$ from $\SL_{d-e}(q)$ (which we take to commute with $\SL_e(q)$) which acts irreducibly on the natural module from $\SL_{d-e}(q)$.
 We then use Gow's Theorem to see that there exists a conjugate $y$ of $xx_1$ such that the product $yxx_1$  has order $\lambda_{fa,p}$. Then the group $H= \langle xx_1,y\rangle$ is irreducible and contains elements of order $\lambda_{ea,p}$ and $\lambda_{ea,p}$. By Theorem~\ref{twoMMMMM}, $H=G$ and we have a hyperbolic triple of a specified type. Of course $x$ can be adjusted by any semisimple element from $\GL_{d-f}(q)$ and we still have generation of $G$. Thus we interpret Theorem~\ref{twoMMMMM} to mean that $G$ has a multitude of hyperbolic triples and as such Theorem~\ref{main} is manifestly true for large $d$.
We have similar generational results for all the classical groups. We also investigate subgroup of $\GL_d(q)$ which contain a large Zsigmondy divisors and have prime degree (Theorem~\ref{Pdegree}). Section~\ref{SS3} closes with two lemmas which provide our first hyperbolic triples in the classical groups.

The proof of Theorem~\ref{main} starts in Section~\ref{SS4}  where we consider the classical groups. The investigation is divided in to four subsections dealing with $\SL_d(q)$, $\SU_d(q)$, $\Sp_d(q)$ and $\Spin_d^\varepsilon(q)$ and their central quotients respectively. Here we must make a remark concerning the spin groups. We always consider their non-faithful action on the  natural orthogonal module. Hence we need to take special care that when we lift our hyperbolic triples for the orthogonal groups, that they actually satisfy condition (iii) of Definition~\ref{maindef}. This is one place where we use the version of Gow's Theorem presented in Section~\ref{SS2} which is a modest generalization of the theorem stated in \cite{Gow}.   When the classical groups are defined in low dimensions, there are not enough conjugacy classes of semisimple elements  to demonstrate that these groups are Beauville groups just using such elements. Hence in Lemmas~\ref{lineardim3}, \ref{U41}, \ref{U3} and \ref{Sp42} we present hyperbolic triples for $\SL_3(q)$, $\SU_4(q)$, $\SU_3(q)$ and $\Sp_4(q)$ which involve unipotent elements. We mention here that the case of $\SL_2(q)$ is  covered by Fuertes and Jones in \cite[Theorem 2.2]{FJ}.  This still leaves various groups defined in small dimensions over small fields where we rely upon computer calculations. We mention that we have used both GAP \cite{GAP} and Magma \cite{Magma} with no particular preference  to carry out such computations.

The exceptional groups of Lie type are the focus of Section~\ref{SS5}. Again Gow's Theorem is critical. The over-groups of maximal tori are known by the work of Liebeck, Saxl and Seitz \cite{LSS} and these results replace \cite{GPPS}  for the analysis of these groups of the larger rank groups $\F_4(q)$, ${}^2\E_6(q)$, $\E_6(q)$, $\E_7(q)$ and $\E_8(q)$. For the smaller rank groups, we resort to the complete lists of maximal subgroups.  Again some small groups are dealt with by computer.  At this stage the bulk of the work has been completed. In Section~\ref{SS6} we consider the alternating groups and their double covers. Even though the work of \cite{FG} shows that the alternating groups are Beauville groups, they do not investigate the double covers of these groups. The sporadic groups and their double covers are dealt with in Section~\ref{SS7} and the resulting hyperbolic triples for the sporadic simple  groups and their covers other than the baby monster, its double cover and the monster are given in terms of standard generators \cite{WilsonStandardGenerators} to guarantee that our results are replicable. Finally we are left with the exceptional covers of the alternating group and of the groups of Lie type. These are the subject of Section~\ref{exceptionalsec}. Again the hyperbolic triples  are presented as words in the standard generators for each of the groups.  Our final section gathers the various pieces of the proof together and contains the proof of Theorem~\ref{main} which of course depends on the classification of the finite simple groups.

\bigskip

\centerline {\sc { Acknowledgement}}

\medskip

The authors wish to express their gratitude to Professor Gareth Jones for introducing them to  Beauville surfaces and Beauville structures and for making them aware of the main conjecture that we resolve in this article.

\section{Preliminary results}\label{SS2}

In this section we collect some background results. In particular, we state Zsigmondy's theorem and its lesser known extension due to Feit. After that we present Gow's Theorem, a result which lies at the centre of our investigations.

\begin{theorem}[Zsigmondy \cite{Z}  (or rather Bang \cite{Bang})]\label{Zig}
 For any natural numbers $a> 1$ and $n > 1$ there is a prime number   that divides $a^n-1$ and does not divide $a^k - 1$ for any natural number $k < n$, with the following exceptions:
 \begin{enumerate}
\item  $ a = 2$ and $n = 6$; and
\item  $a + 1$ is a power of two, and $n = 2$.\end{enumerate}
\end{theorem}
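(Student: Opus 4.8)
The plan is to route everything through the cyclotomic polynomials $\Phi_n$, using the factorisation $a^n-1=\prod_{d\mid n}\Phi_d(a)$. First I would recast the conclusion: a prime dividing $a^n-1$ but no $a^k-1$ with $k<n$ is exactly a prime $p$ with $\mathrm{ord}_p(a)=n$ (a primitive prime divisor), and such primes are to be sought among the prime divisors of $\Phi_n(a)$. So it suffices to exhibit a prime factor $p$ of $\Phi_n(a)$ with $\mathrm{ord}_p(a)=n$, and the whole problem becomes: when does $\Phi_n(a)$ fail to have such a factor?

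The conceptual heart is an order lemma describing which primes can divide $\Phi_n(a)$ \emph{without} being primitive. If $p\mid\Phi_n(a)$ and $e=\mathrm{ord}_p(a)<n$, then $p\mid a^e-1$ forces $e\mid n$, and comparing cyclotomic factors shows $n/e$ must be a power of $p$; since $e\mid p-1$ we get $e<p$, so $p$ is the \emph{largest} prime factor of $n$ and is uniquely determined. A lifting-the-exponent computation then shows $v_p(\Phi_n(a))=1$, the sole anomaly being $n=2$, $p=2$, where $\Phi_2(a)=a+1$ can be divisible by $2$ to a high power. Consequently a primitive prime divisor exists precisely when $\Phi_n(a)$ exceeds this exceptional prime $p$ (and, in the case $n=2$, precisely when $a+1$ is not a pure power of $2$).

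It then remains to bound $\Phi_n(a)$ from below and compare with $p\le n$. Writing $\Phi_n(a)=\prod_{\gcd(k,n)=1}(a-\zeta^k)$ with $\zeta=e^{2\pi i/n}$, the estimate $|a-\zeta^k|^2=a^2-2a\cos(2\pi k/n)+1>(a-1)^2$, strict for $n>1$ since no relevant $\zeta^k$ equals $1$, gives $\Phi_n(a)>(a-1)^{\phi(n)}$; for $a=2$ this crude bound degenerates and must be replaced by a sharper estimate, for instance via $\Phi_n(2)=\prod_{d\mid n}(2^d-1)^{\mu(n/d)}$. In all cases one obtains $\Phi_n(a)>n\ge p$ outside a short explicit list of small pairs $(a,n)$.

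Finally I would clear that finite list by direct evaluation, and this is where the two exceptions surface: for $(a,n)=(2,6)$ one computes $\Phi_6(2)=3$, equal to its largest prime factor, so no primitive divisor exists; and for $n=2$ the primitive divisors of $a+1$ disappear exactly when $a+1$ is a power of $2$. The hard part will be the order lemma itself --- pinning down that the only non-primitive prime is the top prime of $n$, occurring to the first power, together with its $n=2$ anomaly --- since every downstream step reduces to separating these boundary equalities from the generic strict inequality $\Phi_n(a)>n$.
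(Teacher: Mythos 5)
The paper does not prove this statement: it is imported verbatim as a classical result, with the proof delegated to the cited works of Bang and Zsigmondy, so there is no in-paper argument to measure you against. Judged on its own terms, your outline is the standard cyclotomic proof and its architecture is sound: reduce to finding a prime factor of $\Phi_n(a)$ of multiplicative order exactly $n$; show that the only possible non-primitive prime factor of $\Phi_n(a)$ is the largest prime $p$ dividing $n$, occurring to the first power except for the $n=2$, $p=2$ anomaly; and then win by the size comparison. Two remarks. First, the "order lemma" you correctly identify as the conceptual heart is essentially Lemma~\ref{cyclo} of this paper (proved there by exactly the cyclotomic identities $\Phi_{pn}(x)=\Phi_n(x^p)$ and $\Phi_n(x^p)=\Phi_n(x)\Phi_{np}(x)$ that you would need), so that ingredient is available in-house even though it is deployed for a different purpose. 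Second, be careful at the final stage to compare $\Phi_n(a)$ against the single exceptional prime $p$ rather than against $n$: the bound $\Phi_n(a)>(a-1)^{\phi(n)}$ fails to beat $n$ already for $(a,n)=(3,6)$ (where $2^{2}=4<6$), but it comfortably beats $p=3$ there; it is only by targeting $p$ that the residual finite list stays short enough to check by hand, collapsing to $\Phi_6(2)=3$ and to $\Phi_2(a)=a+1$ being a power of two --- precisely the two stated exceptions. With that adjustment and a genuine lower bound for $a=2$ (the na\"{\i}ve $(a-1)^{\phi(n)}$ gives nothing there), the plan closes.
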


A prime with the property described in Theorem~\ref{Zig} is called a \emph{Zsigmondy prime} for $\langle a,n\rangle$ or a \emph{Zsigmondy } $\langle a,n \rangle$-prime.
If $p$ is a Zsigmondy prime for $\langle a,n\rangle$, then $n$ divides $p-1$. In particular, $p \ge n+1$.

 A Zsigmondy prime $p$ for $\langle a, n\rangle$  is called a \emph{large Zsigmondy prime} for
$\langle a, n \rangle$ if $p > n + 1$ or $p^2$  divides $a^n - 1$. A Zsigmondy prime that is not large, is \emph{small}.

\begin{theorem}[Feit\cite{Feit}] \label{LZ}
If  $a$  and  $n$  are  natural numbers greater than $1$,  then there exists  a  large
Zsigmondy prime for  $\langle a,  n\rangle$  except in the following cases.
\begin{enumerate}
\item $n=2$,   and $a  =   2^s3^t -     1$ for some natural number $s$,  and $t =  0$ or $1$.
\item $a =  2$ and $n =  4$, $6$, $10$, $12$  or $18$.
\item  $a =  3$ and $n =  4$ or $6$.
\item $\langle a,  n\rangle  =  \langle 5, 6\rangle$.
\end{enumerate}
\end{theorem}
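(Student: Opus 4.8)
The plan is to work entirely with the cyclotomic factorization $a^n-1=\prod_{d\mid n}\Phi_d(a)$, where $\Phi_d$ denotes the $d$-th cyclotomic polynomial, and to treat $\Phi_n(a)$ as the carrier of the primitive prime divisors. The fundamental input is the standard dichotomy for prime divisors of $\Phi_n(a)$: every prime $p\mid\Phi_n(a)$ either satisfies $\mathrm{ord}_p(a)=n$, equivalently $p\equiv 1\pmod n$, so that $p$ is a Zsigmondy prime for $\langle a,n\rangle$; or else $p$ is the largest prime divisor $p_0$ of $n$, in which case $p_0$ divides $\Phi_n(a)$ to the first power only. I would then reformulate the hypothesis ``no \emph{large} Zsigmondy prime exists'' as an arithmetic constraint: each primitive prime $p\mid\Phi_n(a)$ satisfies $p\ge n+1$, so ``not large'' forces $p=n+1$ occurring to the first power, whence the primitive part of $\Phi_n(a)$ is either $1$ or $n+1$. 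Combining this with the single possible factor $p_0$ yields $\Phi_n(a)\le p_0(n+1)\le n(n+1)$.

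Against this quadratic upper bound I would set a super-polynomial lower bound. Writing $\Phi_n(a)=\prod_{\gcd(k,n)=1}(a-\zeta^k)$ with $\zeta=e^{2\pi i/n}$ and pairing conjugate roots, each factor has modulus exceeding $a-1$, giving the crude estimate $\Phi_n(a)>(a-1)^{\varphi(n)}$, and more honestly the leading behaviour $\Phi_n(a)\sim a^{\varphi(n)}$. Since $\varphi(n)$ eventually dominates $2\log_2 n$, this exponential-in-$\varphi(n)$ quantity outgrows the bound $n(n+1)$ from the first step, so the inequality $\Phi_n(a)>n(n+1)$ holds for all but finitely many pairs $(a,n)$; for those pairs a large Zsigmondy prime must exist. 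This reduces the theorem to an explicit finite list of small $(a,n)$.

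The value $n=2$ must be handled separately, since there $\varphi(2)=1$ and the growth estimate is vacuous. Here $a^2-1=(a-1)(a+1)$ is transparent: the Zsigmondy primes are the odd prime divisors of $a+1$, and the absence of a large one forces every such prime to equal $3$ with $9\nmid a+1$, i.e. $a+1=2^s3^t$ with $t\in\{0,1\}$, which is precisely the exceptional family (i). For the remaining finitely many pairs with $n\ge 3$ and small $a$ — concentrated where $\varphi(n)$ is small, namely $n\in\{3,4,6,\dots\}$ — I would factor $\Phi_n(a)$ directly and check whether a prime exceeding $n+1$ or a repeated prime appears, reading off cases (ii)--(iv).

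The main obstacle is sharpening the lower bound, together with the boundary bookkeeping, exactly in the regime where the argument nearly fails. The case $a=2$ is the delicate one: here $a-1=1$ annihilates the crude estimate, so one needs a genuine bound such as $|2-\zeta^k|^2=5-4\cos\theta_k$, which is bounded away from $1$ for all but the single root of smallest argument, yielding $\Phi_n(2)$ of size roughly $2^{\varphi(n)}$. Controlling this together with the small-$\varphi(n)$ values $n\in\{2,3,4,6\}$, and then verifying the short residual list cleanly, is where essentially all the content of Feit's refinement of Zsigmondy's theorem resides; the super-polynomial growth comparison is otherwise routine.
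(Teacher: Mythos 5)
The paper offers no proof of this statement: it is quoted directly from Feit's article \emph{On large Zsigmondy primes} and used as a black box, so there is no in-paper argument to measure your proposal against. That said, your outline is essentially the standard route to such results. The skeleton is sound: for $n>2$, every Zsigmondy prime $p$ for $\langle a,n\rangle$ satisfies $p\equiv 1\pmod n$, so the absence of a large one forces the primitive part of $\Phi_n(a)$ to be $1$ or exactly $n+1$ to the first power, while the only non-primitive prime dividing $\Phi_n(a)$ is the largest prime factor $P(n)$ of $n$, again to the first power; hence $\Phi_n(a)\le (n+1)P(n)\le n(n+1)$, to be contradicted by a lower bound of size $a^{\varphi(n)}$. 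Your separate treatment of $n=2$ is correct and recovers exception (i) exactly.

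Two points need repair or honest accounting. First, your proposed estimate for $a=2$ is wrong as stated: $|2-e^{i\theta_k}|^2=5-4\cos\theta_k$ is \emph{not} bounded away from $1$ for all but one root, since $\cos(2\pi k/n)\to 1$ for every fixed $k$ as $n\to\infty$, so arbitrarily many factors approach $1$. The clean fix is the M\"obius expansion
$$\log\Phi_n(2)=\varphi(n)\log 2+\sum_{d\mid n}\mu(n/d)\log\bigl(1-2^{-d}\bigr),$$
whose error term is at most $\sum_{d\ge 1}2^{1-d}=2$ in absolute value, giving $\Phi_n(2)>2^{\varphi(n)}e^{-2}$; that suffices. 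Second, the ``finite residual list'' is larger than your phrasing suggests: with the crude comparison $2^{\varphi(n)}>n(n+1)$ the surviving $n$ (for $a\ge 3$) are $\{3,4,5,6,8,9,10,12,14,16,18,20,24,30\}$, and for each of these several values of $a$ satisfy $(a-1)^{\varphi(n)}\le n(n+1)$, so the explicit factorizations producing precisely the exceptions (ii)--(iv) constitute a genuine, if routine, computation of several dozen cases rather than a footnote. With those two repairs the argument goes through and does prove the theorem.
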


\begin{lemma}\label{div}Suppose that $\zeta$ is a Zsigmondy $\langle e,a\rangle$-prime. Then $\zeta$ does not divide $a^j-1$ for all $1\le j < 2e$ with $j \not= e$.
\end{lemma}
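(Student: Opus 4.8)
The plan is to reduce the whole statement to a computation of the multiplicative order of $a$ modulo $\zeta$. By definition of a Zsigmondy prime, $\zeta$ divides $a^e-1$ but $\zeta$ does not divide $a^k-1$ for any integer $1\le k<e$. In particular, since $\zeta\mid a^e-1$ we have $\gcd(\zeta,a)=1$, so $a$ is a unit modulo $\zeta$ and its order $d$ in $(\mathbb{Z}/\zeta\mathbb{Z})^\times$ is well defined.

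The first real step is to pin down this order exactly. From $a^e\equiv 1\pmod{\zeta}$ we get $d\mid e$. If $d$ were a proper divisor of $e$, then $d$ would be an integer with $1\le d<e$ and $\zeta\mid a^d-1$, contradicting the primitivity built into the Zsigmondy property; hence $d=e$. The key consequence is the standard equivalence $\zeta\mid a^j-1\iff a^j\equiv 1\pmod{\zeta}\iff d\mid j\iff e\mid j$, valid for every positive integer $j$ precisely because $\gcd(\zeta,a)=1$. Thus $\zeta$ divides $a^j-1$ if and only if $e\mid j$.

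It then remains to examine the range $1\le j<2e$. The only positive multiple of $e$ lying strictly below $2e$ is $j=e$ itself, the next multiple $2e$ being excluded by the strict inequality $j<2e$. Therefore, for every $j$ with $1\le j<2e$ and $j\ne e$ we have $e\nmid j$, and the equivalence above yields $\zeta\nmid a^j-1$, which is exactly the claim. I do not expect a genuine obstacle in this argument: once the order of $a$ modulo $\zeta$ is identified as exactly $e$ the conclusion is immediate, and the only point deserving an explicit word is the coprimality $\gcd(\zeta,a)=1$, which guarantees that the order argument applies in the first place.
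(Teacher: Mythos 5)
Your proof is correct. It takes a slightly different route from the paper's: the paper argues directly in the stated range, handling $j<e$ by the definition of a Zsigmondy prime and $e<j<2e$ by the one-step subtraction $(a^j-1)-(a^e-1)=a^e(a^{j-e}-1)$, which (using $\gcd(\zeta,a)=1$) forces $\zeta\mid a^{j-e}-1$ with $j-e<e$, a contradiction. You instead identify the multiplicative order of $a$ modulo $\zeta$ as exactly $e$ and deduce the clean criterion $\zeta\mid a^j-1\iff e\mid j$. The two arguments rest on the same two facts (coprimality of $\zeta$ and $a$, and the minimality built into the Zsigmondy property) — indeed the paper's subtraction is just one step of the division argument that shows the order divides any such exponent — but your packaging is more uniform, avoids the case split, and proves a strictly stronger statement valid for all $j$, of which the lemma's range $1\le j<2e$ is the special case where the only multiple of $e$ is $e$ itself.
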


\begin{proof} This is true for $j < e$ as $\zeta$ is a Zsigmondy prime. Suppose that $e<j < 2e$. If $\zeta$ divides $a^{j}-1$, then $\zeta$ divides $(a^j-1)- (a^e-1)= a^e(a^{j-e}-1)$. Since $\zeta$ and $a$ are coprime, we have $\zeta $ divides $a^{j-e}-1$ with $j-e< e$ which is a contradiction. Thus the lemma holds.
\end{proof}

We will also need the following well known lemma.

\begin{lemma} \label{gcd} Suppose that $a$ and $b$ are positive integers and the $q$ is a positive power of a prime. Then the following are true:
\begin{enumerate}
\item $\gcd(q^a-1,q^b-1) = q^d-1$ where $d = \gcd(a,b)$.
\item $\gcd(q^a-1,q^a+1) = 1+ j$ where $ j = q \pmod 2$.
\item $\gcd(q-1,(q^n-1)/(q-1))= \gcd(q-1,n)$.
\end{enumerate}
\end{lemma}
\begin{proof}  Evidently $q^d-1$ is a divisor of  $t=\gcd(q^a-1,q^b-1)$. Conversely
$t$ is a divisor of any integral combination of $q^a-1$ and $q^b-1$. Without loss we may assume that
$a > b$ and thus $t$ is a divisor of $q^a - q^b = (q^{a-b}-1)q^b$. As $t$ is coprime to $q$ we see that $t$ divides
$q^{a-b}-1$. If $a = jb+r$, then, by induction on $j$, we see that $t$ divides $q^r-1$. Thus $t$ is a divisor of
$\gcd(q^b-1,q^r-1)$. Continuing as in the Euclidian division algorithm we see that $t$ divides $q^d-1$. Thus the first part follows.
The second part is trivial. For the third part set $ s = \gcd(q-1,(q^n-1)/(q-1))$ and note that $s$ divides
$ 2q^{n-2} + q^{n-3} + ...+ q + 1$ and by induction $ iq^{n-i} + q^{n -i - 1} + ...+1$. Thus $s$ is a divisor of $nq$, hence $n$. So
$ s $ is a divisor of $\gcd(q-1,n)$. The converse is clear.
\end{proof}

For integers  $k \ge 1$, let $\Phi_k(x)$ denote the $k$'th cyclotomic polynomial.

The  proof of the following lemma is extracted from the proof of \cite[10-1]{GL}.
\begin{lemma} \label{cyclo}Suppose that $r$ is a prime and $q$ is a prime power. The following hold:
\begin{enumerate}
\item There is a unique integer $m_0$ which is coprime to $r$ such that $r$ divides $\Phi_{m_0}(q)$.
\item For any positive integer $a$ and any positive integer $m$ coprime to $r$, $r$ divides $\Phi_{r^am}(q)$ if and only if $m=m_0$.
\item If $b$ and $c$ are positive integers such that $b<c$ and $r$  divides both $\Phi_b(q)$ and $\Phi_c(q)$, then $b$ divides $c$, $c/b$ is a power of $r$,   $b= r^dm_0$ where $d$ is a positive integer and $m_0 $ divides $r-1$.
\end{enumerate}
\end{lemma}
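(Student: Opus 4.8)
The plan is to reduce all three parts to a single computation of the $r$-adic valuation of $\Phi_n(q)$. Throughout I would assume $r \nmid q$ (equivalently $r \neq p$): if $r \mid q$ then $\Phi_n(q) \equiv \Phi_n(0) = \pm 1 \pmod r$ for every $n$, so no value $\Phi_n(q)$ is divisible by $r$ and the statement is vacuous. Set $m_0 = \mathrm{ord}_r(q)$, the multiplicative order of $q$ modulo $r$. Since $q^{r-1} \equiv 1 \pmod r$ by Fermat, $m_0$ divides $r-1$; in particular $\gcd(m_0,r)=1$, which already secures the divisibility $m_0 \mid r-1$ asserted in (iii).

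The engine of the proof is the lifting-the-exponent lemma. Write $v_r$ for the $r$-adic valuation and put $\alpha = v_r(q^{m_0}-1) \ge 1$. For $r$ odd, applying lifting-the-exponent to the congruence $q^{m_0}\equiv 1 \pmod r$ gives $v_r(q^n-1) = \alpha + v_r(n)$ whenever $m_0 \mid n$, while $r \nmid q^n-1$ (so the valuation is $0$) whenever $m_0 \nmid n$. I then feed this into the factorisation $q^n-1 = \prod_{d\mid n}\Phi_d(q)$, which gives $\sum_{d\mid n} v_r(\Phi_d(q)) = v_r(q^n-1)$. M\"obius inversion (or, equivalently, induction on the number of prime factors of $n/m_0$) yields the clean formula
$$v_r(\Phi_n(q)) = \begin{cases} \alpha, & n = m_0, \\ 1, & n = m_0 r^{j} \text{ with } j \ge 1, \\ 0, & \text{otherwise.}\end{cases}$$
The point of the inversion is that in $v_r(q^n-1)$, supported on multiples of $m_0$, the constant contribution $\alpha$ and the $v_r(n)$ contribution convolve with $\mu$ to be supported only on $n=m_0$ and on $\{m_0 r^{j} : j\ge 1\}$ respectively.

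With this formula in hand all three parts are immediate. The integers $n$ with $r\mid\Phi_n(q)$ are exactly $m_0, m_0 r, m_0 r^2,\dots$, and among these the only one coprime to $r$ is $m_0$ itself, giving existence and uniqueness in (i). For (ii), an integer of the form $r^{a}m$ with $\gcd(m,r)=1$ equals some $m_0 r^{j}$ if and only if $m=m_0$ (matching the factors coprime to $r$), which is precisely the stated equivalence. For (iii), if $r$ divides both $\Phi_b(q)$ and $\Phi_c(q)$ then $b=m_0 r^{d}$ and $c=m_0 r^{d'}$ with $d=v_r(b)$ and $d'=v_r(c)$; since $b<c$ we get $d<d'$, whence $b\mid c$, the quotient $c/b = r^{d'-d}$ is a power of $r$, and $b=r^{d}m_0$, while $m_0\mid r-1$ was noted above.

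The only genuine obstacle is the prime $r=2$, where the odd-prime form of lifting-the-exponent fails. Here $q$ is odd, so $m_0=\mathrm{ord}_2(q)=1$, and one instead uses the $p=2$ version $v_2(q^n-1)=v_2(q-1)+v_2(q+1)+v_2(n)-1$ for even $n$ and $v_2(q^n-1)=v_2(q-1)$ for odd $n$; the same inversion then shows $2\mid\Phi_n(q)$ exactly when $n$ is a power of $2$, that is $n=m_0 2^{j}$, so the formula and hence all three conclusions persist. A minor caveat worth flagging is that in (iii) the exponent $d=v_r(b)$ can be $0$ (for instance $r=3$, $q=2$, $b=m_0=2$, $c=6$), so the conclusion is best read with $d$ a non-negative integer.
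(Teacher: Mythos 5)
Your proof is correct, but it takes a genuinely different route from the paper's. The paper argues with polynomial congruences: for (i) it takes $m_0$ to be the order of $q$ modulo $r$ and kills any second candidate $m_1$ via the identity $\gcd\bigl(Q-1,(Q^n-1)/(Q-1)\bigr)=\gcd(Q-1,n)$ from Lemma~\ref{gcd}(iii) applied with $Q=q^{m_0}$; for (ii) it uses the cyclotomic identities to show $\Phi_{r^am}(x)\equiv \Phi_m(x)^{\phi(r^a)}\pmod r$, and (iii) is then read off. You instead compute the exact $r$-adic valuation of $\Phi_n(q)$ by lifting-the-exponent plus M\"obius inversion of $q^n-1=\prod_{d\mid n}\Phi_d(q)$, and read all three parts off the resulting support $\{m_0, m_0r, m_0r^2,\dots\}$. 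Your route costs you an appeal to LTE and a separate case for $r=2$ (where the value at $n=m_0r$ is $v_2(q+1)$ rather than $1$, though the divisibility statement you actually need still holds), whereas the paper's argument is uniform in $r$ and self-contained given its Lemma~\ref{gcd}; in exchange you obtain strictly more, namely that $r$ divides $\Phi_{m_0r^j}(q)$ exactly once for $j\ge 1$, which is the arithmetic behind the ``large Zsigmondy prime'' notion used elsewhere in the paper. Two of your side remarks are worth keeping: the lemma tacitly assumes $r\nmid q$ (otherwise no $m_0$ exists), and in part (iii) the exponent $d$ must be allowed to equal $0$ (e.g.\ $r=3$, $q=2$, $b=2$, $c=6$), so ``positive integer'' in the statement should read ``non-negative integer''; the paper's own deduction of (iii) from (i) and (ii) is consistent only with that corrected reading.
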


\begin{proof} Let $m_0$ be the order of $q$ modulo $r$. Then $m_0$ divides $r-1$.  Suppose that $m_1$ is also coprime to $r$ and that $r$ divides $\Phi_{m_1}(q)$. By the choice of $m_0$, we have $m_0$ divides $m_1$.  Then, by Lemma~\ref{gcd} (iii), $r$ divides $$\gcd(q^{m_0}-1,((q^{m_0})^{m/m_0}-1)/(q^{m_0}-1))= \gcd (q^{m_0}-1,m_1/m_0)$$
 which contradicts $r$ and $m_1$ being coprime. This proves (i).

 Now suppose that $r$ and $m$ are coprime.
Then, as $\Phi_{pn}(x) = \Phi_n(x^p)$ when $n$ divides $p$ and $\Phi_n(x^p) = \Phi_n(x)\Phi_{np}(x)$ when $p$ does not divide $n$, we have $$\Phi_{r^am}(x) = \Phi_m(x^{r^a})/\Phi_m(x^{r^{(a-1)}} )\equiv  \Phi_m(x) ^{\phi(r^a)}\pmod r,$$ where $\phi$ is the totient function. Hence $r$ divides the righthand side if and only if $r$ divides $\Phi_m(q)$ which is if and only if $m=m_0$. This proves (ii).

Part (iii) follows from (i) and  (ii).
\end{proof}
%
%
%\begin{lemma}\label{cyclo} Suppose that $a$ and $b$ are positive integers  with $a < b$ and that $q$ is a positive power of a prime.
%
%\begin{enumerate}
%\item If $a$ does not divide $b$, then  $\gcd(\Phi_a(q),\Phi_b(q))=1$.
%\item If $a$ is not a power of $2$, then $\Phi_a(q)$ is odd.
%\item If $a$ is not a power of $2$, then $\gcd(\Phi_a(q), \Phi_{2a}(q))=1$.
%\end{enumerate}
%\end{lemma}
%
%\begin{proof} We first prove (i). Let $\gcd(a,b)=d$. Then, as $a$ does not divide $b$, $d < a$. By Lemma~\ref{gcd} (i),  $\gcd (\Phi_a(q),\Phi_b(q))$ divides $q^d-1$. As $d<a$, $\Phi_a(q)$ divides $((q^d)^{a/d}-1)/(q^d-1)$  and hence Lemma~\ref{gcd} (iii) implies that  $\gcd(\Phi_a(q),q^d-1)$ divides $a/d$. Similarly $\gcd(\Phi_b(q),q^d-1)$ divides $b/d$. But $a/d$ and $b/d$ are coprime and therefore  $\gcd(\Phi_a(q),\Phi_b(q))=1$ as claimed.
%
%
%
%For (ii), we let $a_2$ denote the largest power of $2$ which divides $a$ and write $a= a_2b$. Then $q^a-1= (q^{a_2}-1)(q^{a_2(b-1)}+ \dots + q^{a_2}+1)$. Since $\Phi_{a}(q)$ divides $q^{a_2(b-1)}+ \dots + q^{a_2}+1$ and the latter number is odd,  (ii) holds.
%
%As $\Phi_{2a}(q)$ divides $q^a+1$ and $\Phi_a(q)$ is odd by (ii), part (iii) follows from Lemma~\ref{gcd} (ii).
%
%\end{proof}

For $G$ a finite simple group of Lie type of characteristic $p$, an element is \emph{semisimple} if its order is relatively prime to $p$  and is  \emph{regular semisimple element} if its centralizer in $G$
has order relatively prime to $p$.
We need a modest extension of Gow's Theorem. The proof is almost identical to that presented by Gow in \cite{Gow}.

\begin{theorem}[Gow]\label{G}
 Let $G$ be a finite quasisimple group of Lie type of characteristic $p$, and let
$s$ be a non-central semisimple element in $G$. Assume that  $R_1$  and $R_2$ are conjugacy classes of
$G$  consisting of regular semisimple elements of $G$. Then there exist $x \in R_1$ and
$ y \in R_2$ such that
$s=xy$.
\end{theorem}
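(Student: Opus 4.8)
The plan is to prove the theorem by the standard character-theoretic computation of class multiplication constants, following Frobenius. Fix representatives $x \in R_1$, $y \in R_2$ and write $C_1 = C_G(x)$, $C_2 = C_G(y)$; since $x$ and $y$ are regular semisimple, $C_1$ and $C_2$ are maximal tori, so $|R_i| = |G|/|C_i|$ is large. The number $N(s)$ of pairs $(a,b) \in R_1 \times R_2$ with $ab = s$ is
\[
N(s) = \frac{|R_1|\,|R_2|}{|G|}\sum_{\chi \in \mathrm{Irr}(G)} \frac{\chi(x)\,\chi(y)\,\overline{\chi(s)}}{\chi(1)},
\]
and it suffices to show $N(s) > 0$ for every non-central semisimple $s$. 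First I would isolate the principal character, whose contribution is $1$; writing the remaining sum as an error term $E(s)$, the quantity $N(s)$ equals $\tfrac{|G|}{|C_1|\,|C_2|}\,(1 + E(s))$, so the whole problem reduces to establishing $E(s) > -1$, for which $|E(s)| < 1$ is amply sufficient.

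The key inputs I would assemble to bound $E(s)$ are special to regular semisimple elements. By column orthogonality, $\sum_{\chi}|\chi(x)|^2 = |C_1|$ and $\sum_\chi |\chi(y)|^2 = |C_2|$, which are small (torus-sized) compared with $|G|$; in particular $|\chi(x)|, |\chi(y)| \le |C_i|^{1/2}$. More importantly, the Deligne--Lusztig character formula evaluated at a regular semisimple element shows that each $|\chi(x)|$ is bounded \emph{independently of} $q$, in terms only of the Weyl group of $G$, because such an element lies in a unique maximal torus and only the Deligne--Lusztig characters attached to that torus contribute, each with controlled multiplicity. Combined with a bound on the inverse-degree sum $\sum_{\chi \ne 1}\chi(1)^{-1}$, which is controlled because the nontrivial irreducible degrees of a group of Lie type are large powers of $q$, these estimates drive the off-principal part of the sum down.

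The main obstacle, and where Gow's argument must be genuinely clever, is controlling $E(s)$ when $s$ has a \emph{large} centralizer, i.e. when $s$ is close to being central: there a crude triangle-inequality estimate through $|\chi(s)| \le |C_G(s)|^{1/2}$ is hopeless, since $|C_G(s)|^{1/2}$ can itself be a large power of $q$. Here one cannot avoid using the actual \emph{signed} values of the characters at $x$ and $y$ furnished by Deligne--Lusztig theory, so that the cancellation in $E(s)$, and not merely the size of its terms, is what yields $|E(s)| < 1$; this is the heart of the proof. Finally, to obtain the stated generalization I would (a) pass from Gow's simple-group formulation to an arbitrary quasisimple $G$, checking that the regular semisimple classes and the structure-constant identity behave correctly modulo the center, and (b) relax the hypothesis that the target element be regular semisimple to merely non-central semisimple, re-examining the character-value bounds in this one-sided case. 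Both adjustments are routine once the genuinely central elements have been excluded, which is precisely the role of the hypothesis that $s$ is non-central.
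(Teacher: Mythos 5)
Your proposal reduces the theorem to the inequality $E(s) > -1$ and then, at exactly the point where the work has to be done, defers to an unspecified cancellation argument (``this is the heart of the proof,'' ``where Gow's argument must be genuinely clever''). That is a genuine gap, not a routine omission: for a general non-central semisimple $s$ the centralizer $C_G(s)$ can be almost all of $G$ (e.g.\ a subsystem subgroup of corank one), the trivial bound $|\chi(s)|\le |C_G(s)|^{1/2}$ is useless as you note, and no uniform estimate of the form $|E(s)|<1$ over all such $s$ is established or even plausibly sketched. Since the sum over nontrivial characters has roughly $|W|q^{\ell}$ terms whose individual sizes you do not control at $s$, the proposal as written does not prove the statement.

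The paper's proof (following Gow) takes an entirely different, non-analytic route that you may find worth internalizing: it evaluates the structure-constant sum modulo a maximal ideal $\mathfrak P$ of the algebraic integers lying over $p$. Because $s$ is non-central semisimple, $p$ divides $|s^G|$, and every character lying in a $p$-block of full defect then contributes a term $\frac{|s^G|\chi(s)}{\chi(1)}\in\mathfrak P$; by Humphreys, the only block of defect zero is the one containing the Steinberg character $\mathrm{St}$. Hence modulo $\mathfrak P$ the entire sum collapses to the single Steinberg term, which equals $\pm|G|_{p'}/|C_G(s)|_{p'}$ using $\mathrm{St}(g)=\pm|C_G(g)|_p$ at semisimple $g$ and the regularity of $x$ and $y$, and this is a $p$-adic unit. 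So the structure constant is nonzero with no estimates whatsoever; the non-centrality of $s$ enters precisely to force $p\mid|s^G|$, and the regularity of $x,y$ enters only to make $\mathrm{St}(x)\mathrm{St}(y)$ a unit. If you want to complete a proof, you should either adopt this block-theoretic argument or supply the missing uniform character bounds, which is a substantially harder analytic undertaking.
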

 \def \Irr{\mathrm {Irr}}
\begin{proof} From Humphreys \cite[Theorem 8.5]{Humphreys},   $G$ has a unique $p$-block of defect $0$  and all the remaining $p$-blocks of $G$ have full defect. The $p$-block of defect $0$ consists of a single character namely  the Steinberg character $\mathrm {St}$. For characters $\chi$ of $G$, the character values $\chi(g)$ of $G$ and the ratios $\frac{|g^G|\chi(g)}{\chi(1)}$ are contained in the ring $\mathcal O$ of algebraic integers. Let $\mathfrak P$ be a maximal ideal of $\mathcal O$ containing the  prime $p$. Select representatives  $r_1 \in L_1$ and $r_2 \in L_2$. Note that, as only central semisimple elements centralize a Sylow $p$-subgroup of $G$, the choices of $r_1$, $r_2$ and $s$ imply that $|C_G(r_1)|_p=|C_G(r_2)|_p=1$ and that $|C_G(s)|_p\neq |G|_p$. In particular, $|s^G|\equiv 0 \pmod p$.

By \cite[Exercise 3.9]{Isaacs}, it suffices to show that the structure constant equation $$\sum_{\chi\in \Irr(G)}\chi(r_1)\chi(r_2) \frac{|s^G|\ov{\chi(s)}}{\chi(1)} \not \equiv 0 \pmod {\mathfrak P}.$$
For $\chi$ a non-projective character we have that $\chi $ is in a block of full defect and so, as $p$ divides $|s^G|$,  $\frac{|s^G|\chi(s)}{\chi(1)} \in \mathfrak P$ by \cite[15.41]{Isaacs}. Hence

$$\sum_{\chi\in \Irr(G)}\chi(r_1)\chi(r_2) \frac{|s^G|\ov{\chi(s)}}{\chi(1)}\equiv
\mathrm {St}(r_1)\mathrm {St}(r_2)\frac{ |s^G|\ov{\mathrm {St}(s})}{\mathrm {St}(1)} \pmod {\mathfrak P}.$$

Now from \cite[Theorem 6.4.7 (ii)]{Carter}, we have that $\mathrm{St}(g)= \pm |C_G(g)|_p$ for semisimple elements $g \in G$. Since $r_1$ and $r_2$ are regular semisimple elements of $G$, we have $$\mathrm {St}(r_1)\mathrm {St}(r_2)\frac{ |s^G|\ov{\mathrm {St}(s)}}{\mathrm {St}(1)} =\pm \frac{ |s^G||C_G(s)|_p}{|G|_p}= \pm \frac {|G|_{p'}}{|C_G(s)|_{p'}} \not \in \mathfrak P.$$
This completes the proof.
\end{proof}

\begin{lemma} \label{structureconstant} Suppose that $G$ is a group, $x \in G$,  $Z =\langle x \rangle$ and   $N= N_G(Z)$.
 Assume that \begin{enumerate}
 \item $N$ is the unique maximal subgroup of $G$ containing $Z$;
 \item $|N/C_G(Z)|=k$, $|Z\setminus Z(G)| > \left({k+1} \atop 2\right) $;
 \item  $x^G \cap N\subset Z$; and
 \item for all non-trivial $z \in Z\setminus Z(G)$, the $(x^G,x^G,z^G)$ structure constant
is non-zero.
\end{enumerate}
Then there exists $z \in Z$,  such that there is a hyperbolic triple for $G$ in $x^G\times x^G \times z^G$.
\end{lemma}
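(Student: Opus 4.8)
The plan is to reformulate the goal and then reduce everything to a counting argument that controls non-generation. Since a triple $(a,b,c)$ with $abc=1$ is determined by $(a,b)$ via $c=(ab)^{-1}$, and generation by the triple is the same as $\langle a,b\rangle=G$, I want to produce a non-trivial non-central $z\in Z$ together with a \emph{generating} pair $(a,b)\in x^G\times x^G$ satisfying $(ab)^{-1}\in z^G$, and then check the hyperbolicity inequality $1/o(a)+1/o(b)+1/o(c)=2/o(x)+1/o(z)<1$. Hypothesis (iv) already supplies, for every non-central $z$, at least one triple $(a,b,c)\in x^G\times x^G\times z^G$ with $abc=1$; the whole difficulty is to choose $z$ so that some such triple actually generates $G$.

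First I would pin down exactly which pairs fail to generate. Suppose $M=\langle a,b\rangle<G$ with $a,b\in x^G$. Writing $a=x^g$, the cyclic group $\langle a\rangle=Z^g$ is a conjugate of $Z$, and by (i) its unique maximal overgroup is $N^g=N_G(\langle a\rangle)$; hence $M\le N_G(\langle a\rangle)$ and in particular $b$ normalises $\langle a\rangle$. Conjugating so that $a=x$, hypothesis (iii) forces $b\in x^G\cap N\subseteq Z$. Moreover $x^G\cap Z$ is a single $N$-orbit: any $G$-conjugate of $x$ lying in $Z$ has order $|Z|$, hence generates $Z$, so the conjugating element normalises $Z$ and lies in $N$; thus $x^G\cap Z=x^N$ has exactly $|N:C_G(Z)|=k$ elements. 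Consequently a non-generating pair consists, up to conjugacy, of two elements of the $k$-element set $x^N\subseteq Z$, and its product again lies in $Z$.

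The counting step then runs as follows. The third entries $(ab)^{-1}$ arising from non-generating triples and lying in $Z$ form the set $\{(uv)^{-1}:u,v\in x^N\}$, which is $N$-invariant and, since $Z$ is abelian, has at most $\binom{k+1}{2}$ elements (unordered pairs with repetition from $x^N$). Provided $N$ controls $G$-fusion in $Z$, these account for all ``blocked'' elements of $Z$, so at most $\binom{k+1}{2}$ elements of $Z$ are blocked. By (ii), $|Z\setminus Z(G)|>\binom{k+1}{2}$, so I can choose a non-central $z\in Z$ outside the blocked set. For this $z$, the triple furnished by (iv) cannot be non-generating, since its third entry would then be $G$-conjugate to a blocked element, forcing $z$ itself to be blocked; hence it generates $G$. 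Taking $z$ of large enough order (automatic in the applications, where $o(x)$ is a large Zsigmondy-type order) then secures $2/o(x)+1/o(z)<1$, and we obtain the desired hyperbolic triple.

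The main obstacle is the fusion bookkeeping in the counting step: it is essential that the at-most-$\binom{k+1}{2}$ products $uv$ translate into at most $\binom{k+1}{2}$ \emph{elements} of $Z$ that are blocked, rather than merely that many conjugacy classes, and this is precisely what the bound in (ii) is calibrated against. Establishing that $G$-fusion of the relevant elements of $Z$ is controlled by $N$ --- so that a $G$-conjugacy carrying such an element into $Z$ is already realised inside $N$ --- is the delicate part, together with the (comparatively routine) verification of the hyperbolic inequality, which is where the statement implicitly needs $o(x)$ to be sufficiently large.
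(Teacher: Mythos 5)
Your proposal is essentially the argument the paper gives. The paper forms $\mathcal X=\{uv\mid u,v\in x^N\}$, bounds $|\mathcal X|\le\binom{k+1}{2}$ using that $x^N\subseteq Z$ has $k$ elements and $Z$ is abelian, invokes (ii) to choose a non-trivial $z\in Z\setminus Z(G)$ with $z\notin\mathcal X$, takes the triple supplied by (iv), and then argues exactly as you do: if $K=\langle x,y\rangle\neq G$, then $Z\le K$ and (i) force $K\le N$, whence (iii) gives $y\in x^G\cap Z=x^N$ and the third entry of the triple lies in $\mathcal X^{-1}$, contrary to the choice of $z$. The fusion point you single out as delicate is a genuine subtlety, but you are not missing an idea that the paper supplies: the paper's proof passes over it just as silently, since the third entry produced by (iv) is only $G$-conjugate to $z$, so deriving a contradiction from $z\notin\mathcal X$ tacitly requires that $G$-fusion of the relevant elements of $Z$ be realised in $N$ --- automatic for $x^G\cap Z$ by the generator argument you give, but not for arbitrary elements of $\mathcal X$. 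In the abstract setting one would either verify this control or replace $\mathcal X$ by the set of elements of $Z$ that are $G$-conjugate into $\mathcal X^{\pm1}$ and strengthen (ii) accordingly; in the paper's applications the point is unproblematic. Your closing observation is likewise accurate: the paper's proof only establishes generation, and the inequality $2/o(x)+1/o(z)<1$ is left to the applications, where $|Z|$ is large.
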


\begin{proof} Set $\mathcal X = \{xy\mid x,y \in x^N\}$. Then $|\mathcal X|= |\mathcal X^N|\le \left({k+1} \atop 2\right) $.
 Since $|Z\setminus Z(G)| >\left({k+1} \atop 2\right)$ by (ii),  there
exits a non-trivial element $z \in Z\setminus Z(G)$ which does not lie in $\mathcal X$, Thus the $(x^N,x^N,z^N)$ structure constant is zero. However, by hypothesis (iv), the
structure constant $(x^G,x^G, z^G)$ is non-zero.  Therefore  we find  $y \in x^G$ and $z' \in z^G$ such that
$xyz' = 1$. Set $K= \langle x,y\rangle$. Then $K \not \le Z$. Furthermore
$K \ge \langle x \rangle =Z$ and so, by (i), $K \le N$ or $K= G$. In the former case, we have $y' \in x^G \cap N \subseteq Z$ by (iii), which is impossible.  Hence $G= K$ and the lemma is proved.
\end{proof}

\section{Generation of Classical Groups}\label{SS3}

For a natural number $n>2$ and a prime  $p$ such that $(n,p ) \neq (6,2)$, we let  $\zeta_{n,p}$ be a  Zsigmondy prime for $\langle n,p\rangle$ chosen  maximally from among all Zsigmondy $\langle n,p\rangle$ primes  and then  let $\lambda_{n,p}$ be the largest power of $\zeta_{n,p}$ which divides $p^n-1$. As in Theorem~\ref{LZ} if $\lambda_{n,p} > n+1$, we say that $\zeta_{n,p}$ is \emph{large}.
Of course a Zsigmondy prime which is not large, is \emph{small} and in such cases we have $\lambda_{n,p}=n+1$. We let $\lambda_{6,2}= 9$ and treat it as a large Zsigmondy prime.
Note that by Theorem~\ref{LZ}, if $\lambda_{n,p} = \zeta_{n,p}$ is small then $$(n,p)\in \{(4,2), (6,2), (10,2), (12,2), (18,2), (4,3), (6,3), (6,5)\}$$ is very limited.

In \cite{GPPS} Guralnick, Pentilla, Praeger and Saxl examine subgroups of $\GL_d(p^a)$ that contain so-called \emph{primitive prime divisor elements} of $p^{ea}-1$ for $d/2<e\le d$. The orders of these elements  are Zsigmondy primes for $\langle e, p^{a}\rangle$. Our first result considers the consequences the maximal subgroups of $\GL_d(p^a)$ which have elements of order $\zeta_{e,p^a}$ and $\zeta_{f,p^a}$ for $d/2<e<f\le d$. To do this we exploit the main theorem in \cite{GPPS} and adopt the division of the examples given there.  Indeed we recommend that the reader have a copy of this paper to hand.

Before continuing, however, we recall some terminology related to the maximal subgroups of the classical groups. We follow \cite{KL}.  Suppose that $G$ is a classical group.  We will always denote the \emph{natural module} of $G$ by $V$.
A subgroup $H$ of $G$ is \emph{reducible} if $V$ is reducible as a $H$-module. The subgroup $H$ is \emph{primitive} if it does not preserve a subspace decomposition of $V$. A subgroup which is not primitive is \emph{imprimitive}. Thus the imprimitive subgroups of $\GL_d(q)$ are contained in the wreath products $\GL_r(q)\wr \Sym(d/r)$ and the other classical groups have similar subgroups determined by orthogonal decompositions of $V$. However, in addition in the unitary, symplectic and orthogonal cases there is the subgroup which preserves a decomposition in to two opposite isotropic/singular subspaces is also a maximal subgroup and so we get groups $\GL_d(q):2 \le \SU_{2d}(q)$, $\GL_d(q):2 \le \Sp_{2d}(q)$ and $\GL_{d}(q):2 \le \mathrm O_{2d}^+(q)$. Finally for $\mathrm O_{2d}^\varepsilon(q)$ with $dq$ odd, we may preserve two non-isometric non-degenerate spaces and so we obtain $\mathrm O_{d}(q) \times \mathrm O_d(q)$.

We shall also come across  subfield subgroups maximum amongst these we have. These are $\GL_d(p^{a_0}) \le \GL_d(p^a)$ and $\Sp_d(p^{a_0}) \le \Sp_{d}(p^a)$, where $r=a/a_0$  a prime,  $\mathrm O_d^{\eta} (p^{a_0}) \le \mathrm O^\varepsilon _{d}(p^a)$ where $r=a/a_0$  a prime and $r \eta = \varepsilon$,  $\GU_d(p^{a_0}) \le \GU_{d}(p^a)$ where $r=a/a_0$  is an odd  prime, $\mathrm O_d^\varepsilon (p^a) \le \GU_d(q)$, $q$ odd, $\Sp_d(q) \le \GU_d(q)$, $d$ even.

Finally, we have the extension field subgroups. Here we have $d= mr$ and the containments are as follows: $\GL_m(q^r) \le \GL_d(q)$, $\GU_m(q^r) \le \GU_m(q)$, $\Sp_m(q^r) \le \Sp_d(q)$,  $\GU_{d/2}(q) \le \Sp_d(q)$ ($q$ odd),
$\mathrm O_m^\varepsilon (q^r) \le \mathrm O_d^\varepsilon (q)$,
$\mathrm O_{n/2} (q^2) \le \mathrm O_d^\varepsilon (q)$ ($qn/2$ odd), and, finally,  $\mathrm \GU_{n/2}(q) \le \mathrm O_d^\varepsilon (q)$ where $\varepsilon = (-1)^{n/2}$.

\begin{theorem}\label{twoZsig} Assume that $p$ is a prime, $q=p^a$, $d \ge 3$, $d/2< e< f \le d$ and  $H$ is a primitive, irreducible subgroup of $\GL_d(q)$ with natural module $V$. Assume that $r$ is a Zsigmondy prime for $\langle e,q\rangle$ and $s$ is a Zsigmondy prime for $\langle f,q\rangle$ and set $X= F^*(H)$. If $H$ contains elements of order $r$ and of order $s$, then one of the following holds.
\begin{enumerate}
\item There exists $a_0 $ dividing $a$ such that $X \cong \SL_d(p^{a_0})$, $\Sp_d(p^{a_0})$, $X \cong \Omega_d^\epsilon (p^{a_0})$ or $\SU_d(p^{a_0/2})$.
\item There exists $b>1$ dividing $\gcd(e,d)$ and $\gcd(f,d)$ such that $X$ is a classical group over a field of order $q^b$ in dimension $d/b$.
\item  $X/Z(X)\cong \Alt(n)$, and either $Z(X) =1$, $V$ is the irreducible section of the $n$-dimensional permutation for $X$,  and $r= e+1$ and $s= f+1$   or $X$, $d$, $p$, $(r,e)$ and $(s,f)$ are as in the first  section
of Table~\ref{tot}.

\item $X/Z(X)$ is a sporadic simple group with $X$, $d$, $p$, $(r,e)$ and $(s,f)$ as  in the middle section of Table~\ref{tot}.

\item  $X/Z(X)$ is a simple group of Lie type which is not defined in  characteristic $p$ and either $X/Z(X) \cong \PSL_2(s)$ with $s= 2f+1$, $r=\frac{s-1}{2}=e+1$ and $d = \frac{s\pm 1}{2}$, or $X/Z(X)$, $d$, $p$, $(r,e)$ and $(s,f)$ are as in the last section of Table~\ref{tot}.
\begin{table}[h]
\caption{Exotic examples in Theorem~\ref{twoZsig}}\label{tot}\vskip 5mm
\begin{tabular}{|l|ccccc|}
\hline
&$X$&$d$&$p$&$(r,e)$&$(s,f)$\\
\hline
\hline
1&$2\udot\Alt(7)$&$4$&$p \equiv 1,2,4 \pmod 7$& $(7,3)$&$(5,4)$\\
2&$3\udot\Alt(7)$&$6$&$p\equiv 1 \pmod 6$&$(5,4)$&$(7,6)$\\
3&$6\udot\Alt(7)$&$6$&$p\equiv 1 \pmod {24}$&$(5,4)$&$(7,6)$\\

4&$\Alt(7)$&$4$&$2$&$(7,3)$&$(5,4)$\\
\hline
\hline
5&$\M_{11}$&$5$&$3$&$(5,4)$& $(11,5)$\\
6&$2\udot \M_{12}$&$6$&$3$&$(5,4)$& $(11,5)$\\
7&$\M_{23}$&$11$&$2$&$(11,10)$& $(23,11)$\\
8&$\M_{24}$&$11$&$2$&$(11,10)$& $(23,11)$\\
9&$3\udot \J_3$&$18$&$p\equiv 1, 4 \pmod {15}$&$(17,16)$&$(19,18)$\\
10&$6\udot \Suz$&$12$&$p\equiv 1\pmod {6}$&$(11,10)$&$(13,12)$\\
\hline
\hline
11&$\Sp_6(2)$&$7$&$> 2$&$(5,4)$& $(7,6)$\\
12&$6\udot \PSU_4(3)$&$6$&$p\equiv 1 \pmod 6$&$(5,4)$& $(7,6)$\\
13&$2\udot \PSL_3(4)$&$6$&$3$&$(5,4)$& $(7,6)$\\
14&$6\udot \PSL_3(4)$&$6$&$p\equiv 1 \pmod 6$&$(5,4)$& $(7,6)$\\
\hline
\end{tabular}

\end{table}
\end{enumerate}
\end{theorem}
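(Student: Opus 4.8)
The plan is to recognise the two prime-order elements as primitive prime divisor elements and then to run the hypotheses through the main theorem of \cite{GPPS} twice. Since $r$ is a Zsigmondy prime for $\langle e,q\rangle$ we have $r\mid q^e-1$ while $r\nmid q^i-1$ for $i<e$, so an element of order $r$ is a $\mathrm{ppd}(e,q)$-element in the language of \cite{GPPS}; likewise an element of order $s$ is a $\mathrm{ppd}(f,q)$-element. As $e,f>d/2$, the hypotheses of the main theorem of \cite{GPPS} are met. I would apply that theorem to the $\mathrm{ppd}(f,q)$-element to obtain the list of candidate configurations for $H$, and then impose on each candidate the additional requirement of containing a $\mathrm{ppd}(e,q)$-element. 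The assumption that $H$ is primitive and irreducible immediately discards the reducible and imprimitive examples, leaving only the subfield and extension-field subgroups, the classical subgroups in their natural action, and the almost-simple examples.

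The classical examples in the defining characteristic $p$, together with their subfield subgroups, give $X\cong\SL_d(p^{a_0})$, $\Sp_d(p^{a_0})$, $\Omega_d^\epsilon(p^{a_0})$ or $\SU_d(p^{a_0/2})$ for some $a_0$ dividing $a$, which is conclusion (i). An extension-field subgroup is a classical group over the field of $q^b$ elements acting in dimension $d/b$, where $b$ divides $d$; the point is that a $\mathrm{ppd}(e,q)$-element and a $\mathrm{ppd}(f,q)$-element both lying in a group defined over the field of $q^b$ elements forces $b$ to divide each of $e$ and $f$, via the cyclotomic mechanism recorded in Lemmas~\ref{cyclo} and~\ref{div}. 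Hence $b\mid\gcd(e,d)$ and $b\mid\gcd(f,d)$, which is conclusion (ii). The remaining geometric families (tensor, tensor-induced and extraspecial-normaliser types) support no element realising a primitive prime divisor for an index exceeding $d/2$, and so do not occur; this too is read off directly from the tables of \cite{GPPS}.

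For the almost-simple examples the theorem of \cite{GPPS} supplies explicit finite tables of quadruples $(X/Z(X),d,p,e)$, and the task reduces to a matching problem: I intersect the examples carrying a $\mathrm{ppd}(e,q)$-element with those carrying a $\mathrm{ppd}(f,q)$-element, subject to $d/2<e<f\le d$. Requiring a single group $X$ to possess primitive prime divisors for two distinct indices $e<f$ both exceeding $d/2$ is extremely restrictive and leaves only finitely many possibilities. For the alternating groups acting on the fully deleted permutation module one recovers $V$ as the irreducible section and the relations $r=e+1$, $s=f+1$; for $\PSL_2(s)$ the relations $s=2f+1$, $r=(s-1)/2=e+1$ and $d=(s\pm1)/2$ emerge from its Weil and principal-series representations in cross characteristic; and the finitely many surviving exceptional covers, sporadic, and cross-characteristic Lie-type examples are exactly the rows of Table~\ref{tot}, giving conclusions (iii), (iv) and (v).

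The principal obstacle is the verification of Table~\ref{tot}. For each almost-simple candidate one must know its faithful irreducible representations of the prescribed dimension $d$, decide which prime-order elements act as $\mathrm{ppd}(e,q)$- and $\mathrm{ppd}(f,q)$-elements, and settle the congruence conditions on $p$ (for example $p\equiv 1,2,4\pmod 7$ for $2\udot\Alt(7)$ in dimension $4$) that guarantee the representation is realised over the field of $q$ elements and that both element orders genuinely occur with the correct eigenvalue multiplicities. This is a finite but delicate audit against the cross-characteristic character tables and the known lists of low-dimensional representations, and it accounts for the bulk of the work; the generic classical and extension-field conclusions, by contrast, fall out of the structure of the \cite{GPPS} classification almost formally.
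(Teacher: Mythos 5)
Your overall route is the same as the paper's: interpret the two prime-order elements as $\mathrm{ppd}$-elements for indices $e,f>d/2$, invoke the main theorem of \cite{GPPS}, use primitivity and irreducibility to discard the reducible and imprimitive examples (Examples~2.2 and~2.3 there), identify the classical/subfield and extension-field examples (Examples~2.1 and~2.4) with conclusions (i) and (ii) --- your derivation of $b\mid\gcd(e,d)$ and $b\mid\gcd(f,d)$ is exactly the intended one --- and then obtain (iii)--(v) by intersecting, over the two indices, the almost-simple configurations listed in Tables~2--8 of \cite{GPPS}.

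The one step that would fail as written is your treatment of the symplectic-type (extraspecial) normalizers, i.e.\ Example~2.5 of \cite{GPPS}. You claim these groups support no element realising a primitive prime divisor for an index exceeding $d/2$ and so are absent from the classification; that is false --- they appear in the main theorem of \cite{GPPS} precisely because they do contain such elements (here $d=2^c$ and the relevant Zsigmondy primes are $d\pm 1$, Fermat or Mersenne). A single $\mathrm{ppd}$-element does not rule this case out; the elimination genuinely needs both of your elements. The paper's argument is: in Example~2.5 the presence of $\mathrm{ppd}$-elements for two distinct indices $e<f$, both exceeding $d/2$, forces $\{r,s\}=\{d-1,d+1\}$; since a Fermat prime $2^c+1$ requires $c$ to be a power of $2$ while a Mersenne prime $2^c-1$ requires $c$ to be prime, the only compatible exponent is $c=2$, so $d=4$; but then $e=2$ contradicts $e>d/2=2$. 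You need an argument of this kind (using the interaction of $r$ and $s$) to close the case; the justification you give does not supply it. The remaining audit of Table~\ref{tot} is, as you say, a finite check against the \cite{GPPS} tables and matches the paper.
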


\begin{proof} We use the main theorem of \cite{GPPS}. Then $H$ is as in one of the Examples 2.1 to 2.9 listed there. The Examples is 2.1 and 2.4 of \cite {GPPS} provide our cases (i) and (ii). Examples 2.2 and 2.3 of \cite{GPPS} cannot arise as by assumption $H$ is both irreducible and primitive.

The groups described in \cite[Example 2.5]{GPPS} normalize symplectic type $2$-groups and $d = 2^c$ for some $c$. Now, in this case we require $\{r,s\} = \{d-1, d+1\}$. As Fermat primes require $c$ to be a power of $2$ and Mersenne primes require $c$ to be a prime, we have $c=2$ and $d=4$. Then $e= d-2=2$ and $f=d= 4$ and we have a contradiction to the assumption that $e> d/2=2$.
 At this stage we know that $X$ is a quasisimple group which acts absolutely irreducibly on $V$. The possibilities for these groups are the subject of Examples 2.6 to 2.9 in \cite{GPPS}. Examining the configurations in Tables~2 to 8 of \cite{GPPS} yield the examples listed in our parts (iii), (iv) and (v) as well as the possibility that $X/Z(X) \cong \PSL_2(s)$,  with $s= 2f+1$, $r=\frac{s-1}{2}=e+1$ and $d = \frac{s\pm 1}{2}$.
\end{proof}

\begin{lemma}\label{3.2}
Suppose that $q=p^a$, $d \ge 4$, $G \le \GL_d(q)$ is irreducible, $d/2 < e\le d$ and $G$ has an element of order $\lambda_{ea,p}$. Then either
\begin{enumerate}
\item $q=p \in \{2,3,5\}$ and $\lambda_{e,p}= e+1$ is small;
\item $F^*(G)$ is as in \cite[Examples 2.1, 2.4 and 2.8]{GPPS}; or
\item one of the following holds:
\begin{enumerate}
\item $d=4$, $q=p^2$, $p \in \{3,5\}$, $\lambda_{6,p}=2e+1= 7$ and $G \cong 2 \udot\Alt(7)$.
\item $d=6$, $q=4$, $\lambda_{10,2}=2e+1=11$ and $G \cong 3\udot \M_{22}$.
\item $d=9$, $q=4$, $\lambda_{18,2}=2e+1=19$ and $G \cong 3\udot \J_3$.
\item $d=4$, $q=9$, $\lambda_{6,3}= 2e+1=7$ and $G \cong 4 \udot \PSL_3(4)$.
\item $d=\frac{s\pm1 }{2}$, $q=p$, $\lambda_{e,p}= 2e+1$ and $F^*(G)/Z(F^*(G)) \cong \PSL_2(s)$.
\end{enumerate}
\end{enumerate}
\end{lemma}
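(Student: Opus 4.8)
The plan is to feed the hypothesis into the main theorem of \cite{GPPS} and then work through its Examples~2.1--2.9 one at a time. First I would note that, since $d\ge 4$ and $e>d/2$, we have $e\ge 3$ and hence $ea\ge 3$; thus by Theorem~\ref{Zig} a Zsigmondy prime $\zeta=\zeta_{ea,p}$ for $\langle ea,p\rangle$ exists, the only relevant Zsigmondy exception $(ea,p)=(6,2)$ being absorbed into the convention $\lambda_{6,2}=9$. Because $\zeta$ divides $\lambda_{ea,p}$ and is a primitive prime divisor of $p^{ea}-1=q^e-1$ with $e>d/2$, any element of order $\lambda_{ea,p}$ in $G$ is a $\mathrm{ppd}(e,q)$-element in the sense of \cite{GPPS}. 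Hence $G$ falls into one of the Examples~2.1--2.9 of \cite{GPPS}, and the whole task is to show that each such Example delivers conclusion (i), (ii) or (iii). The big families are immediate: Examples~2.1, 2.4 and 2.8 are recorded verbatim as conclusion~(ii), while Example~2.2 (the reducible case) is excluded because $G$ is irreducible.

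The crucial case is the imprimitive Example~2.3, and this is where I expect conclusion~(i) to arise. Here $V=V_1\oplus\cdots\oplus V_b$ with $b\ge 2$ blocks of dimension $m=d/b<e$, so $G$ embeds in $\GL_m(q)\wr\Sym(b)$. Since $\zeta$ is primitive for $q^e-1$, it divides $q^j-1$ only when $e\mid j$, and as $e>m$ the base group $\GL_m(q)^b$ has no element of order divisible by $\zeta$; consequently $\zeta$ must divide the order of the image of our element in $\Sym(b)$. This forces $\zeta\le b\le d$, and since $\zeta>d/2$ gives $\zeta^2>d^2/4\ge d$ for $d\ge 4$, the $\zeta$-part of the element order is exactly $\zeta$, so $\lambda_{ea,p}=\zeta$. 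Now $ea$ divides $\zeta-1$ (the remark after Theorem~\ref{Zig}), so $\zeta\ge ea+1$, and combining this with $\zeta\le d<2e\le 2ea$ forces $\zeta=ea+1$ and then $a=1$. Thus $q=p$ and $\lambda_{ea,p}=\zeta=e+1$ is small, and Theorem~\ref{LZ} restricts $(e,p)$ to the short list there, in particular giving $p\in\{2,3,5\}$: this is exactly conclusion~(i). Example~2.5 (normalisers of symplectic-type groups, $d=r^c$) I would dispose of by direct inspection of the corresponding table in \cite{GPPS}, checking as in the proof of Theorem~\ref{twoZsig} that the constraint $e>d/2$ leaves nothing beyond conclusion~(i).

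This leaves the nearly simple Examples~2.6, 2.7 and 2.9, in which $X=F^*(G)$ is quasisimple and acts absolutely irreducibly (Example~2.8, the defining-characteristic case, has already been placed in~(ii)). Here I would run through Tables~2--8 of \cite{GPPS} and, for each entry, test whether it genuinely contains an element of order $\lambda_{ea,p}$ realising a primitive prime divisor for some $e>d/2$. The survivors should be the groups $2\udot\Alt(7)$, $3\udot\M_{22}$, $3\udot\J_3$ and $4\udot\PSL_3(4)$ of parts (iii)(a)--(d), together with the family $X/Z(X)\cong\PSL_2(s)$ in dimension $(s\pm1)/2$ of part~(iii)(e); every remaining entry either realises a primitive prime divisor only for some $e\le d/2$, or falls back into the small-field situation of conclusion~(i). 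Throughout one must keep careful track of the difference between $\lambda_{ea,p}$ being \emph{large} relative to $ea$ in the sense of Theorem~\ref{LZ} and the underlying element being a large primitive prime divisor relative to $e$: for instance $\lambda_{10,2}=11$ is small relative to $ea=10$ yet satisfies $2e+1=11>e+1$ with $e=5$, which is exactly what produces the $3\udot\M_{22}$ example over $q=4$.

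I expect the main obstacle to be precisely this final sift through Tables~2--8: matching each tabulated element order against $\lambda_{ea,p}$, discarding the entries whose only primitive prime divisors correspond to exponents $e\le d/2$, and verifying that the remaining entries are exactly those listed in~(iii). The imprimitive analysis and the symplectic-type Example~2.5 are secondary but still rely on the order bookkeeping above, whereas the reducible and big-family cases are essentially bookkeeping-free.
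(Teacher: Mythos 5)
Your proposal follows the same route as the paper's proof: feed the hypothesis into the main theorem of \cite{GPPS}, record Examples 2.1, 2.4 and 2.8 as conclusion (ii), discard 2.2 by irreducibility, force the imprimitive and symplectic-type cases into conclusion (i) by exactly the order bookkeeping you describe (the $\zeta$-part lives in the $\Sym(b)$-image, so $\zeta\le d<2e$ forces $a=1$ and $\zeta=e+1$ small), and then sift the nearly simple tables for the entries of (iii) --- and your remark that $\lambda_{10,2}=11=2e+1$ is small yet survives is precisely the subtlety that produces $3\udot\M_{22}$. The only step you leave unexecuted is that final sift through Tables 2--8, which is where the paper's proof does its concrete work: for instance $3^5-1=2\cdot 11^2$ gives $\lambda_{5,3}=121$ and eliminates $\M_{11}$ and $2\udot\M_{12}$ in dimension $5$, the factorisation $2^{11}-1=23\cdot 89$ gives $\lambda_{11,2}=89$ and eliminates $\M_{22}$ and $\M_{23}$ in dimension $11$, and a congruence argument on $p^2+p+1$ removes the second $2\udot\Alt(7)$ configuration, leaving exactly (iii)(a)--(e).
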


\begin{proof} We scrutinize the cases which arise in \cite{GPPS}. We may suppose that \cite[Examples~2.1, 2.4 and 2.8]{GPPS} do not occur. If in any of the examples we have $\lambda_{ea,p}= e+1$, we infer that $\lambda_{ea,p}$ is small and we have $q\in \{2,3,5\}$ by Theorem~\ref{LZ}. Thus we may assume that $\lambda_{ea,p} \ge 2e+1$ or else (i) holds.
If \cite[Example 2.3]{GPPS} holds, then $G \le \GL_1(q)\wr \Sym(d)$ and $\zeta_{ae,p}= ae+1\le  d$ which means that $a=1$. Since $\zeta_{ae,p}^2$ does not divide $|G|$, we now have that $\lambda_{ea,p}$ is small and so (i) holds in this case.  Suppose that    $G$ is contained in one of the groups listed in \cite[Example 2.4]{GPPS}. Then we have $\zeta_{ae,p} = ae+1 = d \pm 1= 2^m\pm 1$. So again $a=1$. Furthermore, we have that $|\Sp_{2m}(2)|$ is also divisible by $\zeta_{ae,p}= 2^m\pm 1$. Since $\Sp_{2m}(2)$ has no elements of order $(2^{m}\pm 1)^2$, we have that $\lambda_{ae,p}$ is small and so (i) holds in this case as well. The examples listed in \cite[Example 2.5]{GPPS} all have $F^*(G)$ a cover of an alternating group. The only possibilities are that $d=4$  and $\lambda_{ea,p}=7$. There are two configurations  listed in \cite[Table 2]{GPPS}. In the first case we have that the field has order $p^2$. Then $\lambda_{6,p}=7$ (with $p$ odd)  if and only if $p \in \{3,5\}$. In the second case we have $p \equiv 2,4 \pmod 7$ and is odd. Now we have that $\lambda_{3,p}$ divides $p^2+p+1$. Since $p^2+p+1$ is not divisible by either $5$ or $9$, we see that there must be a prime divisor of $p^2+p+1$ which is either greater than $7$ or $7^2$ must divide $p^2+p+1$. It follows that $\lambda_{3,p}>7$ and we see that the second case cannot arise. From \cite[Example 2.7]{GPPS}, as  we have to consider six cases. The first is $\M_{11}$, here we would require  $d=5$, $q=p=3$ and $\lambda_{5,3}= 11$; however, in fact $\lambda_{5,3}= 11^2$. The same argument eliminates $2\udot \M_{12}$. Similarly, we have $2^{11}-1= 23\cdot 89$ and so $\lambda_{11,2}= 89$ and so $\M_{22}$ and $\M_{23}$ are removed as potential examples. This leaves examples  (iii) (a) and (iii)(b). The examples in \cite[Example 2.9, Table 7]{GPPS} throw up only once possibility which accordingly is listed in (iii)(c). Finally \cite[Example 2.9, Table 8]{GPPS} gives us our final example.
\end{proof}

\begin{lemma}\label{subfield} Suppose that $q= p^a$, $d \ge 3$, $H \le \GL_d(q)$  and  $d/2< e \le d$. Assume that $r =\zeta_{ea,p}$. If $x\in H$ has order $r$, then $H$ is not contained in a proper subfield subgroup of $G$.
\end{lemma}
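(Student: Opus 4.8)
The plan is to argue by contradiction, using only the order of a subfield subgroup together with the defining property of a Zsigmondy prime. When $a=1$ the field $\mathbb{F}_q = \mathbb{F}_p$ has no proper subfield, so $G = \GL_d(q)$ has no proper subfield subgroup and the statement is vacuous; hence I would assume $a \ge 2$. Being contained in a proper subfield subgroup is preserved under conjugation, and any proper subfield subgroup lies in a maximal one, so I would suppose for contradiction that $H \le N := \GL_d(q_0)\,Z(G)$, where $q_0 = p^{a_0}$ for some proper divisor $a_0$ of $a$; put $b = a/a_0$, so that $b \ge 2$.

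First I would pin down the arithmetic of $r = \zeta_{ea,p}$. Writing $m$ for the multiplicative order of $p$ modulo $r$, its Zsigmondy property forces $m = ea$: from $r \mid p^{ea}-1$ we get $m \mid ea$, while $r \nmid p^k-1$ for all $k<ea$ gives $m \ge ea$. Thus for a positive integer $k$ one has $r \mid p^k-1$ if and only if $ea \mid k$.

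Next I would compute the $p'$-part of $|N|$. Since $Z(G)\cong C_{q-1}$ and $\GL_d(q_0)\cap Z(G) = Z(\GL_d(q_0))\cong C_{q_0-1}$, the order $|N| = |\GL_d(q_0)|\,(q-1)/(q_0-1)$ has $p'$-part dividing $(q-1)\prod_{i=1}^{d}(q_0^i-1)$. As $x\in H\le N$ has order $r$ and $r$ is coprime to $p$, the prime $r$ divides this $p'$-part. Now $r \nmid q-1 = p^a-1$, for otherwise $ea \mid a$, forcing $e=1$, contrary to $e > d/2 \ge 3/2$. Hence $r \mid p^{a_0 i}-1$ for some $1 \le i \le d$, and by the previous step $ea \mid a_0 i$, i.e. $eb \mid i$ after cancelling $a_0$.

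The contradiction — the only real content of the argument — is then a size estimate: since $b \ge 2$ and $e > d/2$ we have $eb \ge 2e > d \ge i$, so the positive integer $i$ cannot be divisible by $eb$. This eliminates $N$ and proves the lemma. I do not expect any genuine obstacle; the single point needing care is the bookkeeping of the central factor $q-1$ in $|N|$ together with the verification that $r \nmid q-1$, which is precisely where the hypothesis $e > d/2$ enters. I would also note that the same conclusion is visible module-theoretically: an element of order $r$ acting on the $d$-dimensional $\mathbb{F}_{q_0}$-space underlying $N$ has every irreducible constituent of dimension equal to the order of $q_0$ modulo $r$, namely $eb > d$, which is impossible.
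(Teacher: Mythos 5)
Your proposal is correct and follows essentially the same route as the paper: reduce to a maximal subfield subgroup, observe that the central/determinant factor contributes only a divisor of $q-1$ which $r$ cannot divide, and then note that $r=\zeta_{ea,p}$ divides no $p^{a_0 i}-1$ with $i\le d$ because $a_0 i \le ad/2 < ea$ (you phrase this as $eb\mid i$ being impossible since $eb>d\ge i$, the paper as a direct inequality — the same use of $e>d/2$ and $a_0\le a/2$). The only difference is that you make explicit the step $r\nmid p^a-1$, which the paper leaves implicit when it asserts $x\in X$.
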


\begin{proof} Suppose that $H$ is contained in subfield subgroup of $\GL_d(q)$. Then $H$ normalizes $X \cong \SL_d(p^{a_0})$ where $a_0 $ divides $a$.
Furthermore, $|HX/X|$  divides $(p^a-1)$.
Therefore $x\in X$.  Since $a_0$ divides $ a$,  $p^{a_0g}-1 \le p^{ad/2}-1$ for all $g \le d$ and so using the standard formulae for $|X|$, we have a contradiction.
\end{proof}

\begin{lemma}\label{imprimitive} Suppose that $q= p^a$, $d \ge 4$, $H \le G=\GL_d(q)$ is  irreducible, $d/2< e<f\le d$. Let $r=\lambda_{ea,p}$ and $s=\lambda_{fa,p}$ and assume that $H$ contains elements of order $r$ and elements of order $s$.  If $H$ is imprimitive, then $p^a=3$, $(r,e)= (5,4)$, $(s,f)= (7,6)$, $d \in \{7,8\}$ and $X \le \GL_1(q)\wr \Sym(d)\cong 2\wr \Sym(d)$.
\end{lemma}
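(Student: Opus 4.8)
The plan is to first use irreducibility to confine $H$ to a wreath product, and then exploit the arithmetic of the Zsigmondy prime $\zeta := \zeta_{ea,p}$ dividing $r$ (and, symmetrically, $\zeta_{fa,p}$ dividing $s$). Since $H$ is irreducible and imprimitive, it preserves a decomposition $V = V_1 \oplus \dots \oplus V_t$ into subspaces of a common dimension $m$ that are permuted transitively, so $H \le \GL_m(q)\wr \Sym(t)$ with $mt = d$ and $t \ge 2$. The first step is to rule out $m \ge 2$.

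If $m \ge 2$ then $m \le d/2 < e$, so $\zeta$ divides no factor $q^i-1$ with $1\le i \le m$ (because $\zeta \mid q^i-1 \iff e \mid i$), whence $\zeta \nmid |\GL_m(q)|$; moreover $t \le d/2 < e < \zeta$, so $\zeta \nmid t! = |\Sym(t)|$. Since the exponent of a finite group divides its order, every element of $\GL_m(q)\wr\Sym(t)$ has order coprime to $\zeta$, contradicting the existence of $x \in H$ of order $r$, which is a power of $\zeta$. Hence $m=1$ and $H \le \GL_1(q)\wr\Sym(d)$, the group of monomial matrices.

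The technical heart is the monomial case. Write $x$ in monomial form with underlying permutation $\pi_x \in \Sym(d)$. As $r$ is a power of $\zeta \ne p$, the element $x$ is semisimple, and since $r$ is a prime power the largest order among its eigenvalues equals $r$; fix an eigenvalue $\omega$ of order $r$. Because $r \mid q^\ell-1 \iff e \mid \ell$ together with $r \mid q^e-1$, the element $\omega$ has degree $e$ over $\mathbb{F}_q$, so it lies on a single $\pi_x$-cycle of length $\ell \ge e$, the block of which has characteristic polynomial $X^\ell - c$ with $c := \omega^\ell \in \mathbb{F}_q^*$. Since $e \ge 3$ gives $\zeta \nmid q-1$, the order of $c$ — a power of $\zeta$ dividing $q-1$ — must be $1$; thus $c=1$ and $r \mid \ell$, so $\ell \ge r$. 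Were $a \ge 2$ we would have $r \ge \zeta \ge ea+1 > 2e > d \ge \ell \ge r$, absurd; hence $a=1$ and $q=p$. Now $r = \lambda_{e,p} \mid \ell$ with $e \le \ell \le d < 2e < 2r$ forces $\ell = r \le d$, and $\lambda_{e,p} < 2e$ forces $\zeta_{e,p} = \lambda_{e,p} = e+1$, i.e. the Zsigmondy prime for $\langle e,p\rangle$ is small. The same analysis applied to the element of order $s$ shows $\lambda_{f,p}=f+1$ is small.

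To conclude, I would invoke the list of small pairs recorded after Theorem~\ref{LZ}, which forces $p \in \{2,3,5\}$. The case $p=2$ is impossible: there $\GL_1(2)$ is trivial, so $H \le \Sym(d)$ acts by permutation matrices, fixes the all-ones vector, and is reducible, contrary to hypothesis. For $p=5$ the only small value is $e=6$, leaving no room for $e<f$. Hence $p=3$, whose small values are $\{4,6\}$, so $e=4$, $f=6$, giving $(r,e)=(5,4)$ and $(s,f)=(7,6)$; finally $d/2<4$ together with $s=7 \le d$ pin down $d$, yielding the asserted structure $X \le \GL_1(q)\wr\Sym(d) \cong 2\wr\Sym(d)$. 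The main obstacle is the eigenvalue–cycle analysis in the monomial case: correctly reading off the degree $e$ of a distinguished eigenvalue and proving that the cycle-product $c$ is trivial (so that $r \mid \ell$), since this single argument simultaneously forces $a=1$ and the smallness of both Zsigmondy primes, on which the entire enumeration rests.
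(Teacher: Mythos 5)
Your proof is correct, but it takes a genuinely different and more self-contained route than the paper. The paper's entire argument is a citation: it observes that an irreducible imprimitive $H$ with ppd elements falls under Example~2.3 of \cite{GPPS}, which already asserts $H \le \GL_1(q)\wr\Sym(d)$ with both Zsigmondy primes small, and then finishes with Feit's theorem exactly as you do (ruling out $p=2$ via the fixed all-ones vector of $\Sym(d)$, leaving $p=3$, $e=4$, $f=6$). You instead re-derive the content of that GPPS example from first principles: the order count $\zeta \nmid |\GL_m(q)\wr\Sym(t)|$ for $m\ge 2$ forces the monomial case, and the eigenvalue--cycle analysis (degree $e$ of a distinguished eigenvalue, triviality of the cycle product, hence $r \mid \ell \le d < 2e$) forces $a=1$ and smallness of both $\lambda$'s. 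This buys independence from \cite{GPPS} at the cost of a page of elementary argument; all the individual steps check out (in particular $\zeta \mid q^i-1 \Leftrightarrow e \mid i$, $\zeta \equiv 1 \pmod{e}$, and the large/small dichotomy $\lambda < 2e \Rightarrow \lambda = e+1$ are used correctly). One small remark: your own inequalities give $d \ge f+1 = 7$ (the cycle carrying the order-$s$ eigenvalue has length $\ge s = f+1$) together with $d < 2e = 8$, so your method actually pins $d=7$; the lemma's stated range $d \in \{7,8\}$ is not what follows from the strict hypothesis $d/2 < e$, and this is a defect of the paper's statement rather than of your argument. You should, however, state the final values of $d$ explicitly rather than saying they are ``pinned down.''
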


\begin{proof} Since $H$ is imprimitive, it appears in Example~{2.3} of \cite{GPPS}. In particular, we have $\lambda_{ea,p}$ and $\lambda_{fa,p}$ are small. Therefore, $a=1$ and $p \in \{2,3\}$ by Theorem~\ref{LZ}. If $p=2$, then, as $a=1$, $H$ is contained in a subgroup isomorphic to $\Sym(d)$. This is a contradiction as $\Sym(d)$ does not act irreducibly on $V$.
Hence $p=3$, $e=4$ and $f= 6$. It follows that $d \in \{7,8\}$ as claimed.
\end{proof}

\begin{theorem}\label{exotictwolarge} Suppose that $q= p^a$, $d \ge 4$, $H \le G=\GL_d(q)$ is primitive and irreducible and  $d/2< e<f\le d$. Let $r=\lambda_{ea,p}$ and $s=\lambda_{fa,p}$ and assume that $H$ contains elements of order $r$ and elements of order $s$.  Then either \begin{enumerate}
\item there exists $b>1$ dividing $\gcd(e,f,d)$  such that $H$ is a classical group over a field of order $q^b$ in dimension $d/b$; or \item at least one of $\lambda_{ea,p}$ or $\lambda_{fa,p}$ is small and we have  $F^*(H)$, $d$, $q$, $(r,e)$ and $(s,f)$ are as in Table~\ref{exotic}.
\begin{table}[h]\caption{The exotic examples of Theorem~\ref{exotictwolarge}}\label{exotic}\vskip 5mm\begin{tabular}{|c|ccccc|}
\hline
&$F^*(X)$&$d$&$q$&$(r,e)$&$(s,f)$\\
\hline
1&$\Alt(7)\le \GL_4(2)$&$4$& $2$&$(3,7)$&$(4,5)$\\
2&$\Sp_6(2)\le \Omega_7(3)$& $7$& $3$ &$(5,4)$&$(7,6)$\\
3&$ 2\udot\PSL_3(4)\le \Omega_6^-(3)$&$6$&$3$&$(5,4)$&$(7,6)$\\
4&$\Alt(7) \le \Omega_6^-(3)$&$6$&$3$&$(5,4)$&$(7,6)$\\
5&$\Alt(n)\le \Omega_7(3)$, $n=8,9$&$7$&$3$&$(5,4)$&$(7,6)$\\
%6&$\Alt(7) \le \Omega_6^+(2)$& $6$&$2$&$(5,4)$&$(7,6)$\\
6&$\Alt(n)\le \Omega_{10}^-(2)$, $n= 11,12$& $10$&$2$&$(7,6)$&$(11,10)$\\
7&$\Alt(13)\le \Omega_{12}^-(2)$& $12$&$2$&$(11,10)$&$(13,12)$\\
8&$\Alt(14)\le \Sp_{12}(2)$& $12$&$2$&$(11,10)$&$(13,12)$\\
9&$\Alt(n)\le \Omega_{14}^+(2)$, $n= 15,16$& $14$&$2$&$(11,10)$&$(13,12)$\\
10&$\Alt(17)\le \Omega_{16}^+(2)$& $16$&$2$&$(11,10)$&$(13,12)$\\
11&$\Alt(18)\le \Sp_{16}(2)$& $16$&$2$&$(11,10)$&$(13,12)$\\
12&$\Alt(n)\le \Omega_{18}^-(2)$, $n= 19,20$& $18$&$2$&$(11,10)$&$(13,12)$\\
13&$\Alt(n)\le \Omega_{18}^-(2)$, $n= 19,20$& $18$&$2$&$(11,10)$&$(19,18)$\\
14&$\Alt(n)\le \Omega_{18}^-(2)$, $n= 19,20$& $18$&$2$&$(13,12)$&$(19,18)$\\
15&$\Alt(21)\le \Omega_{20}^-(2)$& $20$&$2$&$(13,12)$&$(19,18)$\\
16&$\Alt(22)\le \Sp_{20}(2)$& $20$&$2$&$(13,12)$&$(19,18)$\\
17&$\Alt(n)\le \Omega_{22}^+(2)$, $n= 23,24$& $22$&$2$&$(13,12)$&$(19,18)$\\
\hline\end{tabular}\end{table}\end{enumerate}
\end{theorem}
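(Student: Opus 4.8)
The plan is to bootstrap from Theorem~\ref{twoZsig}, strengthening its conclusion by exploiting that $H$ now contains elements of the full prime-power orders $\lambda_{ea,p}$ and $\lambda_{fa,p}$, not merely elements of Zsigmondy prime order. First note that $\zeta_{ea,p}$ divides $\lambda_{ea,p}$ and $\zeta_{fa,p}$ divides $\lambda_{fa,p}$, so suitable powers of the two given elements have orders $\zeta_{ea,p}$ and $\zeta_{fa,p}$; since a Zsigmondy $\langle ea,p\rangle$-prime is in particular a Zsigmondy $\langle e,q\rangle$-prime, the hypotheses of Theorem~\ref{twoZsig} are met and $X=F^*(H)$ lands in one of its five cases. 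Cases (i) and (ii) of that theorem are handled at once: in case (i), $X$ is a classical group over a subfield $\GF(p^{a_0})$, but Lemma~\ref{subfield}, applied to the element of order $\zeta_{ea,p}$, forbids a proper subfield, so $a_0=a$ and $X$ is the full classical group over $\GF(q)$ in dimension $d$ (the generic outcome, corresponding to $b=1$); case (ii) produces $b>1$ dividing $\gcd(e,d)$ and $\gcd(f,d)$, hence dividing $\gcd(e,f,d)$, with $X$ classical over $\GF(q^b)$ in dimension $d/b$, which is exactly conclusion~(i) of the present theorem.

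The real content lies in the exotic cases (iii), (iv), (v) of Theorem~\ref{twoZsig}, where $X/Z(X)$ is alternating, sporadic, or a cross-characteristic group of Lie type. Here the decisive extra ingredient is that $\zeta_{ea,p}$ is the \emph{maximal} Zsigmondy prime and that $H$ realizes the full power $\lambda_{ea,p}$, so $\lambda_{ea,p}$ must divide $|X|$. When $\lambda_{ea,p}$ is large it is either a prime exceeding $e+1$ or at least $\zeta_{ea,p}^2$, and such a divisor rarely survives in the order of one of these small quasisimple groups, whose relevant prime $\zeta_{ea,p}=e+1$ usually divides $|X|$ only to the first power and which contain no larger primitive prime. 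I would therefore run this divisibility test against each row of Table~\ref{tot}, in combination with Feit's Theorem~\ref{LZ}, which guarantees that $\lambda_{n,p}$ is large except for the short list $(n,p)\in\{(4,2),(6,2),(10,2),(12,2),(18,2),(4,3),(6,3),(6,5)\}$. For instance $3\udot\J_3$ in dimension~$18$ is discarded because $\lambda_{18,p}$ is large for every odd $p$ while $19$ divides $|\J_3|$ only once; the sporadic and cross-characteristic rows fall the same way. This shows that any surviving exotic configuration forces at least one of $\lambda_{ea,p},\lambda_{fa,p}$ to be small, which by Feit's list confines the characteristic to $p\in\{2,3\}$.

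It then remains to enumerate the survivors. The bulk are the alternating groups acting on the fully deleted permutation module, which is case (iii) of Theorem~\ref{twoZsig} with $Z(X)=1$: there the Zsigmondy primes are $e+1$ and $f+1$, so realizing the full orders $\lambda_{ea,p},\lambda_{fa,p}$ forces these to be small, restricting $(e,f)$ to the pairs available from Feit's list over $\GF(2)$ and $\GF(3)$, and each admissible pair must be matched with the possible degrees $n$ and with the type of the invariant form to produce the individual rows of Table~\ref{exotic}; the remaining small quasisimple cases (such as $\Sp_6(2)$, $2\udot\PSL_3(4)$ and the $\Alt(7)$ of dimension~$4$) are read off directly. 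Part of this can be organised via Lemma~\ref{3.2}, which classifies the irreducible subgroups containing an element of a single order $\lambda_{ea,p}$; applying it to whichever exponent yields a large $\lambda$ already constrains $F^*(H)$ tightly.

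The main obstacle is this final enumeration. It is finite and conceptually routine, but delicate in two respects: one must read off the correct value of the parameter $e$ attached to each element in the orthogonal and symplectic embeddings, where $e$ is governed by the dimension of the irreducible constituent on which the element acts rather than by its bare multiplicative order; and one must be sure that every $(n,d,p)$ giving a deleted permutation module of the right dimension and form type appears, with none spurious. The one genuinely awkward point is the convention $\lambda_{6,2}=9$, which is treated as large even though $(6,2)$ is a Zsigmondy exception, so that $(6,2)$ must be kept out of the ``small'' list throughout this part of the argument.
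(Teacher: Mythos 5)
Your proposal follows the paper's proof essentially verbatim: reduce to Theorem~\ref{twoZsig}, dispose of its cases (i)--(ii) via Lemma~\ref{subfield} and the divisibility of the extension-field parameter $b$, then use Feit's Theorem~\ref{LZ} together with the requirement that the full prime power $\lambda_{ea,p}$ divide $|F^*(H)|$ to force at least one small Zsigmondy divisor, and finally enumerate the alternating deleted-permutation-module cases and the handful of small quasisimple survivors. The only item you leave implicit is the infinite family $F^*(H)/Z(F^*(H))\cong\PSL_2(s)$ with $s=2f+1$ from Theorem~\ref{twoZsig}(v), which is not a row of Table~\ref{tot}; your divisibility test does dispatch it exactly as the paper does (smallness of $r=e+1$ bounds $s$ to $11$ or $23$, and then $\lambda_{5,3}=121$ and $\lambda_{11,2}=89$ eliminate both candidates).
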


\begin{proof} Assume  that (i)  does not hold. We intend to show that Table~\ref{exotic} is complete.  Because of Lemma~\ref{subfield}, to do this we investigate the cases itemized in Theorem~\ref{twoZsig} (iii), (iv) and (v). Let $x,y \in H$ have order $r$ and $s$ respectively.
Notice that for each of these novel examples, at least one of $r$ and $s$ is small. In particular, we have that $p\in \{2,3,5\}$ and $a=1$ from Theorem~\ref{LZ}.
We consider first the groups listed in Table~\ref{tot}.
Since $H$ acts faithfully on $V$, we need $p$ odd for row one in Table~\ref{tot}. So in the first section of Table~\ref{tot}, we only have to consider $\Alt(7)$. This is listed as row (1) of Table~\ref{exotic}.  In the second sector of Table~\ref{tot}, we find the sporadic simple groups and these are then ruled out by Lemma~\ref{3.2}.

Consider the final division  of Table~\ref{tot}. Then both $r$ and $s$ are small. So $p \in \{2,3\}$ by Theorem~\ref{LZ}. Then, because of the described congruences for $p$ in Table~\ref{tot}, we have  $p=3$. The first case arises in $\Omega_7(3)$ coming from the Weyl group embedding and is listed in  row (2) of Table~\ref{exotic} (recall an absolutely irreducible subgroup of $\GL_n(q)$ fixes a unique type of form). The group $2\udot \PSL_3(4)$ is a subgroup of $\Omega^-_6(3)$ and so this is listed in row (3).

Now consider the possibility that $X$ is an alternating group and $V$ is the natural permutation module. Then both $r=\zeta_{ea,p}= e+1$ and $s=\zeta_{fa,p}= f+1$ (and $\zeta_{6,2}$ is not defined).  Furthermore, the square of neither $r$ nor $s$ divides $|H|$. We have that $a=1$, $p\in \{2,3\}$ and both $e$ and $f$ are even.
If $p=3$, then we have that $e=4$ and $f=6$ ($r= 5$ and $s=7$). Since $d/2< e< f\le d<2e$, we have $6 \le d \le 7$. This gives rows (4) and (5). Now we may assume that $p=2$. We have that $\{e,f\} \subset \{4,6,10,12,18\}$ by Theorem~\ref{LZ}. Consideration of these possibilities yield the examples in rows (6) to (17). Note that the case that $e=4$ and $d=6$ falls as $\Alt(7)$ has no elements of order 9. Furthermore, the described containments follow from the information presented in \cite[Theorem 16.7]{SymplecticAmalgams}.

Finally for the linear groups which appear in Theorem~\ref{twoZsig} (v), we have that $r= e+1= f =\frac{s-1}{2}$ is small and that $s=2f+1$. Thus $r\in \{5,7,11,13,19\}$ by Theorem~\ref{LZ}. It follows that  $s= 11$ or $23$. If $s=11$, then $f=5$ and $p\in \{2,3\}$; however,  $11$ does not divide $2^5-1$ and $11^2$ divides $3^5-1$. So these configurations are ruled out. If $s=23$, then $r=11$ and we have $p=2$. But then $\lambda_{11,2}=89$ and this possibility is also ruled  out.

This completes the proof of Theorem~\ref{exotictwolarge}.

\end{proof}

\begin{theorem}\label{maingeneration} Suppose that $q= p^a$, $d \ge 4$, $H \le G=\GL_d(q)$ is irreducible and  $d/2< e<f\le d$. Let $r= \lambda_{ea,p}$ and $s= \lambda_{fa,p}$ and assume that $H$ has elements of order $r$ and elements of order $s$.

\begin{enumerate}
\item Suppose $\gcd(d,e,f)=1$ and at least one of $e$ and $f$ is odd. Then either $H \ge \SL_d(q)$ or $d=4$, $q=2$  and $H \cong \Alt(7)$.

\item If $H \le K=\GU_d(p^{a/2}) \le G$, $e$ and $f$ are both odd  and $\gcd(d,e,f)=1$, then $H    \ge F^*(K) \cong \SU_d(p^{a/2})$.

\item
 If $q$ is odd, $H \le K\le G$ with $F^*(K)\cong\Sp_{d}(q)$, $d \ge 6$,  and  $\gcd (d,e,f)=2$. Then $H \ge F^*(K)$.

\item Assume that $H \le K$ where $F^*(K) \cong \Omega_d^\epsilon(q)$, $d \ge 7$ and  $\gcd (d,e,f)\le 2$. Then  either
 $H \ge F^*(K) \cong \Omega_d^\epsilon(q)$ or $F^*(X)$, $d$, $q$, $(r,e)$ and $(s,f)$ are as presented in  Table~\ref{exotic}.

 %
%
% one of the following holds:
%     \begin{enumerate}\item $d=7$, $p=3$ and $H \cong \Sp_6(2)$;  or \item $d=6$, $p=3$ and $F^*(H) \cong 2\udot\PSL_3(4)\le \Omega_6^-(3)$; \item $6\le d\le 8$, $q=3$, $i=4$ and $X \cong \Alt(n)$, $n \le 10$.
%     \item $d \in\{6,8\}$, $q=2$, $i=4$ and $X \cong \Alt(n)$, $n \le 10$.
%       \item $12\le d\le 18$ is even, $q=2$, $i=10$ and $X \cong \Alt(n)$, $n \not \equiv 2 \pmod 4$ and $n \le 21$.
%       \end{enumerate}

\end{enumerate}\end{theorem}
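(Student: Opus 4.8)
All four parts run on the same engine: the classification of primitive irreducible subgroups carrying two Zsigmondy divisors that we have already assembled in Theorem~\ref{exotictwolarge}, together with the imprimitive analysis of Lemma~\ref{imprimitive} and the subfield exclusion of Lemma~\ref{subfield}. The plan is therefore, in each part, first to reduce to the primitive irreducible case and then to read off the two alternatives of Theorem~\ref{exotictwolarge} through the gcd and parity hypotheses peculiar to that part. Since $H$ is irreducible, if it is imprimitive then Lemma~\ref{imprimitive} forces $q=3$, $(r,e)=(5,4)$, $(s,f)=(7,6)$, $d\in\{7,8\}$ and $H\le\GL_1(q)\wr\Sym(d)\cong 2\wr\Sym(d)$. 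This contradicts the parity hypothesis in part (i) (both $4$ and $6$ are even) and in part (ii) (where $e,f$ are odd); in part (iii) the monomial group preserves a non-degenerate symmetric form but no non-zero alternating form, so it cannot lie in a symplectic group and is excluded. Only in part (iv), where the ambient form is already symmetric, does the monomial configuration survive, and I would confront the two small cases $\Omega_7(3)$ and $\Omega_8^\varepsilon(3)$ directly, matching the outcome against Table~\ref{exotic}.

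With $H$ primitive and irreducible I apply Theorem~\ref{exotictwolarge}, leaving either (a) an integer $b>1$ dividing $\gcd(e,f,d)$ with $F^*(H)$ a classical group in dimension $d/b$ over the field of order $q^b$, or (b) one of $r,s$ small with $(F^*(H),d,q,(r,e),(s,f))$ in Table~\ref{exotic}. In parts (i) and (ii) the hypothesis $\gcd(d,e,f)=1$ makes (a) vacuous, so $H$ either contains the full classical group---yielding $H\ge\SL_d(q)$ in (i), and, after matching the unitary torus structure (the oddness of $e,f$ is exactly what puts $\zeta_{ea,p}$ and $\zeta_{fa,p}$ into torus divisors $p^{ea/2}+1$ and $p^{fa/2}+1$ of $\GU_d(p^{a/2})$), $H\ge\SU_d(p^{a/2})$ in (ii)---or lies in Table~\ref{exotic}. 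Filtering the table by the hypotheses leaves only row~$1$ ($d=4$, $q=2$, $H\cong\Alt(7)$) in part (i) and nothing at all in part (ii), since every entry has $a=1$ and so cannot be a unitary field, or has an even member of $\{e,f\}$.

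Parts (iii) and (iv) are the substantial ones, since $\gcd(d,e,f)\le 2$ now admits $b=2$ in alternative (a): $F^*(H)$ may be an extension-field classical subgroup of $\Sp_d(q)$ or $\Omega_d^\varepsilon(q)$ over the field of order $q^2$ in dimension $d/2$. I would treat the candidate overgroups one at a time. For $\Sp_{d/2}(q^2)$, and likewise for an even-dimensional orthogonal $\Omega_{d/2}^\pm(q^2)$, the dimension $d/2$ must be even, forcing $4\mid d$, while carrying both Zsigmondy elements forces $4\mid e$ and $4\mid f$ by the divisibility bookkeeping of Lemmas~\ref{div} and~\ref{cyclo}; this contradicts $\gcd(d,e,f)=2$ and removes these overgroups. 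Once the extension-field alternative is cleared, alternative (b) survives only in part (iv), where the symplectic and orthogonal rows of Table~\ref{exotic} are exactly the advertised exceptions, whereas in part (iii) every alternating or orthogonal row is incompatible with $F^*(K)\cong\Sp_d(q)$ and is discarded, leaving $H\ge F^*(K)$.

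The hard part will be the unitary extension-field overgroups $\GU_{d/2}(q)\le\Sp_d(q)$ in part (iii) and $\GU_{d/2}(q)\le\Omega_d^\varepsilon(q)$ in part (iv), for which the clean $4\mid\gcd$ obstruction evaporates: both $e$ and $f$ may be $\equiv 2\pmod 4$ with $\gcd(d,e,f)=2$, so the Zsigmondy primes divide unitary torus orders of the shape $q^k+1$ and elements of order $r$ and $s$ sit inside $\GU_{d/2}(q)$. Disposing of these (and, in part (iv), of the odd-dimensional orthogonal overgroups $\Omega_{d/2}(q^2)$ that escape the parity argument for the same reason, together with the residual monomial cases in $\Omega_7(3)$ and $\Omega_8^\varepsilon(3)$) will require the sharper input that the full prime powers $\lambda_{ea,p}$ and $\lambda_{fa,p}$ must simultaneously divide the cyclic torus orders of the overgroup, backed by a careful enumeration of its maximal tori. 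This is the step I expect to absorb most of the effort and to demand the most delicate case analysis.
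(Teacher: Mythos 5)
Your overall architecture is exactly the paper's: reduce to the primitive case via Lemma~\ref{imprimitive}, exclude subfield subgroups via Lemma~\ref{subfield}, feed the result into Theorem~\ref{exotictwolarge}, and use the gcd and parity hypotheses to kill the extension-field alternative before filtering Table~\ref{exotic}. Your parts (i) and (ii) are essentially complete (in (i) you should still record why the classical group over $\GF(q)$ in dimension $d$ must be $\SL_d(q)$ rather than $\Sp_d(q)$, $\Omega_d^\varepsilon(q)$ or $\SU_d(q^{1/2})$: the Zsigmondy primes dividing the orders of the latter three all come from even exponents, and one of $e,f$ is odd). The genuine gap is in parts (iii) and (iv). You correctly dispose of $\Sp_{d/2}(q^2)$ and $\Omega^{\pm}_{d/2}(q^2)$ by the $4\mid\gcd$ argument, but you then defer the overgroups $\GU_{d/2}(q)$ (and, in (iv), $\Omega_{d/2}(q^2)$ with $d/2$ odd) to an unspecified ``careful enumeration of maximal tori.'' That deferral is not a proof, and the enumeration cannot deliver what you want: if $e\equiv f\equiv 2\pmod 4$ --- which is consistent with $\gcd(d,e,f)=2$, e.g.\ $d=16$, $e=10$, $f=14$ --- then $\zeta_{ea,p}$ does not divide $q^{e/2}-1$, so the full power $\lambda_{ea,p}$ divides $q^{e/2}+1$, and likewise $\lambda_{fa,p}$ divides $q^{f/2}+1$; hence $\SU_{d/2}(q)\le\Sp_d(q)$ ($q$ odd) genuinely contains elements of both orders $r$ and $s$ while being irreducible on $V$. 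The analogous computation with exponents divisible by $4$ keeps $\Omega_{d/2}(q^2)$ alive in part (iv) (e.g.\ $d=18$, $e=12$, $f=16$). No torus count will remove these.

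For comparison, the paper dispatches this configuration in one sentence, asserting that for $b=2$ ``$r$ and $s$ do not both divide the order of the group''; that assertion is valid only when at least one of $e$, $f$ is not $\equiv 2\pmod 4$ (respectively, not $\equiv 0 \pmod 4$ for the odd orthogonal case), which happens to hold in every application the paper makes of parts (iii) and (iv), where $e$ and $f$ are consecutive even integers. So your instinct that the unitary extension-field overgroup is the delicate point is correct --- indeed sharper than the paper's own treatment --- but as written your proposal, like the paper's argument, does not close it. To finish, one must either impose the extra condition that some member of $\{e,f\}$ is odd or divisible by $4$ as appropriate, or add these extension-field groups to the list of exceptions and verify the parameters wherever the theorem is invoked.
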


\begin{proof} We consider the possibilities given in Theorem~\ref{exotictwolarge}. Let $X = F^*(H)$.
We first of all observe that by Lemma~\ref{subfield} we have that $X$ is not contained in a proper subfield subgroup.

Suppose that the hypothesis of (i) holds. By Lemma~\ref{imprimitive}, as at least one of $e$ and $f$ is odd, we have that $H$ is primitive. If Theorem~\ref{exotictwolarge} (i) holds, then either $X$ is either a classical group defined over $\GF(q)$ or an extension field subgroup. Now note that $r$ is a Zsigmondy prime for $\langle e,p^a\rangle$ and $s$ is a Zsigmondy prime for   $\langle f,p^a\rangle$. Hence if $X$ is an extension field subgroup define over $\GF(q^b)$, then we have $b$ divides  $\gcd(d,e,f)=1$ by assumption. Hence $b=1$ and this is impossible. Therefore $X$ is one of $\SL_d(q)$, $\Sp_{d}(q)$, $\Omega_d^\varepsilon(q)$ or $\SU_d(q^{1/2})$. But the Zsigmondy primes appearing in the orders of the latter three groups are all Zsigmondy primes for $\langle 2j, q\rangle$. But at least one of $e$ and $f$ is odd.  Hence, if Theorem~\ref{exotictwolarge} (i) holds, then $X \cong \SL_d(q)$. Now assume that  Theorem~\ref{exotictwolarge} (ii) holds. Then we have one of the configurations listed in Table~\ref{exotic}. The only possibility is in line (1) and this is the final possibility of part (i).

Suppose that $K \cong \GU_d(p^{a/2})$. Then $|K|= p^{ad(d+1)/4}\prod _{i=1}^d (p^{ai/2}-(-1)^i)$. In particular, we note that  $\lambda_{ga,p}$ divides $|K|$ with $g > d/2$ if and only if $g$ is odd.  Thus  $K$ has elements of order $r$ and $s$ and these elements are in $F^*(K)$. Suppose that $H \le K$ contains such elements and is irreducible. Then by Lemma~\ref{imprimitive}, $H$ is primitive. If Theorem~\ref{exotictwolarge} (i) holds then either $F^*(H)= F^*(K)$ or $H$ is an extension field subgroup and so is defined over a field of order $q^b$. In the latter case, our choice of $e$ and $f$ means that $b=1$ which is impossible.  Thus if Theorem~\ref{exotictwolarge} (i) holds then $X= F^*(K)$. If Theorem~\ref{exotictwolarge} (ii)  holds, we see that there are no possibilities for $X$ listed in Table~\ref{exotic}.

(iii) This is just the same as the last case. Just note that the extension fields can only occur if $b=2$ and then $r$ and $s$ do not both divide the order of the group. There are no examples to consider in Table~\ref{exotic}.

(iv) Suppose now that $G=\Omega_{d}^\epsilon(q)$ and $H$ is a subgroup of $G$ with elements of order
$r$ and $s$. (Note if $G =\Omega_{d}^+(q)$, $f=d$ is not possible).  By Lemma~\ref{subfield} we have that $H$ is not a subfield subgroup. The choice of $r$ and $s$
indicate that at least one $e$ and $f$ is not divisible by $4$, it follows the $X$ is not an extension field subgroup. Now the exceptional possibilities for $X$ are listed in Table~\ref{exotic} and are as indicated in (iv).

\end{proof}

We recall from \cite{AB, Huppert} that a Singer element of a classical group has order as given in the following table.
\begin{table}[h]
\caption{The orders of Singer elements}
\vskip 5mm
\begin{tabular}{|cc|}
\hline$G$&Singer element order\\
\hline $\SL_d(q)$& $(q^d-1)/(q-1)$\\
$\SU_{2d+1}(q)$& $(q^{2d+1}+1)/(q+1)$\\
$\Sp_{2d}(q)$&$q^d+1$\\
$\Omega_{2d}^-(q)$&$(q^d+1)/\gcd(q+1,2)$\\
\hline
\end{tabular}
\end{table}

\begin{lemma}\label{primedegree} Assume that $d$ is a prime and that $G$ is either $\SL_d(q)$ or $\SU_d(q)$. Suppose that $H \le G$ and that $H$ contains a Singer element $x$. Let $Z = \langle x \rangle$.  Then there exist $z' \in Z$, $z \in z'^G$ and $y \in x^G$ such that $G=\langle x,y \rangle$ and $xy=z$.
\end{lemma}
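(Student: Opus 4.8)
The plan is to deduce the statement from Lemma~\ref{structureconstant} applied to the cyclic group $Z=\langle x\rangle$ and its normalizer $N=N_G(Z)$. Since $x$ is a Singer element it acts irreducibly on the natural module $V$ (over $\GF(q)$ in the linear case, over $\GF(q^2)$ in the unitary case), so by Schur's Lemma its centralizer is the ambient cyclic torus; thus $C_G(Z)=Z$ and $N/Z$ is cyclic, generated by a Frobenius map, of order $d$ in the linear case and of order dividing $2d$ in the unitary case. In particular $k=|N/C_G(Z)|$ is bounded by $2d$. I must verify the four hypotheses (i)--(iv) of Lemma~\ref{structureconstant}; the element it produces, after conjugating the hyperbolic triple so that its first coordinate is exactly $x$, yields $y\in x^G$ with $\langle x,y\rangle=G$ and with $w:=xy$ a $G$-conjugate of some $z'\in Z$, which is exactly the required conclusion.

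Hypothesis (iv) is immediate from Gow's Theorem (Theorem~\ref{G}): because $|C_G(x)|=|Z|=(q^d-1)/(q-1)$ (respectively $(q^d+1)/(q+1)$) is coprime to $p$, the element $x$ is regular semisimple, so $R_1=R_2=x^G$ are classes of regular semisimple elements; for any nontrivial $z\in Z\setminus Z(G)$ the element $z^{-1}$ is noncentral semisimple, and Gow supplies $a,b\in x^G$ with $ab=z^{-1}$, i.e. $abz=1$, so the $(x^G,x^G,z^G)$ structure constant is nonzero. Hypothesis (ii) is a counting estimate: $|Z\setminus Z(G)|$ is $(q^d-1)/(q-1)-\gcd(d,q-1)$ in the linear case and $(q^d+1)/(q+1)-\gcd(d,q+1)$ in the unitary case, each of which grows exponentially in $d$ and so exceeds the polynomial bound $\binom{k+1}{2}$ once $d\ge 3$ and $q\ge 3$, or $d\ge 5$. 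The finitely many remaining small groups (notably $\SL_3(2)\cong\PSL_2(7)$, and noting that $\SU_3(2)$ is not simple) are Beauville groups by the results on $\PSL_2(q)$ in \cite{FJ} and are excluded here.

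The two essential points are (i) and (iii). For (iii), let $\zeta$ be a Zsigmondy prime for $\langle d,q\rangle$ in the linear case and for $\langle 2d,q\rangle$ in the unitary case; such a prime exists by Theorem~\ref{Zig} since $d$ is an odd prime (the excluded pairs force $d=2$ or the non-simple $\SU_3(2)$). Then $\zeta$ divides $|x|$, while $\zeta>d\ge |N/Z|$, so a Sylow $\zeta$-subgroup $P$ of $N$ is a Sylow $\zeta$-subgroup of the abelian normal subgroup $Z$; being characteristic in $Z$ it is normal, hence the unique Sylow $\zeta$-subgroup of $N$, so $P\le Z$. If $g\in x^G\cap N$ then $\zeta\mid |g|$, its nontrivial $\zeta$-part $g_\zeta$ lies in $P\le Z$, and since $g_\zeta$ has order divisible by a primitive prime divisor it acts irreducibly on $V$; by Schur's Lemma $C_G(g_\zeta)=Z$, and as $g$ commutes with its power $g_\zeta$ we get $g\in C_G(g_\zeta)=Z$, proving $x^G\cap N\subseteq Z$. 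For (i), that $N$ is the unique maximal subgroup of $G$ containing $Z$, I would invoke the prime-degree analysis of the overgroups of a Singer cycle (Theorem~\ref{Pdegree}), noting that $|x|$ is divisible by the large Zsigmondy divisor $\lambda_{d,q}$ (respectively $\lambda_{2d,q}$): because $d$ is prime, the imprimitive and field-extension overgroups collapse, since any field-extension degree $b$ must divide $d$ and hence equals $d$, returning $N$. Thus every proper subgroup containing $Z$ lies in $N$; in particular $N$ is itself maximal and is the only maximal subgroup above $Z$.

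With (i)--(iv) verified, Lemma~\ref{structureconstant} produces the hyperbolic triple and the reinterpretation of the first paragraph completes the proof. I expect hypothesis (i) to be the main obstacle: it rests on the classification of subgroups of $\GL_d(q)$ containing a Singer cycle, and one must check that the prime-degree hypothesis genuinely eliminates the imprimitive, field-extension and exotic Zsigmondy configurations rather than merely most of them. The parallel bookkeeping for $\SU_d(q)$---the twisted torus, the prime $\zeta_{2d,q}$, and the order $(q^d+1)/(q+1)$---is the other delicate point, together with isolating the few small $(d,q)$ for which the bound in (ii) or the existence of $\zeta$ fails, which are disposed of by \cite{FJ} or by direct computation.
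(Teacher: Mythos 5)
Your overall strategy is the same as the paper's: reduce to Lemma~\ref{structureconstant} applied to $Z=\langle x\rangle$ and $N=N_G(Z)$, with hypothesis (iv) supplied by Gow's Theorem~\ref{G}. Your verification of (iii) via the unique Sylow $\zeta$-subgroup of $N$ for a Zsigmondy prime $\zeta$ is a valid (if longer) substitute for the paper's one-line observation that $x^G\cap N=x^N\subseteq Z$. However, there are two genuine gaps. The decisive one is hypothesis (i). You correctly identify it as the main obstacle, but the tool you propose, Theorem~\ref{Pdegree}, cannot deliver it: that result assumes $d\ge 5$ and $d/2<e<d$, so it says nothing about the Singer element itself (which corresponds to $e=d$) and nothing about $d=3$; moreover its conclusion is a list of possible $F^*(H)$ (including $\SU_d(q)$, $\Omega_d(q)$, various $\PSL_2(s)$ and sporadic configurations), so even where it applies you would still have to eliminate each alternative by an order computation, which you do not do. The paper instead invokes Bereczky's theorem \cite{AB}, which states precisely that for $d$ prime the normalizer of a Singer cycle in $\SL_d(q)$ or $\SU_d(q)$ is the unique maximal overgroup of the Singer cycle; without this (or an equivalent classification) hypothesis (i) is unproved and the argument does not close.

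The second gap is in hypothesis (ii). The correct value is $k=|N/C_G(Z)|=d$ in both cases (your bound $2d$ for the unitary case is not only unnecessary but would spuriously break the inequality for groups such as $\SU_7(2)$), and the cases you flag are the wrong ones: $\SL_3(2)$ in fact satisfies $|Z|=7>\binom{4}{2}$, while the genuine failure of $|Z|>\binom{d+1}{2}$ occurs for $\SU_5(2)$, where $(2^5+1)/3=11\le\binom{6}{2}=15$. That case cannot be ``excluded because the group is a Beauville group by other means'' --- the lemma asserts the existence of a specific triple with $x$ a Singer element, so the exceptional case must be settled directly; the paper does so by computer. You should replace the appeal to Theorem~\ref{Pdegree} by the citation of \cite{AB}, correct the value of $k$, and handle $\SU_5(2)$ explicitly.
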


\begin{proof} We have that $d= |N_G(Z)/C_G(Z)|$ and $x$ is a regular semisimple element of $G$. By Gow's Theorem, for $z' \in Z\setminus Z(G)$, the $(x^G,x^G,z'^G)$ structure constant is non-zero.  By \cite{AB}, as $d$ is a prime,  $N_G(Z)$ is the unique maximal subgroup of $G$ containing $Z$. Furthermore, as $Z=C_G(Z)$  and $Z$ is cyclic we have $x^G \cap N = x^N$.

If  $(q^d-1)/(q-1) = |Z|> \left ({d+1}\atop 2\right)$ when $G =\SL_d(q)$ or $(q^d+1)/(q+1) = |Z|> \left ({d+1}\atop 2\right)$ when $G = \SU_d(q)$ then we may apply Lemma~\ref{structureconstant} to obtain the desired result.  If  $G = \SL_d(q)$, then  $ (q^d-1)/(q-1) = |Z|\le  \left( {d+1}\atop 2\right)$ has no solutions from $d$ a prime. Whereas if  $G = \SU_d(q)$, then  $ (q^d+1)/(q+1) = |Z|\le  \left( {d+1}\atop 2\right)$ with $q>2$, then $(d,q) =(5,2)$ and this case has been checked by computer.
 This completes the  proof of the lemma.

\end{proof}

\begin{lemma}\label{Pdegree} Suppose that $q= p^a$, $d \ge 5$ is a prime $H \le G=\GL_d(q)$ is irreducible and  $d/2< e<d$. Let $r= \lambda_{ea,p}$  and assume that $H$ has elements of order $r$. Then either \begin{enumerate} \item $F^*(H) = \SL_d(q)$, $\SU_d(q)$ or $\Omega_{d}(q)$;
\item $q=p$, $H \le \GL_1(3) \wr \Sym(d)$, $d \le 11$ and $r$ is small;
\item $F^*(H)= \Alt(d+1)$ or $\Alt(d+2)$, $q\in\{2,3,5\}$, $d \le 37$ and $r$ is small;
\item $F^*(H)$ is a sporadic simple group and one of the following occur.
\begin{enumerate}
\item $F^*(H) \cong \M_{11}$, $d=5$ and $q=3$ or $9$.
\item $F^*(H) \cong \M_{12}$, $d=11$, $e=10$ and $q=p=2$.
\item $F^*(H) \cong \M_{23}$, $d=11$,  $e=10$,  $q=p=2$.
\item $F^*(H) \cong \M_{24}$, $d=11$,  $e=10$,  $q=p=2$.
\end{enumerate}
\item $d=7$, $e=6$ and $F^*(H) \cong \G_2(q)$ with $p$ odd, ${}^2\G_2(q)$ or $\PSU_3(q)$ with $p=3$.
\item $d=7$,  $F^*(H) \cong \Sp_6(2)$ and either $e \in \{4,6\}$ and $p=q=3$  or $e= 6$ and $q=p=5$.
\item $F^*(H) = \PSL_3(2)$, $d=7$, $e=6$ and $q=p \in\{3,5\}$.
\item $F^*(H)=\PSU_3(3)$, $d=7$, $e=6$ and $p=q=5$.
\item $F^*(H) = \PSL_2(8)$, $d=7$, $e=6$ and $q=p\in \{3,5\}$.
\item $F^*(H) = \PSL_2(7)$, $d=7$, $e=6$ and $q=p\in \{3,5\}$.
\item $F^*(H) = \PSL_2(37)$, $d=e+1=19$ and $q=p=2$.
\item $F^*(H) = \PSL_2(27)$, $d=e+1=13$ and $q=p=2$.
\item $F^*(H) = \PSL_2(25)$, $d=e+1=13$ and $q=p=2$.
\item $F^*(H) = \PSL_2(23)$, $d=e+1=11$ and $q=p=2$.
\item $F^*(H) = \PSL_2(13)$, $d=e+1=7$ and $q=p=2$.
\item $F^*(H) = \PSL_2(11)$, $d=e+1=5$ and $q=p=2$.
\item $F^*(H) = \PSL_2(13)$, $d=e+1=7$ and $q=p=\{3,5\}$.
\item $F^*(H) = \PSL_2(11)$, $d=e+1=5$ and $q=p=\{3,5\}$.
\item $F^*(H) = \PSL_2(9)$, $d=e+1=5$ and $q=p=3$;
\item $F^*(H) = \PSL_2(s)$,  $d= \frac{1}{2}(s\pm 1)$ and $r= 2e+1=s$.
\end{enumerate}
In particular, if $\lambda_{ea,p}$ is not small, then either (i), (v) or (xx) holds.
\end{lemma}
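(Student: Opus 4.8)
The plan is to feed the irreducible group $H$ into the classification of subgroups of $\GL_d(q)$ containing a primitive prime divisor element due to Guralnick, Penttila, Praeger and Saxl \cite{GPPS}: the element of order $r=\lambda_{ea,p}$ has order a power of the Zsigmondy prime $\zeta_{ea,p}$, which is a primitive prime divisor of $q^e-1$ with $d/2<e\le d$, so $H$ lies in one of the classes of \cite{GPPS}. The whole proof is then a walk through those classes in which the two standing hypotheses --- that $d$ is an \emph{odd prime} and that $e<d$ \emph{strictly} --- do almost all of the eliminating.

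First I would clear away the geometric (non-nearly-simple) classes. By Lemma~\ref{subfield}, $H$ lies in no proper subfield subgroup, and the reducible class is excluded since $H$ is irreducible. If $H$ is imprimitive then, $d$ being prime, the only block system has one-dimensional blocks, so $H\le\GL_1(q)\wr\Sym(d)$; as $\zeta_{ea,p}\nmid q-1$ this prime must divide $d!$, whence $\zeta_{ea,p}\le d$, forcing $a=1$ and (since a prime exceeding $d/2$ occurs only to the first power in $d!$) $r=\zeta_{ea,p}$ small. Theorem~\ref{LZ} together with the irreducibility of $H$ --- note that for $p=2$ and $d$ odd the permutation module splits off its fixed vector, so $\Sym(d)$ is reducible --- then pins down the base field and yields conclusion~(ii). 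For an extension-field subgroup, defined over $\GF(q^b)$ with $b\mid d$, every primitive prime divisor element it contains is one for $(q^b)^{e''}-1=q^{be''}-1$, so $b\mid e$; since $d$ is prime and $0<e<d$ this forces $b=1$ (cf.\ Theorem~\ref{twoZsig}(ii)), so no proper extension-field subgroup occurs. Finally the symplectic-type normaliser class is handled exactly as in the proof of Theorem~\ref{twoZsig}: it requires $d=2^c$, which is incompatible with $d$ an odd prime.

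What survives are the defining-characteristic classical groups and the nearly-simple examples. In the classical case $d$ being an odd prime $\ge5$ excludes the symplectic groups and forces the orthogonal group into odd dimension, so $F^*(H)$ is one of $\SL_d(q)$, $\SU_d(q)$ or $\Omega_d(q)$, giving conclusion~(i). For the nearly-simple cases I would read off the relevant tables of \cite{GPPS}, keeping only the rows in which $d$ is prime and $d/2<e<d$: the defining-characteristic rows in dimension $7$ give the groups $\G_2(q)$, ${}^2\G_2(q)$ and $\PSU_3(q)$ of~(v); the alternating groups acting on the deleted permutation module give $F^*(H)\cong\Alt(d+1)$ or $\Alt(d+2)$ (according as the characteristic does or does not divide the degree) with $r=e+1$ small, hence $q\in\{2,3,5\}$ and $d$ bounded, which is~(iii); the sporadic rows give exactly~(iv); and the cross-characteristic Lie-type rows (overwhelmingly $\PSL_2$) produce the explicit entries (vi)--(xix) together with the generic family~(xx), in which $F^*(H)\cong\PSL_2(s)$ acts in dimension $d=\tfrac12(s\pm1)$ and the relevant semisimple element has order $r=2e+1=s$.

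The main obstacle is precisely this last, lengthy bookkeeping through the tables of \cite{GPPS}: for each nearly-simple candidate one must match the tabulated dimension to an odd prime $d$, check $d/2<e<d$, and then decide, via Theorem~\ref{LZ}, whether $\zeta_{ea,p}$ is large or small (the latter confining $q$ to $\{2,3,5\}$). Once the list is assembled, the closing ``in particular'' assertion is immediate: conclusions (ii), (iii), (iv) and (vi)--(xix) were each obtained only when $r$ is small, so if $\lambda_{ea,p}$ is not small the only survivors are the defining-characteristic conclusions (i) and (v) and the generic cross-characteristic family (xx), where $r=2e+1$ is genuinely large.
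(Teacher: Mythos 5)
Your proposal follows the same route as the paper: feed $H$ into the main theorem of \cite{GPPS} via the primitive prime divisor $\zeta_{ea,p}$, eliminate the reducible, imprimitive, subfield, extension-field and symplectic-type classes using the hypotheses that $d$ is an odd prime and $e<d$ (together with Lemma~\ref{subfield}), and then sift the nearly-simple tables of \cite{GPPS} with Theorem~\ref{LZ} controlling the small-Zsigmondy cases. The paper's proof is exactly this, with the table bookkeeping you defer carried out explicitly; your reductions (for instance the extension-field and imprimitive eliminations) are, if anything, spelled out more carefully than in the paper.
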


\begin{proof} We again employ \cite{GPPS} and often use the Modular Atlas \cite{Moat} to rule out groups having  certain odd degree representations. In case of \cite[Example 2.1]{GPPS}, we once again apply Lemma~\ref{subfield} and obtain our result. By assumption \cite[Example 2.2]{GPPS} does not hold.  So suppose that $H \le \GL_1(q)\wr \Sym(d)$.  Then $r = ea+1$ which means that $q=p$ and that $r$ is small. So (ii) holds in this case.
Since $d$ is prime, both \cite[Examples 2.4 and 2.5]{GPPS} do not occur. If $F^*(H)$ is as in \cite[Example 2.6]{GPPS} then we either have (iii) or by \cite[Tables 2, 3 and 4]{GPPS}, $d=5$, $r$ is small and $p\ ge 7$ which is impossible. Moving onto \cite[Example 2.7]{GPPS}, suppose that $F^*(H) \cong \M_{11}$ with $d=5$. Then $e=4$.  Since $r \le 11$, we infer that $a \le 2$. Hence $q \in \{3,9\}$. Now in the case that $F^*(H) \cong \M_{11}$ and $d=11$, we have that $e=10$ and $r$ is small, but then $p =2$ by Theorem~\ref{LZ} which is impossible according to \cite[Table 5]{GPPS}. If $F^*(H) = \M_{12}$. Then $d=11$, $e=10$ and $r$ must be small. Again Theorem~\ref{LZ} eliminates this possibility. Suppose that $F^*(H) \cong \M_{23}$ then $r= e+1= 11=d$ or $2e+1$ which means that $a =1$ as if $a=2$, then $r=2e+1$ would be small. The situation for $F^*(H)= \M_{24}$ and $d=11$ is similar to that for $\M_{23}$. If $F^*(H)= \M_{24}$ and $d=23$, then $p \not \in \{2,3\}$ whereas $r$ must be small. Hence Theorem~\ref{LZ} eliminates this case. Similarly, we cannot have $d=23$ and $F^*(H) \cong \Co_3$ or $\Co_2$. Thus (v) holds.  Consider next \cite[Example 2.8]{GPPS}.
Then we have (vi) using the argument in Lemma~\ref{subfield} to eliminate smaller fields. Finally we contemplate the possibilities in \cite[Example 2.9]{GPPS} first dealing with Table 7 in which just two possibilities occur. The first bing $F^*(H) \cong \Sp_6(2)$ with $d=7$ In this case we see that $r$ is small with $e= 4$ or $6$. Since $p> 2$, we infer that $p=3$ or $p=5$ and $e= 6$ from Theorem~\ref{LZ}. This give (vii). The second possibility is than $d=5$ and that $p \ge 7$. Since also in this case $r$ is small we may use Theorem~\ref{LZ} to eliminate this case. So suppose that we have the examples in Table~8 of \cite{GPPS}. In dealing with these example we will repeatedly call on Theorem~\ref{LZ}.  In line one of Table~8 we find the groups $\PSL_n(s)$ with $n$ a prime and $r= (s^n-1)/(s-1)= e+1$. Thus $r$ is small and so we have $r \le 19$ which means first that $n =3$. If $p=2$, then $s$ is odd and so we get $s = 3$ and $H= \PSL_3(3)$ and $q=p$ which doesn't have any odd degree representations in characteristic $2$. If $p$ is odd, then we require $r \le 7$ and we obtain $H= \PSL_3(2)$, $d=r=e+1=7$ and $p=q\in \{3,5\}$. Next suppose that $H = \PSU_3(s)$. Then we have $r= (s^n+1)/(s+1)$ is small. If $p=2$, then $s$ is odd and $r$ is at most $19$. This gives us $\PSU_3(3)$ with $d=7$ and $e=6$. But $\PSU_3(3)$ has no irreducible representations in dimension $7$ over fields of characteristic $2$.  If $p$ is odd, then we require $r \le 7$ which again forces $H = \PSU_3(3)$, again with $d=7$, $e=6$ but with $p=q=5$. Moving on to line 3 of Table 8, we have $H= \PSp_{2n}(s)$ with $n= 2^b \ge 2$ and $s$ odd. Again if $p= 2$, we get $r= \frac {1}{2}(s^n+1) \le 19$ which means that $H = \PSp_4(5)$ with $d= 13= r$ and $q=2$ or $H= \PSp_4(3)$ both of which have no odd degree representations in characteristic $2$.  If $p> 2$, then we require $r \le 7$ which again forces us to consider $\PSp_4(3)$ but Theorem~\ref{LZ} implies $p=3$ and we have a contradiction.  We move on to line 4. Here $H= \PSp_{2n}(3)$ with $n$ an odd prime. Again $r$ is small. The only possibility is that we have $p=2$ and $n=3$ with $d=13$.  The remaining rows of Table $8$ all concern $\PSL_2(s)$ with $s \ge 7$. The first two cases to consider are $s= 2^c$. In this case we have $r= s\pm 1$ is small. Hence $s \pm 1 \le 7$ so $s= 8$ and we have $d= 7$, $e=6$. In the remaining case we have $s$ odd. In line $7$  we have $r=s=d$ is small and so $r \le 19$. Using \cite{Moat} we rule out $\PSL_2(19)$, $\PSL_2(13)$, $\PSL_2(11)$, $\PSL_2(7)$ and $\PSL_2(5)$ in characteristic $2$. If $p$ is odd, then we must have $s=7$ and this possibility is listed. So suppose that the last row of Table $8$ holds. Then we have $s$ odd and $r = \frac{1}{2} (s\pm 1)= d$. Supposing that $p=2$, we then get the possibilities listed in (xi) through (xvi). If $p$ is odd we require $(s \pm 1)/2 \le 7$and we get $s \in \{13,11,9,7\}$ with $d$ respectively $7,5,5,3$.
Finally then we have line 6. Here $s$ is a prime, $d= \frac{1}{2}(s\pm 1)$ and $r= 2e+1=s$.

\end{proof}

\begin{lemma}\label{altnorder} Suppose that $G= \Sym(d)$, $V$ is the deleted permutation module for $G$, $x \in G$ and $C_V(x)= 0$. Then either $\dim V = d-1$ and $x$ has order at most $d$ or $\dim V= d-2$, $d \ge 4$ and $x$ has order at most $(d^2-1)/4$.
\end{lemma}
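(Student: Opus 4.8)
The plan is to translate the condition $C_V(x)=0$ into a restriction on the cycle type of $x$ and then maximise the order over the admissible types. Write $F$ for the underlying field, $p$ for its characteristic, let $P=F^d$ be the permutation module on which $\Sym(d)$ permutes coordinates, let $U=\{v\in P:\sum_i v_i=0\}$ be its augmentation submodule of dimension $d-1$, and let $W=\langle(1,\dots,1)\rangle$ be the all-ones line. The basic fact, valid over every field, is that $\dim C_P(x)=c(x)$, the number of cycles of $x$ on $\{1,\dots,d\}$, since $C_P(x)$ is spanned by the indicator vectors of the $\langle x\rangle$-orbits. When $p\nmid d$ we have $W\not\le U$ and $V\cong U$ has dimension $d-1$; when $p\mid d$ we have $W\le U$ and $V\cong U/W$ has dimension $d-2$.

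First I would treat the case $\dim V=d-1$. Here $C_V(x)=C_P(x)\cap U=\ker(\sigma)$, where $\sigma\colon C_P(x)\to F$ is the coordinate-sum functional, which sends the indicator of an orbit of length $\ell$ to $\ell$. Since $\sum_i\ell_i=d\not\equiv0\pmod p$, the functional $\sigma$ is non-zero, so $\dim C_V(x)=c(x)-1$. Thus $C_V(x)=0$ forces $c(x)=1$, i.e. $x$ is a single $d$-cycle of order $d$, which is the first alternative.

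For $\dim V=d-2$ the key step is the inequality $\dim C_V(x)\ge c(x)-2$. The quotient $U\to U/W=V$ restricts to a map $C_U(x)\to C_V(x)$ whose kernel is $C_U(x)\cap W=W$ (as $(1,\dots,1)$ is fixed and lies in $U$), so $\dim C_V(x)\ge\dim C_U(x)-1$; and $C_U(x)=\ker(\sigma|_{C_P(x)})$ has codimension at most $1$ in $C_P(x)$, so $\dim C_U(x)\ge c(x)-1$. Hence $\dim C_V(x)\ge c(x)-2$, and $C_V(x)=0$ forces $c(x)\le2$. If $c(x)=2$ with cycle lengths $a,b$, $a+b=d$, then $o(x)=\lcm(a,b)\le ab=\tfrac{1}{4}\bigl(d^2-(a-b)^2\bigr)$, which is at most $(d^2-1)/4$ when $a\ne b$ and equals $d/2\le(d^2-1)/4$ when $a=b=d/2$; either way the bound holds since $d\ge4$.

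The delicate sub-case, and the main obstacle, is $c(x)=1$: here $x$ is a $d$-cycle of order exactly $d$, and the estimate $o(x)=d$ alone would break the bound $(d^2-1)/4$ at $d=4$. To resolve it I would compute $\dim C_V(x)$ exactly. For a $d$-cycle one has $C_U(x)=W$, so the image of $C_U(x)$ in $V$ is zero, and a non-trivial fixed vector of $V$ must be the class of some $u\in U$ with $(x-1)u=(1,\dots,1)$. Solving this equation coordinatewise around the cycle is possible precisely because $p\mid d$, and the coordinate sum of a solution equals $-\binom{d}{2}$; hence a solution lying in $U$ exists, and so $C_V(x)\ne0$, exactly when $\binom{d}{2}\equiv0\pmod p$, i.e. whenever $p$ is odd or $4\mid d$. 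Consequently a fixed-point-free $d$-cycle can occur only for $p=2$ and $d\equiv2\pmod4$, where $d\ge6$ and therefore $o(x)=d\le(d^2-1)/4$. This disposes of the last sub-case. The hypothesis $d\ge4$ keeps the earlier inequalities valid, and it is the congruence criterion $\binom{d}{2}\not\equiv0\pmod p$ — rather than any crude bound — that excludes the otherwise troublesome value $d=4$, for which $\binom{4}{2}\equiv0\pmod2$.
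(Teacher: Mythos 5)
Your proof is correct, and its backbone is the same as the paper's: identify $\dim C_P(x)$ with the number of cycles of $x$ and deduce that $C_V(x)=0$ forces one orbit when $\dim V=d-1$ and at most two orbits when $\dim V=d-2$, then bound $\lcm(a,b)\le ab\le (d^2-(a-b)^2)/4$. Where you go beyond the paper is the single-cycle subcase in dimension $d-2$: the paper simply asserts that at most two orbits gives order at most $(d^2-1)/4$ for $d\ge 4$, but a fixed-point-free $d$-cycle would have order $d$, which violates that bound precisely at $d=4$. Your congruence criterion — a $d$-cycle with $p\mid d$ has $C_V(x)=0$ if and only if $\binom{d}{2}\not\equiv 0\pmod p$, i.e.\ only for $p=2$ and $d\equiv 2\pmod 4$, forcing $d\ge 6$ — is exactly the missing verification, and your computation of the coordinate sum of a solution to $(x-1)u=(1,\dots,1)$ is right. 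So your write-up is not a different route but a completion of the paper's sketch; the only quibble is that for $d=4$ one could dispose of the 4-cycle more quickly by noting that every involution in $\GL_2(2)$ fixes a nonzero vector, but your general criterion is cleaner and uniform in $d$ and $p$.
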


\begin{proof} Let $W$ be the permutation module for $\Sym(d)$.  Then $\dim C_W(x)$ equals the number of cycles of $x$ in its cycle decomposition.  Since $C_V(x)= 0$, we have that either $\dim V= d-1$ and $\langle x\rangle $ is transitive or $\dim V= d-2$ and $\langle x\rangle$ has at most two orbits. In the first case $x$ has order $d$ and in the second case $\langle x\rangle$ has order at most $(d^2-1)/4$ provided $d \ge 4$.
\end{proof}

\begin{lemma}\label{OPlusetcAgain}
\begin{enumerate}
\item  Suppose that  $G\cong \Sp_{2d}(q)$  with $d \ge 3$. Let $x\in \GL_d(q)\le G$ have order $q^d-1$ fix  two opposite totally isotropic  subspaces,   $y \in \GL_{d-1}(q) \times \Sp_{2}(q)$ be such that $y$ projects to an element of order $q^{d-1}-1$ on the first factor and to an element of order $q+1$ on the second factor and let $z$ be a bireflection of order $q+1$. Then there exits a hyperbolic triple in $x^G\times y^G  \times z^G$.

\item Suppose that  $G\cong \SU_{2d}(q^{\frac 1 2 } )$  with $d \ge 3$. Let $x\in  G$ have order $(q^d-1)/(q^{\frac 1 2}+1)$ fix  two opposite totally isotropic  subspaces,   $y \in \GL_{d-1}(q) \times \SU_{2}(q^{\frac 1 2 })$ have determinant $1$  and be such that $y$ projects to an element of order $(q^{d-1}-1)/(q^{\frac 1 2}+1)$ on the first factor and to an element of order $q^{\frac 1 2}+1$ on the second factor and let $z$ be a bireflection of order $q^{\frac 1 2}+1$. Then there exits a hyperbolic triple in $x^G\times y^G  \times z^G$.
\end{enumerate}
\end{lemma}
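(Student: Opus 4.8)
The plan is to produce the triple directly from Gow's Theorem and then to establish generation by exploiting the fact that the third element is a bireflection. First I would confirm that $x$ and $y$ are regular semisimple. In part~(i) the element $x$ lies in the Levi subgroup $\GL_d(q)$ stabilising the opposite totally isotropic summands $V=U\oplus W$ and acts as a Singer cycle on $U$, so $C_G(x)$ is the torus of order $q^d-1$, coprime to $p$; likewise $y$ preserves a decomposition $V=V_1\perp V_2$ with $\dim V_1=2(d-1)$ and $\dim V_2=2$, acting as a Singer cycle of order $q^{d-1}-1$ on $V_1$ and as a nonsplit torus element of order $q+1$ on $V_2$, so $C_G(y)$ has order $(q^{d-1}-1)(q+1)$, again coprime to $p$. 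The element $z$, and hence $z^{-1}$, is a non-central semisimple bireflection. Applying Theorem~\ref{G} with $R_1=x^G$, $R_2=y^G$ and $s=z^{-1}$ then yields $x'\in x^G$ and $y'\in y^G$ with $x'y'=z^{-1}$, so that $(x',y',z)$ satisfies $x'y'z=1$ with its three entries in the prescribed classes. Part~(ii) is identical after replacing $q^d-1$, $q^{d-1}-1$ and $q+1$ by $(q^d-1)/(q^{1/2}+1)$, $(q^{d-1}-1)/(q^{1/2}+1)$ and $q^{1/2}+1$, and $\GL_d(q)$ by the corresponding Levi of $\SU_{2d}(q^{1/2})$.

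Condition~(ii) of Definition~\ref{maindef} is routine: every order is at least $3$ and $o(x)\ge 7$, so the reciprocal sum is at most $\tfrac17+\tfrac13+\tfrac13<1$, the extreme case being $d=3$. The substantive work is to show $H:=\langle x',y'\rangle=G$. I would stress at the outset why Theorem~\ref{maingeneration} is \emph{not} available here: the Zsigmondy primes supplied by $x'$ and $y'$ belong to the exponents $d$ and $d-1$, both at most $\dim V/2=d$, so the hypothesis $\dim V/2<e<f$ fails and a bespoke argument is forced. Since $z=(x'y')^{-1}$ gives $y'=x'^{-1}z^{-1}$, we have $H=\langle x',z\rangle$. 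I would then study the normal closure $R=\langle z\rangle^H$, which is generated by the $H$-conjugates of $z$ and in particular contains the $x'$-translates $z^{x'^{\,i}}$, each a bireflection of order $q+1$ (respectively $q^{1/2}+1$) with support $[V,z^{x'^{\,i}}]=V_2^{x'^{\,i}}$.

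The crux is to prove that $R$ acts irreducibly. The non-degenerate plane $V_2=[V,z]$ lies in neither $U$ nor $W$, and since $x'$ acts irreducibly on each of $U$ and $W$, the $x'$-invariant subspace $\sum_i V_2^{x'^{\,i}}$ cannot be contained in $U$ or $W$ and must therefore equal $V$; thus the supports of the conjugate bireflections span $V$ and $[V,R]=V$. From here I would upgrade this to irreducibility and primitivity of $R$ via a Clifford-theoretic analysis of how $H/R$ permutes the $R$-submodule lattice, ruling out subfield subgroups by the order $q^d-1$ of $x'$ exactly as in Lemma~\ref{subfield} and obstructing the field-extension and tensor decompositions by the two distinct Zsigmondy exponents $d$ and $d-1$. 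With irreducibility secured, I would invoke the classification of irreducible subgroups of $\Sp_{2d}(q)$ (respectively $\SU_{2d}(q^{1/2})$) generated by bireflections, which forces $R\ge F^*(G)$ apart from a short list of small configurations, and the element of order $q^d-1$ then gives $R=G$, whence $H=G$. I expect the main obstacle to be precisely this irreducibility and primitivity step: because $x'$ and $y'$ are controlled only through the Gow product relation, one cannot prescribe their invariant subspaces, so the argument must run entirely through the bireflection normal closure $R$, and the residual small values of $d$ and $q$ are best dispatched by direct computation in GAP or Magma.
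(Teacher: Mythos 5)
Your opening (regular semisimplicity of $x$ and $y$, Gow's Theorem to realise the triple, and the observation that Theorem~\ref{maingeneration} is unavailable because the relevant Zsigmondy exponents $d$ and $d-1$ do not exceed half the dimension) matches the paper, and your ultimate appeal to a Guralnick--Saxl-type classification of irreducible groups containing a bireflection is also the paper's key external input (\cite[Theorem 7.1]{GSa}). However, there is a genuine gap at the irreducibility step, and it stems from a false premise. You assert that ``one cannot prescribe the invariant subspaces'' of the conjugates $x'$ and $y'$ produced by Gow's Theorem and that the argument must therefore run through the normal closure $R=\langle z\rangle^H$. But the \emph{type} of invariant subspace is preserved by conjugation: $x'$, like $x$, acts irreducibly on each of two opposite totally isotropic $d$-spaces which are non-isomorphic as $\langle x'\rangle$-modules, so these are its only proper nonzero invariant subspaces; and $y'$ preserves a non-degenerate decomposition into pieces of dimensions $d-1$, $d-1$, $2$, so it stabilises no totally isotropic $d$-space. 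Hence $\langle x',y'\rangle$ is irreducible by direct comparison of invariant subspaces --- this is exactly the paper's argument, and your detour through $R$ is unnecessary. Worse, the detour does not close: from the fact that the supports of the $x'$-conjugates of $z$ span $V$ you obtain only $[V,R]=V$, which is far weaker than irreducibility of $R$ (a completely reducible action with two nontrivial summands satisfies it), and the promised ``Clifford-theoretic upgrade'' is precisely the missing content --- indeed Clifford theory would need irreducibility of the overgroup $H$ as input, which is the very thing being proved.

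A secondary shortfall is in the endgame. Before \cite[Theorem 7.1]{GSa} can be applied one must also establish tensor indecomposability and primitivity (the paper rules out $X\le \GL_d(q){:}2$ using the shape of $[V,z]$, and rules out wreath-product stabilisers using transitivity of $\langle x'\rangle$ on blocks together with Zsigmondy primes), and afterwards one must eliminate a concrete list of residual configurations: classical subgroups other than $G$ itself (e.g.\ $\Omega_{2d}^+(q)\le \Sp_{2d}(q)$ for $q$ even, excluded because $y$ is conjugate into $\mathrm O_{2d}^-(q)$), covers of alternating groups on the deleted permutation module (handled via the element-order bound of Lemma~\ref{altnorder}), and the exceptional cases in dimensions $6$, $8$ and $10$ involving $\U_5(2)$, ${}^3\mathrm D_4(q)$, $\Omega_7(q)$, $2\udot\Omega_8^+(2)$, extraspecial normalisers, tensor cubes and several dimension-$6$ groups, one of which ($\Sp_8(3)$) the paper settles by computer. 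Your ``short list of small configurations \dots\ dispatched by direct computation'' substantially underestimates this case analysis.
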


\begin{proof}  Since the elements in $x^G$ and $y^G$ are regular semisimple elements,  we may suppose that $xy=z$ by Gow's Theorem.
Set $X = \langle x,y\rangle$. Then $\langle x\rangle$ preserves a decomposition of $V$ into two isotropic  subspaces of dimension $d$ and that $T=\langle x \rangle$ acts irreducibly on these two subspaces which are non-isomorphic as $T$- modules. Since $\langle y\rangle$ preserves no such isotropic subspace,   $X$ acts irreducibly on $V$. Considering the order of $x$ shows that $X$ acts tensor indecomposably on $V$.  We next show that $X$ acts primitively on $V$. Suppose that  $X \le \GL_d(q):2$. If $G= \Sp_{2d}(q)$,  we have $z$ has order $q+1$ and $z$ acts irreducibly on $[V,z]$ which has dimension $2$ so $[V,z]$ is non-degenerate and we cannot have $z \in \GL_d(q)$ fixing preserving just maximal isotropic spaces. Thus  $z\not \in \GL_d(q)$ and exchanges the two isotropic spaces, but then $d\ge 3 > 2=\dim [V,z] \ge d$ which is a contradiction. If $G= \SU_{2d}(q)$, then $z$ has order $q^{\frac 1 2 }+1$ and $[V,z]$ is a direct sum of two non-degenerate spaces which are not isomorphic as $\langle z \rangle$-modules. Hence again we have $z \not \in \GL_d(q)$ and we obtain a contradiction just as before.   Suppose then that $X \le I=\mathrm X_m \wr \Sym(2d/m)$ where $X_m$ is one of  $\Sp_{m}(q)$ or $\GU_m(q)$  in the respective cases and $m$ divides $2d$.  Since the only subspaces of $V$ preserved by $T$ are the two maximal isotropic spaces,   $T$ must act transitively on the blocks preserved by $I$. As any Zsigmondy prime dividing the $|T|$ is at least $d+1$, we have first that $m= 1$ and second that if $x_1= x^m$ is an element of order $\zeta_{da,p}$, then $C_V(x) \neq 0$ which is impossible.
 Hence we have shown that $G$ acts irreducibly,  primitively and tensor indecomposably on $V$. Since $z$ is a bireflection, we may apply the main result of \cite[Theorem 7.1]{GSa} to see that $X$ is a classical group in its natural representation, an alternating group or  one of the following holds:
\begin{enumerate}
\item[(a)]$2d=10$ and $X$ normalizes $\U_5(2)$ with $q$ odd.
\item [(b)] $2d=8$ and $X$ normalizes one of  ${}^3\mathrm D_4(q)$ or $\Omega_7(q)$  with $q$ odd or $\Sp_6(q)$ with $q$ even.
\item  [(c)] $2d=8$ and $X$ normalizes $2\udot \Omega_8^+(2)$ and $q$ is odd.
\item [(d)]$2d= 8$ and $X$ normalizes an extraspecial group $2^{1+8}_\pm$ and $q$ is odd.
\item [(e)] $2d= 8$ and $X$ preserves a tensor cube structure.
\item [(f)] $2d=6$ and $X$ normalizes either $\PSL_3(q)$, $\PSU_3(q)$ with $q$ odd or $\G_2(q)$ with $q$ even.
\item [(g)] $2d=6$ and  one $F^*(X)$ is one of 11 possibilities.
\end{enumerate}
By considering  $|T| $, if $X$ is a classical group, then, as $T$ decomposes $T$ into two subspaces of dimension $d$, we have that either $X=G$ or $G= \Sp_{2d}(q)$ with $q$ even and  $F^*(X) \cong \Omega_{2d}^+(q)$. However, in the later case  $\Omega_2^+(q)$ does not contain a conjugate $y$ (which is conjugate into $\mathrm{O}_{2d}^-(q)$). So this scenario cannot occur.

Suppose that $F^*(X)$ is an alternating group $\Alt(2d+1)$ or $\Alt(2d+2)$ and $V$ is the deleted permutation module. By Lemma~\ref{altnorder} we have that $q^d-1 \le (2d+2)^2/4= (d+1)^2$ if $G \cong \Sp_{2d}(q)$ and $(q^d-1)/(q^{1/2}+1) \le (d+1)^2$ if $G \cong \SU_{2d}(q)$ . Thus we have $q=2$ and $d \in \{3,4,5\}$.  Furthermore, in these cases we have that $F^*(X) = \Alt(2d+2)$ and thus $X$ does not have elements of the required orders.  Now we consider the possibilities in (a) to (g) above. If (a) holds, then the maximal element order in the normalizer of $\U_5(2)$ in $\Sp_{10}(q)$  is at most $48 < (3^5-1)/2$  and,  in $\SU_{10}(q)$ at most $240< (3^{10}-1)/4$ so (a) cannot  occur.  Suppose that $2d=8$. Since the embeddings   in (b) are into the orthogonal group $\Omega_8^+(q)$ (\cite{KL08}), (b) cannot occur. In (c) and (d) the maximal element orders are bounded by $8.2.30$ and so we must have $G= \Sp_8(3)$. This final case has been checked by computer.

Since $\GL_2(q) \wr \Sym(3)$ does not contain elements of order $\zeta_{4a,p}$ (which divides $(q^4-1)$),  we also have that (e) does not hold.

Suppose that $2d=6$. In case  (f),  the cases that $F^*(X)/Z(F^*(X))\cong \PSU_3(q)$ or $\PSL_3(q)$ and that $V$ is the module $V(2\lambda_1)$ fails as the module is not self-dual or unitary. The case that $F^*(X)= \G_2(q)$ can only occur if $q$ is a power of $2$. In this case we have $\G_2(q) \le\Sp_6(q)$. Now we note that $G_2(q)$ doesn't contain an element conjugate to $y$. Hence these cases do not happen.
Now assume that case (f) occurs.  Suppose first that $G = \Sp_6(q)$. Then by considering the order of $x$, $|Z(X)| \le 2$ and considering the values of $p$ itemized in \cite[Theorem 7.1 (d)]{GSa}, we see that all cases are eliminated.  So suppose that $G= \GU_{6}(q^{\frac 1 2 })$.  In particular $q$ is a square. Since $(9^3-1)/4$ is larger than any of the elements in the candidate groups, we must be have $G=\SU_6(2)$ and $x$ thus has order $21=(4^3-1)/3$.  Now we consider $y$. This element has order $3(4^2-1)/3= 15$ and the fifth power of this element is not central in $G$. Now the candidates for $X$ are $3 \udot \M_{22}$ and $3 \udot \U_4(3)$ neither of which have an element of order $15$ which does not power to $Z(G)$. This completes the proof of the lemma.
\end{proof}

\begin{lemma}\label{OPlusetc}   Suppose that  $G\cong \Omega_{2d}^+(q)$ with $d \ge 5$. Then $G$ has  a hyperbolic triple of type $((q^d-1)/\gcd(q-1,2),(q^d-1)/\gcd(q-1,2), (q+1)/\gcd(q+1,2))$ where the elements of order $(q^d-1)/\gcd(q-1,2)$ fix  two opposite totally singular  subspaces and the element of order $(q+1)/\gcd(2,q+1)$ is a bireflection.
\end{lemma}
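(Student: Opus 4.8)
The plan is to follow the template of Lemma~\ref{OPlusetcAgain}, with the ambient group now the orthogonal group $\Omega_{2d}^+(q)$ and exploiting that it contains the stabiliser $\GL_d(q){:}2$ of a pair of opposite maximal totally singular subspaces $W_1$ and $W_2$. First I would take $x$ to be a generator, of order $(q^d-1)/\gcd(q-1,2)$, of the cyclic subgroup of $\Omega_{2d}^+(q)$ arising from a Singer cycle of the $\GL_d(q)$ factor; the factor $\gcd(q-1,2)$ accounts for the determinant and spinor-norm restriction that places $x$ inside $\Omega_{2d}^+(q)$. Such an $x$ is regular semisimple, and with $T=\langle x\rangle$ the only proper nonzero $T$-invariant subspaces of $V$ are $W_1$ and $W_2$, which are non-isomorphic as $T$-modules. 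I would take $z$ to be a bireflection of order $(q+1)/\gcd(q+1,2)$, acting as a generator of $\Omega_2^-(q)$ on a non-degenerate minus-type $2$-space $[V,z]$ and trivially on its ($+$-type) complement; this is a non-central semisimple element. Applying Gow's Theorem~\ref{G} with $R_1=R_2=x^G$ and $s=z^{-1}$ yields conjugates $x,y\in x^G$ with $xy=z^{-1}$, so that $(x,y,z)$ has product $1$; set $X=\langle x,y\rangle$.

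Next I would show that $X$ acts irreducibly, primitively and tensor indecomposably on $V$. Irreducibility follows because $y$ cannot preserve the decomposition $W_1\oplus W_2$: if it did, then $z=xy$ would also preserve it, contradicting that $z$ is a bireflection with $[V,z]$ non-degenerate of dimension $2$. Tensor indecomposability follows from the order of $x$ exactly as in Lemma~\ref{OPlusetcAgain}, since a Zsigmondy prime $\zeta_{da,p}$ dividing $|T|$ exceeds $d$ and so cannot divide the order of a smaller-dimensional tensor factor. For primitivity I would eliminate the two imprimitive possibilities: if $X\le\GL_d(q){:}2$, then the bireflection structure of $z$ forces $z\notin\GL_d(q)$, so $z$ interchanges $W_1$ and $W_2$, but then $\dim[V,z]\ge d>2$, contradicting $\dim[V,z]=2$ as $d\ge5$; and if $X\le \mathrm{O}_m^\eta(q)\wr\Sym(2d/m)$, then $T$ must permute the blocks transitively, whence the Zsigmondy prime forces $m=1$ and $C_V(x)\neq0$, impossible for the regular element $x$.

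With $X$ irreducible, primitive and tensor indecomposable and $z$ a bireflection, I would invoke \cite[Theorem 7.1]{GSa} to conclude that $X$ is a classical group in its natural representation, an alternating group, or one of the finitely many exceptional configurations listed there. The alternating case is ruled out by Lemma~\ref{altnorder}: the resulting bound $(q^d-1)/\gcd(q-1,2)\le(d+1)^2$ forces $q=2$ and $d=5$, and then $\Alt(12)$ has no element of order $31$. Since $d\ge5$ gives $2d\ge10$, the exceptional families of \cite[Theorem 7.1]{GSa} collapse to the $2d=10$ configuration, whose bounded element orders are incompatible with $o(x)\ge(3^5-1)/2$. Hence $X$ is a classical group in its natural representation; as $X\le G=\Omega_{2d}^+(q)$ preserves the $+$-type quadratic form and contains $x$ of order $(q^d-1)/\gcd(q-1,2)$, the order of $T$ together with (the orthogonal analogue of) Lemma~\ref{subfield} rules out a subfield subgroup and any smaller-dimensional classical subgroup, so $X=\Omega_{2d}^+(q)$. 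Finally the genus inequality $1/o(x)+1/o(y)+1/o(z)<1$ is immediate for $d\ge5$, so $(x,y,z)$ is the required hyperbolic triple.

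The step I expect to be the main obstacle is the interface between the primitivity analysis and the elimination of the Guralnick--Saxl exceptional cases: one must be certain that no genuine orthogonal example survives once $2d\ge10$, and, more delicately, that the determinant and spinor-norm conditions really confine $x$ and $z$ to $\Omega_{2d}^+(q)$ rather than to $\mathrm{SO}_{2d}^+(q)$, since it is precisely these conditions that pin down the element orders $(q^d-1)/\gcd(q-1,2)$ and $(q+1)/\gcd(q+1,2)$ appearing in the statement.
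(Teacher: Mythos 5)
Your proposal is correct and follows essentially the same route as the paper: a Singer element of $\GL_d(q){:}2\cap G$ and a bireflection in $\Omega_2^-(q)$, Gow's Theorem to produce the triple, irreducibility from the fact that $[V,z]$ meets the two totally singular $d$-spaces trivially, then primitivity, tensor indecomposability and the application of \cite[Theorem 7.1]{GSa} exactly as in Lemma~\ref{OPlusetcAgain}, with Lemma~\ref{subfield} finishing the classical case. The paper's own proof is precisely this, stated more briefly by deferring to the arguments of Lemma~\ref{OPlusetcAgain}.
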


\begin{proof}  We let $x$ be a Singer element of $\GL_d(q):2 \cap G$ which stabilizes  a
pair $(q+1)/\gcd(2,q+1)$ of opposite singular subspaces. Then $x$ is a regular semisimple element in $G$. Let $z\in \Omega_2^-(q)$ be a bireflection of order $(q+1)/\gcd(2,q+1)$ on $V$.  We use Gow's Theorem and suppose  $y$ is a conjugate of $x$  and all of $x$, $y$ and $xz$ are chosen to satisfy $xy=z$.

 We set $X = \langle x,y\rangle$. Then $\langle x\rangle$ preserves a decomposition of $V$ into two  singular subspace $W_1$ and $W_2$ of dimension $d$ and that $\langle x \rangle$ acts irreducibly on these two subspaces which are non-isomorphic as $\langle x \rangle$- modules. Since $W_3=[V,\langle z\rangle]$ is a 2-dimensional non-degenerate space containing only non-singular vectors, we have that $W_3 \cap W_1= 0= W_3 \cap W_2$. It follows that $X$ acts irreducibly on $V$. We argue that $X$ acts primitively and tensor indecomposably on $V$ just as we did in Lemma~\ref{OPlusetcAgain}. The application of \cite{GSa} is also much the same apart from we have assumed $d \ge 5$.
So, if $X$ is a classical group then $X= G$ by Lemma~\ref{subfield} and the other possibilities for $X$  melt away with the arguments from Lemma~\ref{OPlusetcAgain}.
\end{proof}

\section{Classical Beauville groups}\label{SS4}

In this section we prove that the classical groups are Beauville groups.

\subsection{Linear groups}

In this subsection $G=\SL_d(q) $ with $q=p^a$ and $V$ be the  natural $G$-module. Additionally,  we suppose that $Z \le Z(G)$.
\begin{lemma} Assume that $d \ge 7$.
\begin{enumerate}
\item $G/Z$ has a hyperbolic triple of type $(\lambda_{ad,p},\lambda_{ad,p},\lambda_{a(d-1),p})$.
\item  $G/Z$ has a hyperbolic triple of type $$(\lambda_{a(d-3),p}\lambda_{3a,p}, \lambda_{a(d-3),p}\lambda_{3a,p},\lambda_{a(d-2),p} ).$$
\end{enumerate}
In particular, $G/Z$ is a Beauville group.
\end{lemma}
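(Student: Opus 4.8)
My plan is to produce both triples by a single device: Gow's Theorem (Theorem~\ref{G}) to pin down the product, and then the generation criterion Theorem~\ref{maingeneration}(i) to force the two conjugates to generate. Throughout, $G=\SL_d(q)$ is quasisimple for $d\ge 7$, so Gow's Theorem applies. In each part I fix a regular semisimple $x\in G$ and a non-central semisimple $z\in G$ of the prescribed orders; applying Gow's Theorem to $R_1=R_2=x^G$ and the semisimple element $z^{-1}$ yields $x',y'\in x^G$ with $x'y'z=1$. Writing $H=\langle x',y'\rangle$, it then suffices to check that $H$ is irreducible and that it contains elements of orders $\lambda_{ea,p}$ and $\lambda_{fa,p}$ for consecutive $e<f$ in $(d/2,d]$, whence $\gcd(d,e,f)=1$ with one of $e,f$ odd and Theorem~\ref{maingeneration}(i) gives $H\ge\SL_d(q)$, i.e. $H=G$ (the exceptional $d=4$ there being excluded). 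The hyperbolic inequality is automatic: the Zsigmondy bound $\zeta_{n,p}\ge n+1$ (together with $\lambda_{6,2}=9$) makes every order exceed $6$, so the three reciprocals sum to less than $1$.

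For part (i) I take $x$ of order $\lambda_{ad,p}$ in a Singer torus of $G$ (this order divides $(q^d-1)/(q-1)$ since $\zeta_{ad,p}\nmid q-1$); because $\lambda_{ad,p}$ is a primitive prime divisor power for $d$, such $x$ acts irreducibly, is regular semisimple, and makes $H\supseteq\langle x'\rangle$ irreducible for free. For $z$ I take $g\oplus 1$ with $g\in\SL_{d-1}(q)$ acting irreducibly of order $\lambda_{a(d-1),p}$. Then $H$ contains an element of order $\lambda_{ad,p}$ and the element $z$ of order $\lambda_{a(d-1),p}$, so $(e,f)=(d-1,d)$ and Theorem~\ref{maingeneration}(i) applies directly.

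Part (ii) follows the same template, but the first generator is now reducible, which is where the real work lies. I take $x=t_1\oplus t_2$ in a maximal torus of $G$ of type $(d-3,3)$, with $t_1\in\GL_{d-3}(q)$ and $t_2\in\GL_3(q)$ acting irreducibly and contributing the primitive prime divisor powers $\lambda_{a(d-3),p}$ and $\lambda_{3a,p}$; the block determinants are chosen to cancel (they are individually trivial whenever the corresponding Zsigmondy prime is coprime to $q-1$, and the one degenerate value $\lambda_{6,2}=9$, occurring only for $q=4$, is absorbed by a compensating determinant in $t_1$ without changing the total order $\lambda_{a(d-3),p}\lambda_{3a,p}$). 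Since its two irreducible constituents have distinct dimensions $d-3$ and $3$, the element $x$ is regular semisimple, and I take $z=h\oplus I_2$ with $h\in\SL_{d-2}(q)$ irreducible of order $\lambda_{a(d-2),p}$. After Gow's Theorem produces $x',y'$ with $x'y'z=1$, the element $x'$ no longer acts irreducibly, so I must establish irreducibility of $H$ by hand: any proper nonzero $H$-submodule $W$ is invariant under both $x'$ and $z\in H$, and the primitive prime divisor structure forces $\dim W\in\{3,d-3\}$ (from the two irreducible constituents of $x'$) and simultaneously $\dim W\in\{1,2,d-2,d-1\}$ (from the dimension-$(d-2)$ irreducible constituent of $z$). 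These sets are disjoint for $d\ge 7$, so $H$ is irreducible and, with $(e,f)=(d-3,d-2)$, Theorem~\ref{maingeneration}(i) again gives $H=G$. This irreducibility verification is the main obstacle.

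It remains to descend to $G/Z$ and to assemble the Beauville structure. Every order in the first triple is a prime power of $\zeta_{ad,p}$ or $\zeta_{a(d-1),p}$, and every order in the second is built from $\zeta_{a(d-3),p}$, $\zeta_{3a,p}$ and $\zeta_{a(d-2),p}$; since no prime is a Zsigmondy prime for two different exponents and the five exponents $ad,a(d-1),a(d-2),a(d-3),3a$ are pairwise distinct for $d\ge 7$, these two sets of primes are disjoint. Hence the product of the three orders of the first triple is coprime to that of the second, and condition (iii) of Definition~\ref{maindef} holds automatically by the remark following that definition. Moreover $|Z|$ divides $\gcd(d,q-1)$ and hence $q-1$, while none of these Zsigmondy primes divides $q-1$; thus passing to $G/Z$ leaves all element orders, non-centrality, generation and the coprimality intact. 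Therefore the images of the two triples form a Beauville structure and $G/Z$ is a Beauville group.
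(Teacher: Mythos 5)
Your proposal is correct and follows essentially the same route as the paper: in each part, pick a regular semisimple $x$ (irreducible of order $\lambda_{ad,p}$ for (i), block-diagonal in $\SL_{d-3}(q)\times\SL_3(q)$ for (ii)), use Gow's Theorem to force the product of two conjugates into the prescribed semisimple class, and invoke Theorem~\ref{maingeneration}(i) with the consecutive exponents $(d-1,d)$ resp.\ $(d-3,d-2)$; the Zsigmondy-prime disjointness then gives the coprimality needed for the Beauville structure and the passage to $G/Z$. Your explicit verification of the irreducibility of $\langle x',z\rangle$ in (ii) and of the determinant adjustment for $\lambda_{6,2}=9$ fills in details the paper leaves implicit, but the argument is the same.
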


\begin{proof} For (i), select  $x\in G$ of order $r= \lambda_{ad,p}$ and  $z\in G$ of order $s= \lambda_{a(d-1),p}$. Then $x$ is regular semisimple and consequently by Gow's Theorem~\ref{G}  there is a conjugate $y$ of $x$ such that $ xy$  is conjugate to $ z$ in $G$. But then Theorem~\ref{maingeneration}(i) implies that $(x,y,xy)$ is a hyperbolic triple for $G$ of type $(r,r,s)$. This then gives a hyperbolic triple with the same parameters in $G/Z$ and proves (i).

Now assume  $x \in \SL_{d-3}(q)\times \SL_3(q)\le \SL_d(q)$ has order $r=\lambda_{a(d-3),p}\lambda_{3a,p}$ and $z' \in \SL_{d-2}(q)\le \SL_d(q)$ has order $s= \lambda_{a(d-2)}$. Then $x$ is a regular semisimple element in $G$ and so there exists a conjugate, $y \in G$, of $x$  such that $z=xy$ is conjugate to $z'$.  Note that $H = \langle x,z\rangle$ acts irreducibly on $V$. Since $x^{\lambda_{3a,p}}$ has order $\lambda_{a(d-3),p}$ and $z$ has order $\lambda_{a(d-2)}$, it follows from Theorem~\ref{maingeneration}(i) that $(x,y,z)$ is a hyperbolic triple of type
$(r,r,s)$. Hence (ii) holds.

Finally as the hyperbolic triples we have found are coprime and intersect $Z$ trivially, we have that $G/Z$ is a Beauville group.
\end{proof}

\begin{lemma}\label{SL44}
Assume that  $3 \le d \le 6$, $q \ge 7$, $G = \SL_d(q)$ and $Z\le Z(G)$. Then

\begin{enumerate}\item $G/Z$ has a hyperbolic triple $(x,y,z)$ of type $(\frac{q^d-1}{(q-1)n}, \frac{q^d-1}{(q-1)n},\frac{q-1}{(2,q-1)})$ where $n = |Z|$.
\item If $q+1$ is a power of $2$, then $\SL_4(q)$ has a hyperbolic triple $(x,y,z)$ of type $(\lambda_{4a,p}, \lambda_{4a,p},\frac{q-1}{2})$.
\end{enumerate} In particular, we note that $x$ and $y$ are regular semisimple elements with cyclic centralizers and that $z$ is the image of  $\diag(\lambda^2, \lambda^{-2},1, \dots ,1)$  in (i) and an image of $\diag(\lambda^2, \lambda^2,\lambda^{-4}, 1)$ in (ii) where $\lambda $ is a generator for the multiplicative group of $\GF(q)$.
\end{lemma}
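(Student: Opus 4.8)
The plan is to realise each triple by taking $x$ to be a regular semisimple element with cyclic centralizer, taking $z$ to be the prescribed diagonal element, and gluing them with Gow's Theorem~\ref{G}; the real work is then to show that $x$ and the Gow-conjugate $y$ already generate $G=\SL_d(q)$. Fix a generator $\lambda$ of $\GF(q)^\times$. For part~(i) I let $x$ be a Singer cycle of $G$, so $\langle x\rangle$ is the cyclic Singer torus of order $(q^d-1)/(q-1)$, $x$ is regular semisimple with cyclic centralizer $\langle x\rangle$, and since $Z\le\langle x\rangle$ the image of $x$ in $G/Z$ has order $(q^d-1)/((q-1)n)$. I let $z=\diag(\lambda^2,\lambda^{-2},1,\dots,1)$; because $q\ge 7$ the eigenvalues $\lambda^{\pm2},1$ are pairwise distinct, so $z$ is a non-central semisimple element of order $(q-1)/\gcd(2,q-1)$, with $\langle z\rangle\cap Z=1$ (so its order survives in $G/Z$) and with the eigenvalue $\lambda^2$ occurring with $\GF(q)$-multiplicity one. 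As $x$ is regular semisimple, Gow's Theorem~\ref{G} applied with both classes equal to $x^G$ produces $y\in x^G$ with $xy$ conjugate to $z^{-1}$; thus $(x,y,z)$ satisfies $xyz=1$ and has the stated type. The type is hyperbolic since $1/o(z)\le 1/3$ while $1/o(x)=1/o(y)$ is negligible for $q\ge 7$.

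The crux is to prove $H=\langle x,y\rangle=G$. Now $H$ is irreducible, as it contains the irreducibly-acting Singer cycle, and for $d\ge 4$ it contains a power of $x$ of order $\lambda_{da,p}$ (the large Zsigmondy divisor for $e=d$, which divides the Singer order because $\zeta_{da,p}\nmid q-1$). Applying Lemma~\ref{3.2} with $e=d$, the hypothesis $q\ge 7$ kills case~(i), and every configuration of Lemma~\ref{3.2}(iii) forces $e<d$ or a field $q<7$ or demands an impossible relation such as $\lambda_{da,p}=2d+1$, so none is compatible with $e=d$; hence $F^*(H)$ is as in \cite[Examples 2.1, 2.4, 2.8]{GPPS}. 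Lemma~\ref{subfield} removes the proper subfield subgroups. The full Singer order $(q^d-1)/(q-1)$ exceeds the order of every semisimple element of $\Sp_d(q)$, $\Omega_d^\varepsilon(q)$, $\SU_d(q^{1/2})$ (and their small extensions) for $q\ge 7$, eliminating the wrong-type classical groups and the near-simple candidates of Example~2.8; and the extension-field subgroups $\GL_{d/b}(q^b).b$ with $b>1$ are excluded because a $\GF(q)$-rational eigenvalue of a $\GF(q^b)$-linear element must have $\GF(q)$-multiplicity divisible by $b$, whereas $\lambda^2$ has multiplicity one in $z$. This leaves only $H\ge\SL_d(q)$, i.e.\ $H=G$. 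For $d=3$, which is outside the range of Lemma~\ref{3.2}, I instead invoke \cite{AB}: the normaliser $N_G(\langle x\rangle)$ is the unique maximal subgroup of $\SL_3(q)$ containing the Singer cycle, and since $z$ fixes a line it cannot lie in this normaliser, so again $H=G$. The triple then descends to $G/Z$, giving part~(i).

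Part~(ii) runs along the same lines with $d=4$ and $q$ odd (as $q+1$ is a power of $2$, so $q\equiv 3\pmod 4$). I take $x$ of order $\lambda_{4a,p}$, a power of the Singer cycle whose order is the large Zsigmondy divisor for $e=d=4$ and which is therefore still regular semisimple and irreducible on $V$, and I take $z=\diag(\lambda^2,\lambda^2,\lambda^{-4},1)$, a non-central semisimple element of order $(q-1)/2$ with $\det z=1$. Gow's Theorem again supplies $y\in x^G$ with $xy$ conjugate to $z^{-1}$. The generation argument repeats that of part~(i) via Lemma~\ref{3.2}, noting that $q+1$ a power of $2$ rules out the borderline fields $q\in\{9,25\}$ and hence the survivors $2\udot\Alt(7)$ and $4\udot\PSL_3(4)$; here the classical subgroups $\Sp_4(q)$, $\Omega_4^\varepsilon(q)$ are eliminated not by order but by the eigenvalue asymmetry of $z$ (the eigenvalue $\lambda^2$ has multiplicity two while $\lambda^{-2}$ is absent, so $z$ preserves no symplectic or orthogonal form), and the extension field $\GL_2(q^2).2$ is eliminated since $z$ has a $\GF(q)$-rational eigenvalue of odd multiplicity. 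I expect this last elimination of the exotic over-groups of the Singer element for the composite degrees $d=4,6$ and the marginal fields to be the main obstacle; everything else is a routine combination of Gow's Theorem with the subgroup structure assembled in Section~\ref{SS3}.
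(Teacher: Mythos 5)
Your overall strategy is the same as the paper's: take $x$ a Singer element (or, in (ii), its power of order $\lambda_{4a,p}$), take $z$ the prescribed diagonal element, glue with Gow's Theorem, and then eliminate proper irreducible overgroups of $\langle x,y\rangle$ using the \cite{GPPS} machinery together with the huge cyclic subgroup $\langle x\rangle$ and the eigenvalue pattern of $z$. Two of your reroutings are worth noting. First, for $4\le d\le 6$ you filter through Lemma~\ref{3.2} rather than quoting the main theorem of \cite{GPPS} directly as the paper does; this is if anything more careful, since the paper's assertion that $\lambda_{da,p}$ is large for all $q\ge 7$ has exceptions (e.g.\ $(d,q)=(4,8)$ where $\lambda_{12,2}=13$ is small), while your route through Lemma~\ref{3.2} does not need largeness. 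Second, for $d=3$ you replace \cite{GPPS} by Bereczky's theorem \cite{AB} that $N=N_G(\langle x\rangle)$ is the unique maximal overgroup of the Singer cycle; the paper instead pushes $d=3$ through \cite{GPPS}, where the extension field subgroup $\GL_1(q^3){:}3$ is exactly this normalizer. Your elimination of the extension field subgroups via the multiplicity of the $\GF(q)$-rational eigenvalue $\lambda^2$ is essentially the paper's ``distinct non-trivial eigenvalues'' argument, and your form-theoretic elimination of $\Sp_4$ and $\Omega_4^\varepsilon$ in (ii) is exactly the paper's computation $(v,v)=\lambda^4(v,v)$.

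There is, however, one step that fails as stated: the claim for $d=3$ that ``since $z$ fixes a line it cannot lie in $N_G(\langle x\rangle)$.'' Write $N=T{:}3$ with $T$ the Singer torus of order $q^2+q+1$. Every element of $N\setminus T$ has order $3$ (for $t\in T$ and $g$ the Galois element, $(tg)^3=t^{1+q+q^2}=1$), and for $q\equiv 1\pmod 3$ such elements are conjugate to $\diag(1,\omega,\omega^2)$ with $\omega$ a primitive cube root of unity --- they do fix a line. For $q=7$ one has $\lambda^2=\omega$, so $z=\diag(\lambda^2,\lambda^{-2},1)$ is precisely such an element and $z^G\cap N\neq\emptyset$; your stated reason collapses in this case. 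The repair is immediate and uses what you already have: if $H=\langle x,y\rangle\le N$, then $y$, having order $q^2+q+1>3$, must lie in $T$, whence $z=(xy)^{-1}\in T$; but a non-central element of the Singer torus has no eigenvalue in $\GF(q)$, whereas $z$ has eigenvalue $1$. With that one sentence substituted, your argument for $d=3$ goes through, and the rest of the proposal is sound.
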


\begin{proof} (i) We work in $\SL_d(q)$ and let $V$ be its natural module. Let $x'$ be an element of order $(q^d-1)/(q-1)$. Then some power of $x$ has order $\lambda_{da,p}$ and, by the choice of $q$, $\lambda_{da,p}$ is large. Furthermore $x$ is a regular semisimple element in $G$. Therefore from Gow's Theorem there are two conjugates, $x$ and $y$, of $x$ which product to give an element $z=\diag(\lambda^2, \lambda^{-2},1, \dots ,1)$ of order $\frac{q-1}{\gcd(2,q-1)}$. Let $H= \langle x,y\rangle$. We may now employ  the main theorem of \cite{GPPS} as $H$ contains $x$ which powers to $\lambda_{ad,p}$. Since $\lambda_{da,p}$ is large and $d \le 6$, we then see that $F^*(H)$ is either an extension field subgroup, a classical subgroup defined over $\GF(q)$,  or one of the following holds:
\begin{enumerate}
\item $d= 4$ and $F^*(H)$ is isomorphic to a subgroup of $2^{1+4}_\pm .\mathrm O_4^\pm(2)$.
\item $d=4$ and $F^*(H)$ is isomorphic to a subgroup of $ 2\udot \Alt(7)$;
\item $d=6$, $p=2$  and $F^*(H) \cong \G_2(q)$;
\item $d=4$, $p=2$ and $F^*(H) \cong {}^2\B_2(q)$;
 or \item $F^*(H) = \PSL_2(s)$, $s \le 13$.
\end{enumerate}
However, $H$ has elements of order $(q^d-1)/(q-1)$  and $q \ge 7$ and so each of these possibilities fail.   Now suppose that $F^*(H)$ is an extension field subgroup. Then, as $H$ contains $z$, we first see that $H$ must be defined over a quadratic extension, and then, as the non-trivial  eigenvalues of $z$  are distinct, we have a contradiction. Therefore $H$ is a classical subgroup defined over $\GF(q)$. However no such subgroup contains elements of order $(q^d-1)/(q-1)$. Hence $G = H$ and $G$ has a hyperbolic triple of type $(\frac{q^d-1}{(q-1)n}, \frac{q^d-1}{(q-1)n},\frac{q-1}2)$ where $n = |Z|$.

Now suppose that $d=4$, $q \ge 5$ and $q+1$ is a power of 2. The  elements $x$ and $y$ of order $\lambda_{4a,p}$ act irreducibly on $V$ and so can be arranged, by Gow's Theorem, to product to $z$ where $\diag(\lambda^2, \lambda^2,\lambda^2, \lambda^{-6})$. Set $H= \langle x,y\rangle$. Since $q+1$ is a power of $2$ and $q>3$, we have that $\lambda_{4a,p}$ is large. In particular, $\lambda_{4a,p} = k4a+1 \ge 8a+1=9$ and so is at least 13. Now using \cite{GPPS} and Lemma~\ref{subfield}, we see that the only possibility is that $H$ is an extension field subgroup or is a classical group defined over $\GF(q)$. Since $z$ centralizes a $1$-dimensional subspace, $H$ is not contained in an extension field subgroup. Suppose that $H$ preserves some sesquilinear form $(\;,\;)$. Since $\lambda_{4a,p}$ does not divided $\GU_4(q)$, $(\;,\;)$ must be  bilinear. Let $v\in V$ be an eigenvector of $z$ with eigenvalue $\lambda^2$. Then $(v,v)=(v.z,v,z)= \lambda^4(v,v)$. Since $\lambda^4\neq 1$, we must have $(v,v)=0$. So $v$ is singular. Now $v^\perp$ is preserved by $z$. Let $w\in V$ with $(w,v)=1$. Then $w.z= \mu w+x$ for $\mu \in \{\lambda^2,\lambda^{-4}, 1\}$. Now $1=(w,v)= (w.z,v.z)= \mu\lambda^2 \in \{\lambda^4, \lambda^{-2}, \lambda^2\}$ which is impossible. It follows that $H = \SL_4(q)$ as claimed.
\end{proof}

\begin{lemma}
\begin{enumerate}
\item If $5 \le d \le 6$ and $q \ge 7$, then $G/Z$ has a hyperbolic triple of type $(\frac{q+1}{\gcd(2,q+1)}\lambda_{(d-2)a,p},\frac{q+1}{\gcd(2,q+1)}\lambda_{(d-2)a,p},\lambda_{(d-1)a,p})$.
\item Let $G = \SL_4(q)$. If $r$ is an odd prime divisor of  $q+1$, then $G/Z$ has a hyperbolic triple of type $(q^2+q+1, q^2+q+1,r)$.
\end{enumerate}
\end{lemma}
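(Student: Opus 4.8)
The plan is to handle both parts with the template already used in this subsection: exhibit a regular semisimple element $x$ of the desired order lying in a convenient reducible subgroup, use Gow's Theorem~\ref{G} to find a conjugate $y\in x^G$ with $xy$ conjugate to a prescribed semisimple element $z$ of the small third order, set $H=\langle x,y\rangle$, and then force $H=\SL_d(q)$ with the generation machinery. Since $x\cdot y\cdot z^{-1}=1$ with $o(z^{-1})=o(z)$, and since $1/o(x)+1/o(y)+1/o(z)<1$ is immediate for $q\ge 7$ (each reciprocal is at most $1/3$ and at least two are far smaller), the image of $(x,y,z^{-1})$ in $G/Z$ will be the required hyperbolic triple. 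The only bookkeeping is that the orders survive in $G/Z$: in each case $x$ acts irreducibly on a subspace with eigenvalue-order coprime to $q-1$, which forces $\langle x\rangle\cap Z=1$, while $o(z)$ is coprime to $|Z|$; thus the type is exactly as stated, with no $|Z|$-correction.

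For (i) I take $x=x_1\oplus x_2\in\SL_{d-2}(q)\times\SL_2(q)\le\SL_d(q)$, with $x_1$ of order $\lambda_{(d-2)a,p}$ irreducible on a $(d-2)$-space $V'$ and $x_2$ of order $(q+1)/\gcd(2,q+1)$ irreducible on a complementary $2$-space; for $q\ge 7$ both orders are coprime to $q-1$, so $x$ is regular semisimple and has \emph{no} $\GF(q)$-rational eigenvalue. Choosing $z$ of order $\lambda_{(d-1)a,p}$ (realised in $\SL_{d-1}(q)$) and applying Gow gives $H=\langle x,y\rangle$. To see $H$ is irreducible I argue: since $\zeta_{(d-1)a,p}$ is a Zsigmondy prime it divides no $q^i-1$ with $i< d-1$, so $z$ acts trivially on every composition factor of dimension at most $d-2$; hence a reducible $H$ must have composition factors of dimensions $d-1$ and $1$. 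But then $x$ would act by a scalar on the $1$-dimensional factor, i.e. possess a $\GF(q)$-rational eigenvalue, a contradiction. Therefore Theorem~\ref{maingeneration}(i) applies with $e=d-2$, $f=d-1$ (one checks $\gcd(d,d-2,d-1)=1$ and one of $d-2,d-1$ is odd), and as $(d,q)\ne(4,2)$ we obtain $H\ge\SL_d(q)$, whence $H=G$.

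For (ii) I let $x$ be a generator of a Singer cyclic subgroup of $\SL_3(q)\le\SL_4(q)$, of order $q^2+q+1$, irreducible on a $3$-space $V_3$ and trivial on a complementary line $V_1$; it is regular semisimple. The new ingredient is to use the freedom in Gow's Theorem to prescribe the class of $z$: I choose $z\in\SL_4(q)$ of order $r$ acting as a sum of two irreducible $2$-dimensional modules, so that $z$ has \emph{no} $\GF(q)$-rational eigenvalue (possible since $r\mid q+1$ but $r\nmid q-1$, and the determinant is automatically $1$ because $r\mid q+1$). With $H=\langle x,y\rangle$ from Gow, irreducibility is then clean: the $3$-dimensional irreducible constituent of $x$ rules out an $H$-invariant $2$-space, while the absence of a rational eigenvalue of $z$ rules out any $H$-invariant line or hyperplane. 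Now $x$ powers to a $\mathrm{ppd}(4,q;3)$-element, so the main theorem of \cite{GPPS} applies exactly as in Lemma~\ref{SL44}: Lemma~\ref{subfield} removes subfield subgroups; $\zeta_{3a,p}\nmid q^4-1$ removes extension-field subgroups; the cyclotomic factor $q^2+q+1$ (respectively $q_0^2+q_0+1$ when $q=q_0^2$) fails to divide $|\Sp_4(q)|$, $|\mathrm O_4^\pm(q)|$ and $|\SU_4(q_0)|$, removing the remaining classical candidates; and $q^2+q+1$ is too large for the residual almost-simple cases ($\PSL_2(s)$ with $s\le 13$, $2\udot\Alt(7)$, the symplectic-type normaliser, and ${}^2\B_2(q)$). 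Hence $H=\SL_4(q)$ for all but a few small $q$, which are settled by computer.

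The step deserving the most care, and the one I expect to be the main obstacle, is part (ii): unlike the first lemma of this subsection, here $x$ fixes a line, so the $\mathrm{ppd}$-element alone neither forces irreducibility of $H$ nor excludes $\SU_4(q_0)$ (its Zsigmondy prime $\zeta_{3a,p}$ can divide $q_0^3+1$). Both difficulties are overcome by bringing the companion element $z$ and the \emph{full} order $q^2+q+1$ into play, respectively, as described above.
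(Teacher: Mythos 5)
Your proof is correct and follows essentially the same route as the paper: the same choices of $x$ (lying in $\SL_{d-2}(q)\times\SL_2(q)$, resp.\ a Singer element of $\SL_3(q)$) and of $z$, then Gow's Theorem, irreducibility of $\langle x,y\rangle$, and finally Theorem~\ref{maingeneration}(i) for part (i) and the \cite{GPPS} analysis for part (ii); you merely supply details the paper leaves implicit, notably the exclusion of $\SU_4(q_0)$ via the full order $q^2+q+1$. One small slip: in (i) the order $(q+1)/\gcd(2,q+1)$ need not be coprime to $q-1$ (e.g.\ $\gcd(4,6)=2$ when $q=7$), but the conclusion you draw --- that $x_2$ acts irreducibly on the $2$-space and so $x$ has no $\GF(q)$-rational eigenvalue --- still holds, since $(q+1)/2$ does not divide $q-1$ for $q\ge 7$.
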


\begin{proof}
 Assume that $5 \le d \le 6$. Let $x'$ have order $\frac{q+1}{\gcd(2,q+1)}\lambda_{(d-2)a,p}\in \SL_2(q)\times \SL_{d-2}(q)$ and $z$ have order $\lambda_{(d-1)a,p} \in \SL_{d-1}(q)$. Using Gow's Theorem we  arrange that two conjugates $x$ and $y$ of $x'$ product to $xy=z$. Letting $H=\langle x,y\rangle$, we have that $H$ is irreducible on $V$ and so, when $d \in \{5,6\}$, Theorem~\ref{maingeneration} (i) implies that $H= G$. Thus $G/Z$ has a hyperbolic triple of type $(\frac{q+1}{\gcd(2,q+1)}\lambda_{(d-2)a,p},\frac{q+1}{\gcd(2,q+1)}\lambda_{(d-2)a,p},\lambda_{(d-1)a,p})$.

Assume that $d=4$. Let $r$ be an odd prime divisor of $q+1$ and $z \in \SL_2(q)\times \SL_2(q)$ act so that $V$ restricted to $z$ is not homogeneous. Now choose and element $x'$ of order $q^2+q+1$ contained in the subgroup of $G$ isomorphic to $\SL_3(q)$. Then, as usual, we find $x$ and $y$ conjugate to $x'$ such that $xy=z$. Let $H = \langle x,y\rangle$. Since $z$ preserves only $2$-dimensional subspaces  while $x$ preserves a $1$-spaces and a $3$-space, we infer that $H$ acts irreducibly on $V$. Now we note that $x$ powers to an element of order $\lambda_{3a,p}$ and that $\lambda_{3a,p}$ is large.

   Once again, from \cite{GPPS} we get that $G = \langle x,y\rangle $.

\end{proof}

\begin{lemma}\label{lineardim3} Let $G = \SL_3(q)$, $q>3$. Then if $p$ is odd, then $G/Z$ has a hyperbolic triple of type $(p,p,\frac{(q^2-1)}{\gcd(3,q-1)})$ and if $p=2$, then $G/Z$ has a hyperbolic triple of type $(4,2,\frac{q^2-1}{\gcd (3,q-1)})$.
\end{lemma}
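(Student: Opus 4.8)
The plan is to build the triple around one explicit regular semisimple element together with two explicit unipotent factors, and then to force generation using the classification of the irreducible subgroups of $\SL_3(q)$. Note that Gow's Theorem~\ref{G} is unavailable here, since two of the three classes are unipotent rather than regular semisimple. First I would fix a regular semisimple $z\in G=\SL_3(q)$ acting irreducibly on a $2$-dimensional subspace $V_2$ of $V$, as a Singer cycle of $\GL_2(q)$ via an eigenvalue $\omega\in\GF(q^2)^\times$ of order $q^2-1$, and as the scalar $\omega^{-(q+1)}$ on a complementary line $V_1$, so that $\det z=1$; its characteristic polynomial is $(t-\omega^{-(q+1)})$ times the irreducible quadratic minimal polynomial of $\omega$ over $\GF(q)$. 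Since the three eigenvalues $\omega,\omega^q,\omega^{-(q+1)}$ are pairwise distinct, $z$ is cyclic (nonderogatory), and its image in $G/Z$ has order exactly $(q^2-1)/\gcd(3,q-1)$, the third entry of the desired type. Because $|Z|=\gcd(3,q-1)$ is coprime to $p$, orders of $p$-elements are unchanged on passing to $G/Z$.

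The computational heart is to write $z^{-1}$ as a product $xy$ of two unipotent elements of prescribed Jordan type: for $p$ odd, both $x$ and $y$ of order $p$ (a regular unipotent has order $p$ in this range), and for $p=2$, a regular unipotent $x$ of order $4$ and a transvection $y$ of order $2$. I would do this explicitly: putting $z^{-1}$ in the companion form of its characteristic polynomial, one reads off an $LU$-type factorisation $z^{-1}=xy$ with $x$ lower unitriangular and $y$ upper unitriangular, then adjusts the free entries so that $x$ is a single Jordan block and $y$ acquires the required type; the entries depend polynomially on the coefficients of the characteristic polynomial, and checking that the orders are as claimed is a direct matrix calculation. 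Taking $z$ as the third element gives $xyz=1$ with $z\in\langle x,y\rangle$, and the hyperbolic inequality $1/o(x)+1/o(y)+1/o(z)<1$ is immediate: in the odd case it reads $2/p+\gcd(3,q-1)/(q^2-1)<1$, valid for $p\ge 3$ and $q>3$, and in the even case $1/4+1/2+\gcd(3,q-1)/(q^2-1)<1$.

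It remains to prove $H=\langle x,y\rangle=G$ (whence its image generates $G/Z$), which is the main obstacle. First I would establish irreducibility: the regular unipotent $x$ fixes a unique incident flag (a line inside a hyperplane), whereas $z\in H$ stabilises the generic decomposition $V_1\oplus V_2$ and fixes neither member of that flag, so $H$ preserves no proper subspace. Thus $H$ is an irreducible subgroup of $\SL_3(q)$ containing the nontrivial $p$-element $x$ and an element $z$ whose order in $G/Z$ exceeds $q+1$ (indeed $(q^2-1)/3>q+1$ for $q\ge 5$, while the borderline $q=4$ lies in characteristic $2$, where $x$ has order $4$). Running through the irreducible subgroups of $\SL_3(q)$ (citing the maximal subgroup structure in \cite{KL} together with Mitchell's classification): the imprimitive monomial subgroups and the Singer normaliser $(q^2+q+1){:}3$ have order coprime to $p$ and so cannot contain $x$; the orthogonal subgroup $\mathrm{SO}_3(q)$, of element orders at most $q+1$, and the subfield/unitary subgroups $\SL_3(q_0)$, $\SU_3(q_0)$ have no element of the order of $z$ (the subfield cases are also excluded by Lemma~\ref{subfield}, using that $z$ powers to a Zsigmondy $\langle 2,q\rangle$-prime whenever one exists, the sole exception $q+1=2^k$ forcing $q$ prime); and the finitely many exceptional almost simple subgroups ($\PSL_2(7)$, $\Alt(6)$, $\Alt(7)$ and the Hessian groups) have bounded order and are ruled out once $o(z)$ surpasses their element orders. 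Hence $H=\SL_3(q)$ for all but finitely many small $q$, which I would dispatch by direct computation in \cite{GAP} or \cite{Magma}. The real difficulty is the uniform exclusion of $\mathrm{SO}_3(q)$ and of the exceptional subgroups, together with the honest verification that the chosen factorisation realises the stated unipotent Jordan types; the irreducibility and hyperbolicity checks are routine.
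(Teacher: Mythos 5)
Your proposal is correct and follows essentially the same route as the paper: two explicitly parametrised unitriangular matrices $x$ (lower, regular unipotent of order $p$, resp.\ $4$) and $y$ (upper, a transvection) whose product is forced, by matching characteristic polynomials, to be conjugate to a semisimple element of order $q^2-1$, after which irreducibility plus the presence of a cyclic subgroup of order $q^2-1$ yields $\langle x,y\rangle=\SL_3(q)$. The only difference is one of detail: the paper verifies directly that the two relevant coefficients of the target characteristic polynomial are distinct (so the free parameter $a$ is nonzero and $x$ really is a regular Jordan block), and compresses the generation step into a single sentence where you spell out the list of irreducible subgroups.
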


\begin{proof} Suppose that $q>3$. Let
$x=\left(\begin{array}{ccc}1&0&0\cr a&1&0\cr b&1&1
\end{array}\right)$  with $a, b \in \GF(q)^\#$
and
$y= \left(\begin{array}{ccc}1&0&1\cr 0&1&0\cr 0&0&1
\end{array}\right).$

Then $xy$ has characteristic polynomial $1-(b+3-a)x+(3+b)x^2-x^3$. Now let $\lambda \in \GF(q)$ be a generator of the multiplicative group and choose $m \not\in \{\mu+\mu^{-1}\mid \mu \in \GF(q)\}$. This is possible as $q\neq 3$. Then $x^2-\lambda m x+ \lambda^2$ is an irreducible polynomial over $\GF(q)$. Now the matrix $z=\left(\begin{array}{ccc}0&-\lambda&0\cr \lambda&\lambda m&0\cr 0&0&\lambda^{-2}\end{array}\right)$ has characteristic polynomial $(\lambda^{-2}-x)(x^2-\lambda m x+ \lambda^2)= -x^3+(\lambda m +\lambda^{-2})x^2-(\lambda^{-1}m +\lambda^2)x +1$. Since $\lambda$ is a generator of $\GF(q)$ and $q>3$, $\lambda^2\neq 1$, hence $\lambda m +\lambda^{-2} \neq \lambda^{-1}m +\lambda^2$. It follows that we may select $a$ and $b$ so that $xy$ and $z$ have the same characteristic polynomial and we do this. Thus $xy$ has order $q^2-1$.

Let $H = \langle x,y\rangle$. Then $H$ acts irreducibly on the natural module for $G$.  Since $H$ has a cyclic subgroup of order $q^2-1$ we conclude that $G=H$.
\end{proof}

\begin{lemma}\begin{enumerate}
\item $\SL_3(3)$ and $\SL_3(2)$ are Beauville groups.
\item $\SL_4(q)$ is a Beauville group for $q\le 16$.
\item $\SL_5(q)$ and $\SL_6(q)$ are Beauville groups for $q\le 7$.
\end{enumerate}
\end{lemma}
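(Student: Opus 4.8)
The groups listed are the remaining small cases: the constructions of the earlier lemmas, resting on Gow's Theorem~\ref{G} and the generation result Theorem~\ref{maingeneration}, produce Beauville structures once $d$ and $q$ are large enough, and the bounds $q\le 16$ (for $d=4$) and $q\le 7$ (for $d=5,6$), together with $\SL_3(2)$ and $\SL_3(3)$, delimit the finitely many groups left over. To put a Beauville structure on such a $G$ (Definition~\ref{maindef}) it suffices to exhibit two hyperbolic generating triples $(x_1,y_1,z_1)$ and $(x_2,y_2,z_2)$ of coprime type: as observed in the Introduction, if $o(x_1)o(y_1)o(z_1)$ is coprime to $o(x_2)o(y_2)o(z_2)$ then condition (iii) holds automatically, since conjugate elements share an order and any common nontrivial power-order would have to divide both products. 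So the plan is to locate two such coprime hyperbolic triples in each listed group.

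My approach would be, for each $G$, to first read off the spectrum of element orders and the conjugacy classes together with their power maps (all these groups are tiny --- for instance $|\SL_3(2)|=168$ and $|\SL_3(3)|=5616=2^4\cdot 3^3\cdot 13$ --- so the full class data is at hand), and then to pick two target types $(l_1,m_1,n_1)$ and $(l_2,m_2,n_2)$ that are hyperbolic (each $\sum 1/o<1$, which is automatic once the orders are moderately large) and satisfy $\gcd(l_1m_1n_1,l_2m_2n_2)=1$. For the existence of a triple of a chosen type I would use the class-algebra structure constant $n(C_1,C_2,C_3)=\frac{|C_1||C_2||C_3|}{|G|}\sum_{\chi\in\Irr(G)}\frac{\chi(g_1)\chi(g_2)\chi(g_3)}{\chi(1)}$: positivity guarantees a triple with $g_1g_2g_3=1$, and subtracting the analogous contributions from the (known, finitely many conjugacy classes of) maximal subgroups of $G$ guarantees that one such triple actually generates. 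Since one entry of each triple may be taken to be a Singer-type element of order $(q^d-1)/((q-1)|Z|)$ and the others built from classes whose orders involve the complementary factors $q\pm1$ or a different Zsigmondy prime, coprimality of the two products is usually easy to arrange, and Gow's Theorem~\ref{G} often supplies the product element outright when the first two entries are regular semisimple.

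Concretely I would target, for example, a $(7,7,7)$ triple together with a $(3,3,4)$ triple in $\SL_3(2)\cong\PSL_2(7)$ (products $343$ and $36$, coprime); a $(13,13,\ast)$ triple against an $(8,8,\ast)$ triple in $\SL_3(3)$; and for the linear families a triple based on the Singer class against one based on classes of order dividing $q^2-1$ or $q^3-1$, adjusting the third entry to keep the two products coprime. In the handful of cases where no coprime pair of hyperbolic types is available, I would instead verify condition (iii) by hand, checking directly from the power maps on the conjugacy classes that no nontrivial power of a first-triple element is conjugate to a power of a second-triple element.

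The main obstacle is not the existence of products --- that is handed to us by Gow's Theorem or by a positive structure constant --- but rather \emph{generation}: ensuring that the triples we write down genuinely generate $G$ rather than lying inside a proper subgroup, and, in the non-coprime cases, discharging the disjointness condition (iii) explicitly. For these small $G$ the maximal subgroups are completely known, so both are finite, decidable verifications, and the cleanest route (the one adopted here) is to run the enumeration directly in GAP~\cite{GAP} or Magma~\cite{Magma}, which confirms in each case the presence of two disjoint hyperbolic generating triples and hence a Beauville structure.
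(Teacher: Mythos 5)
Your proposal is correct and is essentially the paper's own argument: the paper simply verifies these finitely many small groups by computer, recording two generating hyperbolic triples of coprime types for each (Table~\ref{small}), exactly the strategy you describe. The extra scaffolding you give (structure constants, maximal subgroup checks) is a reasonable account of what such a computation does, but the substance is the same computational verification.
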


\begin{proof} This was easily checked using computer. The types $(l,m,n)$ and $(l_1,m_2,n_1)$ of the triples discovered are listed in Table~\ref{small}.

\begin{table}[h]\caption{Beauville triple types in $\SL_d(q)$ for $d$ and $q$ small.}\label{small}\vskip 5mm
\begin{tabular}{|c|cc|}
\hline
$(d,q)$&$(l,m,n)$&$(l_1,m_1,n_1)$\cr
\hline

$ (3,2) $&$( 4 , 4 , 4 )$&$( 3 , 3 , 7 )$\cr
$ (3,3) $&$( 4 , 4 , 8 )$&$( 3 , 3 , 13 )$\cr
$ (4,2) $&$( 4 , 4 , 4 )$&$( 3 , 3 , 15 )$\cr
$(4, 3) $&$( 8 , 8 , 8 )$&$( 9 , 9 , 13 )$\cr
$(4, 4) $&$( 4 , 4 , 17 )$&$( 3 , 3 , 15 )$\cr
$(4, 5) $&$( 4 , 4 , 31 )$&$( 3 , 3 , 13 )$\cr
$(4, 7) $&$( 16 , 16 , 14 )$&$( 9 , 9 , 57 )$\cr
$ (4,8)$&$( 4 , 4 , 511 )$&$( 9 , 9 , 195 )$\cr
$(4, 9 )$&$( 4 , 4 , 205 )$&$( 9 , 9 , 91 )$\cr
$(4, 11) $&$( 8 , 8 , 10 )$&$( 3 , 3 , 183 )$\cr
$(4, 13) $&$( 8 , 8 , 244 )$&$( 9 , 9 , 5 )$\cr
$ (4,16) $&$( 4 , 4 , 455 )$&$( 3 , 3 , 4369 )$\cr
$(5, 2 )$&$( 4 , 4 , 14 )$&$( 3 , 3 , 15 )$\cr
$(5, 3 )$&$( 8 , 8 , 80 )$&$( 9 , 9 , 121 )$\cr
$(5, 4 )$&$( 4 , 4 , 14 )$&$( 9 , 9 , 255 )$\cr
$(5, 5 )$&$( 4 , 4 , 781 )$&$( 3 , 3 , 93 )$\cr
$(5, 7 )$&$( 16 , 16 , 2801 )$&$( 9 , 9 , 1197 )$\cr
$(6, 2 )$&$( 4 , 4 , 28 )$&$( 9 , 9 , 15 )$\cr
$(6, 3 )$&$( 8 , 8 , 80 )$&$( 9 , 9 , 121 )$\cr
$(6, 4 )$&$( 4 , 4 , 91 )$&$( 3 , 3 , 33 )$\cr
$(6, 5 )$&$( 8 , 8 , 10 )$&$( 3 , 3 , 651 )$\cr
$(6, 7 )$&$( 8 , 8 , 2800 )$&$( 9 , 9 , 8403 )$\cr\hline
\end{tabular}\end{table}

\end{proof}

\begin{theorem}\label{SLThm}  Suppose that $d \ge 3$, $q=p^a$, $G = \SL_d(q)$ and $Z \le Z(G)$. Then  $G = \SL_d(q)/Z$ is a Beauville group.
\end{theorem}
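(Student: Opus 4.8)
The statement is a clean-up theorem: it asserts that $\SL_d(q)/Z$ is a Beauville group for \emph{all} $d\ge 3$, all $q$, and all $Z\le Z(G)$, so the plan is to assemble, for each pair $(d,q)$, two of the hyperbolic triples for $G/Z$ already produced in the preceding lemmas into a single Beauville structure. Recall that a Beauville structure is just a pair of hyperbolic triples satisfying condition (iii) of Definition~\ref{maindef}, and that (iii) holds automatically once the two types have coprime products $l_1m_1n_1$ and $l_2m_2n_2$, by the observation recorded after that definition. Thus the whole proof reduces to a finite case analysis in $d$: in each case choose two triples whose types are as nearly coprime as possible, and then verify (iii).

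I would split on the dimension. For $d\ge 7$ the first lemma of this subsection already exhibits two triples, of types $(\lambda_{ad,p},\lambda_{ad,p},\lambda_{a(d-1),p})$ and $(\lambda_{a(d-3),p}\lambda_{3a,p},\lambda_{a(d-3),p}\lambda_{3a,p},\lambda_{a(d-2),p})$; these are built from the distinct Zsigmondy primes attached to the pairwise non-dividing exponents $d,d-1,d-2,d-3,3$, so the types are coprime and nothing further is needed. For $5\le d\le 6$ with $q\ge 7$ I would pair Lemma~\ref{SL44}(i), which furnishes a Singer-type triple built from $\frac{q^d-1}{q-1}$ and $\frac{q-1}{(2,q-1)}$, with the triple of type $(\frac{q+1}{(2,q+1)}\lambda_{(d-2)a,p},\cdot,\lambda_{(d-1)a,p})$ from the subsequent lemma. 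For $d=4$ I would pair Lemma~\ref{SL44}(i) or (ii) with the triple $(q^2+q+1,q^2+q+1,r)$, $r\mid q+1$, splitting according to whether $q+1$ is a power of $2$. For the prime dimension $d=3$ (with $q>3$) I would pair the unipotent triple of Lemma~\ref{lineardim3} with the Singer triple supplied by Lemma~\ref{primedegree}. All remaining small fields --- $\SL_3(q)$ for $q\le 3$, $\SL_4(q)$ for $q\le 16$, and $\SL_5(q),\SL_6(q)$ for $q\le 7$ --- are disposed of by the computer lemma.

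The arithmetic heart of each case is the coprimality bookkeeping, which I would organise through the cyclotomic factorisation $q^k-1=\prod_{j\mid k}\Phi_j(q)$. The two chosen triples are arranged so that their element orders are supported on disjoint families of factors $\Phi_j(q)$; by Lemma~\ref{cyclo} two such factors $\Phi_b,\Phi_c$ share a prime only when one index divides the other with quotient a power of that prime, and the Zsigmondy primes involved are in any case distinct and $\equiv 1$ modulo their exponent, which rules out almost all overlaps. The genuine obstacle, and the step I expect to be hardest, is the residual overlap in the even-dimensional cases: when $d$ is even, $\Phi_2(q)=q+1$ divides both the Singer order $\frac{q^d-1}{q-1}$ and the companion order, so the types are not literally coprime. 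Here I would abandon coprimality and check (iii) directly by comparing eigenvalue data: for $r\mid q+1$ the $r$-part of the Singer-type element is anisotropic, with eigenvalues of equal multiplicity and no fixed vector, whereas the corresponding $r$-part of the companion element has either a large fixed space (the $5\le d\le 6$ construction) or four distinct eigenvalues of multiplicity one (the $d=4$ construction, where the block is non-homogeneous), so the two are not $G$-conjugate, and the same multiplicity comparison disposes of the cross pairs.

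Finally, because $Z$ ranges over \emph{all} central subgroups, I would pay particular attention to scalar powers: a Singer element of order $\frac{q^d-1}{q-1}$ has a power equal to a non-trivial scalar whenever $\gcd(d,q-1)>1$, and such a scalar is conjugate only to itself, so I must ensure that the companion triple does not meet $Z(G)$ in the same class once we pass to $G/Z$. Tracking these scalar powers through the two extreme quotients $G$ (where $Z=1$) and $\PSL_d(q)$ (where $Z=Z(G)$) --- checking that each shared central element either becomes trivial in $G/Z$ or carries distinct eigenvalue patterns in the two triples --- is the delicate point, and the $d=4$, $d=3$ cases with $\gcd(d,q-1)>1$ are where I would expect to spend most of the effort. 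Once this is settled, condition (iii) holds in every case and $\SL_d(q)/Z$ is a Beauville group.
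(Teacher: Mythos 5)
Your strategy is exactly the paper's: the printed proof of this theorem is a single sentence deferring to the lemmas of the subsection, and your assembly --- the first lemma for $d\ge 7$, Lemma~\ref{SL44} and its companion for $4\le d\le 6$, Lemma~\ref{lineardim3} paired with the Singer triple for $d=3$, the computational lemma for small fields, with condition (iii) checked via disjointness of cyclotomic supports --- is the intended reading. For $d\ge 5$ your fixed-space comparison does dispose of the $\Phi_2(q)$ overlap, and your insistence on tracking scalar powers is the right instinct. The problem is that in the two cases you yourself single out as delicate, the check you describe does not close the argument, and the subsection supplies no alternative triples.

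Concretely: for $d=3$ and $q\ge 7$ the only available triples are the unipotent one of Lemma~\ref{lineardim3}, whose third member generates a cyclic group of order $q^2-1$, and the Singer triple of Lemma~\ref{SL44}(i), whose members generate cyclic groups of orders $(q^2+q+1)/n$ and $(q-1)/\gcd(2,q-1)$. If $q\equiv 1\pmod 4$, both the group of order $q^2-1$ and the group of order $(q-1)/2$ contain an involution with eigenvalues $\{-1,-1,1\}$, and these are $\SL_3(q)$-conjugate; if $3\mid q-1$ and $Z=1$, both the Singer group (which always contains $Z(G)$) and the group of order $q^2-1$ contain the non-trivial scalars of $Z(G)$ (one computes $z^{(q^2-1)/3}=\lambda^{2(q-1)/3}I$). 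In either case a non-identity power of one triple is conjugate to a power of the other, so condition (iii) fails: the shared elements neither become trivial in $G/Z$ nor carry distinct eigenvalue patterns. For $d=4$ with $q+1$ a power of $2$ and $q>16$, your prescribed companion $(q^2+q+1,q^2+q+1,r)$ does not exist, since there is no odd prime $r\mid q+1$; and the remaining pair, Lemma~\ref{SL44}(i) and (ii), cannot be combined, because the Singer element of order $(q+1)(q^2+1)/n$ has a power of order $\lambda_{4a,p}$ whose eigenvalues form a full Frobenius orbit and which is therefore $\SL_4(q)$-conjugate to a suitable power of the irreducible element of order $\lambda_{4a,p}$ from part (ii). These defects are inherited from the paper, whose one-line proof glosses over them; but a complete argument needs a genuinely new triple (or an appeal to the known results on $\PSL_3(q)$ and $\SL_3(q)$) in each of these residual cases, not merely the eigenvalue comparison you propose.
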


\begin{proof}
This is the cumulative outcome of combining the results of this subsection.
\end{proof}

\subsection{Unitary groups}

In this subsection we assume that $G = \SU_d(q)$ with $d \ge 3$, $q= p^{a/2}$. We view $G$ as a subgroup of $\GL_d(q^2)$ acting on the natural unitary space $V$. Further we let $Z \le Z(G)$. We note that when $d$ is odd, our choice of notation implies that $\lambda_{da,p}$ divides $|G|$ as the Singer elements in $G$ have order $q^d+1$. . 

\begin{lemma}\label{unitary1} Assume   $d \ge 8$.  Let $k$ be odd with  $ d/2< k< d-2$. Then $G$ has a  hyperbolic triple of type $(\lambda_{ka,p}\lambda_{(d-k)a,p}, \lambda_{ka,p}\lambda_{(d-k)a,p}, \lambda_{(k+2)a,p})$.
\end{lemma}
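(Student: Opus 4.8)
The plan is to follow the template used repeatedly in this section: build a regular semisimple element of the prescribed order as a sum of irreducible pieces, realise a suitable product via Gow's Theorem~\ref{G}, prove the group generated is irreducible, and finish with Theorem~\ref{maingeneration}(ii). First I would fix an orthogonal decomposition $V = V_1 \perp V_2$ of the natural unitary space with $\dim V_1 = k$ and $\dim V_2 = d-k$, and choose $x = x_1x_2 \in \SU_k(q)\times\SU_{d-k}(q) \le \SU_d(q)$, where $x_1$ has order $\lambda_{ka,p}$ and acts irreducibly on $V_1$ and $x_2$ has order $\lambda_{(d-k)a,p}$ and acts irreducibly on $V_2$. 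Such elements exist inside the Singer subgroups of $\SU_k(q)$ and $\SU_{d-k}(q)$ (of orders $(q^k+1)/(q+1)$ and $(q^{d-k}+1)/(q+1)$): since $k$ and $d-k$ are odd, $\zeta_{ka,p}$ divides $q^k+1$ and $\zeta_{(d-k)a,p}$ divides $q^{d-k}+1$, while neither divides $q+1$, so both prime powers divide the relevant Singer orders; moreover $q^2 = p^a$ has multiplicative order $k$ (resp. $d-k$) modulo these primes, which is exactly what makes the action on $V_i$ irreducible. Because $V_1$ and $V_2$ are non-isomorphic irreducible $\langle x\rangle$-modules (distinct dimensions, as $k > d/2 > d-k$), the centraliser of $x$ in $\GU_d(q)$ is the product of the two field-extension tori and hence has order coprime to $p$; thus $x$ is regular semisimple. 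The sole purpose of the summand $x_2$ is to secure this regular semisimplicity so that Gow's Theorem applies.

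Next I would pick a non-central semisimple target $z \in \SU_{k+2}(q) \le \SU_d(q)$ of order $\lambda_{(k+2)a,p}$ acting irreducibly on a non-degenerate $(k+2)$-dimensional subspace $V_3$ and trivially on $V_3^\perp$; here $k+2$ is odd and $k+2 < d$, and $\zeta_{(k+2)a,p} \nmid q+1$ guarantees that $z$ is not scalar, so $z$ is non-central. Applying Theorem~\ref{G} with $R_1 = R_2 = x^G$ and $s = z$ produces conjugates $x$ and $y$ of our element with $xy = z$, and I set $H = \langle x,y\rangle \le \SU_d(q)$.

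The step I expect to carry the real content is the irreducibility of $H$. I would argue that any proper nonzero $H$-invariant subspace is in particular $\langle x\rangle$-invariant, and since $x$ acts as a sum of two non-isomorphic irreducibles, the only proper nonzero $\langle x\rangle$-invariant subspaces are $V_1$ and $V_2$, of dimensions $k$ and $d-k$, both strictly less than $k+2$ (using $k > d/2$, whence $d-k < k+2$). But $z = xy \in H$ has an irreducible constituent of dimension $k+2$, namely $V_3$ (equivalently, a $\zeta_{(k+2)a,p}$-eigenvalue has degree $k+2$ over $\GF(q^2)$). If $V_1$ or $V_2$ were $z$-invariant, then every composition factor of $z$ on $V$ would have dimension at most $\max(k,d-k) = k < k+2$, contradicting the existence of the $(k+2)$-dimensional constituent. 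Hence $H$ admits no proper nonzero invariant subspace and is irreducible.

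Finally, $H$ contains an element of order $\lambda_{ka,p}$ (a power of $x$) and the element $z$ of order $\lambda_{(k+2)a,p}$. Taking $e = k$ and $f = k+2$, both odd, we have $d/2 < e < f \le d$ and $\gcd(d,e,f) = \gcd(d,k,k+2) = 1$ because $\gcd(k,k+2) = 1$ for $k$ odd, so Theorem~\ref{maingeneration}(ii) forces $H \ge F^*(\GU_d(q)) \cong \SU_d(q)$, giving $H = G$. The triple $(x,y,z^{-1})$ then satisfies $xyz^{-1} = 1$, generates $G$, and has type $(\lambda_{ka,p}\lambda_{(d-k)a,p},\ \lambda_{ka,p}\lambda_{(d-k)a,p},\ \lambda_{(k+2)a,p})$. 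Since $k \ge 5$ (as $k$ is odd and $k > d/2 \ge 4$), all three orders are at least $6$, so $1/o(x)+1/o(y)+1/o(z^{-1}) \le \tfrac12 < 1$ and condition (ii) of Definition~\ref{maindef} holds; this is a hyperbolic triple of the required type.
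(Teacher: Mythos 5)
Your argument is sound when $d$ is even, and there it coincides with the paper's proof (same decomposition $V=V_1\perp V_2$, same use of Gow's Theorem and of Theorem~\ref{maingeneration}(ii) with $e=k$, $f=k+2$). The gap is in the other case: you assert that ``$k$ and $d-k$ are odd'', which is false when $d$ is odd, since then $d-k$ is even — and this case genuinely occurs; the application in Lemma~\ref{unitary} invokes the present lemma with $k=d-4$ for $d\ge 9$ odd, so $d-k=4$. When $d-k$ is even, $\SU_{d-k}(q)$ has no Singer torus of order $(q^{d-k}+1)/(q+1)$; worse, $\zeta_{(d-k)a,p}$ divides $q^{d-k}+1$ but divides none of the factors $q^i-(-1)^i$, $i\le d-k$, of $|\GU_{d-k}(q)|$ (divisibility of $q^i-1=p^{ia/2}-1$ would force $2(d-k)\mid i$, and divisibility of $q^i+1$ would force $i=d-k$, where the sign is wrong for $d-k$ even). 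So the element $x_2$ your construction requires — of order $\lambda_{(d-k)a,p}$ and irreducible on a non-degenerate $(d-k)$-dimensional unitary space — does not exist, and the element $x$ of the prescribed order cannot be built this way.

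The paper's proof splits into the two parities of $d$. For $d$ even it does exactly what you do. For $d$ odd it takes $x_2$ instead inside the Levi subgroup $\SL_{(d-k)/2}(q^2)\le \SU_{d-k}(q)$ stabilising a pair of opposite maximal totally isotropic subspaces of $V_2$, so that $V_2$ becomes a sum of two non-isomorphic irreducible $\langle x_2\rangle$-modules; this still renders $x$ regular semisimple and keeps the list of $\langle x\rangle$-invariant subspaces short enough for the irreducibility argument to go through. (Even there one must watch the arithmetic: the natural cyclic subgroup of that Levi has order $q^{d-k}-1$, whose primitive part is $\lambda_{(d-k)a/2,p}$ rather than $\lambda_{(d-k)a,p}$, so the type in the odd-$d$ case needs care — but the essential point for your write-up is that the even-$(d-k)$ case demands a different construction, which you do not supply.) The remaining steps of your proposal — regular semisimplicity of $x$, the appeal to Gow's Theorem, the invariant-subspace analysis via the $(k+2)$-dimensional constituent of $z$, and the application of Theorem~\ref{maingeneration}(ii) using $\gcd(d,k,k+2)=1$ — are correct and agree with the paper.
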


\begin{proof} 
Since $k$ is odd, and $k+2$ are coprime. Let $r = \lambda_{ka,p}$ and $s = \lambda_{(k+2)a,p}$. Let $x_1$ be an element of $\SU_k(q)$ of order $r$, $z' \in \SU_{k+2}(q)$ have order $s$ and identify these as elements of $\SU_d(q)$ using the obvious embedding. If $d$ is even, then $d-k \ge 3$ and we select an element $x_2$ of order $\lambda_{(d-k)a,p}$  in $\SU_{d-k}(q)$. We then define $x=x_1x_2 \in \SU_{k}(q)\times \SU_{d-k}(q)$. If $d$ is odd, then $d-k \ge 4$ and choose an element $x_2$ in $\SL_{(d-k)/2}(q^2) \le \SU_{d-k}(q)$ of order $\lambda_{(d-k)a,p}$ which acts on the $d-k$ unitary space  such that the restriction to $\langle x_2\rangle$ is a direct sum of two non-isomorphic $\langle x_2\rangle$-modules. Then set $x= x_1x_2 \in \SU_k(q)\times \SU_{d-k}(q)$.

Since $x$ is a regular semisimple element of $\SU_d(q)$ we may find a conjugate $y$ of $x'$ in $\SU_d(q)$ such that $z=xy$ is conjugate to $z'$.  By the choice of $x$ and $z'$, $H = \langle x,z\rangle$ acts irreducibly on $V$ and $H$ contains elements of order $r$ and $s$. Thus Theorem~\ref{maingeneration} (ii) implies that $(x,y,z)$ is a hyperbolic triple of type $(\lambda_{ka,p}\zeta_{(d-k)a,p}, \lambda_{ka,p}\zeta_{(d-k)a,p}, \lambda_{(k+2)a,p})$. Thus the lemma holds.
\end{proof}

We recall that a \emph{bireflection} is an element of $G$ with $\dim [V,g]=2$.

\begin{lemma}\label{unitary2}  Suppose that $G=\SU_{2d+1}(q)$ and let $z\in G$ be semisimple bireflection. Then there exist Singer elements $x$ and $y$ such that $xy=z$ and $G= \langle x,y\rangle$.
\end{lemma}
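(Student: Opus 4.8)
The plan is to follow the template established in Lemma~\ref{OPlusetcAgain}, exploiting the fact that $z$ is a bireflection so that the Guralnick--Saxl classification \cite[Theorem 7.1]{GSa} becomes available. First I would fix the conjugacy class $R$ of Singer elements of $G=\SU_{2d+1}(q)$. A Singer element has order $(q^{2d+1}+1)/(q+1)$ and its centralizer is the cyclic Singer torus, whose order is coprime to $p$; hence $R$ consists of regular semisimple elements. Since a semisimple bireflection $z$ has $\dim[V,z]=2\neq 2d+1$, it is neither the identity nor a scalar, so it is a non-central semisimple element. Gow's Theorem~\ref{G}, applied with $R_1=R_2=R$ and $s=z$, then produces Singer elements $x,y\in R$ with $xy=z$. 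Setting $H=\langle x,y\rangle$, it remains to prove $H=G$.

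The second step is to establish that $H$ acts irreducibly, primitively and tensor-indecomposably on $V$. I would first pass to the $\zeta_{(2d+1)a,p}$-part $w$ of $x$: the eigenvalues of $w$ are primitive $\zeta$-th roots of unity for $\zeta=\zeta_{(2d+1)a,p}$, and since the order of $q^2=p^a$ modulo $\zeta$ equals $(2d+1)a/a=2d+1=\dim_{\GF(q^2)}V$, the minimal polynomial of $w$ over $\GF(q^2)$ has full degree $2d+1$. Hence $w$, and therefore $\langle x\rangle$ and a fortiori $H$, acts irreducibly on $V$. Primitivity and tensor-indecomposability follow from the order of $w$ exactly as in Lemma~\ref{OPlusetcAgain}: as $\zeta>2d+1$, the prime-order element $w$ can neither permute the blocks of an imprimitivity decomposition nor the factors of a proper tensor decomposition without fixing a proper nonzero subspace, contradicting its irreducible action.

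With these hypotheses in place I would apply \cite[Theorem 7.1]{GSa} to the bireflection $z\in H$: either $H$ is a classical group in its natural representation, or $F^*(H)$ is a cover of an alternating group on a deleted permutation module, or one of finitely many exceptional configurations in small dimension occurs. The alternating possibility is eliminated by Lemma~\ref{altnorder}: as $x$ is regular semisimple with $C_V(x)=0$, its order would be bounded by a quantity of size $(\dim V+1)^2/4$, whereas $(q^{2d+1}+1)/(q+1)$ is far larger. The exceptional configurations in \cite{GSa} occur only in even or very small dimensions, so for odd $2d+1$ they are excluded on dimensional grounds, the handful of genuinely small cases (and small fields) being dispatched by the large order of the Singer element or by direct computation. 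Thus $H$ is a classical group in its natural representation; since $H\le G$ preserves the Hermitian form, contains a Singer element, and by Lemma~\ref{subfield} lies in no proper subfield subgroup, we conclude $F^*(H)\cong\SU_{2d+1}(q)$, whence $H=G$.

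The main obstacle I anticipate is this final elimination step: ruling out that the Singer element and the bireflection together sit inside one of the exceptional irreducible overgroups permitted by \cite{GSa} in odd dimensions, for instance putative configurations involving $\G_2(q)$- or $\Omega_7(q)$-type subgroups when $2d+1=7$. The decisive leverage is that the Singer element carries a large Zsigmondy divisor of order $(q^{2d+1}+1)/(q+1)$, exceeding the element orders available in any such proper overgroup; making this bound quantitative across the few small exceptional dimensions is where the genuine work lies.
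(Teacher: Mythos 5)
Your proposal is logically sound and would very likely go through, but it takes a genuinely different and substantially heavier route than the paper. The paper's proof is essentially three lines: after producing $x,y,z$ with $xy=z$ via Gow's Theorem exactly as you do, it invokes Bereczky's theorem \cite{AB} (the same result used in Lemmas~\ref{primedegree}, \ref{OPlusetc} and \ref{orthogonal-}), which says that any proper overgroup of a Singer element of $\SU_{2d+1}(q)$ normalizes an extension field subgroup $\GU_{(2d+1)/r}(q^{r})$ for some prime $r$ dividing $2d+1$. A bireflection cannot live there: $[V,z]$ would have to be a subspace over the extension field, so $r$ would divide $\dim_{\GF(q^2)}[V,z]=2$, forcing $r=2$, which is impossible since $r$ divides the odd number $2d+1$. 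Thus the Singer element alone already confines $H$, and the bireflection finishes by a parity argument — no appeal to \cite{GSa}, no primitivity or tensor-indecomposability analysis, and no exceptional configurations to eliminate. Your route via \cite[Theorem 7.1]{GSa} mirrors what the paper does in Lemma~\ref{OPlusetcAgain}, where it is genuinely needed because the elements there are not Singer elements; here it is overkill, and it leaves you with exactly the residue you identify yourself: verifying the hypotheses of the Guralnick--Saxl theorem and killing the small odd-dimensional exceptional cases ($2d+1=5,7,9$, e.g.\ $\G_2(q)$ and $\Omega_7(q)$ in dimension $7$) by explicit element-order bounds. That case analysis is not carried out in your write-up, so as it stands the proof is an outline rather than a complete argument, whereas the Bereczky approach closes the door on all proper overgroups at once. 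If you want a one-step fix, replace your third paragraph by the citation of \cite{AB} and the divisibility observation above.
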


\begin{proof} Let $z$ be a semisimple element with $\dim [V,z]= 2$.  Then by Gow's Theorem~\ref{G} we can find Singer elements $x$ and $y$ such that $xy=z$.   Setting $H = \langle x,y\rangle$ we have that $z \in H$. Then $H$ has elements which have commutator of dimension $2$ on the natural module. It follows that $ H$ is contained in a quadratic extension group. In particular $d$ is even and we have a contradiction.
\end{proof}

\begin{lemma}\label{unitary} Assume that $d\ge 8$ and $q=p^{a/2}$. Then $G/Z$ is a Beauville group.
\end{lemma}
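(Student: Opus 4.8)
The plan is to exhibit two hyperbolic triples for $G=\SU_d(q)$ whose element orders are pairwise coprime. By the observation following Definition~\ref{maindef} (that condition (iii) holds whenever $l_1m_1n_1$ is coprime to $l_2m_2n_2$), two such triples automatically form a Beauville structure. Moreover every order we use is a product of prime powers of Zsigmondy primes $\zeta_{ma,p}$ with $m\ge 3$, and $\zeta_{ma,p}\ge ma+1$ exceeds every prime divisor of $|Z(G)|=\gcd(d,q+1)\mid d$ whenever $m>d/2$; since passing to $G/Z$ only deletes prime factors dividing $|Z|$, it preserves coprimality and leaves the large Singer orders huge, so it suffices to build the structure in $G$ itself.

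The raw material is Lemma~\ref{unitary1}: for each odd $k$ with $d/2<k<d-2$ it provides a triple of type $(\lambda_{ka,p}\lambda_{(d-k)a,p},\lambda_{ka,p}\lambda_{(d-k)a,p},\lambda_{(k+2)a,p})$, whose orders are divisible by exactly the three Zsigmondy primes $\zeta_{ka,p}$, $\zeta_{(d-k)a,p}$, $\zeta_{(k+2)a,p}$, indexed by the distinct integers $k$, $d-k$, $k+2$. Two such triples with odd parameters $k_1<k_2$ thus have coprime orders precisely when $\{k_1,d-k_1,k_1+2\}$ and $\{k_2,d-k_2,k_2+2\}$ are disjoint. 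Because $k_1,k_2>d/2$ forces $k_1+k_2>d>d-2$, the coincidences $d-k_1=k_2$, $d-k_1=k_2+2$ and $k_1=d-k_2$ are all impossible, so the only obstruction is $k_1+2=k_2$, which is avoided as soon as $k_2\ge k_1+4$. Hence whenever $(d/2,d-2)$ contains two odd integers differing by at least $4$ — which happens for all $d\ge 16$ — two applications of Lemma~\ref{unitary1} already finish the proof.

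For $8\le d\le 15$ the interval $(d/2,d-2)$ is too short, so I would pair a single Lemma~\ref{unitary1} triple with a Singer based triple of complementary index. When $d$ is odd I would take the second triple from Lemma~\ref{unitary2}, namely $((q^d+1)/(q+1),(q^d+1)/(q+1),t)$ where $t$ is the order of a semisimple bireflection chosen to be a prime dividing $q+1$ (or $t=2$ when $q+1$ is a $2$-power); this is hyperbolic because the Singer order is large. Since $\zeta_{ma,p}$ divides $q^d+1$ only if $m\mid d$, and none of $k$, $k+2$, $d-k$ divides the odd number $d$ (the first two lie strictly between $d/2$ and $d$, and $d-k$ is even), the three large primes miss $q^d+1$; as $t\mid q+1$ has $p$-order at most $a$ it is distinct from them too, so the two triples are coprime. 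When $d$ is even I would instead use Lemma~\ref{OPlusetcAgain}(ii), whose nontrivial index content sits at $d/2$ and $d/2-1$ together with divisors of $q+1$; choosing the odd $k$ with $k\neq d/2+1$ (so that $d-k\neq d/2-1$) keeps the index sets disjoint, and this choice is available for every even $d$ with $10\le d\le 14$.

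The genuinely tight case is $d=8$, where the only admissible parameter $k=5$ gives index set $\{3,5,7\}$ and collides at the index $3$ with the $\Omega$-type triple of Lemma~\ref{OPlusetcAgain}(ii); here I would simply verify a concrete coprime pair of triples by machine, as is done for the other small classical groups. The main obstacle throughout is the coprimality bookkeeping rather than any deep structural point: one must guarantee that the three Zsigmondy primes delivered by Lemma~\ref{unitary1} are disjoint from those of the companion triple, and separately confirm that the sporadic failures of Zsigmondy's theorem (Theorem~\ref{LZ}, which force $p\in\{2,3,5\}$ and a short list of indices such as the undefined $\zeta_{6,2}$) do not cause two nominally different indices to share a prime. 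These finitely many small $(d,q)$ exceptions are again dispatched by direct computation.
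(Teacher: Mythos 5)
Your overall architecture---two hyperbolic triples whose total orders are coprime, with Lemma~\ref{unitary1} as the workhorse and a Singer-type companion triple in low dimension---is the paper's, and your odd case (pairing Lemma~\ref{unitary1} with the Lemma~\ref{unitary2} triple and observing that none of $k$, $k+2$, $d-k$ divides the odd number $d$) is essentially the paper's argument, which takes $k=d-4$. Your $d\ge 16$ route is genuinely different: the paper always pairs a Lemma~\ref{unitary1} triple with a Singer/totally-isotropic triple, whereas you pair two Lemma~\ref{unitary1} triples with odd parameters $k_2\ge k_1+4$; the disjointness check via $k_1+k_2>d$ is clean and correct, and it buys a uniform argument for large $d$ of either parity. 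The reduction from $G$ to $G/Z$ and the remark that distinct Zsigmondy indices give distinct primes are also handled correctly.

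The gap is at $d=12$. The admissible odd parameters are $k\in\{7,9\}$, and the companion triple from Lemma~\ref{OPlusetcAgain}(ii) contains elements of orders $(q^{12}-1)/(q+1)$ and $\lcm\bigl((q^{10}-1)/(q+1),\,q+1\bigr)$; writing $q=p^{a/2}$, the first is divisible by $\zeta_{3a,p}$ (as $3a\mid 6a$ and $\zeta_{3a,p}\nmid q+1$) and the second by $\zeta_{5a,p}$. But $k=9$ puts $\lambda_{3a,p}$ into the Lemma~\ref{unitary1} triple (since $d-k=3$) and $k=7$ puts $\lambda_{5a,p}$ into it (since $d-k=5$), so for either choice the two triples share a Zsigmondy prime and the coprimality shortcut for condition (iii) of Definition~\ref{maindef} is unavailable. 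Your rule ``$k\neq d/2+1$'' only excludes the single coincidence $d-k=d/2-1$ and in fact selects the bad value $k=9$ here; the claim that a good choice ``is available for every even $d$ with $10\le d\le 14$'' is false at $d=12$ (it is fine at $d=10$ with $k=7$ and at $d=14$ with $k=9$). To close this you would need a direct verification of condition (iii) at the shared prime (e.g.\ distinguishing the relevant classes of elements of order $\zeta_{3a,p}$ by their eigenvalue multiplicities on $V$), a different companion triple, or a computation, exactly as you already concede for $d=8$. (For what it is worth, the paper's even case takes $k=d-3$ with the same companion triple and does not address this collision either---it occurs whenever $d\equiv 0,2\pmod 6$---so your bookkeeping is the more honest; but as a proof your even case is incomplete at $d=12$.)
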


\begin{proof} Suppose first that $d \ge 9$ is odd. Then  Lemma~\ref{unitary2} implies that $G$ has a hyperbolic triple of type $((q^{d}+1)/(q+1),(q^{d}+1)/(q+1),q+1)$. On the other hand Lemma~\ref{unitary1} gives us a triple of type
$(\lambda_{(d-4)a,p}\zeta_{4a,p}, \lambda_{(d-4)a,p}\zeta_{4a,p}, \lambda_{(d-2)a,p})
$.  It follows that $\SU_d(q)$ is a Beauville group when $d$ is odd.

Suppose that $d\ge 8$ is even. Lemma~\ref{OPlusetcAgain} (ii) implies that $G$ has a hyperbolic triple of type $((q^{2d}-1)/(q+1),(q^{2d}-1)/(q+1), q+1)$ and Lemma~\ref{unitary1} gives us a triple of type $(\lambda_{(d-3)a,p}\zeta_{3a,p}, \lambda_{(d-3)a,p}\zeta_{3a,p}, \lambda_{(d-2)a,p})$. Hence $G$ is a Beauville group in this case also. Finally we note that the hyperbolic triple in Lemma~\ref{unitary1} consists of elements of odd order and so $G/Z$ is also a Beauville group. 
\end{proof}

\begin{lemma}\label{SU5 and 7} For $d \in \{5,7\}$,  $G/Z$ is a Beauville group.
\end{lemma}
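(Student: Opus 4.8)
The plan is to exhibit two hyperbolic triples of \emph{coprime} type, so that condition (iii) of Definition~\ref{maindef} is automatic by the coprimality observation following that definition. Since $d\in\{5,7\}$ is prime, the whole argument is driven by the prime-degree results Lemma~\ref{primedegree} and Lemma~\ref{Pdegree}, together with Gow's Theorem~\ref{G}. For all but finitely many small $q$ the two triples are produced uniformly as below; the remaining small fields, where the relevant Zsigmondy primes are small or fail to exist, or where the coincidence $d\mid q+1$ intervenes, are confirmed by direct computation in \cite{GAP} or \cite{Magma}.

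For the first triple I would use the Singer torus. Writing $N_d=(q^d+1)/(q+1)$ for the order of a Singer element $x$ of $G=\SU_d(q)$, Lemma~\ref{primedegree} (valid since $d$ is an odd prime and $N_d>\binom{d+1}{2}$ outside the recorded exception $(d,q)=(5,2)$) produces $z'\in\langle x\rangle$, a conjugate $y$ of $x$ and $z\in z'^G$ with $G=\langle x,y\rangle$ and $xy=z$. Taking $z'$ of order at least $2$ gives a hyperbolic triple of type $(N_d,N_d,o(z'))$ in which all three orders divide $N_d$; its image is a hyperbolic triple in $G/Z$. This is the triple attached to the first curve.

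For the second triple I would work with the single large Zsigmondy prime of index $d-2$. Put $e=d-2$ (odd and, since $d\ge5$, satisfying $d/2<e<d$) and $r=\lambda_{ea,p}$. Choosing $x_1\in\SU_{d-2}(q)$ of order $r$ acting irreducibly on the $(d-2)$-space (here $d-2$ is again prime) and $x_2\in\SU_2(q)$ of suitable order $c$ dividing $q+1$, the element $x=x_1x_2\in\SU_{d-2}(q)\times\SU_2(q)$ is regular semisimple. Gow's Theorem then supplies a conjugate $y$ of $x$ with $z=xy$ any prescribed non-central semisimple element; I would take $z$ of order $\lambda_{2a,p}$. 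As in Lemma~\ref{unitary1} the invariant subspaces of $x$ and of $z$ differ, so $H=\langle x,y\rangle$ is irreducible, and $H$ contains an element of order $r=\lambda_{ea,p}$. Now Lemma~\ref{Pdegree} applies: when $r$ is large its final clause forces $F^*(H)$ to be $\SL_d(q)$, $\SU_d(q)$, $\Omega_d(q)$, or a group $\PSL_2(s)$ of case~(xx); the case~(v) groups do not arise because they require $e=6$, whereas here $e=d-2\in\{3,5\}$. Since $H$ preserves the unitary form one eliminates $\SL_d(q)$; an order comparison (the primitive part of $q^{d-2}+1$ does not divide $|\Omega_d(q)|$) eliminates $\Omega_d(q)$; and $d=(s\pm1)/2$ with $s=2e+1=2d-3$ has no solution, eliminating the $\PSL_2(s)$. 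Hence $H=G$, giving a hyperbolic triple of type $(rc,rc,o(z))$.

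It remains to secure condition (iii). Since $r$ is a primitive prime divisor of $q^{d-2}+1$ and $\gcd(q^{d-2}+1,q^d+1)=q+1$ by Lemma~\ref{gcd}, while $r\nmid q+1$, we get $r\nmid N_d$; likewise $o(z)=\lambda_{2a,p}$ is coprime to $N_d$, and after choosing $c=o(x_2)$ coprime to $\gcd(q+1,d)=\gcd(q+1,N_d)$ the factor $c\mid q+1$ is coprime to $N_d$ as well. Thus the product of the orders of the second triple is coprime to $N_d$, hence to that of the first, and as coprimality persists in the quotient $G/Z$ condition (iii) holds there. I expect the main difficulty to lie precisely in this generation step for the second triple, namely pinning down $F^*(H)$ via Lemma~\ref{Pdegree} and discarding every non-target possibility by form-, order- and arithmetic considerations, together with isolating the finitely many small $q$ (including $(d,q)=(5,2)$) that escape the Zsigmondy and coprimality hypotheses and must be verified by machine.
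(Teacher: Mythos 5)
Your skeleton is the paper's: the first triple is exactly the Singer-element triple from Lemma~\ref{primedegree}, and the second is built around an element of order $\lambda_{(d-2)a,p}$ sitting in $\SU_{d-2}(q)\times\SU_2(q)$, with generation settled by Gow's Theorem plus Lemma~\ref{Pdegree} (your elimination of cases (v) and (xx) and of $\SL_d$ and $\Omega_d$ is fine). But the two auxiliary choices you make for the second triple each leave a genuine gap. The more serious one is irreducibility of $H=\langle x,y\rangle$. Your $z$ has order $\lambda_{2a,p}$, so its faithful irreducible constituents on $V$ are $2$-dimensional and, $d$ being odd, $C_V(z)\neq 0$; hence $z$ stabilises subspaces of essentially every dimension. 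Your $x_2$, of order $c$ dividing $q+1$, lies in the non-split torus of $\SU_2(q)$, so its two eigenlines are \emph{non-degenerate} $1$-spaces. Nothing prevents one of these lines from lying in $C_V(z)$ (or the $2$-space $U$ from being a $z$-irreducible), in which case $\langle x,z\rangle$ fixes a non-degenerate line and $H\le (\GU_1(q)\times\GU_{d-1}(q))\cap G$. Gow's Theorem only asserts that \emph{some} configuration $(x',y',z)$ exists; it gives no control over the relative position of $x'$ and $z$, so you cannot discard the reducible configurations. The appeal to Lemma~\ref{unitary1} does not transfer: there both generators have an irreducible constituent of dimension exceeding $d/2$, which is precisely what your $z$ lacks. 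The paper avoids this by making the second \emph{generator}, not the product, the incompatible element: it takes $y$ of order $q^{d-1}-1$ preserving two totally isotropic $(d-1)/2$-spaces and one non-degenerate line, takes $x_2$ in the split torus of order $q-1$ so that the lines fixed by $x$ are isotropic, and lets $xy$ be conjugate to $x$; then every common invariant subspace is excluded by comparing dimensions and isotropy types.

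The second gap is the coprimality bookkeeping. You need $c\mid q+1$ with $c>2$ (otherwise $x_2$ is central in $\SU_2(q)$ and $x$ is not regular semisimple) and $\gcd(c,d)=1$ (otherwise $c$ shares the factor $d$ with $N_d=(q^d+1)/(q+1)$ when $d\mid q+1$). Such a $c$ fails to exist exactly when $q+1=d^k$ or $2d^k$, e.g.\ $(d,q)=(5,9),(5,49),(7,13),(7,97),\dots$; this is an arithmetically defined family, not a finite list of small fields that can be delegated to a machine. The paper's choice of the order-$(q-1)$ torus removes the issue at a stroke, since $(q^d+1)/(q+1)$ is odd while $\gcd(q-1,q^d+1)\le 2$. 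Both gaps are repaired simply by adopting the paper's choices of $x_2$ and of the second generator; as written, your construction of the second triple does not go through.
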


\begin{proof} Let $d \in \{5,7\}$. By Lemma~\ref{primedegree}, $\SU_d(q)$ has a hyperbolic triple of type $(\frac{q^d+1}{q+1},\frac{q^d+1}{q+1}, \lambda)$ where $\lambda$ divides $(q^{d}+1)/(q+1)$ and where the first elements of the triple are Singer elements and the element of order $\lambda$ is a power of a Singer element.  If $d = 5$ let $e=3$ and
if $d=7$ let $e=5$. Then set  $r = \lambda_{ea,p}$ and let $x$ be a product of an element of order $r$ and an element of order $q-1$ preserving an orthogonal decomposition of $V$ into an $e$-space and a $2$-space.  Let $y$ have order $q^{d-1}-1$ be an element which preserves a decomposition of $V$ into a sum of two isotropic spaces of dimension $(d-1)/2$ and a $1$-dimensional non-degenerate space. Then $x$ and $y$ are regular semisimple elements of $G$.  By Gow's Theorem we may suppose that the product of $x$ and $y$ is conjugate to $x$.  The choice of $x$ and $y$ means that $H =\langle x,y \rangle$ is an irreducible subgroup of $G$. Now we may apply Theorem~\ref{Pdegree} to see that $G = \SU_d(q)$. Thus $G$ is a Beauville group. The choice of the triples also shows that $G/Z$ is a Beauville group.
\end{proof}

We also need an additional lemma for $\PSU_6(q)$.
\begin{lemma}\label{U6}  Suppose that $q\neq 2$ and that $G= \SU_6(q)$. Then $G$ has  hyperbolic triples of the following types. 
\begin{enumerate}
\item $((q^6-1)/(q+1),(q^6-1)/(q+1),q+1)$ where the elements of order $(q^6-1)/(q+1)$ preserve opposite  isotropic subspaces and the element of order $q+1$ is a bireflection.
\item $(\lambda_{5a,p}, \lambda_{3a,p}, \lambda_{5a,p})$
\end{enumerate}
In particular $G/Z$ is a Beauville group.
\end{lemma}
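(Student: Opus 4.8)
The plan is to exhibit the two triples separately and then to check condition (iii) of Definition~\ref{maindef}; the substance of the argument lies in the two generation statements and the final disjointness check.

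For part (i) I would proceed exactly as in Lemmas~\ref{OPlusetcAgain} and~\ref{OPlusetc}. Let $x$ be a regular semisimple element of order $(q^6-1)/(q+1)$ in the stabiliser $\GL_3(q^2).2\cap G$ of a pair $W_1,W_2$ of opposite totally isotropic $3$-spaces; then $x$ acts irreducibly on each $W_i$ with $W_1\not\cong W_2$ as $\langle x\rangle$-modules, and $C_G(x)$ is the corresponding maximal torus, so $x$ is regular semisimple. Choosing a bireflection $z$ of order $q+1$, so that $[V,z]$ is a nondegenerate $2$-space meeting $W_1$ and $W_2$ trivially, Gow's Theorem~\ref{G} produces conjugates $x,y$ of this element with $xy=z$; the geometry forces $H=\langle x,y\rangle$ irreducible, and the primitivity and tensor-indecomposability arguments of Lemma~\ref{OPlusetcAgain} apply verbatim. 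As $z$ is a bireflection, \cite{GSa} leaves only three possibilities for $H$: a classical group in its natural module, an alternating group, or one of finitely many $6$-dimensional exceptional configurations. In the first case Lemma~\ref{subfield} and the order of $x$ give $H=G$; the other two are excluded because $o(x)=(q^6-1)/(q+1)\ge 182$ for $q\ge 3$ exceeds every relevant element order, which is precisely where $q\neq 2$ enters.

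For part (ii) I would run Gow's Theorem in the reverse direction. Take $u$ a regular semisimple element of order $\lambda_{5a,p}$ in $\SU_5(q)\times\U_1(q)\le G$, acting irreducibly on a nondegenerate $5$-space, and take $s$ of order $\lambda_{3a,p}$ acting as two Singer-type $\SU_3(q)$-blocks on a pair of nondegenerate $3$-spaces lying in \emph{distinct} Frobenius orbits. Theorem~\ref{G} then gives conjugates $u',u''$ of $u$ with $u'u''=s$, yielding a triple of type $(\lambda_{5a,p},\lambda_{3a,p},\lambda_{5a,p})$ with $H=\langle u',u''\rangle$. Here $u'$ carries a $\mathrm{ppd}(5,q^2)$-constituent, so its invariant subspaces have dimensions in $\{0,1,5,6\}$, whereas every $s$-invariant subspace has dimension in $\{0,3,6\}$; as in Lemma~\ref{unitary1} this incompatibility forces $H$ to be irreducible. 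Since $\lambda_{5a,p}$ is large for $q\neq 2$, Lemma~\ref{3.2} applies with $d=6$, $e=5$: Lemma~\ref{subfield} removes the subfield cases, $\gcd(6,5)=1$ removes the extension-field cases, and the surviving sporadic and $\PSL_2(s)$ candidates are excluded because none contains an element of order $\lambda_{3a,p}$ (and $q\neq 2$ rules out $\SU_6(2)$). Hence $F^*(H)=\SU_6(q)$ and $H=G$.

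Finally I would verify (iii). The orders occurring are $(q^6-1)/(q+1)$ and $q+1$ in the first triple, and $\lambda_{5a,p},\lambda_{3a,p}$ in the second. Since $\zeta_{5a,p}$ divides neither $q^6-1$ nor $q^2-1$, and $\zeta_{3a,p}$ does not divide $q^2-1$, the only common prime is $\zeta_{3a,p}$, which divides $(q^6-1)/(q+1)$. Using $q^3\equiv -1\pmod{\lambda_{3a,p}}$ one checks that the $\zeta_{3a,p}$-part of an order-$(q^6-1)/(q+1)$ element has eigenvalues a single Frobenius orbit taken with multiplicity $2$, while $s$ was chosen with two distinct orbits, hence with $6$ distinct eigenvalues; as conjugate semisimple elements share their eigenvalue multiset, no power of a first-triple element is conjugate to a power of $s$, and every other cross-pair is coprime. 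Thus (iii) holds, and since reduction modulo $Z\le Z(G)$ preserves these coprimalities and the eigenvalue distinction, $G/Z$ is a Beauville group. I expect the main obstacle to be part (ii): because $d=6$ is even there is no pair of odd Zsigmondy exponents in $(d/2,d]$, so Theorem~\ref{maingeneration} is unavailable and one must generate from the single large element of order $\lambda_{5a,p}$ via Lemma~\ref{3.2}, while simultaneously choosing $s$ to secure both the irreducibility of $H$ and the disjointness required in (iii).
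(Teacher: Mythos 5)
Your proposal is correct and follows essentially the same route as the paper: part (i) is the paper's Lemma~\ref{OPlusetcAgain}(ii) specialised to $d=3$ (the paper simply cites it), and part (ii) uses the identical configuration — a $\lambda_{5a,p}$-element supported on a nondegenerate $5$-space against a regular semisimple $\lambda_{3a,p}$-element split over two nonisomorphic $\SU_3(q)$-blocks, with Gow's Theorem and then the \cite{GPPS} analysis (which you route through Lemma~\ref{3.2}, itself a corollary of \cite{GPPS}) forcing $H=G$. Your explicit eigenvalue-multiplicity check that the $\zeta_{3a,p}$-parts of the two triples are non-conjugate is more careful than the paper, which disposes of condition (iii) with ``it immediately follows from (i) and (ii).''
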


\begin{proof} Part (i) is a restatement of Lemma~\ref{OPlusetcAgain} (ii) in the special case $d=3$.

So consider (ii). Let $x $ have order $\lambda_{5a,p}$ be a Singer element of $\SU_5(q)$ and let $y\in \SU_3(q)\times \SU_3(q)$ be a regular semisimple element. Then as usual we can suppose that $xy$ is a bireflection of order $\lambda_{5a,p}$ as described.   Obviously $X = \langle x,y \rangle $ operates irreducibly on $V$. We now once again exploit \cite{GPPS}.   As usual if Example 2.1 of \cite{GPPS} holds, then we are done. As $X$ acts irreducibly, we have that Example 2.2 of \cite{GPPS} does not occur. As $\lambda_{5a,p}$ is  large Zsigmondy  we cannot have \cite[Example 2.3]{GPPS}.  Since $5$ and $6$ are coprime, \cite[Example 2.4]{GPPS} is impossible. \cite[Example 2.5]{GPPS} cannot occur as $6$ is not a power of $2$. In Example 2.6 of \cite{GPPS}, we note that the primitive prime divisor must be small  and so these cases do not come up. Moving on to \cite[Example 2.7]{GPPS} the only possibilities are that $X $ normalizes one of $2\udot \M_{12}$, $3 \udot \M_{22}$ or $2 \udot \J_2$. In the first and last case we have that $X$ can have no element of order $\lambda_{5a,p}$. Thus we have $X = 3 \udot \M_{22}$, $q= p=2$ and $G= \SU_6(2)$ which contradicts $q\neq 2$.  In \cite[Example 2.8]{GPPS} the only possibility is that $X $ normalizes $\G_2(q)$ but this also has no element of order $\lambda_{5a,p}$. From \cite[Example 2.9]{GPPS}, as $\lambda_{5a,p}$ is large, the only possibility arises in Table 8, with $F^*(X)/Z(F^*(X)) = \L_2(s)$ and $\lambda_{5a,p}=s$ and $6 = \frac 1 2 (s \pm 1)$ which means that $s=13$.  But then $13$ is not a possibility for $\lambda_{5a,p}$. This completes the proof of (ii).

It immediately follows from (i) and (ii) that $G/Z$ is a Beauville group.
\end{proof}

\begin{lemma}\label{U41} Let $q= p^{a/2}$ with $q>2$. Then $\SU_4(q)$ has a hyperbolic triple of type $(p,p,\lambda_{2a,p})$ if $p$ is odd and, if $p=2$, $G$ has a hyperbolic triple   of type $(4,2,\lambda_{2a,2})$.
\end{lemma}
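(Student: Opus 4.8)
The plan is to imitate the explicit-matrix construction of Lemma~\ref{lineardim3}, because the primitive prime divisor machinery of Theorem~\ref{maingeneration} and \cite{GPPS} is simply unavailable here: we control only one semisimple element, of order $\lambda_{2a,p}$ dividing $q^2+1$, and its exponent corresponds to $e=2=d/2$, which is not $>d/2$, and in any case Theorem~\ref{maingeneration} requires \emph{two} semisimple elements of orders $\lambda_{ea,p},\lambda_{fa,p}$ with $d/2<e<f\le d$. So the product of the generators must be computed by hand, as in the $\SL_3$ case.

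First I would fix a Hermitian form on $V=\GF(q^2)^4$ with a hyperbolic basis, realising $\SU_4(q)\le\GL_4(q^2)$. The shape of the target element dictates the generators. Any $z\in\SU_4(q)$ of order $\lambda_{2a,p}$ has eigenvalues that are primitive $\lambda_{2a,p}$-th roots of unity, and since $\lambda_{2a,p}$ is coprime to $q+1$ and to $q^2-1$ these four eigenvalues form a single orbit $\{\mu,\mu^{-q},\mu^{-1},\mu^{q}\}$ under the Galois and Hermitian symmetries; in particular $z$ is regular semisimple with $\dim[V,z]=4$. Hence I must arrange $\dim[V,xy]=4$, i.e. $xy$ has no eigenvalue $1$. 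For $p$ odd I therefore take $x,y$ to be unipotent elements of order $p$, each with $\dim[V,x]=\dim[V,y]=2$ (Jordan type $(2,2)$ or $(3,1)$), lying in the unipotent radicals of two different parabolics and depending on free parameters in $\GF(q^2)$ constrained only by preservation of the form; for $p=2$ I instead take $x$ a regular unipotent of order $4$ (a single Jordan block, $\dim[V,x]=3$) and $y$ a unitary transvection of order $2$. In both cases $\dim[V,x]+\dim[V,y]=4$.

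Next I would compute $\det(tI-xy)$ as a polynomial in the parameters and, exactly as in the genericity choice of $m$ in Lemma~\ref{lineardim3}, use the hypothesis $q>2$ to select parameter values making this equal $(t-\mu)(t-\mu^{-1})(t-\mu^{q})(t-\mu^{-q})$ for a primitive $\lambda_{2a,p}$-th root $\mu$. Such a polynomial lies in $\GF(q^2)[t]$ and is unitary-compatible, so $z:=(xy)^{-1}$ is a regular semisimple element of $\SU_4(q)$ of order $\lambda_{2a,p}$ and $xyz=1$. The hyperbolic inequality is then quick: $q>2$ forces $2a\ge 4$, so the Zsigmondy prime satisfies $\zeta_{2a,p}\ge 2a+1\ge 5$ and hence $\lambda_{2a,p}\ge 5$, giving $2/p+1/\lambda_{2a,p}<1$ for $p$ odd and $1/4+1/2+1/\lambda_{2a,2}<1$ for $p=2$.

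It remains to show $H:=\langle x,y\rangle=\SU_4(q)$, and this is the main obstacle. I would first prove $H$ is irreducible and in fact primitive: $z$ preserves a unique pair of opposite totally isotropic $2$-spaces (the eigenspaces of its two quadratic factors), while the unipotent generators are chosen not to preserve that decomposition, which disposes of the reducible and imprimitive subgroups, including the stabiliser $\GL_2(q^2).2$ of an isotropic pair — the one imprimitive overgroup that does contain an element of order $\lambda_{2a,p}$. A subfield argument of the type in Lemma~\ref{subfield}, using that $\lambda_{2a,p}$ is a primitive prime divisor, removes the proper subfield subgroups. The delicate cases are $\Sp_4(q)$ and, for $q$ odd, $\mathrm{O}_4^{\pm}(q)$, which genuinely contain Singer elements of order $q^2+1\ge\lambda_{2a,p}$; these I would exclude by checking directly from the explicit forms of $x$ and $y$ that $H$ preserves no non-degenerate alternating or symmetric bilinear form (for $p$ odd this reflects that the transvection parameters are forced into $\GF(q^2)\setminus\GF(q)$). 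Finally the extraspecial-normaliser subgroup and the almost-simple candidates such as $2\udot\Alt(7)$, $4_2\udot\PSL_3(4)$ and $\PSU_4(2)$ have order bounded independently of $q$, so $\lambda_{2a,p}$ rules them out for all but finitely many $q$, which are settled by computer. Hence $H=\SU_4(q)$ and $(x,y,(xy)^{-1})$ is the required hyperbolic triple.
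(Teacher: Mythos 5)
Your overall strategy is the paper's: abandon Gow's Theorem and the \cite{GPPS} machinery (for exactly the reason you give), write down explicit unipotent generators $x,y$ with free parameters over an explicit Hermitian form, match the characteristic polynomial of $xy$ to that of a prescribed element of order $\lambda_{2a,p}$ (which, as you correctly note, is forced to be regular semisimple with no eigenvalue $1$), and then eliminate the maximal overgroups using \cite{BrayHoltRoney}. The differences lie in the choice of unipotents and in where the weight of the elimination falls. The paper takes $x$ and $y$ with $\dim[V,x]=\dim[V,y]=3$, i.e.\ close to regular unipotent; this costs it the literal type $(p,p,\lambda_{2a,p})$ when $p=3$ (its $x$ then has order $9$, as its proof concedes) but buys a cheap disposal of the isotropic-pair stabiliser $\GL_2(q^2){:}2$, since such an $x$ does not act quadratically on $V$ while the unipotent elements of $\GL_2(q^2)$ do. Your Jordan-type-$(2,2)$ bireflections for $p$ odd honour the stated type for all $p$, but they lie in the abelian unipotent radical of a Siegel parabolic and hence in many conjugates of $\Sp_4(q)$ and of the isotropic-pair stabiliser; so for you the deferred verification that $\langle x,y\rangle$ preserves no non-degenerate bilinear form is the crux of the whole proof rather than a side remark, and the parameter count in the characteristic-polynomial computation (which the paper carries out explicitly and you only assert) also needs to be checked for your thinner unipotents. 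Your reduction of the $\GL_2(q^2){:}2$ case to a finite check on $x$, via the uniqueness of the isotropic pair fixed by $z$, is sound. One point genuinely in your favour: you treat $\Sp_4(q)$ and $\mathrm O_4^{-}(q)$ as live dangers because $\lambda_{2a,p}$ divides $q^2+1$ and hence their orders, whereas the paper folds them into ``subfield subgroups'' and appeals to Lemma~\ref{subfield}, whose hypothesis $e>d/2$ fails here ($e=2=d/2$) and whose order argument does not touch these groups; your extra care is warranted, not redundant. Modulo executing the two deferred computations, the argument goes through.
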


\begin{proof} We let   $G=\SU_4(q)$ be the group of matrices which preserve the hermitian form with associated matrix $$J=\left(\begin{array}{cccc}
0&0&0&1\cr
0&0&1&0\cr
0&1&0&0\cr
1&0&0&0
\end{array}\right).$$
Let $F = \GF(q^2)$, $F_0= \GF(q)$ and $F_1 = \{\lambda \in F \mid \lambda^q+\lambda = 0\}$. Fix $e \in F_1$. Then $y =\left(\begin{array}{cccc}
1&0&0&e\cr
1&1&0&0\cr
0&0&1&0\cr
0&0&1&1
\end{array}\right)$ is an element of $G$. Now let $x=\left(\begin{array}{cccc}
1&0&0&0\\ 1&1&0&0
\\ b+c&{c}^{q}&1&0\\ -{b}^{q}-c+{c
}^{q}&-{b}^{q}-{c}^{q}+c&-1&1
\end{array}\right)$  where $b \in F$ and $c \in F_1$. Then $x \in G$. The characteristic polynomial of $xy$ is
$$w^4+(2ce+eb^q-4)w^3+(eb-eb^q+6-5ce)w^2+(2ce-eb-4)w+1.$$
A typical element $z$ of $G$ has characteristic polynomial $w^4+tw^3+fw^2+t^qw+1$ where $t \in F$ and $f \in F_0$. Our immediate aim is to show that we can select $c$ and $b$ to obtain any desired minimal polynomial. To do this we may take $c= -(f+t+t^q+2)/e$ and $b= -(3t^q-2t-2f-8)/e$.  Since we want $G= \langle x,y\rangle$, we further require that $x$ is not an involution when $p=2$.  So we require $c \neq 0$.  On the other hand, if $c=0$, we see that the minimal polynomial of $xy$ becomes
$$w^4+tw^3-(t+t^q-2)w^2+t^qw+1= (w-1)(w^3+(t+1)w^2-(t^q+1)w)-1)$$ and thus in this case we have that $z$ fixes a vector in the natural module for $G$. Now we select $z$ so that it has order $\lambda_{2a,p}$  with $z \in \SL_2(p^a)= \SL_2(q^2) \le \SU_4(q)$. Then $z$ has no non-zero fixed vectors on $V$ and hence its characteristic polynomial does not have $1$ as a root.  Now we select $b$ and $c$ so that $xy$ has the same  characteristic polynomial as $z$. Then $xy$ is conjugate to $z$ and $c \neq 0$. Set $X = \langle x,y \rangle$. Our choice of $x$ and $y$ implies that $X$ acts irreducibly on $V$. We intend to use the list of maximal subgroups of $G$  listed in \cite{BrayHoltRoney}. As $X$ is generated by $p$-elements,  $X= O^{p'}(X)$. Because $X$ acts irreducibly on $V$, $X$ is not contained in a parabolic subgroup or in a subgroup $\GU_3(q)$. Notice that as $z$ has order $\lambda_{2a,p}$, $X$ is not contained in a subfield subgroup of $G$ by Lemma~\ref{subfield}. Furthermore, $\lambda_{2a,p}$ does not divide the order of $\GU_2(q)\wr 2$.
Since $x$ and $y$ are not conjugate in $G$,  the Sylow $p$-subgroups of $X$ have order at least $p^2$; furthermore, if $p=3$, then $x$ has order $9$. It follows that $X$ is not in a maximal subgroup of type $(q+1)^3:\Sym(4)$, $(2^{1+4}\circ 4):\Sym(6)$, $(2^{1+5}\circ 4): \Alt(6)$, $(q+1,4)\udot\PSL_2(7)$, $(q+1,4)\udot\Alt(7)$, $(q+1,4)\udot\PSU_4(2)$  (here $p\neq 3$) or $4_2.\PSL_3(4)$. Finally, we address the possibility that $X$ is contained in the determinant one subgroup of  $\GL_2(q^2):2 \le \GU_4(q)$. Since the Sylow $p$-subgroup of $\GL_2(q^2)$ acts quadratically on  $V$  and $x$ does not, we infer that $x \not \in \GL_2(q^2)$. Hence we must have $p=2$. Now $y$ acts as a transvection on $V$, so we deduce that $y \in \GL_2(q^2)$. It follows that $z=xy $ is  contained in $\GL_2(q^2):2 $ but is not contained  in $\GL_2(q^2)$, which contradicts $z$ having odd order. Therefore this case cannot arise either. This completes the demonstration of the lemma.
\end{proof}

\begin{lemma}\label{U42}
Assume that $G= \SU_4(q)$ with $q=p^{a/2}\not\in\{2,3,5\}$. Then $G$ has a hyperbolic triple of type $(\lambda_{3a,p}, \lambda_{3a,p}, (q+1))$.
\end{lemma}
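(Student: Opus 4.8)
The plan is to build the triple with Gow's Theorem~\ref{G} and then force $\langle x,y\rangle=\SU_4(q)$ through the single-large-Zsigmondy classification of Lemma~\ref{3.2}. Write $V$ for the natural $4$-dimensional Hermitian module over $\GF(q^2)$, where $q^2=p^a$. Since $\lambda_{3a,p}$ is a power of a Zsigmondy $\langle 3a,p\rangle$-prime it divides $q^6-1$ but not $q^3-1$, hence divides $q^3+1$ but not $q+1$, so in fact $\lambda_{3a,p}$ divides $(q^3+1)/(q+1)=q^2-q+1$. I would therefore take $x$ to be an element of order $\lambda_{3a,p}$ inside a subgroup $\SU_3(q)$ acting on a nondegenerate $3$-space $U$ and trivially on the nondegenerate line $U^\perp$; such an $x$ acts irreducibly on $U$ (its primitive-prime eigenvalue generates a Galois orbit of size $3$) and its centralizer in $\SU_4(q)$ is the cyclic torus of order $q^3+1$, so $x$ is regular semisimple. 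For the third element I would \emph{not} use a bireflection but instead a fixed-point-free $z$, for example $z=\diag(\omega,\omega,\omega^{-1},\omega^{-1})$ with $\omega$ a primitive $(q+1)$-th root of unity: this lies in $\SU_4(q)$ (its two $2$-dimensional eigenspaces are nondegenerate and mutually orthogonal, and $\det z=1$), is non-central and semisimple of order exactly $q+1$, and satisfies $C_V(z)=0$. As $x$ is regular semisimple and $z$ is non-central semisimple, Gow's Theorem~\ref{G} produces $y\in x^G$ with $xy=z$.

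The crucial, and pleasingly cheap, step is the irreducibility of $H=\langle x,y\rangle$, where the choice $C_V(z)=0$ does all the work. As an $\langle x\rangle$-module $V=U\oplus U^\perp$ is a sum of two non-isomorphic irreducibles, so the only $x$-invariant subspaces are $0,U^\perp,U,V$; likewise $y$ is $G$-conjugate to $x$, so its only invariant subspaces are $0,U'^\perp,U',V$ for the corresponding decomposition. If $W$ were a proper nonzero $H$-invariant subspace, then $W\in\{U^\perp,U\}$, and being $y$-invariant of the same dimension forces $U'^\perp=U^\perp$, whence $U'=U$. But then both $x$ and $y$, and therefore $z=xy$, act trivially on $U^\perp$, giving $0\neq U^\perp\subseteq C_V(z)=0$, a contradiction. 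Hence $H$ is irreducible on $V$.

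With irreducibility in hand I would apply Lemma~\ref{3.2} to $H\le\GL_4(q^2)$ (taking ambient field $q^2=p^a$, $d=4$ and $e=3$, so that the relevant order is $\lambda_{3a,p}$ and $d/2<e\le d$). Lemma~\ref{subfield} removes proper subfield subgroups; the extension-field possibilities of \cite[Example 2.4]{GPPS} cannot occur because the field degree would have to divide $\gcd(d,e)=\gcd(4,3)=1$; and since $H\le\SU_4(q)$ preserves the Hermitian form, the classical possibility of \cite[Example 2.1]{GPPS} can only be $\SU_4(q)$ itself, giving $H=G$. It remains to discard the exotic candidates of Lemma~\ref{3.2}(iii) together with the defining-characteristic case \cite[Example 2.8]{GPPS}: the $2\udot\Alt(7)$ and $\PSL_2(s)$ ($s=7,9$) configurations all force $q\in\{3,5\}$ (e.g.\ $\lambda_{3a,p}=7$ already entails $q^2-q+1=7$, i.e.\ $q=3$), which is excluded by the hypothesis $q\notin\{2,3,5\}$, while the defining-characteristic subgroups do not carry a $4$-dimensional Hermitian representation containing an element of order $\lambda_{3a,p}$. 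This pins down $F^*(H)=\SU_4(q)$, and hence $H=\SU_4(q)$. Finally $(x,y,z)$ is hyperbolic since $2/\lambda_{3a,p}+1/(q+1)\le 2/7+1/5<1$ (here $a\ge 2$ forces $\lambda_{3a,p}\ge 7$ and $q\ge 4$ forces $q+1\ge 5$). The main obstacle is thus not the generation but this last piece of bookkeeping: verifying that every GPPS candidate other than $\SU_4(q)$ is killed by $q\notin\{2,3,5\}$, by the Hermitian form, or by the order $\lambda_{3a,p}$.
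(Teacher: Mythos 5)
Your proof is correct, and it reaches the crucial generation step by a genuinely different route from the paper. The paper chooses its order-$(q+1)$ element inside $\SU_2(q)\times\SU_2(q)$ so that it fixes no non-zero vector \emph{and} stabilizes no non-degenerate $1$-space; irreducibility of $\langle x,y\rangle$ then drops out at once, and generation is settled by running through the explicit list of maximal subgroups of $\SU_4(q)$ from \cite{BrayHoltRoney}, Aschbacher class by class ($\mathcal C_1$--$\mathcal C_5$ fall to irreducibility and the order $\lambda_{3a,p}\ge 7$, $\mathcal C_6$ to its prime divisors, and $\mathcal S$ because only the primes $2,3,5,7$ occur there, forcing $\lambda_{3a,p}$ small and hence $q\in\{2,3,5\}$). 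You instead take $z$ diagonal with $C_V(z)=0$ --- note your $z$, unlike the paper's, does stabilize non-degenerate $1$-spaces, so your alternative irreducibility argument via the classification of $x$- and $y$-invariant subspaces is genuinely needed, and it is correct as written --- and then you invoke the single-Zsigmondy classification of Lemma~\ref{3.2} (that is, \cite{GPPS}) with $d=4$, $e=3$ over the ambient field of size $q^2=p^a$. Both routes work: yours is more uniform with the machinery of Section~\ref{SS3}, while the paper's trades re-examination of the GPPS cases for a citation of the full maximal subgroup list. One small repair to your bookkeeping: the parenthetical ``$\lambda_{3a,p}=7$ entails $q^2-q+1=7$, i.e.\ $q=3$'' is not the right implication, since $7\mid q^2-q+1$ also for $q=5,17,\dots$; the correct exclusion is that $\lambda_{3a,p}=7$ forces $\lambda_{3a,p}$ to be small, whence $(3a,p)\in\{(6,3),(6,5)\}$ by Theorem~\ref{LZ} and so $q\in\{3,5\}$. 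Likewise for $q\in\{4,8\}$ the divisor $\lambda_{3a,p}\in\{13,19\}$ is small, but the field-size constraints in Lemma~\ref{3.2}(iii) (ambient field equal to $p$ or to $p^2$ with $p\in\{3,5\}$, or equal to $9$) rule out those configurations over fields of size $16$ or $64$, so your conclusion stands.
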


\begin{proof}
We let $y \in (\GU_3(q)\times \GU_1(q)) \cap G \cong \GU_3(q)$ be a Singer element. Then $x$ is a regular semisimple element of $G$. Thus we arrange for the product of $x$ with a conjugate $y$  of $x$ to be $z$ where $z \in \SU_2(q)\times \SU_2(q)$ is an element  of order $q-1$ which fixes no non-zero vectors and does not leave a non-degenerate one-space invariant (here we use $q>3$). Thus the group $X= \langle x,y\rangle$ is irreducible on $V$.

We show that $X$ is not contained in a maximal subgroup of $G$.
The maximal subgroups of $\SU_4(q)$ are listed in \cite{BrayHoltRoney}.
As $x$ has  order $\lambda_{3a,p}\ge 7$, we have  $X$ is not contained in any of the groups from Aschbacher classes $\mathcal C_1$ to $\mathcal C_5$. The  extraspecial group normalizers in $\mathcal C_6$ have largest odd prime divisor equal to $5$ and so these are also ruled out.  Thus $X$ must be a group in class $\mathcal S$. The only primes figuring in the order of the groups listed there are $2$, $3$, $5$, and $7$. In particular, we must have that $\lambda_{3a,p}$ is small. But then $q \in \{2,3,5\}$ which is impossible.  This proves the lemma.
\end{proof}

\begin{lemma}\label{U3} Suppose that $G = \SU_3(q)$ where $q = p^{a/2}> 2$. Then $G$ has a hyperbolic triple of type $(p,p,q^2-1)$ when $p$ is odd and of type $(4,2,q^2-1)$ when $q$ is even.
\end{lemma}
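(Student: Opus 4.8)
The plan is to imitate the constructions of Lemmas~\ref{lineardim3} and \ref{U41}: I would write $x$ and $y$ down as explicit matrices lying in the Sylow $p$-subgroup of $\SU_3(q)$, compute the characteristic polynomial of $xy$ as a function of a few free parameters, and then tune those parameters so that $xy$ is conjugate to a preassigned element $z$ of order $q^2-1$. Note that Gow's Theorem~\ref{G} is \emph{not} available here, since $x$ and $y$ are unipotent rather than regular semisimple; as in the two model lemmas the product $xy$ is instead forced into the class of $z$ purely by matching characteristic polynomials.

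Concretely, I would realise $G=\SU_3(q)$ on $V=\GF(q^2)^3$ via the anti-diagonal hermitian form and put $F=\GF(q^2)$, $F_0=\GF(q)$ and $F_1=\{\lambda\in F:\lambda^q+\lambda=0\}$. I take $y$ to be a unitary transvection with isotropic centre, so that $o(y)=p$ (and $o(y)=2$ when $q$ is even), and $x$ to be a unipotent element depending on parameters $b\in F$ and $c\in F_1$; when $q$ is even I choose the parameter so that $x$ is not an involution, forcing $o(x)=4$ (this is exactly the role of the condition $c\neq0$ in Lemma~\ref{U41}). Every element of $G$ has characteristic polynomial of the shape $w^3-aw^2+a^qw-1$ with $a\in F$, and the main computational step is to check that $a$ can be prescribed arbitrarily by a linear choice of the parameters of $x$. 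I then fix $z$ of order $q^2-1$ inside $(\GU_2(q)\times\GU_1(q))\cap G$, acting irreducibly on a non-degenerate $2$-space $W$ and as a non-matching scalar on the anisotropic line $W^\perp$; such a $z$ is regular semisimple, has distinct eigenvalues none of which is $1$, and realises the largest semisimple element order in $G$. Matching characteristic polynomials makes $xy$ conjugate to $z$, so that $o(xy)=q^2-1$ and $(x,y,(xy)^{-1})$ has the stated type. The hyperbolicity inequality is immediate from $q>2$, since then $2/p+1/(q^2-1)<1$ when $p$ is odd and $1/4+1/2+1/(q^2-1)<1$ when $q$ is even.

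Setting $H=\langle x,y\rangle$, the incompatibility of the invariant subspaces $W$ and $W^\perp$ of $z=xy$ with those of the transvection $y$ forces $H$ to act irreducibly on $V$, exactly as in the earlier lemmas, and $H=O^{p'}(H)$ is generated by $p$-elements. The heart of the proof, and the step I expect to be the main obstacle, is then to show that $H$ is contained in no maximal subgroup of $G$, using the list in \cite{BrayHoltRoney}. Two tools do the work: the element $z$ of maximal semisimple order $q^2-1$, and the fact that the Sylow $p$-subgroup of $H$ has order at least $p^2$ (two independent transvections when $p$ is odd, an element of order $4$ when $q$ is even). Irreducibility removes the parabolic and the reducible point stabilisers. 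The monomial subgroup $(q+1)^2:\Sym(3)$ has $p$-part at most $p$ and so cannot contain $H$. The torus normaliser $C_{q^2-q+1}:3$ is Frobenius, whence its element orders do not exceed $q^2-q+1<q^2-1$; the subfield subgroups $\SU_3(q_0)$ and the orthogonal subgroup $\SO_3(q)\cong\PGL_2(q)$ have maximal element order strictly below $q^2-1$ (Lemma~\ref{subfield} gives the subfield case directly).

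Finally, the primitive almost-simple and symplectic-type normaliser subgroups that occur for small $q$ all have element orders strictly below $q^2-1$ once one records, by Lagrange's theorem, which of them actually embed in $\SU_3(q)$. The only numerical coincidence is an element of order $15=q^2-1$ at $q=4$, but the relevant candidate (a cover of $\Alt(6)$) has $3$-part divisible by $9$ whereas $|\SU_3(4)|$ has $3$-part only $3$, so it does not embed; all remaining small cases are dispatched by the same bound on element orders. Hence $H=G$ and the required hyperbolic triple exists.
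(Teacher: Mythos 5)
Your proposal follows essentially the same route as the paper's proof: explicit unipotent matrices $x$ (regular unipotent, of order $p$, or $4$ when $p=2$) and $y$ (a transvection), tuning the free parameters so that the characteristic polynomial of $xy$ — necessarily of the form $w^3-tw^2+t^qw-1$ — matches that of a regular semisimple element $z$ of order $q^2-1$, followed by elimination of the maximal overgroups of $\langle x,y\rangle$ via the cyclic subgroup of order $q^2-1$ and the Sylow $p$-structure. One small correction that does not affect the argument: an element of order $q^2-1$ in $(\GU_2(q)\times\GU_1(q))\cap G$ does \emph{not} act irreducibly on the non-degenerate $2$-space (it preserves a pair of isotropic lines there; the paper's $z$ is $\diag(\lambda,\lambda^{q-1},\lambda^{-q})$), so the irreducibility of $\langle x,y\rangle$ should be deduced, as in the paper, from the condition $a\neq 0$ making $x$ regular unipotent (and one must also verify, via the norm map and the constraint linking the entries of $x$, that the required trace can actually be realised with $a\neq 0$ — this is the content of the paper's computation showing $b+b^q\neq 0$).
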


\begin{proof} We assume  $G$ preserves the form which has matrix $ {\left( \begin{array}{ccc}0&0&1\cr0&1&0\\1&0&0 \end{array}\right)}$. A typical element of a Sylow $p$-subgroup of $G$ is
$ x={\left( \begin{array}{ccc}1&0&0\cr a&1&0\\b&-a^q&1 \end{array}\right)}$ where, $a,b, \in \GF(q^2)$ and $b + b^q + a a^q=0$. Now note that $y= {\left( \begin{array}{ccc}1&0&e\cr0&1&0\\0&0&1 \end{array}\right)}$ where $e$ is non-zero and $e+ e^q =0$ is also in $G$.
We manipulate $a$ and $b$ so that  $xy$ is conjugate to $z={\left( \begin{array}{ccc}\lambda&0&0\cr0&\lambda^{q-1}&0\\0&0&\lambda^{-q} \end{array}\right)}$ where $\lambda$ is a generator of the multiplicative group of $\GF(q^2)$. For this we just have to equate the characteristic polynomials. We have that the minimal polynomial of $xy$ is $-w^3+(be+3)w^2-(3+aa^q e+be)w+1$ whereas $z$ has minimal polynomial $
{w}^{3}- \left(\lambda +{\lambda }^{q-1}+  {\lambda }^{-q} \right) {w}
^{2}+ \left( {\lambda }^{q}+ \lambda^{1-q} + \lambda^{-1} \right) w-1$. We now select $b = \frac{1}{e}(\lambda+\lambda^{q-1}+ \lambda^{-q}-3)$. Then it is easy to check that by substituting $-a a^q = b + b^q$ and using $e^q = -e$ we get the two polynomials are equal.  Furthermore, as the norm map is onto, given that $b+b^q$ is non-zero, there exists $a$ satisfying $-aa^q= b+ b^q$. Now
\begin{eqnarray*}b+b^q &=&  \frac{1}{e}\left((\lambda+\lambda^{q-1}+ \lambda^{-q}) -( \lambda^q+\lambda^{1-q}+\lambda^{-1})\right)\cr&=& \frac 1 e (1-\lambda)(\lambda^{-q}-1)(1-\lambda^{q-1}) \neq 0.\end{eqnarray*} Thus we may arrange that $a\neq 0$. Set $X= \langle x,y\rangle$. Then, as $a\neq 0$, $X$ acts irreducibly on $V$. Since $x$ and $y$ are not conjugate in $G$, the Sylow $p$-subgroup of $X$ has order at least $p^2$ and contains two $G$-classes of non-identity elements and since $z\in  X$, $X$ has a cyclic subgroup of order $q^2-1$. Lemma~\ref{subfield} implies that $X$ is not contained in a subfield subgroup of $G$ all the other possibilities either do not have a cyclic subgroup of order $q^2-1$ or do not have Sylow $p$-subgroups of the correct structure.
 It follows that $X=G$ as claimed.
\end{proof}

\begin{lemma} \label{U32}  The group $\SU_3(q)$ has a hyperbolic triple of type $(q^2-q+1,q^2-q+1,\lambda)$ where $\lambda $ divides $q^2-q+1$.
\end{lemma}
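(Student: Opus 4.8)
The plan is to observe that this is essentially a direct instance of Lemma~\ref{primedegree}: here $d=3$ is prime and $G=\SU_3(q)$, so the structure-constant machinery developed there applies with no change. First I would recall from the table of Singer element orders that a Singer element $x$ of $\SU_3(q)$ has order $q^2-q+1=(q^3+1)/(q+1)$, and set $Z=\langle x\rangle$, a cyclic Singer subgroup. Such an $x$ is a regular semisimple element of $G$.

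Next I would apply Lemma~\ref{primedegree} with $d=3$. Since $x$ is a Singer element of the prime-degree group $G=\SU_3(q)$, there exist $z'\in Z$, $z\in z'^G$ and $y\in x^G$ with $G=\langle x,y\rangle$ and $xy=z$. Because $z'$ lies in the cyclic group $Z$ of order $q^2-q+1$, the order $\lambda:=o(z)=o(z')$ divides $q^2-q+1$. As $x$ and $y$ are conjugate Singer elements, both have order $q^2-q+1$, so $(x,y,(xy)^{-1})$ is a generating triple of the claimed type $(q^2-q+1,q^2-q+1,\lambda)$.

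It then remains to confirm hyperbolicity, i.e.\ condition (ii) of Definition~\ref{maindef}. The structure-constant argument inside Lemma~\ref{primedegree} selects a \emph{non-central} element of $Z$, so $z\neq1$; moreover $q^2-q+1=q(q-1)+1$ is odd, whence $\lambda$ is odd and $\lambda\ge3$. For $q\ge3$ we have $q^2-q+1\ge7$, so $2/(q^2-q+1)+1/\lambda\le 2/7+1/3<1$, giving the hyperbolicity at once.

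The one delicate point, and the step I expect to cause the most trouble, is the size hypothesis (ii) of Lemma~\ref{structureconstant} underpinning Lemma~\ref{primedegree}: with $k=|N_G(Z)/C_G(Z)|=3$ one needs $|Z\setminus Z(G)|>\binom{4}{2}=6$. For $q\ge4$ this is immediate (for instance $q=4$ gives $12$ non-central elements, $q=5$ gives $18$), but for $q=3$ one has $|Z|=7$ with $Z(G)=1$, so exactly six non-identity elements, and the crude bound on the product set $\mathcal{X}$ is tight. I would settle this boundary case either by refining the count of $\mathcal{X}$ in the proof of Lemma~\ref{structureconstant} (noting that $\mathcal{X}$ contains the identity whenever $x^{-1}\in x^N$, which frees up a slot) or simply by direct computation, consistent with the small-field computer checks used elsewhere in the paper. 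The remaining value $q=2$ is excluded, since $\SU_3(2)$ is not quasisimple.
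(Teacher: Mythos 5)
Your proposal is correct and follows exactly the paper's route: the paper's entire proof of this lemma is the single sentence that it is a reiteration of Lemma~\ref{primedegree}. Your caution about $q=3$ is also well placed --- the paper's proof of Lemma~\ref{primedegree} only verifies $|Z|>\binom{d+1}{2}$ rather than the hypothesis $|Z\setminus Z(G)|>\binom{d+1}{2}$ actually required by Lemma~\ref{structureconstant}, and for $(d,q)=(3,3)$ one has $Z(G)=1$ and $|Z\setminus Z(G)|=6=\binom{4}{2}$, so that boundary case genuinely needs the sharpened count of $\mathcal X$ or the direct computation you describe.
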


\begin{proof}
This is a reiteration of  Lemma~\ref{primedegree}.
\end{proof}

\begin{theorem}\label{Uthm} Suppose that $d \ge 3$, $G = \SU_d(q)$ and $Z \le Z(G)$. Then $G/Z$ is a Beauville group.
\end{theorem}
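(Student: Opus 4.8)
The plan is to assemble the theorem from the hyperbolic triples already produced in this subsection, organised by the dimension $d$, exhibiting in each regime \emph{two} triples whose types meet condition (iii) of Definition~\ref{maindef}. For $d \ge 8$ nothing further is required, since Lemma~\ref{unitary} already delivers the Beauville structure; likewise Lemma~\ref{SU5 and 7} settles $d \in \{5,7\}$ and Lemma~\ref{U6} settles $d = 6$ whenever $q \neq 2$. The residual work therefore concentrates on $d \in \{3,4\}$ and on a short list of small groups excluded by the hypotheses of the cited lemmas.

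For $d = 4$ and $q \notin \{2,3,5\}$ I would pair the triple of type $(p,p,\lambda_{2a,p})$ (or $(4,2,\lambda_{2a,2})$ when $p=2$) from Lemma~\ref{U41} with the triple of type $(\lambda_{3a,p},\lambda_{3a,p},q+1)$ from Lemma~\ref{U42}. To confirm that this is a Beauville structure I would invoke the coprimality observation from the introduction: $\lambda_{3a,p}$ is a Zsigmondy $\langle 3a,p\rangle$-prime, hence coprime to $p$, to $q+1 = p^{a/2}+1$ (a divisor of $p^a-1$), and to $\lambda_{2a,p}$, while $\lambda_{2a,p}$, being a power of a Zsigmondy $\langle 2a,p\rangle$-prime, divides no $p^j-1$ with $j < 2a$ and in particular does not divide $q+1$. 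Thus the products of the two types are coprime and condition (iii) holds automatically; since the first triple consists of elements of $p$-power order and of order $\lambda_{2a,p}$, coprimality is inherited by every central quotient $G/Z$.

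For $d = 3$ and $q > 2$ I would pair Lemma~\ref{U3} (type $(p,p,q^2-1)$, or $(4,2,q^2-1)$ when $q$ is even) with Lemma~\ref{U32} (type $(q^2-q+1,q^2-q+1,\lambda)$ with $\lambda \mid q^2-q+1$). The unipotent entries of the first triple are never conjugate to a nontrivial power of the semisimple generators of the second, so the only potential violation of condition (iii) arises in comparing powers of the element of order $q^2-1$ with powers of the element of order $q^2-q+1$. A direct computation shows these orders are coprime unless $q \equiv 2 \pmod 3$, in which case $\gcd(q^2-1,q^2-q+1)=3$. This is where I expect the main obstacle to lie: when $q \equiv 2 \pmod 3$ the order-$3$ powers of \emph{both} elements are the central scalar $\omega I \in Z(G)$, so the pairing yields condition (iii) at once for the projective quotient $\PSU_3(q)$, where the centre is killed, but \emph{not} for the full cover with $Z = 1$. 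I would dispatch that remaining case separately, either by substituting for the generator of Lemma~\ref{U32} a regular semisimple element whose order is coprime to the centre, or, should no uniform substitution present itself, by direct verification for the affected covers.

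Finally, the hypotheses above leave only finitely many small groups uncovered, namely $\SU_6(2)$, $\SU_4(2)$, $\SU_4(3)$ and $\SU_4(5)$ (the group $\SU_3(2)$ being soluble), and I would settle these by exhibiting two coprime hyperbolic triples by machine computation in GAP or Magma, such triples passing to all central quotients. In short, the generic large-dimensional part of the argument is bookkeeping over the preceding lemmas, so the substantive difficulty is the condition-(iii) analysis in dimension $3$ — controlling the shared prime $3$ and its interaction with the centre — together with the finite computer check.
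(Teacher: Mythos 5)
Your assembly is the same as the paper's: the actual proof of Theorem~\ref{Uthm} is a one-line combination of Lemmas~\ref{unitary}, \ref{SU5 and 7}, \ref{U6}, \ref{U41}, \ref{U42}, \ref{U3} and \ref{U32}, followed by the disposal of $\SU_4(2)$, $\SU_4(3)$, $\SU_4(5)$ and $\SU_6(2)$ --- exactly your residual list. The only organisational difference is that the paper handles $\SU_4(2)\cong\PSp_4(3)$ via Theorem~\ref{Spthm} and sends $\SU_4(3)$ and $\SU_6(2)$ to Theorem~\ref{excepcovers} (their Schur multipliers are exceptional), computing afresh only for $\SU_4(5)$; your plan to machine-check all four directly is equally serviceable for the statement at hand. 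Your $d=4$ coprimality bookkeeping is correct.

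Where you genuinely part company with the paper is in dimension $3$, and your worry there is well founded rather than a defect of your write-up: the paper never examines condition (iii) of Definition~\ref{maindef} for the pair of triples coming from Lemmas~\ref{U3} and \ref{U32}. Your computation is right: when $q\equiv 2\pmod 3$ the element $\diag(\lambda,\lambda^{q-1},\lambda^{-q})$ of order $q^2-1$ powers onto $Z(\SU_3(q))$ (indeed $z^{(q^2-1)/3}=\lambda^{(q^2-1)/3}I$), and the cyclic Singer torus of order $q^2-q+1$ also contains the centre, so condition (iii) fails for $\SU_3(q)$ itself while holding for $\PSU_3(q)$, where the residual orders $(q^2-1)/3$ and $(q^2-q+1)/3$ are coprime. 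You have therefore located a case the written proof does not cover. The loose end in your proposal is that neither of your proposed remedies is carried out, and the second (``direct verification for the affected covers'') is not actually available: infinitely many $q$ satisfy $q\equiv 2\pmod 3$, so this cannot be reduced to a finite computation. To close the gap you must make the substitution route work --- for instance, arrange that every element of one of the two triples generates a cyclic group meeting $Z(G)$ trivially; the unipotent entries of the Lemma~\ref{U3} triple already do this, the obstruction is the order-$(q^2-1)$ entry, and replacing it sacrifices the ``cyclic subgroup of order $q^2-1$'' step in that lemma's generation argument, which would then have to be redone. Until that is supplied, the case $d=3$, $3\mid q+1$, $Z=1$ remains open in your proof and, as far as the text goes, in the paper's as well.
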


\begin{proof} For almost all the groups in question this follows from Lemmas~\ref{unitary}, \ref{SU5 and 7}, \ref{U6}, \ref{U41},  \ref{U42}, \ref{U3} and \ref{U32}. The groups remaining are $\SU_4(2) \cong \PSp_4(3)$, $\SU_4(3)$ and $\SU_4(5)$ and $\SU_6(2)$. The first group is dealt with as a symplectic group in Theorem~\ref{Spthm} and  $\SU_4(3)$ and $\SU_6(2)$ are addressed in Theorem~\ref{excepcovers} as they have  exceptional multipliers. The remaining  group have been examined by computer and we have two hyperbolic triples one with elements of order $13$ the other with elements of order $63$.
\end{proof}

\subsection{Symplectic groups}

Having addressed the unitary groups in detail, the case of the symplectic groups are straightforward.

\begin{lemma}\label{symplectc}
Suppose that $d \ge 3$ and $q=p^a$. Then $\Sp_{2d}(q)$ and $\PSp_{2d}(q) $ are a Beauville group.
\end{lemma}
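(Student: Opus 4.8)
The plan is to mirror the treatment of the unitary groups: I would exhibit in $G=\Sp_{2d}(q)$ two hyperbolic triples whose types have coprime element orders, so that condition (iii) of Definition~\ref{maindef} holds automatically by the coprimality observation recorded after that definition, and then pass to $\PSp_{2d}(q)=G/Z$. Since $d\ge 3$ we have $2d\ge 6$, so the exceptional group $\Sp_4(q)$ is excluded and handled separately in Lemma~\ref{Sp42}. For the first triple I would simply invoke Lemma~\ref{OPlusetcAgain}(i): for every $q$ it produces a hyperbolic triple in $x^G\times y^G\times z^G$ of type $(q^d-1,\ \mathrm{lcm}(q^{d-1}-1,q+1),\ q+1)$, where $x$ fixes two opposite totally isotropic $d$-spaces and $z$ is a bireflection. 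Every prime dividing one of these orders divides $q^d-1$, $q^{d-1}-1$ or $q^2-1$.

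For the second triple I would use Gow's Theorem~\ref{G} together with Theorem~\ref{maingeneration}(iii). Let $x$ have order $\lambda_{2da,p}$, a power of the Zsigmondy prime $\zeta_{2da,p}$; then $x$ lies in an irreducible Singer torus of order $q^d+1$, acts irreducibly on $V$, and so is a regular semisimple element of $G$. Let $z$ be the non-central semisimple element of order $\lambda_{(2d-2)a,p}$ acting irreducibly on a nondegenerate $(2d-2)$-subspace and centralizing the complementary symplectic $2$-space. Gow's Theorem provides a conjugate $y$ of $x$ with $xy$ conjugate to $z$; set $H=\langle x,y\rangle$. Then $H$ is irreducible (it contains the irreducible element $x$) and has elements of orders $r=\lambda_{(2d-2)a,p}$ and $s=\lambda_{2da,p}$. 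Applying Theorem~\ref{maingeneration}(iii) with natural module dimension $2d$, so that $e=2d-2$ and $f=2d$, and noting $d<2d-2<2d$ together with $\gcd(2d,2d-2,2d)=2$, we obtain $H\ge F^*(G)=G$ when $q$ is odd; hence the second type is $(\lambda_{2da,p},\lambda_{2da,p},\lambda_{(2d-2)a,p})$.

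To combine these I would check coprimality. The orders in the second triple are powers of the Zsigmondy primes $\zeta_{2da,p}$ and $\zeta_{(2d-2)a,p}$, both odd, which are primitive prime divisors of $q^{2d}-1$ and $q^{2d-2}-1$. By Lemma~\ref{div} (read with base $q$) such a prime for exponent $e$ divides no $q^j-1$ with $j<2e$ and $j\neq e$; since $d$, $d-1$ and $2$ are all smaller than $2d-2$ for $d\ge 3$, neither prime divides $q^d-1$, $q^{d-1}-1$ or $q^2-1$. Thus the two types are coprime and we obtain a Beauville structure on $G$. Since the second triple consists of elements of odd order, hence of order coprime to $|Z|=\gcd(2,q-1)$, the images of both triples generate $G/Z$ and retain coprime types there, so $\PSp_{2d}(q)$ is a Beauville group as well.

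The hard part will be the two points where this clean line fails. First, Theorem~\ref{maingeneration}(iii) is stated only for $q$ odd, so for $q$ even the identification $H=G$ in the second triple must be recovered by a separate argument: one reruns the characteristic-free analysis underlying Theorem~\ref{exotictwolarge} from \cite{GPPS} (the only surviving alternative there is an extension-field subgroup over $\GF(q^2)$, which cannot accommodate both $r$ and $s$), or appeals to a bireflection argument in the style of Lemma~\ref{OPlusetcAgain}. Second, the smallest cases lie outside the Zsigmondy range: for $\Sp_6(2)$ the convention $\lambda_{6,2}=9$ makes the prime $3$ collide with the factors $q+1=3$ and $q^2-1=3$ of the first triple, so coprimality breaks, and such cases must be settled by the small-field results and direct computation.
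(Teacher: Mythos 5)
Your overall route is the paper's: the first triple comes from Lemma~\ref{OPlusetcAgain}(i), the second is $(\lambda_{2da,p},\lambda_{2da,p},\lambda_{(2d-2)a,p})$ built from Gow's Theorem and Theorem~\ref{maingeneration}(iii), and you are in fact more careful than the printed proof on two points it leaves implicit: the coprimality of the two types via Lemma~\ref{div}, and the collision of $\lambda_{6,2}=9$ with $q+1=3$ in $\Sp_6(2)$, which genuinely forces that case out of the coprimality argument.

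The gap is in your treatment of even $q$. You assert that, after rerunning the analysis behind Theorem~\ref{exotictwolarge}, ``the only surviving alternative is an extension-field subgroup over $\GF(q^2)$.'' That is not so: for $q$ even, $\Sp_{2d}(q)$ has the maximal classical subgroups $\mathrm{O}_{2d}^{\pm}(q)$ defined over the ground field (these are exactly why Theorem~\ref{maingeneration}(iii) carries the hypothesis that $q$ is odd), and $|\Omega_{2d}^-(q)|$ is divisible by both $\zeta_{2da,p}$ (through the factor $q^d+1$) and $\zeta_{(2d-2)a,p}$ (through the factor $q^{2d-2}-1$), so no order count removes them. The paper excludes them by a type argument: an element of order $\lambda_{2da,p}$ forces minus type, while the chosen $z$, irreducible on a nondegenerate $(2d-2)$-space and centralizing a $2$-space, forces plus type. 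Your fallback of ``a bireflection argument in the style of Lemma~\ref{OPlusetcAgain}'' is not available, since neither generator of the second triple is a bireflection. Finally, Table~\ref{exotic} also leaves the residual possibilities $\Alt(13),\Alt(14)\le\Sp_{12}(2)$, which the paper eliminates by a computer check in $\Sp_{12}(2)$; your sketch does not account for this case.
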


\begin{proof} Let $G =\Sp_{2d}(q)$. By Lemma~\ref{OPlusetcAgain} , $G$ has a hyperbolic triple of type $(q^d-1,\lcm((q^{d-1}-1),(q+1)),q+1)$.

Let $x$ be an element of order $\lambda_{2da,p}$ and $z \in \Sp_{2d-2}(q)\times \Sp_2(q)$ be an element of order $\lambda_{(2d-2)a,p}$. Then by Gow's Theorem, we may arrange that two conjugates of $x$, $x_1, x_2$, have  product  $x_1x_2=z$. Set $H = \langle x_1,x_2\rangle$. Suppose that $H < G$. Theorem~\ref{maingeneration} (ii) implies that $q$ is even.

If Theorem~\ref{exotictwolarge} (i)  holds, then by considering the order of $\Sp_{d}(q^2)$, we obtain a contradiction. It follows that $H$ must be one of the groups  $\Omega_d^\epsilon(q)$ with $q$ a power of $2$. However the presence of $x_1$ indicates that $\epsilon = -$ while $z$ indicates that $\epsilon = +$.  This contradiction shows that $H = G$ in the situation.  So suppose that Theorem~\ref{exotictwolarge} (ii) holds. Then $q=2$ and $F^*(H) \cong \Alt(13)$ or $\Alt(14)$.
Hence so long as $q\neq 2$ and $d \neq 12$, we have that  $G$ has a hyperbolic triple of type $(\lambda_{2da,p},\lambda_{2da,p},\lambda_{(2d-2)a,p})$. We have checked by computer that when $d= 12$ and $q=2$, $G$ also has a hyperbolic triple with the same designated type.
\end{proof}

\begin{lemma}\label{SP41}
Assume that $G= \Sp_4(q)$ with $q>7$. Then $G$ has a hyperbolic triple of type $$((q^2-1)/\gcd(2,q-1), (q^2-1)/\gcd(2,q-1)2,q+1).$$
\end{lemma}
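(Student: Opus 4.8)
The plan is to build the triple directly from Gow's Theorem and then pin down $H=\langle x,y\rangle$ using the list of maximal subgroups of $\Sp_4(q)$. First I would realise a regular semisimple element $x$ of order $N:=(q^2-1)/\gcd(2,q-1)$ inside the subgroup $\GL_2(q).2\le G$ stabilising a pair $\{W_1,W_2\}$ of opposite totally isotropic $2$-spaces, chosen so that $x$ acts $\GF(q)$-irreducibly on each $W_i$ (take $x$ a suitable power of a Singer cycle of $\GL_2(q)$ acting as $g$ on $W_1$ and as $g^{-\mathrm T}$ on $W_2$). Since the eigenvalues of $x$ have order $N$, and $N$ divides neither $q-1$ nor $q+1$, the constituents $W_1$ and $W_2$ are non-isomorphic $\langle x\rangle$-modules, so the only $\langle x\rangle$-invariant subspaces are $0,W_1,W_2,V$. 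Next I would take $z$ to be a bireflection of order $q+1$, so that $[V,z]$ and $C_V(z)=[V,z]^\perp$ are non-degenerate $2$-spaces. As $x$ is regular semisimple and $z$ is a non-central semisimple element (here $q>7$ makes $G$ quasisimple and $z$ non-central), Gow's Theorem~\ref{G} supplies conjugates $x,y\in x^G$ with $xy=z$; set $H=\langle x,y\rangle=\langle x,z\rangle$.

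The second step is irreducibility of $H$. Any $H$-invariant subspace is $\langle x\rangle$-invariant, hence one of $0,W_1,W_2,V$; but $W_1$ and $W_2$ are totally isotropic, whereas every $z$-invariant subspace decomposes along the non-degenerate orthogonal splitting $V=[V,z]\perp C_V(z)$, so neither $W_1$ nor $W_2$ is $z$-invariant. Thus $H$ leaves invariant only $0$ and $V$, and $H$ is irreducible.

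It then remains to prove $H=G$ by eliminating the maximal subgroups of $\Sp_4(q)$ from the list in \cite{BrayHoltRoney}. Irreducibility disposes of the reducible (parabolic and non-degenerate-space-stabiliser) members at once. Subfield subgroups $\Sp_4(q_0)$ are excluded because $N$ exceeds every element order of $\Sp_4(q_0)$ when $q_0<q$, while the class-$\mathcal S$ candidates ($2\udot\Alt(5)$, $2\udot\Sym(5)$, $2\udot\Alt(6)$, $\Sz(q)$, and the $\mathrm{Sym}^3$ copy of $\SL_2(q)$) have orders not divisible by $N$ once $q>7$. I expect the real obstacle to be the three remaining families — the imprimitive $\Sp_2(q)\wr\Sym(2)$, the extension-field subgroups $\Sp_2(q^2).2$ and $\GU_2(q).2$, and, in even characteristic, $\OO_4^\pm(q)$ — since each of these already contains elements of order exactly $N$ and so cannot be ruled out on orders alone. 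Here I would argue geometrically: an element of order $N$ has eigenvalues of order $N$, and since every eigenvalue of an element of $\SL_2(q)\times\SL_2(q)$ has order dividing $q-1$ or $q+1$, the element $x$ cannot be conjugated into the base group $\SL_2(q)\times\SL_2(q)$ common to $\Sp_2(q)\wr\Sym(2)$ and (for $q$ even) $\Omega_4^+(q)$, even after passing to $x^2$ when the two blocks are interchanged; and the two totally isotropic, non-isomorphic $\langle x\rangle$-constituents $W_1,W_2$ are incompatible with the $\GF(q^2)$-semilinear structure preserved by $\Sp_2(q^2).2$, $\GU_2(q).2$ and $\Omega_4^-(q)\cong\SL_2(q^2)$. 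With all maximal subgroups excluded we obtain $H=G$, and the triple is hyperbolic because $2/N+1/(q+1)<1$ for $q>7$; any residual small values of $q$ left open by these estimates can be confirmed by computer.
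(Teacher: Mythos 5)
Your overall strategy (Gow's Theorem plus a sweep through the maximal subgroups of $\Sp_4(q)$) is reasonable, but the crucial elimination step has a genuine gap. First, you never address the maximal subgroup $\GL_2(q){:}2$ stabilising a pair of opposite totally isotropic $2$-spaces --- which is exactly where you built $x$. That omission happens to be repairable: a semisimple bireflection $z$ of order $q+1>2$ cannot lie in any conjugate of $\GL_2(q){:}2$ (if it fixed both isotropic $2$-spaces they would be $z$-invariant, but the only $z$-invariant $2$-spaces are the non-degenerate ones $[V,z]$ and $C_V(z)$; if it swapped them then $\dim C_V(z)=\dim C_{W_1}(z^2)\le 1$). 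The serious problem is your claimed exclusion of the extension-field subgroups. The maximal torus of order $q^2-1$ containing $x$ lies (up to conjugacy) not only in $\GL_2(q)$ but also in $\Sp_2(q^2)$ (as the split torus, whose two $\GF(q^2)$-eigenlines are precisely a pair of totally isotropic $\GF(q)$-$2$-spaces with non-isomorphic irreducible constituents) and, for $q$ odd, in $\GU_2(q)$ (as the Coxeter torus, with the same eigenvalue multiset $\{\beta,\beta^q,\beta^{-1},\beta^{-q}\}$). Since $\Sp_4(q)$ is simply connected, semisimple classes are determined by eigenvalues, so $x^G$ meets all of these subgroups: the ``incompatibility with the $\GF(q^2)$-semilinear structure'' you invoke simply is not there.

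For $\Sp_2(q^2).2$ and $\Omega_4^-(q)$ this can again be rescued through $z$, because those groups contain no bireflection of order $q+1$. But for $q$ odd the subgroup $\GU_2(q).2$ contains both elements of $x^G$ (in its cyclic torus of order $q^2-1$) and elements of $z^G$ (the element $\mathrm{diag}(\lambda,1)$ with $\lambda^{q+1}=1$ is precisely a semisimple bireflection of order $q+1$ whose commutator space is a non-degenerate $2$-space). So no order or eigenvalue argument rules it out, and since Gow's Theorem only produces \emph{some} pair $(x,y)\in x^G\times x^G$ with $xy=z$, you cannot exclude the possibility that the pair it hands you lies inside a conjugate of $\GU_2(q).2$; you would need a structure-constant comparison inside $\GU_2(q).2$ of the sort carried out in Lemma~\ref{structureconstant}. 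The paper sidesteps exactly this difficulty by taking the two elements of order $(q^2-1)/\gcd(2,q-1)$ from two \emph{non-conjugate} classes (one in $\Sp_2(q)\times\Sp_2(q)$ fixing vectors, one in $\GL_2(q){:}2$ fixing only isotropic $2$-spaces) and observing that every surviving candidate overgroup, including $\GU_2(q){:}2$, has a unique class of elements of that order, so it cannot contain both. You either need to import that idea or supply the missing class-multiplication analysis in $\GU_2(q).2$.
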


\begin{proof}
We let $x$ be an element of order $(q^2-1)/\gcd(2,q-1)2$ containing in $\Sp_2(q)\times \Sp_2(q)\le \Sp_4(q)$ in such a way that $x$ acts on the first block as an element of order $(q-1)/2$ and on the second block as an element of order $(q+1)/2$. Notice that on at least one of these blocks, $x$ acts as an element of odd order. Furthermore, as $q> 5$, we have that $x$ is a regular semisimple element of $G$ with centralizer of order $q^2-1$. Now select an element $y \in \GL_2(q):2 \le \Sp_4(q)$ again of order $(q^2-1)/\gcd(2,q-1)$. Note that $y$, unlike $x$,  does not fix any one-dimensional subspaces. Thus $x$ and $y$ are not $G$-conjugate. Further we have $y$ is a regular semisimple element. Now by Gow's Theorem we may assume that $x$ and $y$ are chosen such that $xy$ has order $q+1$  and fixes a two-space vector wise.  Now let $X = \langle x,y\rangle$. Since $y$ fixes exactly two non-trivial subspaces each of which is isotropic and since $x$ fixes no two dimensional isotropic subspaces we infer that $X$ acts irreducibly on $V$.  Note that if we choose an odd prime $r$ divisor of $q\pm 1$  then $x$ contributes an element of order $r$ which fixes a non-zero vector where as $y$ contributes and element of order $r$ which fixes no non-zero vectors. It follows that $X$ contains an elementary abelian subgroup of order $r^2$.   Now using the maximal subgroups as presented in \cite{Mitchell} and \cite{BrayHoltRoney} we get that $X$ is contained in a conjugate of $\Sp_2(q)\wr 2$, $\GL_2(q):2$, $\GU_2(q):2$ ($p$ odd) or $\mathrm{SO}_4^+(q)$ ($p=2$) (we used that $(q^2-1)/2\ge 40$ to rule out some of the possibilities). In all the cases we see that the candidates have a unique conjugacy class of elements of order $(q^2-1)/\gcd(2,q-1)$.
\end{proof}

\begin{lemma}\label{oneclass} The group $G=\mathrm O_4^-(2^m)$ has exactly one conjugacy class of
 elements of order $4$.
\end{lemma}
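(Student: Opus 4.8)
The plan is to reduce everything to the exceptional isomorphism $\Omega_4^-(q)\cong\PSL_2(q^2)=\SL_2(q^2)$, valid because $q=2^m$ makes the centre of $\SL_2(q^2)$ trivial. Since $\mathrm{O}_4^-(q)=\Omega_4^-(q).2$ and, in characteristic $2$, $\PGL_2(q^2)=\PSL_2(q^2)$ has no diagonal automorphisms, the outer $2$ must be the unique involution of $\mathrm{Out}(\PSL_2(q^2))\cong C_{2m}$, namely the field automorphism $\sigma\colon x\mapsto x^{q}$ of $\GF(q^2)$ over $\GF(q)$, which satisfies $\sigma^2=1$. I would therefore fix the identification $G=\mathrm{O}_4^-(q)\cong\SL_2(q^2)\rtimes\langle\sigma\rangle$. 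In $\SL_2(q^2)$ with $q$ even every element is either semisimple of odd order (dividing $q^2\pm1$) or a unipotent involution, so there is no element of order $4$; hence every element of order $4$ in $G$ lies in the coset $\SL_2(q^2)\sigma$.

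Next I would analyse a coset element $t=g\sigma$. Writing $g^{(q)}$ for the entrywise $q$-th power, we have $t^2=g\,g^{(q)}\in\SL_2(q^2)$, and $t$ has order $4$ exactly when $t^2$ is a non-trivial involution. As $\SL_2(q^2)$ has a single class of involutions (the transvections), after conjugating in $G$ I may assume $t^2=u_0=\left(\begin{smallmatrix}1&1\\0&1\end{smallmatrix}\right)$: indeed, if $z$ has order $4$ then $z^2$ is $G$-conjugate to $u_0$, and a conjugating element carries $z$ to an order-$4$ element whose square is $u_0$. Since $t$ commutes with $t^2=u_0$, every order-$4$ element squaring to $u_0$ lies in $C_G(u_0)$, and a direct centraliser computation gives $C_G(u_0)=U\rtimes\langle\sigma\rangle$, where $U=\{u_b:=\left(\begin{smallmatrix}1&b\\0&1\end{smallmatrix}\right):b\in\GF(q^2)\}$ has order $q^2$ and $\sigma$ centralises $u_0$.

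The crux is then a short transitivity argument inside $C_G(u_0)$. One computes $(u_b\sigma)^2=u_b u_b^{(q)}=u_{b+b^{q}}$, so $u_b\sigma$ has order $4$ with square $u_0$ exactly when $\mathrm{Tr}_{\GF(q^2)/\GF(q)}(b)=b+b^{q}=1$, which holds for precisely $q$ values of $b$. Conjugating $u_b\sigma$ by $u_c\in U$ yields $u_{b+c+c^{q}}\sigma$ (using $-1=1$), and as $c$ runs over $\GF(q^2)$ the quantity $c+c^{q}=\mathrm{Tr}(c)$ runs over all of $\GF(q)$. Hence $U$ alone permutes the $q$ order-$4$ elements squaring to $u_0$ transitively, so they form a single $G$-class; combined with the reduction to $t^2=u_0$, this shows that all elements of order $4$ in $G$ are conjugate.

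The main obstacle is bookkeeping rather than depth: one must correctly pin down that the outer $2$ of $\mathrm{O}_4^-(q)$ is the field automorphism $x\mapsto x^{q}$ and that it squares to the identity, since the entire computation takes place in its centraliser. As a sanity check I would test the class size against $q=2$, where $\mathrm{O}_4^-(2)\cong\Sym(5)$ and the order-$4$ elements are exactly the $30$ four-cycles, forming one class. In general the argument yields a class of size $|G|/|C_G(t)|=2q^2(q^4-1)/(2q)=q(q^4-1)$, which matches the independent count of $q(q^4-1)$ order-$4$ elements ($q^4-1$ involutions, each admitting $q$ order-$4$ square roots), confirming that the single orbit exhausts all of them.
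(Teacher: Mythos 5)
Your proof is correct, but it follows a genuinely different and considerably more explicit route than the paper's. The paper works directly inside the orthogonal group: it takes a Sylow $2$-subgroup $S$ of $G$, records that $|S\cap\Omega_4^-(q)|=q^2$ and $|N_G(S)|=2q^2(q-1)$, and asserts that a calculation in $N_G(S)$ (every $2$-element being conjugate into $S$) produces a single class of order-$4$ elements; the calculation itself is left to the reader. You instead transport the problem through the exceptional isomorphism $\Omega_4^-(q)\cong\SL_2(q^2)$, identify the outer $2$ with the field automorphism $\sigma\colon x\mapsto x^q$, locate all order-$4$ elements in the coset $\SL_2(q^2)\sigma$ (since unipotent elements of $\SL_2(q^2)$ in characteristic $2$ are involutions), and reduce, via the single class of involutions of $\SL_2(q^2)$ and the centraliser computation $C_G(u_0)=U\rtimes\langle\sigma\rangle$, to the transitivity of $U$-conjugation on the $q$ order-$4$ square roots $u_b\sigma$ with $b+b^q=1$. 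In effect your $C_G(u_0)$, of order $2q^2$, is a Sylow $2$-subgroup of $G$, so the two arguments ultimately live in the same subgroup; what your version buys is that every step the paper's ``now we calculate in this group'' elides -- control of fusion, the trace condition, the transitivity of $U$ -- is written out, together with the numerical cross-check $q(q^4-1)=|G|/(2q)$ against $\mathrm{O}_4^-(2)\cong\Sym(5)$. Two cosmetic points: your fixed transvection $u_0$ is $u_1$ in your own parametrisation $u_b=\left(\begin{smallmatrix}1&b\\0&1\end{smallmatrix}\right)$, so the notation clashes harmlessly; and when you assert that the outer $2$ is the unique involution of $\mathrm{Out}(\SL_2(q^2))$ you should say explicitly that it acts non-trivially in $\mathrm{Out}$, which follows since $\mathrm{O}_4^-(q)$ has trivial centre (an inner action would force a central complement). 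Neither affects the validity of the argument.
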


\begin{proof} Let $S \in \Syl_2(G)$ and let $\sigma \in S$ be a reflection. Then $C_G(\sigma) \cong \Sp_2(2^m) \cong \mathrm O_3(2^m)$. Let $S_0 = S \cap \Omega_4^-(2^m)$. Then $S_0$ has order $q^2$ and we have $N_G(S)$ has order $2q^2(q-1)$. Now we calculate in this group that there is exactly one conjugacy class of elements of order $4$.
\end{proof}

\begin{lemma}\label{Sp42} The group $\Sp_4(q)$ has a hyperbolic triple of type $(p,p,(q^2+1)/2)$ if $p \not \in \{2,3\}$,   of type $(9,3,(q^2+1)/2)$ if $p=3$ and  of type $(4,4,q^2+1)$ if $p=2$.
\end{lemma}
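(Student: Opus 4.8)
The plan is to follow the explicit matrix method used for $\SU_3(q)$ and $\SU_4(q)$ in Lemmas~\ref{U3} and \ref{U41}. Fix the natural symplectic module $V=\GF(q)^4$ together with a convenient form, and exhibit two unipotent elements $x$ and $y$ whose matrix entries depend on parameters $a,b\in\GF(q)$. For $p\notin\{2,3\}$ both $x$ and $y$ will be transvections of order $p$; for $p=3$ the element $x$ will be a unipotent element of order $9$ (rather than a transvection) while $y$ is a transvection of order $3$; and for $p=2$ both $x$ and $y$ will be chosen of order $4$ and, crucially, from distinct $\Sp_4(q)$-classes. Because every element of $\Sp_4(q)$ is symplectically self-dual, the characteristic polynomial of $xy$ is a self-reciprocal quartic $w^4+c_3w^3+c_2w^2+c_3w+1$ whose coefficients I would write down explicitly as functions of the parameters.

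The target of the construction is a semisimple element $z$ of order $(q^2+1)/2$ (respectively $q^2+1$ when $p=2$). Since $z$ is a suitable power of a Singer cycle of $\Sp_4(q)$ and $(q^2+1)/2$ remains divisible by $\zeta_{4a,p}$, a Zsigmondy prime for $\langle 4,q\rangle$, the element $z$ acts irreducibly on $V$ and its characteristic polynomial is again a self-reciprocal quartic with no root equal to $1$. The first substantial step is to solve the two-variable system equating the characteristic polynomials of $xy$ and $z$; exactly as in Lemma~\ref{U3} this amounts to inverting a pair of polynomial expressions in $a,b$ and invoking the surjectivity of a norm (or trace) map, subject to side conditions such as $a\neq 0$ (and $c\neq 0$ when $p=2$) that prevent $x$ from degenerating and guarantee that $X=\langle x,y\rangle$ acts irreducibly on $V$. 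Once $xy$ and $z$ share a characteristic polynomial they are conjugate, so after replacing $y$ by a suitable conjugate we may assume $xy=z\in X$.

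It then remains to prove $X=\Sp_4(q)$ by running through the maximal subgroups of $\Sp_4(q)$ as catalogued by Mitchell \cite{Mitchell} for $p$ odd and by Bray, Holt and Roney-Dougal \cite{BrayHoltRoney}. Irreducibility of $X$ discards the parabolic subgroups; the Zsigmondy prime dividing $o(z)$ does not divide the order of $\Sp_2(q)\wr 2$, $\GL_2(q){:}2$, $\GU_2(q){:}2$ or the extraspecial normalizers, eliminating those; and Lemma~\ref{subfield} removes the subfield subgroups. The field-extension subgroup $\Sp_2(q^2){:}2\cong\SL_2(q^2){:}2$ is the subtle one: for $p\neq 3$ it is excluded because a genuine transvection has $\dim[V,x]=1$ and so cannot lie in the $\GF(q^2)$-linear subgroup, while for $p=3$ it is excluded because $\SL_2(q^2)$ has elementary abelian Sylow $3$-subgroups and hence no element of order $9$. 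The same exponent and order bounds dispatch the class-$\mathcal S$ subgroups (the covers of $\Alt(5)$, $\Alt(6)$ and $\Sym(6)$ and the $\PSL_2$-type subgroups). Finally, in the even case the subgroups $\mathrm{O}_4^\pm(q)$ must be handled: $\mathrm{O}_4^+(q)$ lacks elements of order $q^2+1$, whereas $\mathrm{O}_4^-(q)$ is ruled out by Lemma~\ref{oneclass}, since it contains a single class of elements of order $4$ but $x$ and $y$ were chosen non-conjugate. Hence $X=\Sp_4(q)$, and the inequality $1/o(x)+1/o(y)+1/o(z)<1$ is immediate in each case, so $(x,y,(xy)^{-1})$ is a hyperbolic triple of the stated type; the handful of smallest fields where an order argument is too coarse can be confirmed directly by machine.

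The main obstacle will be the characteristic-dependent bookkeeping in this small-rank group. Because $\Sp_4(q)$ carries an unusually rich collection of class-$\mathcal S$ and geometric subgroups and, in characteristic $2$, the extra families $\mathrm{O}_4^\pm(q)$ and the exceptional graph automorphism, the order of $z$ alone does not suffice and the local $p$-structure of $x$ and $y$ must be arranged with care. In particular the $p=3$ case genuinely requires the order-$9$ element: two transvections of order $3$ can be trapped inside $\Sp_2(q^2){:}2$ or a class-$\mathcal S$ subgroup, and verifying that the parameter system of the first step remains solvable when $x$ is taken of order $9$ is the most delicate point of the argument.
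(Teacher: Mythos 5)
Your overall skeleton — explicit parametrised matrices, matching the self-reciprocal characteristic polynomial of $xy$ to that of an irreducible element of order $(q^2+1)/2$ (resp.\ $q^2+1$), and then eliminating maximal overgroups, with two non-conjugate elements of order $4$ and Lemma~\ref{oneclass} disposing of $\Omega_4^-(q)$ when $p=2$ — is the right one and is essentially what the paper does. But there is a concrete flaw in your $p\notin\{2,3\}$ case: you propose to take \emph{both} $x$ and $y$ to be transvections of order $p$. A symplectic transvection fixes a hyperplane pointwise, so two transvections of $\Sp_4(q)$ fix pointwise the intersection of two hyperplanes, a subspace of dimension at least $2$; hence $xy$ always fixes a $2$-space vectorwise and its characteristic polynomial is divisible by $(w-1)^2$. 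Since an element of order $(q^2+1)/2$ has order divisible by the (odd) Zsigmondy prime $\zeta_{4a,p}$ and therefore acts fixed-point-freely on $V$, your polynomial-matching system has no solution: the step "solve for $a,b$ so that $xy$ is conjugate to $z$" fails outright, and in any case $\langle x,y\rangle$ would be reducible.

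The repair is exactly the asymmetry you already allow yourself at $p=3$: take $x$ to be a \emph{regular} unipotent element (a single Jordan block, so of order $p$ when $p\ge 5$ and of order $9$ when $p=3$) and only $y$ a transvection. Then $C_V(x)$ is one-dimensional, irreducibility of $\langle x,y\rangle$ is attainable, and the transvection still earns its keep in the endgame: for $q$ odd the only maximal subgroups of $\Sp_4(q)$ containing transvections are $\Sp_2(q)\wr 2$ and the subfield subgroups, neither of which has elements of order $(q^2+1)/2$, which is a shorter route than trawling the whole maximal subgroup list as you propose. Your diagnosis of why $p=3$ needs the order-$9$ element is also slightly off — the obstruction to using two transvections is the reducibility above, not containment in $\Sp_2(q^2){:}2$ or a class-$\mathcal S$ subgroup — but the conclusion you draw there is correct.
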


\begin{proof} As usual for these small cases we give explicit matrices. We consider $G=\Sp_4(q)$ as the set of matrices which preserve the form with matrix $$J=\left(\begin{array}{cccc}
0&0&0&1\cr
0&0&1&0\cr
0&-1&0&0\cr
-1&0&0&0
\end{array}\right).$$ We let $\{e_1,e_2,f_2,f_1\}$ be the basis of  $V$ corresponding to $J$.
Then, for $a$ and $b$ in $\GF(q)$, the matrix $x=\left(\begin{array}{cccc}
1&0&0&0\cr
1&1&0&0\cr
0&a&1&0\cr
b&-a&1&1
\end{array}\right)$ is an element of $G$.

We first consider the situation with $p$ odd. Then $x$ has order $p$ if $p>3$ and has order $9$ if $p=3$ and $a \neq 0$.
Let $y=\left(\begin{array}{cccc}
1&0&0&1\cr
0&1&0&0\cr
0&0&1&0\cr
0&0&0&1
\end{array}\right)$ so that $y\in G$ is a transvection. Now the characteristic polynomial (in $w$) of $xy$ is $w^4-(4+a)w^3+(6+b+2a)w^2+(-4-a)w +1$.

Let $z$ be an element of $G$ of order $(q^2+1)/(2,q-1)$. Then, as $q> 3$, $z$ acts irreducibly on $V$ and its characteristic polynomial is of the form $w^4-tw^3+fw^2-tw+1$ where $t$ is the trace of $z$. Plainly we may select $a$ and $b$ so that $xy$ has characteristic polynomial  $w^4-tw^3+fw^2-tw+1$. (This can be done for $p=2$ as well.) Now setting $X = \langle x,y\rangle$, we see that $X$ contains an element of order $(q^2+1)/2$ and a transvection. The only maximal subgroups of $\Sp_4(q)$ for $q$ odd which contain transvections are $\Sp_2(q)\wr 2$ and $\Sp_4(q_0)$ where $\GF(q_0)$ is a subfield of $\GF(q)$. Plainly these  groups do not contain an element of order $(q^2+1)/2$. (In the case when $q$ is even, we can have $X = \Omega_4^-(q)$. In $G=\Sp_4(8)$, we have checked computationally that $G$ is not generated by $x$ and $y$ for any choice of $a$ and $b$.)

The plan for $q= 2^m$ is similar. We take  $y=\left(\begin{array}{cccc}
1&1&1&\lambda\cr
0&1&1&0\cr
0&0&1&1\cr
0&0&0&1
\end{array}\right)$ where $\lambda \in F \setminus \{\beta^2+\beta \mid \beta \in \GF(2^m)\}$. Then $y$ has order $4$.
Now, for $a$ and $b$ non-zero elements of $\GF(2^m)$, we let  $x=\left(\begin{array}{cccc}
1&0&0&0\cr
a&1&0&0\cr
0&b&1&0\cr
0&ab&a&1
\end{array}\right)$. Then $x$ also has order $4$. We have $C_V(x^2) = \langle e_1,e_2\rangle$ and $C_V(y^2) = \langle f_1,f_2\rangle$. Now note that $(f_1x,f_1)=0$ whereas if $(vy,v)=0$ we calculate that our choice of $\lambda$ implies that $v \in C_V(y^2)$. Hence $x$ and $y$ are not conjugate in $G$.

The characteristic polynomial of $xy$ is
\begin{eqnarray*}
&&w^4+bw^3+a^2(b\lambda+1)w^2+bw+1
\end{eqnarray*}
Again  the characteristic polynomial of an element of order $q^2+1$ has the form  $w^4+tw^3+fw^2+tw+1$ and is irreducible.  In particular, as elements in $\GF(q)$ have unique square roots (as the field has characteristic 2), we have that $t \neq 0$ and $f \neq 0$ as $1$ is not a root of the polynomial. Setting  $b= t$, we must choose $a$ so that  $a^2(b\lambda +1)= f$. This has a solution so long as $b\lambda \neq 1$. Since we have at least two choices for $\lambda$ this can be arranged.
Thus we may choose $a, b$ and $\lambda$ so that $xy$ has order $q^2+1$. Now set $X = \langle x,y\rangle$. If $X$ is a proper subgroup of $G$, then, as $X$ has elements of order $4$,  $X \cong \SL_2(q^2):2$ or  $\Omega_4^-(q):2$ by \cite{AB}. However these groups are isomorphic (by an outer automorphism of $G$) and by Lemma~\ref{oneclass} they have a unique conjugacy class of elements of order $4$. As $x$ and $y$ are not conjugate we deduce that $G=X$ and this concludes the proof.

\end{proof}
Finally we check computationally that $\Sp_4(q)$ with $q\le 7$ has hyperbolic triples as listed in Table~\ref{Sptab}.

\begin{table}[h] \label{Sptab}
\caption{Element Orders for Beauville Systems in small symplectic groups}
\vskip 0.5cm
\begin{center}
\renewcommand{\arraystretch}{1.24}
\begin{tabular}{|c|c|c|c|c|c|c|}
\hline
$G$ & $x_1$ & $x_2$ & $x_3$ & $ y_1$ & $ y_2 $ & $ y_3 $ \\
\hline
$\Sp_4(3)$ & 5 & 5 & 5 & 9 & 9 & 9 \\
\hline
$\Sp_4(5)$ & 8 & 8 & 8 & 13 & 13 & 13 \\
\hline
$\Sp_4(7)$ & 8 & 8 & 8 & 25 & 25 & 25 \\
\hline
\end{tabular}
\end{center}
\end{table}

We summarize the results of this subsection with the main result.
\begin{theorem}\label{Spthm} Suppose that $G = \Sp_d(q)$ with $d \ge 4$ and $Z\le Z(G)$. Then $G/Z$ is a Beauville group.\qed
\end{theorem}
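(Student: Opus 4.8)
The plan is to prove the theorem by assembling the lemmas of this subsection according to the symplectic dimension $d$ (which is even, $d\ge 4$). Since $|Z(\Sp_d(q))| = \gcd(2,q-1) \le 2$, the subgroup $Z$ is either trivial or the full centre, so it suffices to produce Beauville structures in $\Sp_d(q)$ and in $\PSp_d(q)$; in each construction below I would record that the triples meet $Z$ trivially and are of coprime type, which then transfers the structure to the central quotient exactly as in the proofs of Theorems~\ref{SLThm} and \ref{Uthm}.

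For $d \ge 6$ there is nothing to do beyond citing Lemma~\ref{symplectc}, which already states that both $\Sp_{2d'}(q)$ and $\PSp_{2d'}(q)$ are Beauville groups whenever $2d' = d \ge 6$. Thus the entire content of the theorem for $d \ge 6$ is subsumed in that lemma.

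The remaining case is $d = 4$, and here the idea is to combine the two explicit hyperbolic triples constructed in Lemmas~\ref{SP41} and \ref{Sp42}. The first, available for $q > 7$, has type whose entries all divide $q^2 - 1$ (one of them being $q+1$); the second, available for every $q$, has type built from the characteristic $p$ and a divisor of $q^2+1$ (namely $(q^2+1)/\gcd(2,q-1)$). By the observation in the Introduction it is enough, for condition (iii) of Definition~\ref{maindef}, to check that these two types are coprime. Since $p$ divides neither $q^2-1$ nor $q^2+1$, and $\gcd(q^2-1,\,q^2+1) \le 2$, the only prime that could be shared is $2$; but for $q$ odd the second triple has all its orders odd, and for $q$ even one has $q^2-1$ odd and $\Sp_4(q) = \PSp_4(q)$. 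In either parity the two types are genuinely coprime, so condition (iii) holds and the pair of triples is a Beauville structure for $\Sp_4(q)$ that descends to $\PSp_4(q)$.

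Finally, the small fields $q \le 7$ lie outside the hypothesis $q > 7$ of Lemma~\ref{SP41}, and for these I would invoke the direct computer verification recorded in Table~\ref{Sptab} (the degenerate group $\Sp_4(2) \cong \Sym(6)$ is not quasisimple and is covered by the results on symmetric groups). I expect the only real obstacle to be the careful parity bookkeeping in the $d = 4$ coprimality argument together with the verification that the chosen triples meet $Z$ trivially; the substantive generation statements have already been established in Lemmas~\ref{symplectc}, \ref{SP41} and \ref{Sp42}, so no further group-theoretic input is required.
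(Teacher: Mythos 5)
Your assembly is precisely the paper's intended argument: Theorem~\ref{Spthm} is stated with no written proof and is simply the union of Lemma~\ref{symplectc} (which disposes of all $d\ge 6$), Lemmas~\ref{SP41} and \ref{Sp42} for $d=4$ with $q>7$ together with the coprimality and parity checks you carry out (which are correct, since $(q^2+1)/\gcd(2,q-1)$ is odd for $q$ odd and $\gcd(q^2+1,q^2-1)\le 2$), and the computer verifications of Table~\ref{Sptab}, the reduction to $\PSp_d(q)$ being immediate because $|Z(\Sp_d(q))|\le 2$. The one caveat, inherited from the paper rather than introduced by you, is that Table~\ref{Sptab} as printed lists only $q\in\{3,5,7\}$, so the appeal to it leaves $\Sp_4(4)$ formally unaccounted for even though the surrounding text asserts that all $q\le 7$ were checked.
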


\subsection{Spin groups}

The simply connected groups of Lie type $\B_n(q)$, $\mathrm D_n(q)$ and ${}^2\mathrm D_n(q)$ are the Spin groups. They have centre of order a power of $2$ and map onto the orthogonal groups $\Omega_{2n+1}(q)$,  $\Omega_{2n}^+(q)$, $\Omega_{2n}^-(q)$ respectively.  We often work in these images of the simply connected groups so that we can describe the elements in various hyperbolic triples via their action on the associated natural orthogonal space $V$. For this purpose whenever $G= \Spin^\varepsilon(q)$, $\ov G$ will denote the image $\Omega_d^\varepsilon (q)$. Note that as any maximal torus contains $Z(G)$ any preimage of a regular semisimple element of $\Omega_d^\varepsilon(q)$ in $\Spin_d^\varepsilon (q)$ is also regular semisimple.  Now suppose that $\ov x$ and $\ov y$ are regular semisimple elements of odd order in $\ov G$ and that $\ov z$ is a semisimple element of odd order in $\ov G$. Then there exist preimages  $x$, $y$ and $z$ of $\ov x$, $\ov y$ and $\ov z$ respectively which also have odd order. Furthermore, $x$ and $y$ are regular semisimple elements of $G$. Hence by Gow's Theorem~\ref{G}  we can conjugate $x$ and $y$ and suppose that $xy=z$ and consequently also $\ov x \ov y =\ov z$. In the arguments in this section, we will always argue in the orthogonal groups with the understanding that our conjugation happens in $G$ as just described when our elements have odd order. By abuse of notation, we will not use the ``bar" notation rather we will just describe the way  that elements of $G$ act on the orthogonal space.

\begin{lemma}\label{orthogonal+}
Suppose that $d \ge 10$ is even, $q=p^a$, $G= \Spin^+_d(q)$ and $Z \le Z(G)$.  Then $G/Z$ is a Beauville group.
\end{lemma}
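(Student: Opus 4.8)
The plan is to exhibit two hyperbolic triples of \emph{odd-order} elements in the orthogonal image $\ov G = \Omega_d^+(q)$, to transport them to $G = \Spin_d^+(q)$ using the odd-order lifting described at the start of this subsection, and then to read off a Beauville structure in every quotient $G/Z$. Throughout write $d = 2n$, so $n \ge 5$. Once the two triples $(x_i,y_i,z_i)$ are built in $\ov G$, the lifting produces preimages of the same (odd) orders with $x_iy_iz_i = 1$; writing $H_i = \langle x_i,y_i\rangle$, each $H_i$ maps onto $\Omega_d^+(q)=G/Z(G)$, so $H_iZ = G$, and since $G$ is perfect we get $G = [G,G] = [H_iZ,H_iZ] = [H_i,H_i] \le H_i$, forcing $H_i = G$. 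This gives condition (i). Because all elements have odd order and $Z$ is a $2$-group, orders are unchanged on passing to $G/Z$, and I will secure condition (iii) by showing the two triples have coprime orders; this makes the descent to $G/Z$ automatic.

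For the first triple I would invoke Lemma~\ref{OPlusetc}, which (as $n\ge5$) supplies a hyperbolic triple whose first two entries fix a pair of opposite totally singular $n$-spaces and have order dividing $q^n-1$, and whose third entry is a bireflection of order dividing $q+1$. Replacing each element by its odd part (a suitable power) keeps the geometric picture: the odd part of the $\GL_n(q)$-Singer still acts irreducibly on each singular $n$-space, since the odd Zsigmondy prime $\zeta_{na,p}$ divides it, so it remains regular semisimple. The result is a triple of odd-order elements all of whose orders divide $q^n-1$ or $q+1$.

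For the second triple I would combine Gow's Theorem~\ref{G} with Theorem~\ref{maingeneration}(iv). Fix an even integer $m$ with $4\le m<n$ and $m\nmid n$ (for $n$ odd every even $m$ works and I take $m=4$; for $n$ even such an $m$ always exists, e.g. $m=4$ unless $4\mid n$, then $m=6$ unless $6\mid n$, and so on, an elementary finite check), and set $e = d-m$, $f=d-2$. Then $d/2<e<f\le d-2<d$ and $\gcd(d,e,f)=\gcd(2n,2n-m,2n-2)=2$. I take $x=x_1x_2$ with $x_1$ of order $\lambda_{(d-m)a,p}$ acting irreducibly on a non-degenerate minus $(d-m)$-space and $x_2$ of order $\lambda_{ma,p}$ acting irreducibly on the complementary minus $m$-space (so that $(-)\perp(-)=(+)$); as a product of two primitive-prime-divisor elements on complementary blocks, $x$ is regular semisimple of odd order. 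Let $z$ have order $\lambda_{(d-2)a,p}$, supported on a minus $(d-2)$-space. By Gow's Theorem there is a conjugate $y$ of $x$ with $xy=z$; the group $H=\langle x,y\rangle$ is irreducible (the invariant subspaces of $x$ and of $z$ have trivial intersection, as in Lemmas~\ref{OPlusetcAgain} and~\ref{unitary1}) and contains elements of orders $r=\lambda_{ea,p}$ and $s=\lambda_{fa,p}$, so Theorem~\ref{maingeneration}(iv) yields $H=\Omega_d^+(q)$ unless we are in one of the exceptional rows of Table~\ref{exotic}.

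Finally I would verify condition (iii). Each Zsigmondy prime $\zeta_{Na,p}$ ($N\in\{d-m,m,d-2\}$) divides $p^t-1$ only when $N\mid t$; since each of $d-m=2n-m$, $m$ and $d-2=2n-2$ is at least $4$, none divides $2$, and none divides $n$ (as $m\nmid n$ and $2n-m,2n-2>n$), these primes divide neither $q^n-1$ nor $q+1$. Hence the two triples have coprime orders and (iii) holds, while hyperbolicity (ii) is routine for $d\ge10$. The main obstacle is precisely this coprimality bookkeeping: it forces the delicate choice of the filler index $m$, equivalently a control of $2$-adic valuations so that the minus-block orders avoid the primes of $q^n-1$; conceptually one may instead observe that every nontrivial odd-order power of $x_1,y_1$ has all its eigenvalue-pairs of plus type whereas those of $x_2,y_2,z_2$ are of minus type, and that $z_1$ alone is a bireflection, which already rules out the required conjugacies. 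This arithmetic breaks down only in the finitely many $q=2$ configurations of Table~\ref{exotic}, namely $\Omega_{14}^+(2)$, $\Omega_{16}^+(2)$ and $\Omega_{22}^+(2)$, where either an alternative admissible pair $(e,f)$ avoiding the tabulated exception is selected or the group is treated directly by machine. A secondary point is to confirm that the odd-part elements of the first triple stay regular semisimple and that all constructed elements lift to odd-order elements of $\Spin_d^+(q)$, both of which follow from the primitive-prime-divisor structure and the remarks opening this subsection.
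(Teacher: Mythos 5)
Your proof follows essentially the same route as the paper: one triple comes from Lemma~\ref{OPlusetc} (Singer-type elements fixing a pair of opposite totally singular subspaces, plus a bireflection), the other from a product of primitive-prime-divisor elements on two complementary minus-type blocks, assembled via Gow's Theorem and checked against Theorem~\ref{maingeneration}(iv). The one genuine difference is your choice of filler block: the paper always uses $\Omega_{d-4}^-(q)\times\Omega_4^-(q)$, i.e.\ $m=4$, whereas you insist on an even $m$ with $m\nmid d/2$. This buys you outright coprimality of the two triples (with the paper's fixed choice, when $4\mid d/2$ the prime $\zeta_{4a,p}$ divides $q^{d/2}-1$, so condition (iii) needs the supplementary observation that the order-$\zeta_{4a,p}$ elements arising in the two triples have different fixed spaces and hence are not conjugate), at the cost of a slightly fussier check against Table~\ref{exotic}. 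In fact your $(e,f)$ collides with the table only at $(d,q)=(14,2)$ --- the cases $\Omega_{16}^+(2)$ and $\Omega_{22}^+(2)$ that you flag do not match any tabulated row for your choices --- and the paper resolves $(14,2)$ exactly as you suggest, by switching to $\Omega_8^-(2)\times\Omega_6^-(2)$. One wording to repair: you cannot literally ``replace each element of the first triple by its odd part,'' since taking powers destroys the relation $x_1y_1z_1=1$; you must instead take the odd parts of the Singer element and of the bireflection as your starting conjugacy classes, re-apply Gow's Theorem to them, and re-run the generation argument of Lemma~\ref{OPlusetc} --- which does go through for the reason you give, namely that the Zsigmondy prime $\zeta_{(d/2)a,p}$ is odd and survives in the odd part, so regular semisimplicity is retained.
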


\begin{proof} Let $G= \Spin_d^+(q)$ and $V$ be the orthogonal space for $G$ (so there is a kernel to the representation). The proof is similar to that of Lemma~\ref{unitary} but we have to take special care about the orthogonal type of the various subspaces involved.

Suppose that $d/2< k< k+2\le d-2$ with $k$ even. We let $x_1 \in \Omega_{d-4}^-(q)$ have order $\lambda_{(d-4)a,p}$ and $z \in  \Omega_{d-2}^-(q)$ of order $\lambda_{(d-2)a,p}$.  Now pick $x_2 \in \Omega_{4}^-(q)$ of order $\zeta_{4a,p}$ and set $x= x_1x_2 \in \Omega_{d-4}^-(q)\times \Omega^-_{4}(q) \le \Omega_d^+(q)$. Then in this case $x$ is regular semisimple in $G$ and by Gow's Theorem we can find a conjugate $y$ of $x$ such that $xy$ is conjugate to $z$. In addition we have that $H=\langle x,z\rangle$ acts irreducibly on $V$. Since $H$ contains elements of order $\lambda_{(d-4)a,p}$ and $\lambda_{(d-2)a,p}$, we may apply Theorem~\ref{maingeneration} (iv) to get that $H= \Omega^+_{d}(q)$ so long as $(d,q) \not = (14,2)$. Thus $G$ has a hyperbolic triple of type  $(\lambda_{(d-4)a,p}\zeta_{4a,p},\lambda_{(d-4)a,p}\zeta_{4a,p},\lambda_{(d-2)a,p})$ when $(d,q) \not = (14,2)$. In this exceptional case we make an alternative hyperbolic  triple by selecting $x$ of order $\lambda_{8,2}\zeta_{6,2}= 153 $ projecting in $\Omega_8^-(2)\times \Omega_{6}^-(2)$ and choose $y$ a conjugate of $x$ such that $xy$ is conjugate to $z$. Then $\langle x,y \rangle$ is irreducible on $V$ and we once again apply Theorem~\ref{maingeneration} (iv) to get that $G = \langle x,y\rangle$. Hence $\Omega_{14}^+(2)$ has a hyperbolic triple of type $(153,153,13)$.
 On the other hand, by Lemma~\ref{OPlusetc}, $G$ has a hyperbolic which projects to  a hyperbolic triple of type  $((q^{d/2}-1)/\gcd (q-1,2),(q^{d/2}-1)/\gcd (q-1,2), (q+1)/\gcd (q-1,2))$ in $\Omega_d^+(q)$. This shows that $G$ and the quotients $G/Z$ are Beauville groups.
\end{proof}

\begin{lemma}\label{O8}
Suppose that $H = \Spin_8^+(q)$ with $q=p^a\not\in \{2,3\}$. Then
\begin{enumerate}
\item $G$ has a hyperbolic triple  $(x,y,z)$  of type $((q^4-1)/\gcd(q-1,2),(q^4-1)/\gcd(q-1,2),\lambda_{6a,p})$ where the element $z$ centralizes a 2-dimensional $-$-space and  $x$ and $y$ are conjugate and act on a decomposition of $V$ in to a sum of  two totally singular subspaces preserved by $\GL_4(q):2$.
    \item  $G$ has a hyperbolic triple  $(x,y,z)$  of type $((q^2+1)/\gcd (q-1,2),(q^2+1)/\gcd(q-1,2),(q^3-1)/\gcd(q-1,2))$ where $x$ and $y$ are conjugate regular semisimple elements with centralizers of order $(q^2+1)^2/\gcd(q-1,2)$ and $z$ is an element of $\Omega_6^+(q)$ which preserves a decomposition of $V$ in to a sum of two singular $3$-spaces and a two space which it centralizes.
\end{enumerate}
In particular, if $H =\Spin^+_8(q)$ with $q \not \in\{2,3\}$ and $Z \le Z(H)$, $H/Z$ is a Beauville group.
\end{lemma}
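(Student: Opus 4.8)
The plan is to treat the two parts in turn by the now-familiar recipe of Lemmas~\ref{OPlusetcAgain} and~\ref{OPlusetc}, and then to fuse the resulting triples into a Beauville structure that survives the passage to $\Spin_8^+(q)/Z$. Throughout I work in the orthogonal image $\ov G=\Omega_8^+(q)$, using the conventions set out at the start of this subsection to lift conjugacy from $\ov G$ to $\Spin_8^+(q)$. In each part I manufacture a regular semisimple element $x$ of the prescribed shape, fix the target semisimple element $z$, call on Gow's Theorem~\ref{G} to produce a conjugate $y$ of $x$ with $xy=z$, and then show $X=\langle x,y\rangle=\langle x,z\rangle=\ov G$.

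For part~(i) take $x$ of order $(q^4-1)/\gcd(q-1,2)$ inside $\GL_4(q){:}2\le\ov G$, so that $x$ stabilises a pair $W_1,W_2$ of opposite totally singular $4$-spaces and acts irreducibly (hence with $W_1\not\cong W_2$) on each; such an $x$ is regular semisimple and its only proper nonzero invariant subspaces are $W_1$ and $W_2$. Take $z$ of order $\lambda_{6a,p}$ acting irreducibly on a nondegenerate minus $6$-space and centralising a complementary nondegenerate minus $2$-space, and apply Gow. Since $z$ preserves no totally singular $4$-space, the group $X$ is irreducible on $V$. Now $X$ contains the element $z$ of order $\lambda_{6a,p}$, a large Zsigmondy divisor with $e=6>d/2=4$, so Lemma~\ref{3.2} (or the main theorem of \cite{GPPS} directly when $\lambda_{6a,p}$ happens to be small) applies; as $q\notin\{2,3\}$ the exotic configurations are excluded by the presence of the element of order $(q^4-1)/\gcd(q-1,2)$, whose order outgrows every candidate, and the subfield and extension-field cases fall to Lemma~\ref{subfield} and to $\gcd$ considerations. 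This leaves $F^*(X)$ a classical group over $\GF(q)$ acting on $V$, and irreducibility together with the form fixed by $x$ forces $X=\ov G$, establishing part~(i).

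For part~(ii) no large Zsigmondy divisor is at hand, since both relevant cyclotomic indices $e=3,4$ are at most $d/2$; I would therefore argue directly from the maximal subgroups of $\Omega_8^+(q)$. Choose $x$ of order $(q^2+1)/\gcd(q-1,2)$ in $\Omega_4^-(q)\times\Omega_4^-(q)$ projecting to a Singer element of each factor, so that $C_{\ov G}(x)$ has order $(q^2+1)^2/\gcd(q-1,2)$ and $x$ is regular semisimple, and take $z$ of order $(q^3-1)/\gcd(q-1,2)$ in $\Omega_6^+(q)$ stabilising two opposite singular $3$-spaces and centralising a $2$-space. After applying Gow, irreducibility of $X=\langle x,z\rangle$ follows because $x$ preserves no singular subspace whereas $z$ does. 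Running through the Aschbacher classes $\mathcal C_1$--$\mathcal C_8$ and $\mathcal S$ of maximal subgroups of $\Omega_8^+(q)$ as listed in \cite{BrayHoltRoney}, and keeping in mind the additional triality classes special to dimension $8$, the combination of an element of a $(q^2+1)^2$-torus with a regular element of order $(q^3-1)/\gcd(q-1,2)$ is incompatible with every proper irreducible overgroup once $q\ge4$; hence $X=\ov G$.

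It remains to combine the two triples and descend to $\Spin_8^+(q)/Z$. Hyperbolicity is immediate, as every order exceeds $2$ when $q\ge4$, so each reciprocal sum is well below $1$. The decisive point is condition~(iii) of Definition~\ref{maindef}: the two types share only the $\Phi_1$ and $\Phi_4$ cyclotomic contributions, so I must show that no nontrivial power of a first-triple element is $\ov G$-conjugate to a power of a second-triple element. Here the structural descriptions do the work. A power of the order-$(q^4-1)/\gcd(q-1,2)$ element of dividing $q^2+1$ stabilises only totally singular $4$-spaces, whereas the matching power of the order-$(q^2+1)/\gcd(q-1,2)$ element stabilises nondegenerate minus $4$-spaces; and a power of order dividing $q-1$ of the first element is fixed-point-free with eigenvalue multiplicities $(4,4)$, while the matching power of the order-$(q^3-1)/\gcd(q-1,2)$ element fixes the central $2$-space and so has a nontrivial $1$-eigenspace. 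Both distinctions are invariant under conjugation, so~(iii) holds. For $q$ even one has $\Spin_8^+(q)=\Omega_8^+(q)$ and all orders are odd, so the triples descend verbatim; for $q$ odd I would lift through $\Spin_8^+(q)\to\ov G$ exactly as in the conventions preceding the lemma, checking that the central $4$-group introduces no new conjugacies among the relevant powers. I expect the main obstacle to be precisely this final bookkeeping—verifying~(iii) across the shared $\Phi_1$ and $\Phi_4$ primes and controlling it under the passage to $\Spin_8^+(q)/Z$—rather than the generation arguments themselves.
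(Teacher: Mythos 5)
Your treatment of parts (i) and (ii) follows the paper's route: the same choices of $x$ and $z$, Gow's Theorem to manufacture $y$, irreducibility of $\langle x,z\rangle$ from the incompatibility of their invariant subspaces, and then elimination of proper overgroups --- via \cite{GPPS} together with Lemma~\ref{subfield} in (i), and via the maximal subgroups of $\Omega_8^+(q)$ in (ii). For (ii), however, the paper's proof consists almost entirely of a row-by-row traversal of Kleidman's table of the $75$ classes of maximal subgroups of $\Omega_8^+(q)$, using the exact centralizer order $(q^2+1)^2/\gcd(q-1,2)$ of $x$ and the order of $z$ to exclude each candidate (the triality copies of $\Omega_7(q)$, the stabilizers of decompositions, $2^6{:}\Sym(8)$, the subfield and class-$\mathcal S$ subgroups, and so on). You assert that this check succeeds but do not perform it, so that step of your argument is a plan rather than a proof.

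The genuine gap is in the final assembly. First, you omit the paper's reduction: for $q$ odd, $Z(H)\cong 2^2$ and triality is transitive on its subgroups of order $2$, so for every $Z\neq 1$ the quotient $H/Z$ is $\Omega_8^+(q)$ or $\mathrm P\Omega_8^+(q)$ and is handled directly by (i) and (ii); only $H$ itself remains, and there the entire content of the paper's argument is to track which of the three central involutions $v_1,v_2,v_3$ occur as powers of the triple elements (triple (i) meets only $\{v_2,v_3\}$, triple (ii) only $\{v_1\}$). This is precisely what you defer as ``bookkeeping.'' Second, the discriminating invariant you do supply for the shared divisor $q^2+1$ is false: if $x$ has order $(q^4-1)/\gcd(q-1,2)$ with eigenvalues $\lambda^{q^i}$ on $W_1$ and their inverses on $W_2$, then its power of order dividing $q^2+1$ has eigenvalues $\mu^{\pm1},\mu^{\pm q}$ (where $\mu=\lambda^{q^2-1}$) \emph{each with multiplicity two}, so it stabilises non-degenerate $4$-spaces as well as $W_1$ and $W_2$; worse, for a prime $\ell\mid q^2+1$ with $\{\pm q^i\}$ covering all of $(\Z/\ell\Z)^*$ (for instance $\ell=5$, $q=7$) the order-$\ell$ powers of the elements in the two triples have identical eigenvalue multisets, so no subspace-stabilisation invariant of the kind you propose can separate them. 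Your $q-1$ argument (non-trivial $1$-eigenspace versus fixed-point-freeness) is sound, but the $q^2+1$ case needs a different idea, and the central-involution analysis cannot be waved away, since it is where the paper actually earns the ``in particular'' clause.
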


\begin{proof} Let $H= \Spin_8^+(q)$ and  $G = \Omega_8^+(q)$ with $q\not\in\{2,3\}$ and $V$ be its natural orthogonal module. Again we identify elements of $G$ with their preimages of smallest order in $H$ and apply Gow's Theorem in $H$.  We first prove (i). Let $x$ be an element of the stabilizer of an opposite pair of singular subspaces $W_1$ and $W_2$ which acts as an element of order $(q^4-1)/\gcd(q-1,2)$ generating a subgroup of the Singer cycle subgroup on both subspaces. Then $x$ acts as the field element (of $\GF(q^4)$) $\lambda$ on $W$ and $\lambda^{-1}$ on $W_2$. In particular, as $x$ has order $(q^4-1)/\gcd(q-1,2)$, we have that $W_1$ and $W_2$ are the only subspaces of $V$ left invariant by $x$. Hence  $x$ is a regular semisimple element of $G$. Now let $z$ be an element of $\Omega_6^-(q) \times \Omega_2^-(q)$ acting just in the first factor as an element of order $(q^3+1)/\gcd(q-1,2)$ and centralizing the $2$-dimensional $-$-space. Then Gow's Theorem implies that we may find a conjugate $y$  of $x$ and assume that $xy=z$. Set $X = \langle x,y\rangle$. Note that $z$ does not leave any $4$-dimensional subspace invariant. Hence $X$ acts irreducibly on $V$.

 We use \cite{GPPS} again. Lemma~\ref{subfield} and the fact that $X$ contains   $z$  implies that $X$ is not contained in the groups listed in \cite[Examples 2.1, 2.2]{GPPS}.   The possibilities from \cite[Examples 2.3 and 2.5]{GPPS} do  not contain  an element of order $(q^4-1)/\gcd(q-1,2)$.
 The extension field subgroups $\mathrm O_4^\varepsilon (q^2)$ and $\GU_4(q)$ as the first do not contain a conjugate of $z$ and the second has no cyclic subgroup of order $(q^4-1)/\gcd(q-1,2)$ and thus possibility  \cite[Example 2.4]{GPPS} is  eliminated.  Since $q>3$ we have $(q^4-1)/\gcd(q-1,2) \ge 255$. Thus $x$ has high order and we see that $X$ is not a cover of $\Sym(10)$, $\Sym(9)$, $\Sym(8)$, $\Sym(7)$, $\Sp_6(2)$, $\Omega_8^+(2)$, ${}^2\B _2(8)$ or $\PSL_3(4)$.  This eliminates \cite[Example 2.6]{GPPS} and \cite[Table 7]{GPPS}. There are no sporadic examples in dimension 8 and so \cite[Example 2.7] {GPPS} has no groups for us.  The groups listed in   \cite[Table~8]{GPPS} are easily seen to be impossible.   This leaves $\SL_3^\varepsilon(q_0^3)$, $\Sp_6(q_0)$ ($p=2$) or $2\udot \Omega_7(q_0)$ ($p$ odd) on their spin modules   to consider from  \cite[Table 8]{GPPS}. However these groups  do not have cyclic subgroups of order $(q^4-1)/\gcd(q-1,2)$. This proves (i).

Now for (ii) we let $x$ be an element of the subgroup of $G$ fixing two perpendicular $-$-type spaces of dimension $4$, say $W_1$ and $W_2$ in such a way that  as $\langle x\rangle$-modules $W_1$ and $W_2$ are not isomorphic. Thus $x$ has order  $(q^2+1)/\gcd(q-1,2),$ and $\langle x\rangle$ is a diagonal subgroup in $(\Omega_4^-(q)\times \Omega_4^-(q)).2^2$.  Since $W_1$ and $W_2$ are non-isomorphic as $\langle x\rangle$-modules, we get that $x$ is a regular semisimple element of $G$  and that its centralizer has order $(q^2+1)^2/\gcd(q-1,2)$. Now select $z\in \Omega_6^+(q) \times \Omega_2^+(q)$ to project trivially on to the second factor and to have order $(q^3-1)/\gcd(q-1,2)$ leaving invariant two totally singular $3$-spaces. Thus $X$ acts irreducibly on $V$.

 We now survey the maximal subgroups of $\Omega_8^+(q)$ as described in \cite{KL08}. Before doing this, we observe that if $\alpha$ is an automorphism of $G$, then $\alpha$ maps regular semisimple elements of $G$ to regular semisimple elements of $G$. We refer explicitly to the 75 rows in Table 1 of \cite{KL08} where the maximal subgroups of $\Omega_8^+(q)$ are described. In particular, we note that in that table, the rows labeled with ``none" in column IV are not maximal subgroups of $\Omega_8^+(q)$. First because $X$ acts irreducibly on $V$, we have that $X$ is not contained in a parabolic subgroup of $G$. Hence $X$ is not contained in any of the groups listed in the first $8$ rows of \cite[Table 1]{KL08}. Now, in the subgroup $\Omega_7(q)$ of $G$ fixing a one space of $V$, we note that the elements of order $\lambda_{4a,p}$ commute with a subgroup $\Omega_3(q)$ and so are not regular semisimple. It follows now that the elements of all of the maximal subgroups of $G$ isomorphic to $\Omega_7(q)$ or $2 \udot \Omega_7(q)$ do not contain an element of order $\lambda_{pa,4}$ which is regular semisimple. Hence $X$ is not contained in such a subgroup. This eliminates the subgroups listed on rows 9 through 18 of  \cite[Table 1]{KL08}  as rows 15 to 18 are labeled none. Now, if $X \le \GL_4(q)$, we see that $x$ is centralized by a subgroup of order $(q^4-1)/\gcd(q-1,2)$ not $(q^2+1)^2/\gcd(q-1,2)$. This removes the maximal subgroups listed on rows 19 to 21 of \cite[Table 1]{KL08} as possible over groups of $X$. We do not need to consider rows $22$ or $26$. The argument adopted for the elimination of $\Omega_7(q)$ applies equally well to the groups listed in rows 23 to 25 and 27 to 32.  Rows 33 to 38 do not list  maximal subgroups of $G$ and 39 to 50 refer to conjugates of $2^6:\Sym(8)$. They require $q=p \equiv \pm 1 \pmod 8$ to be odd. However, as $\lambda_{4,p}$ is large, we must have $\lambda_{4,p}= 7$ which is absurd as $\lambda_{4,p} \equiv 1 \pmod 4$.
 The  rows 51 to 54 do not list maximal subgroups of $G$. Rows 55, 56 and 57 are eliminated as $\lambda_{4a,p}$  does not divide the order of the listed groups.  The subgroups of shape  $(\Omega_4^-(q) \times \Omega_4^-(q)).2^2$ do not have order divisible by $(q^3-1)/\gcd(q-1,2)$ and so rows 58, 59 and 60 are eliminated. Similarly  row $61$ is ruled out as a possible over group of $X$.

  The subfield type subgroups are eliminated as they have no cyclic subgroup of order $(q^4-1)/\gcd(q-1,2)$. This leaves the subgroups in rows 70 to 75 to consider. Rows 70 and 71 are ruled out as $\lambda_{4a,p}$ does not divide the order of the groups there. Finally, as $\lambda_{4a,p}$ is large, we cannot have $X$ contained in $\Omega_8^+(2)$, ${}^2\B_2(8)$, $\Alt(9)$ or $\Alt(10)$.  We conclude that $X = G$. Thus (ii) holds.

  Finally let $H= \Spin_8^+(q)$ and $Z \le Z(H)$.  If $p=2$, then as $q>2$ we have $Z(H)=1$. Hence $q$ is odd and $Z(H)$ is elementary abelian of order $4$ (see \cite[3.9.2]{WilsonBook}).  If $|Z|\ge 2$, then either $H/Z \cong \mathrm  P\Omega_8^+(q)$ or $H/Z \cong \Omega_8^+(q)$ as the triality automorphism acts transitively on the subgroups of $Z(H)$ of order $2$. In these cases (i) and (ii) show that $H/Z$ is a Beauville group.    Thus we just need to show that $H$ has a Beauville group.

  Let the non-identity elements of  $Z(H)$ be $v_1$, $v_2$ and $v_3$ and identify $G= H /\langle v_1\rangle$.
  Let $(\ov x,\ov y,\ov z)$ be the hyperbolic triple for $G$  identified in (i). Then $\ov z$ has odd order and $\ov x$ and $\ov y$ have even order, $k$ say, with $x^{k/2}\in Z(G)$.  As $Z(H)$ is elementary abelian, it follows that $x$, $y$ and $z$ have the same orders as $\ov x$, $\ov y$ and $\ov z$ and of course $xy=z$ as all along we were working in $H$.  In particular, the only non-trivial central elements of $H$ seen by the triple $(x,y,z)$  are in $\{v_2,v_3\}$.

Let $(\ov x,\ov y,\ov z)$ be the hyperbolic triple for $G$  from (ii). Then $x$ and $y$ have odd order and $\ov z$ may have either even or odd order. However no power of $\ov z$ is contained in $Z(G)$. It follows that if the triple $(x,y,z)$ has an element which generates a cyclic group which intersects $Z(H)$ non-trivially, then it is $z$ and the non-trivial element is $v_1$.  It follows that $H$ is a Beauville group.
\end{proof}

We include one of the two exception cases from Lemma~\ref{O8}.
\begin{lemma} \label{spin83} Both $\Spin_8^+(3)$ and $\Omega_8^+(3)$ are Beauville groups.
\end{lemma}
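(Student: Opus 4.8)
The plan is to treat this as the $q=3$ exception to Lemma~\ref{O8}, where the generational arguments break down: for $q=3$ the relevant Zsigmondy invariant $\lambda_{4,3}=5$ is small and $(q^4-1)/\gcd(q-1,2)=40$ is far too small to rule out the exotic overgroups coming from \cite{GPPS}, so the order estimates used in the proof of Lemma~\ref{O8} no longer exclude the covers of alternating and sporadic groups. Accordingly I would verify the claim by direct computation, in the same spirit as the other small cases over small fields in this paper. First I would set $H=\Spin_8^+(3)$ and recall from the closing paragraphs of the proof of Lemma~\ref{O8} that, as $3$ is odd, $Z=Z(H)$ is elementary abelian of order $4$ and its three involutions $v_1,v_2,v_3$ are permuted transitively by the triality automorphism. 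Since $\Omega_8^+(3)\cong H/\langle v_i\rangle$ for each $i$ and these three quotients are mutually isomorphic, it suffices to produce a Beauville structure for $H$ itself together with one for a single copy of $H/\langle v_1\rangle\cong\Omega_8^+(3)$.

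Next I would realise $H$ concretely, for instance as a matrix group in an $8$-dimensional spin representation or via a faithful permutation representation, in Magma or GAP, and search for two hyperbolic triples $(x_1,y_1,z_1)$ and $(x_2,y_2,z_2)$ whose types are coprime. Coprimality of the two types immediately supplies condition (iii) of Definition~\ref{maindef}, since no nontrivial power of an element of one triple can then be conjugate to a power of an element of the other. One triple I would take with all three element orders odd, so that its members miss the central involutions entirely; the other I would arrange, exactly as in the analysis of the centre in the proof of Lemma~\ref{O8}, so that the only central element any power of it can meet is a single prescribed $v_i$. Tracking which central elements are ``seen'' by each triple is precisely what certifies both that the lifted elements generate the full group $H$ (rather than a proper subgroup merely projecting onto $\Omega_8^+(3)$) and that the two types remain admissible, coprime and hyperbolic after passing to the quotient $H/\langle v_1\rangle$.

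The main obstacle will be the bookkeeping around the order-$4$ centre rather than the generation itself: one must confirm that the chosen triples do not collapse under the quotient map in a way that destroys coprimality or violates condition (ii), and that the preimages genuinely generate $H$. The ambient group has order roughly $10^{13}$, but with a suitable representation the random search for generating hyperbolic triples and the checking of the non-conjugacy conditions are entirely routine for both Magma and GAP, so the computation is feasible; the explicit element orders of the triples found would then be recorded in the style of the tables of element orders used elsewhere in this section to make the result replicable.
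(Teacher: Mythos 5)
Your proposal is correct and matches the paper's approach: the paper simply constructs $\Spin_8^+(3)$ from the online \ATLAS matrices and finds by computer two hyperbolic triples, of types $(7,7,7)$ and $(13,13,13)$, whose coprime odd orders automatically avoid the centre of order $4$ and hence work simultaneously for $\Spin_8^+(3)$ and $\Omega_8^+(3)$. Your second-triple strategy of tracking which central involution is ``seen'' is sound but unnecessary here, since both of the paper's triples consist entirely of elements of order coprime to $|Z(H)|$.
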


\begin{proof}  Matrices representing $\Spin_8^+(3)$ can be obtained from \cite{onlineatlas}. We have calculated by computer that  $\Spin_8^+(3)$  has hyperbolic triples of type $(7,7,7)$ and $(13,13,13)$.\end{proof}

Note that $\Omega_8^+(2)$ has an exceptional Schur cover and will therefore be dealt with in Theorem~\ref{excepcovers}.

\begin{lemma}\label{orthogonal-}
Suppose that  $d \ge 8$. Then $G=\Spin_d^-(q)$. Then the following hold.

\begin{enumerate}
\item  For $q \not \in \{2,3,5\}$, $G$ has a hyperbolic triple of type $$((q^{d/2}+1)/\gcd(q-1,2),(q^{d/2}+1)/\gcd(q-1,2),(q-1)/\gcd(q-1,2)).$$
\item For $q>3$ and $d \neq 8$, $G$ has a hyperbolic triple of type $$(\lambda_{(d-4)a,p} (q^2-1)/\gcd(q-1,2),\lambda_{(d-4)a,p} (q^2-1)/\gcd(q-1,2),\lambda_{(d-2)a,p}).$$
\item For $q \in \{2,3,5\}$ and $d \neq\{ 8, 12\}$, $G$ has  a hyperbolic triple of type $$((q^{d/2}+1)/\gcd(q-1,2),(q^{d/2}+1)/\gcd(q-1,2), (q^k+1)/\gcd(2,q-1))$$ where $k = d/2-1$ if $d/2$ is odd and $d/2-2$ if $d/2$ is even.
    \item  For $d \ge 14$, $G$ has a hyperbolic triple of type $$(\lambda_{(d-6)a,p}\lambda_{3a,p},\lambda_{(d-6)a,p}\lambda_{3a,p},\lambda_{(d-4)a,p}).$$
    \item For $d=8$ and $q\not \in \{2,3,5\}$, $G$ has a hyperbolic triple of type $(\lambda_{4a,p}(q^2-1)/\gcd(q-1,2),\lambda_{4a,p}(q^2-1)/\gcd(q-1,2), \lambda_{6a,p})$.
\end{enumerate}
In particular, $G$  and $G/Z(G)$ are Beauville groups so long as  $$(d,q) \not\in \{(8,2),(8,3),(8,5),(10,2),(10,3), (10,5), (12,2),(12,3),(12,5)\}.$$%%

\end{lemma}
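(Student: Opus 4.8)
The plan is to produce, for each of the five types, a pair $x,y$ of regular semisimple elements of $\ov G=\Omega_d^-(q)$ whose product $z=xy$ lies in a prescribed conjugacy class, following the pattern of Lemmas~\ref{unitary}, \ref{orthogonal+} and \ref{O8}. In every case I would choose $x$ inside a subsystem subgroup $\Omega_{d_1}^{\epsilon_1}(q)\times\Omega_{d_2}^{\epsilon_2}(q)$ of $\ov G$, built from a Singer-type element on one factor and a Zsigmondy or torus element on the other, so that $x$ is regular semisimple; the target $z$ would be placed in a different orthogonal decomposition so that the invariant subspaces of $x$ are incompatible with those of $z$, forcing $H=\langle x,y\rangle$ to act irreducibly on $V$. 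Gow's Theorem~\ref{G} then supplies a conjugate $y$ of $x$ with $xy$ conjugate to $z$. The real work is to prove $H=\ov G$, after which the conventions set up in the preamble to this subsection let us lift odd-order triples to $G=\Spin_d^-(q)$ and descend to the quotients $G/Z$.

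For the two types assembled from two large Zsigmondy primes, namely (ii) and (iv), I would apply Theorem~\ref{maingeneration}(iv) directly. In (ii) take $e=d-4$ and $f=d-2$, and in (iv) take $e=d-6$ and $f=d-4$; since $d$ is even one has $\gcd(d,e,f)=2$, and the requirement $d/2<e<f\le d$ forces $d\ge 10$ in (ii) and $d\ge 14$ in (iv), which explains the exclusion of $d=8$ from (ii). The element $x$ is built so that a suitable power has order $\lambda_{ea,p}$ while $z$ has order $\lambda_{fa,p}$, so $H$ contains elements of both orders and Theorem~\ref{maingeneration}(iv) yields $H\ge F^*(\ov G)$ unless $(F^*(H),d,q,(r,e),(s,f))$ occurs in Table~\ref{exotic}. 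Every row of Table~\ref{exotic} has $q\in\{2,3\}$, so the hypothesis $q>3$ of (ii) removes all exotic possibilities outright, while for (iv) the relevant rows are eliminated because the exponents $e=d-6$ and $f=d-4$ match no row.

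The remaining types exploit the size of the Singer order rather than Theorem~\ref{maingeneration}. In (i) I take $x$ and $y$ to be Singer elements of $\ov G$ of order $(q^{d/2}+1)/\gcd(q-1,2)$, which act irreducibly on $V$, and $z$ a bireflection of order $(q-1)/\gcd(q-1,2)$ in $\Omega_2^+(q)\times\Omega_{d-2}^-(q)$. Then $H$ is irreducible at once; after verifying primitivity and tensor-indecomposability as in Lemma~\ref{OPlusetcAgain}, the Guralnick--Saxl bireflection theorem \cite{GSa} leaves only a short exceptional list, every member of which is discarded because $q\notin\{2,3,5\}$ makes $(q^{d/2}+1)/\gcd(q-1,2)$ large, exactly as in Lemma~\ref{OPlusetc}, while Lemma~\ref{subfield} removes the subfield subgroups, giving $H=\ov G$. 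For (iii), where $q\in\{2,3,5\}$ is small and $z$ is a Singer element of a nondegenerate $2k$-space rather than a bireflection, and for the boundary case (v) with $d=8$ in which $d-4=d/2$ disqualifies Theorem~\ref{maingeneration}, I would instead run the direct \cite{GPPS} analysis used for $\Omega_8^+(q)$ in Lemma~\ref{O8}, working through Examples~2.1--2.9 of \cite{GPPS} together with the description of the over-groups of a Singer torus in \cite{AB}. The decisive step is excluding the extension-field subgroups, $\GU_{d/2}(q)$ in (iii) and $\mathrm{O}_4^\varepsilon(q^2)$, $\GU_4(q)$ in (v), by the incompatibility of the invariant-subspace structure of $z$ with those embeddings; the surviving candidates are then removed on order grounds.

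Finally I would assemble the Beauville structures and carry out the lift. For $q\notin\{2,3,5\}$ one pairs (i) with (ii) when $d\ge 10$ and with (v) when $d=8$; for $q\in\{2,3,5\}$ and $d\ge 14$ one pairs (iii) with (iv). For each pairing condition (iii) of Definition~\ref{maindef} must be checked: where the two types are coprime it is automatic, and otherwise one verifies directly that the differing fixed-space and eigenvalue structures prevent a power of an element of one triple from being $\ov G$-conjugate to a power of an element of the other. The excluded pairs $(d,q)\in\{(8,2),(8,3),(8,5),(10,2),(10,3),(10,5),(12,2),(12,3),(12,5)\}$ are exactly those where the available constructions either collide --- for instance the Singer orders in (ii) and (iii) both equal $313$ when $(d,q)=(10,5)$ --- or are blocked by the Zsigmondy and Feit exceptions of Theorem~\ref{LZ}, and these groups are dealt with separately, several by computer. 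The genuine obstacle, and the reason for this subsection's preamble, is the passage from $\ov G=\Omega_d^-(q)$ to $G=\Spin_d^-(q)$ and then to each central quotient $G/Z$: I must confirm that the odd-order triples lift with unchanged orders and that the central $2$-group creates no new coincidence of conjugacy classes across the two triples, the delicate bookkeeping already carried out for $\Spin_8^+(q)$ in Lemma~\ref{O8}.
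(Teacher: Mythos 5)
Your proposal is correct and follows the paper's architecture almost exactly: parts (ii) and (iv) via Gow's Theorem plus Theorem~\ref{maingeneration}(iv) with precisely the choices $e=d-4$, $f=d-2$ and $e=d-6$, $f=d-4$ (and the same explanation of why $d=8$ drops out of (ii) and $d\ge 14$ is forced in (iv)); part (v) via the direct \cite{GPPS} run; and the same pairings (i)+(ii), (i)+(v), (iii)+(iv) to assemble the Beauville structures, with the lift to $\Spin_d^-(q)$ handled by the odd-order convention of the preamble. The one genuine divergence is in parts (i) and (iii): because the Singer element of $\Omega_d^-(q)$ of order $(q^{d/2}+1)/\gcd(q-1,2)$ acts irreducibly on all of $V$ (unlike the plus-type case of Lemma~\ref{OPlusetc}), the paper invokes Bereczky \cite{AB} to conclude at once that any proper overgroup of $\langle x,y\rangle$ normalizes an extension-field subgroup, and then kills both (i) and (iii) in one line by choosing $z$ (a bireflection with distinct eigenvalues, resp.\ an element of order $(q^k+1)/\gcd(2,q-1)$ with $k$ not dividing $d/2$) so that it cannot lie in $\Omega_{d/b}^\eta(q^b)$ or $\GU_{d/2}(q)$. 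Your detour through the Guralnick--Saxl bireflection theorem for (i) is workable but does strictly more work (primitivity, tensor-indecomposability, and a longer exceptional list), and it would not apply to (iii) where $z$ is not a bireflection; there your fallback to \cite{GPPS}+\cite{AB} is essentially the paper's argument. Two small inaccuracies that do not affect the outcome: in (iv) the rows of Table~\ref{exotic} with $d=16$, $q=2$ \emph{do} have $(e,f)=(d-6,d-4)=(10,12)$, and are excluded not because the exponents fail to match but because those subgroups preserve a plus-type or symplectic form rather than a minus-type one; and your numerical aside about $(d,q)=(10,5)$ is off (the relevant coincidence is not two Singer orders equal to $313$), though the exclusion of that case stands for the reasons the paper gives, namely that (i) fails for $q\in\{2,3,5\}$ and (iv) fails for $d<14$.
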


\begin{proof} We let $x$ be an element of $G$ which projects in $\Omega_{d}^-(q)$ to an element  of order $(q^{d/2}+1)/\gcd(q-1,2)$. Then $x$ is a Singer element in $G$. Let $z$ be an arbitrary semisimple element of $G$. Then, as $x$ is a regular semisimple element of $G$, there is a $y \in x^G$ such that $xy$ is conjugate to $z$. Let $X= \langle x,y\rangle$.  Then,  by \cite{AB}, if $X \neq G$, then $X$ normalizes an extension field subgroup. Thus $X$ contains one of $\Omega_m^\eta(q^r)$ where $rm=n$ or $\GU_{d/2}(q)$ where $d/2$ is odd. We now specify $z$ more precisely. If $q\not\in\{2,3,5\}$, we let $z$ be a bi-reflection in $\Omega_2^+(q)$ of order $(q-1)/\gcd(q-1,2)$. Since $q> 3$, the eigenvalues of $z$ on $V$ are distinct. Therefore, in this case $X= G$. Suppose that $q \in \{2,3,5\}$. Then provided $d\ge 10$ and $d \neq 12$, we can select an even number $k < d/2$ such that $k$ does not divide $d$ (take $k=d/2-1$ when $d/2$ is odd or $k=d/2-2$ if  $d/2$ is even). Now we let $z$ be an element of order $(q^{k/2}+1) /\gcd(2,q-1)$. This element is obviously not in an extension field subgroup and so $X= G$ in these cases too. Thus (i) and (iii) hold.

Assume that $d>8$. Let $x=x_1x_2$ be such that $x_1$ is an element of order $\lambda_{(d-4)a,p}$ projecting in $\Omega_{d-4}^-(q)$ and $x_2$ is of order $(q^2-1)/\gcd(q-1,2)$ when projected into $\Omega_{4}^+(q)$ and lies in $\GL_{2}(q)$ (notice that this element has the same order in $G$ as it does in $\Omega_4^+(q)$). Let $z$ be an element of order $\lambda_{(d-2)a,p}$ projecting into $\Omega_{d-2}^-(q)$. Then, assuming that $q > 3$, we have that $x$ is a regular semisimple element. Applying Gow's Theorem,  we can find a conjugate $y$ of $x$ such that $xy$ is conjugate to $z$. So we suppose that $x, y$ and $z$ have been chosen in this way. Then $x$ is a regular semisimple element and $H = \langle x,y \rangle$ acts irreducibly on $V$. Now we use  Theorem~\ref{maingeneration} (iv) to get that $H= \Spin^-_{d}(q)$. Thus, so long as $q>3$, $(x,y,z)$ is a hyperbolic triple of type $(\lambda_{(d-4)a,p} (q^2-1)/\gcd(q-1,2),\lambda_{(d-4)a,p} (q^2-1)/\gcd(q-1,2),\lambda_{(d-2)a,p})$. This is (ii). Now suppose that  $d \ge 14$. We select elements $x$ of order $\lambda_{(d-6)a,p}\lambda_{3a,p}$ and $z$ of $\lambda_{(d-4)a,p}$ and again find  a conjugate $y$ of $x$ such that $xy$ is conjugate to $z$. Now  Theorem~\ref{maingeneration} (iv) implies that $H=G$. Thus we have a hyperbolic triple of type $(\lambda_{(d-6)a,p}\lambda_{3a,p},\lambda_{(d-6)a,p}\lambda_{3a,p},\lambda_{(d-4)a,p})$ under the specified conditions. This proves (iv).

Now suppose that $d=8$ and that either $q\not \in\{2,3,5\}$. Select $x$  to project to an element of $\Omega_4^-(q)\times \Omega_4^+(q)$ of order $\lambda_{4a,p}$ on the first factor and of order $(q^2-1)/\gcd(q-1,2)$ in the second factor. Then $x$ is a regular semisimple elements of $G$. Let $z$ have order $\lambda_{6a,p}$. As usual we may suppose that $xy=z$ where $y$ is some conjugate of $x$. Let $X=\langle x,y\rangle$ and identify $X$ with its image in $\Omega_8^-(q)$. Then $X$ acts irreducibly on $V$ and $X$ and as $q\neq 3$, Lemma~\ref{imprimitive} shows $X$ is primitive. Again we use \cite{GPPS}. Lemma~\ref{subfield} implies that $F^*(X)$ is not a proper subfield subgroup. If $F^*(X)$ is an extension field subgroup, then we must have $F^*(X) \le \Omega_4^-(q^2)\cong \PSL_2(q^2)$ but the presence of $z\in X$ rules this possibility out.  Using  $\lambda_{6a,p}$ is large and $d=8$, we also eliminate the possibility that $F^*(X)$ is a cover of an alternating
group or a sporadic simple group. From Example 2.8 of \cite{GPPS}  we see that $F^*(X)$ could be one of $\SL_2(q^3)$, $2\udot \Omega_7(q)$ with $q$ odd or $\Sp_6(q)$ with $q$ even, or $\PSU_3(q^{\frac 1 2})$ with $p \neq 3$. However the first three of these groups appear as subgroups of $\Omega_8^+(q)$ \cite{KL08} and so cannot be contained in $\Omega_8^-(q)$ while that last group has no elements compatible with $x$. Thus this case cannot occur. Finally we have to consider Example 2.9 from \cite{GPPS} from which we get $F^*(X) = \SL_2(17)$ but in this last case the large prime divisor is comes from $q^8-1$ and not $q^6-1$ and so this case is also impossible. Thus we infer that $G= X$. Hence (v) holds.

Suppose now that $d \ge 14$ and $q\not\in\{2,3,5\}$. Then the triples in (i) and (iv) show that $G$ is a Beauville group. Suppose that $d \ge 14$ and $q\in \{2,3,5\}$. Then (iii) and (iv) show that $G$ is a Beauville group. Suppose that $d \neq 8$. Then as the elements of  the triple from (ii) has the same order in $G$ as they do in $\Omega_{d}^-(q)$, the triples in (i) and (ii) satisfy the non-conjugacy condition. Thus, if $G$ is not a Beauville group and $d \neq 8$, then  $d= 10$ or $12$ with $q\in \{2,3,5\}$. If $d=8$ and $q\not\in\{2,3,5\}$, then (i) and (v) show that $G$ is a Beauville group.
\end{proof}

We now consider the exceptional cases in Lemma~\ref{orthogonal-}.

\begin{lemma}\label{orth-extra} The following hold.
\begin{enumerate}
\item$\Spin_8^-(5)$ has a hyperbolic triple of type $(313,313,3)$.
\item$\Spin_{10}^-(3)$ has a hyperbolic triple of type $(61,61,41)$.
\item$\Spin_{10}^-(5)$ has a hyperbolic triple of type $(1563,1563,13)$.
\item$\Spin_{12}^-(3)$ has a hyperbolic triple of type $(73,73,61)$.
\item $\Spin_{12}^-(5)$  has a hyperbolic triple of type $(7813,7813,3)$.
\end{enumerate}

\end{lemma}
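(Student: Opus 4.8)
The plan is to treat each of these five groups as an explicit finite quasisimple group and to produce the stated triple by the combination of Gow's Theorem with a generation argument used throughout this subsection, as in Lemmas~\ref{orthogonal-} and \ref{O8}. First I would realise $G=\Spin_d^-(q)$ concretely --- for instance from the matrices in \cite{onlineatlas}, exactly as for $\Spin_8^+(3)$ in Lemma~\ref{spin83} --- and keep track of the associated orthogonal module so that semisimple classes can be recognised by their action. In each listed type the two equal entries are the order of a single regular semisimple element $\ov x$ of $\ov G=\Omega_d^-(q)$: for $(313,313,3)$ and $(1563,1563,13)$ these are the Singer orders $(q^{d/2}+1)/\gcd(q-1,2)$, while for $(61,61,41)$ and $(73,73,61)$ they are the Zsigmondy numbers $\zeta_{10,3}$ and $\zeta_{12,3}$ respectively. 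The third entry is in every case the order of a genuine semisimple class of $\ov G$, so Gow's Theorem~\ref{G}, applied with both classes equal to that of $\ov x$, furnishes a conjugate $\ov y$ of $\ov x$ with product $\ov x\,\ov y=\ov z$ of the prescribed order.

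Since all three orders are odd, the remarks opening this subsection let me lift $\ov x,\ov y,\ov z$ to odd-order elements $x,y,z$ of $G$ still satisfying $xy=z$. As $G$ is quasisimple, hence perfect, any subgroup whose image is all of $\ov G$ already equals $G$ (if $\langle x,y\rangle Z(G)=G$ then $[G,G]=[\langle x,y\rangle,\langle x,y\rangle]\le\langle x,y\rangle$, so perfectness forces $\langle x,y\rangle=G$); thus it suffices to prove that $\ov x$ and $\ov y$ generate $\ov G$. The hyperbolicity condition~(ii) of Definition~\ref{maindef} is immediate, as $1/a+1/b+1/c<1$ for each of the listed types by inspection. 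Hence the whole content of the lemma reduces to the generation statement $\langle\ov x,\ov y\rangle=\ov G$.

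The generation is the only substantial point, and it is delicate precisely because the uniform machinery is unavailable here: these five pairs $(d,q)$ are exactly the values excluded from Lemma~\ref{orthogonal-} --- they lie below the thresholds $q\notin\{2,3,5\}$ and $d\ge 14$ used there --- while a direct appeal to Theorem~\ref{maingeneration}(iv) is blocked by the exceptional configurations of Table~\ref{exotic}. The natural first attempt is the Singer dichotomy of \cite{AB} used in Lemma~\ref{orthogonal-}(i): if $\langle\ov x,\ov y\rangle\neq\ov G$ then it normalises an extension-field subgroup, so it suffices to show that $\ov z$ lies in no maximal overgroup of $\langle\ov x\rangle$. This succeeds cleanly only when the order of $\ov z$ is prime to the orders of the torus normaliser and of every relevant $\Omega_m^\eta(q^r)$ or $\GU_{d/2}(q)$; since here that order typically does divide one of these (for example $3\mid|\GU_4(5)|$ in the case of $\Spin_8^-(5)$), the clean exclusion generally fails and one cannot avoid inspecting a specific pair. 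Accordingly I would verify $\langle\ov x,\ov y\rangle=\ov G$ directly in the explicit matrix model, searching over conjugates $\ov y$ of $\ov x$ whose product with $\ov x$ meets the prescribed class until generation holds --- existence of one such pair being all that a single hyperbolic triple requires. I expect this computer verification, rather than any single structural obstruction, to be the main labour, and it is the reason these cases are isolated here rather than subsumed into Lemma~\ref{orthogonal-}.
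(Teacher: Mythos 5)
Your overall skeleton --- Gow's Theorem applied to the regular semisimple class of $\ov x$, lifting the resulting odd-order triple to $\Spin_d^-(q)$, and reducing everything to the generation statement $\langle \ov x,\ov y\rangle=\ov G$ --- matches the paper. Where you diverge is on the generation step, which you defer in all five cases to an explicit computer search; and the reasons you give for abandoning the structural route are not correct. For (ii) and (iv) the paper simply applies Theorem~\ref{maingeneration}(iv) with the Zsigmondy pairs $(e,f)=(8,10)$ and $(10,12)$: the exceptional configurations of Table~\ref{exotic} in dimensions $10$ and $12$ all occur over $\GF(2)$, so nothing blocks the appeal when $q=3$. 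For (i), (iii) and (v) the Bereczky dichotomy for overgroups of Singer elements does succeed, because the exclusion of extension field subgroups is not an order-divisibility argument but a geometric one: $z$ is chosen to project into $\Omega_2^-(q)$ and hence centralizes a subspace of $\GF(q)$-codimension $2$, so inside any $\Omega_m^\eta(q^r)$ with $r>1$ it would centralize a hyperplane and be a reflection of order dividing $2$, contradicting $o(z)=3$. The unitary extension field subgroup is ruled out separately since $\GU_{d/2}(q)$ embeds in $\Omega_d^{(-1)^{d/2}}(q)$ and $d/2$ is even in cases (i) and (v); in particular your example ``$3\mid|\GU_4(5)|$'' concerns a group that is not even a subgroup of $\Omega_8^-(5)$.

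A computer verification would of course also establish the lemma, so your plan is not wrong in principle; but as written it leaves the only substantive point unproved, and it misses the point of this lemma's placement: the paper deliberately produces these odd-order triples by \emph{theoretical} means so that the computer calculations for these $(d,q)$, carried out in the following lemma, can be performed in the simple image $\Omega_d^-(q)$ and then lifted to $\Spin_d^-(q)$ without losing the non-conjugacy condition. Reversing that order of logic, as you propose, would force you to compute directly in the spin groups.
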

\begin{proof}
Suppose that $G= \Spin_{8}^-(5)$ and let $x \in G$ have order  $(5^4+1)/2=313$. Select $z$ of order $3$ to project into $\Omega_2^-(5)$. Then $x$ is regular semisimple and so we may suppose that we have a triple $(x,y,z)$ with $xy=z$ and such that $y$ is conjugate to $x$. Now by \cite{AB}, we have that $X=\langle x,y \rangle$ normalizes an extension field subgroup. Assume that $X < G$. Since $z \in X$, we infer that $z$ centralizes a hyperplane when $V$ is considered over that extension field. As $d/2$ is even, we deduce that $X$ cannot be a unitary group. Thus we must have $x$ of order $2$. Hence $G= X$. This proves (i).  The same argument works to give (v).  Suppose that $G = \Spin_{10}^-(3)$. Then $\lambda_{10,3}= 61$. We let $x$ and $y$ be elements of order $61$ and arrange for their  product $z$ to have order $\lambda_{8,3}= 41$. Then, using Theorem~\ref{maingeneration} (iv) we have that $G=\langle x,y\rangle$. So (ii) holds.
The example in  (iii) is just a reiteration of the result in Lemma~\ref{orthogonal-}(iii) for the case $d=10$, $q=5$. So assume that $G= \Spin_{12}^-(3)$. This time we take our hyperbolic triple to be be other the form $(\lambda_{12,3}, \lambda_{12,3}, \lambda_{10,3}) = (73,73,61)$.
\end{proof}

\begin{lemma} The groups $\Spin_d^-(q)$  with $$(d,q)\in \{(8,2),(8,3),(8,5),(10,2),(10,3), (10,5), (12,2),(12,3),(12,5)\}$$ are Beauville groups.
\end{lemma}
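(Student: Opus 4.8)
The plan is to exhibit, for each of the nine pairs $(d,q)$, a Beauville structure, that is, two hyperbolic triples satisfying condition (iii) of Definition~\ref{maindef}. The cleanest way to secure condition (iii) is to arrange that the element orders appearing in the first triple are coprime to those in the second, since then the condition holds automatically by the remark following Definition~\ref{maindef}; where coprimality cannot be achieved I would instead verify directly that no nontrivial power of an element of one triple is conjugate in $G$ to a power of an element of the other.

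The five pairs $(8,5)$, $(10,3)$, $(10,5)$, $(12,3)$ and $(12,5)$ already come equipped with one hyperbolic triple by Lemma~\ref{orth-extra}, so for these it remains to produce a companion triple and to check disjointness. Several parts of Lemma~\ref{orthogonal-} still apply to these pairs --- for instance part (iii) when $d=10$, or part (ii) when $q=5$ --- and supply a second triple of (near-)Singer type; however, as the Singer orders in $\Spin_d^-(q)$ all involve the common factor $(q^{d/2}+1)/\gcd(q-1,2)$, the two triples frequently share primes, so the non-conjugacy condition must be inspected rather than inferred from coprimality. For the remaining pairs $(8,2)$, $(8,3)$, $(10,2)$ and $(12,2)$, no part of Lemma~\ref{orthogonal-} or Lemma~\ref{orth-extra} delivers a usable triple --- the Zsigmondy primes are small and the supply of semisimple element orders is too sparse for the generic generation theorems of Section~\ref{SS3} --- so here I would construct both triples from scratch.

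Concretely, for each group I would realise $\Spin_d^-(q)$ as a matrix or permutation group (taking explicit generators from \cite{onlineatlas} where they are tabulated), compute its conjugacy classes, and search for classes of elements $x,y$ of the prescribed orders with a product $z=xy$ of a third prescribed order; Gow's Theorem~\ref{G} together with the argument of Lemma~\ref{orthogonal-} guarantees that such triples exist and generate $G$ once the relevant elements are regular semisimple and the subgroup acts irreducibly, while in the most degenerate cases the equality $\langle x,y\rangle=G$ is confirmed directly. With two triples in hand I would read off the element orders and, using the stored power-map data, verify condition (iii). Since for $q=2$ we have $\Spin_d^-(2)=\Omega_d^-(2)$ with trivial centre, condition (iii) there is purely a statement about conjugacy classes of the simple group and is immediate to machine-check; for odd $q$ one must additionally track the behaviour of the central elements, which is where the computation is most delicate.

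The main obstacle is precisely this verification of condition (iii) in the cases where the natural triples are not coprime, notably the characteristic-$2$ groups $\Spin_8^-(2)$, $\Spin_{10}^-(2)$ and $\Spin_{12}^-(2)$ and the centre-$2$ group $\Spin_8^-(3)$: here one cannot appeal to the coprimality shortcut, so establishing that the two triples are genuinely disjoint --- that no power class of one meets a power class of the other --- requires the explicit conjugacy-class and power-map information produced by GAP \cite{GAP} or Magma \cite{Magma}.
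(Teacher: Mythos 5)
Your proposal is correct and follows essentially the same route as the paper: one triple is taken from Lemma~\ref{orth-extra} where available (its odd order being what allows a clean lift past the $2$-group centre), the companion triple is found by machine computation in $\Omega_d^-(q)$, and the $q=2$ cases are handled directly as simple groups. The only practical difference is that the paper sidesteps your anticipated power-map checks by choosing the two triples with pairwise coprime element orders in every case.
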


\begin{proof} If $q$ is a power of $2$, then $\Spin_{d}^-(q) = \Omega_d^-(q)$ is simple so these groups are easily accessible by computer. The group $\Spin_8^-(3)$ is available as a matrix group.
from \cite{onlineatlas}. For the other groups under consideration we have provided a hyperbolic triple of odd order in Lemma~\ref{orth-extra} and so we may calculate in $\Omega_{d}^-(q)$ and lift the generators $x$ and $y$ without losing the conjugacy condition. Thus the remaining groups can be checked by computer. The hyperbolic triples discovered are as follows.

\begin{table}[h] \label{Sptab}
\caption{Hyperbolic triples in certain small orthogonal and spin groups of minus type}
\vskip 0.5cm
\begin{center}
\renewcommand{\arraystretch}{1.24}
\begin{tabular}{|c|c|c|c|c|c|c|}
\hline
$G$ & $x_1$ & $x_2$ & $x_3$ & $ y_1$ & $ y_2 $ & $ y_3 $ \\
\hline
$\Omega_8^-(2)$ & 5 & 5 & 5 & 17& 17& 17 \\
$\Spin_8^-(3)$ & 41& 41& 41&7&7&7 \\
$\Omega_8^-(5)$& 31 & 31 &31  & && \\
$\Omega_{10}^-(2)$&33&33&33&5&5&5\\
$\Omega_{10}^-(3)$&130&130&130&&&\\
$\Omega_{10}^-(5)$&620&620&620&&&\\
$\Omega_{12}^-(2)$&65&65&65&17&17&17\\
$\Omega_{12}^-(3)$&41&41&41&&&\\
$\Omega_{12}^-(5)$&313&313&313&&&\\

\hline
\end{tabular}
\end{center}
\end{table}

\end{proof}

\begin{lemma}\label{orthogonalodd}
Let $G= \Spin_d(q)$.
\begin{enumerate}
\item If $d \ge 15$, then $G$ has  a hyperbolic triple of type $$(\lambda_{(d-7)a,p}\lambda_{3a,p},\lambda_{(d-7)a,p}\lambda_{3a,p},\lambda_{(d-5)a,p}\lambda_{4a,p}).$$
\item If $d \ge 7$, then $G$ has a hyperbolic triple of type $$(\lambda_{(d-1)a,p}, \lambda_{(d-1)a/2,p}, \lambda_{ka,p})$$  where $ d/2< k \le d-3$ and $k$ is even.
\item If $d\in \{11,13\}$, then $G$ has a hyperbolic triple of type $$(\lambda_{(d-1)a,p},\lambda_{(d-1)a,p}, \lambda_{(d-1)a/2,p}).$$
\item If $q> 3$, then $G$ has a hyperbolic triple of type $$(\lambda_{(d-5)a,p}(q^2-1)/2,\lambda_{(d-5)a,p}(q^2-1)/2,\lambda_{(d-3)a,p}(q+1)/2).$$
\end{enumerate}
In particular, $G$  and $G/Z(G)$ are  Beauville group unless possibly $G \cong \Spin_{11}(3)$ or $\Spin_{13}(3)$.
\end{lemma}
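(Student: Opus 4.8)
The plan is to handle (i)--(iv) by the template already in force for Lemmas~\ref{orthogonal+} and \ref{orthogonal-}: work throughout in $\ov G=\Omega_d(q)$, build a regular semisimple element $x$ supported on a perpendicular decomposition of $V$ whose summands carry the Zsigmondy orders appearing in the required type, apply Gow's Theorem~\ref{G} (lifting odd-order classes to $G=\Spin_d(q)$ as explained in the preamble to this subsection) to obtain a conjugate $y$, or a second regular semisimple class, with $xy$ in a prescribed semisimple class $z$, check that $H=\langle x,y\rangle$ is irreducible on $V$, and finally force $H\ge\Omega_d(q)$ by a generation theorem. Because $d$ is odd, each even index $j$ supplies a minus-type summand $\Omega_j^-(q)$ carrying an element of order $\lambda_{ja,p}$ (a power of a Singer element acting irreducibly on that $j$-space), while $\GL_3(q)\le\Omega_6^+(q)$ supplies an element of order $\lambda_{3a,p}$ and $\GL_2(q)\le\Omega_4^+(q)$ one of order $(q^2-1)/\gcd(2,q-1)$; in each part $x$ is assembled from such factors on mutually perpendicular summands together with a residual non-degenerate line. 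Distinctness of the eigenvalue blocks makes $x$ (and, in (ii), also the second factor) regular semisimple, and irreducibility of $H$ follows because the target $z$ does not preserve the proper subspaces fixed by $x$, exactly as in Lemma~\ref{orthogonal-}.

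For (i) I would set $e=d-7$ and $f=d-5$; since $d\ge15$ both exceed $d/2$, and $\gcd(d,d-7,d-5)$ divides $(d-5)-(d-7)=2$, so Theorem~\ref{maingeneration}(iv) applies once the lines of Table~\ref{exotic} are excluded --- and they are, because every row of that table has $d\in\{4,6,7,10,12,14,16,18,20,22\}$, whose only odd entry is $7$, so no row can meet our odd $d\ge15$. Part (iv) is the same argument with $e=d-5$ and $f=d-3$ (both exceeding $d/2$ once $d\ge11$) and the identical gcd bound and table exclusion, the factors $(q^2-1)/\gcd(2,q-1)$ and $(q+1)/\gcd(2,q-1)$ being realized in $\GL_2(q)$ and a minus-type plane; here $q>3$ keeps the relevant Zsigmondy primes large, while the residual $d\in\{7,9\}$ are treated by a direct appeal to \cite{GPPS} and Lemma~\ref{subfield} as in the unitary small-dimension lemmas. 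For (ii) I would instead feed Gow's Theorem two \emph{distinct} regular semisimple classes, of orders $\lambda_{(d-1)a,p}$ and $\lambda_{(d-1)a/2,p}$, with product of order $\lambda_{ka,p}$ where $d/2<k\le d-3$ is even; generation follows from Theorem~\ref{maingeneration}(iv) applied to the two large indices $(e,f)=(k,d-1)$, for which $\gcd(d,k,d-1)=1$ as $\gcd(d,d-1)=1$, the table again being vacuous for odd $d\ge9$ (only $d=7$ needing separate scrutiny). Finally (iii) is the reason $d\in\{11,13\}$ is isolated: there $xy$ has order only $\lambda_{(d-1)a/2,p}$, whose index $(d-1)/2<d/2$ is not large, so a second large index is unavailable and Theorem~\ref{maingeneration} does not apply; instead, as $d$ is prime I would invoke the prime-degree classification of Lemma~\ref{Pdegree}, whose list of irreducible overgroups containing an element of order $\lambda_{(d-1)a,p}$ collapses, after discarding subfield subgroups by Lemma~\ref{subfield} and the groups of incompatible order or characteristic, to $F^*(H)=\Omega_d(q)$.

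To conclude that $G$ is a Beauville group I would pair two of the resulting triples whose types are coprime, so that condition (iii) of Definition~\ref{maindef} holds automatically: for $d\ge15$ the pair (i)/(ii), for $d\in\{11,13\}$ and $q>3$ the pair (iii)/(iv), and for $d\in\{7,9\}$ and $q>3$ the pair (ii)/(iv). Coprimality reduces to the observation that Zsigmondy primes attached to distinct indices are distinct (Lemma~\ref{cyclo}), so it suffices to choose the free even parameter $k$ in (ii) to avoid the indices $\{d-7,d-5\}$ or $\{d-5,d-3\}$ occurring in the partner triple, which is possible whenever $d\ge15$. Since these triples can be taken of odd order in their generating factors, or otherwise lift to $\Spin_d(q)$ without change of order and meet $Z(G)$ trivially in the sense exploited for Lemma~\ref{O8}, the same pairs serve for every quotient $G/Z(G)$, and the residual small cases with $q=3$ are verified by direct computation.

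The step I expect to be the genuine obstacle is this coprimality audit, and it is exactly where the two exceptions surface. When $q=3$ part (iv) is unavailable (it requires $q>3$), so for $d\in\{11,13\}$ only the triples of (ii) and (iii) remain, and both are built from the Zsigmondy data of the indices $d-1$ and $(d-1)/2$: for $\Spin_{11}(3)$ one finds $\lambda_{10,3}=61$ and $\lambda_{5,3}=121$ recurring in both triples, and for $\Spin_{13}(3)$ one finds $\lambda_{12,3}=73$ recurring alongside the small prime $\lambda_{6,3}=7$. In neither case is a coprime pair of types available, so these two groups must be set aside. The remaining care needed throughout is to confirm, using Theorem~\ref{LZ}, that no alternating, sporadic or cross-characteristic Lie-type overgroup intrudes in Theorem~\ref{maingeneration}(iv) or Lemma~\ref{Pdegree} when one of the relevant $\lambda_{ja,p}$ happens to be small, which is precisely the delicate point for the smallest admissible dimensions over $\GF(3)$.
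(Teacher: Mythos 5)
Your proposal follows essentially the same route as the paper: the same decompositions $\Omega_{d-7}^-(q)\times\Omega_6^+(q)$ and $\Omega_{d-5}^-(q)\times\Omega_4^+(q)$ for (i) and (iv), the same two regular semisimple classes fed to Gow's Theorem in (ii), generation via Theorem~\ref{maingeneration}(iv) in (i), (ii), (iv) and via Lemma~\ref{Pdegree} in (iii), and the same pairing of coprime types to conclude. Your explicit audit of Table~\ref{exotic} for odd $d$, of the coprimality of the indices, and of why $\Spin_{11}(3)$ and $\Spin_{13}(3)$ escape (the recurrence of $\lambda_{(d-1)a,p}$ and $\lambda_{(d-1)a/2,p}$ in both surviving triples when $q=3$) is in fact more careful than the paper's own write-up, but it is the same argument.
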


\begin{proof} To see that we have a hyperbolic triple as in (i) we select $x$ of order $\lambda_{(d-7)a,p}\lambda_{3a,p}$ to project into $\Omega_{d-7}^-(q)\times \Omega_6^+(q)$ and let $y$ be a conjugate of $x$. Then we may suppose that $xy$ is conjugate to an element $z$ of order $\lambda_{(d-5)a,p}\lambda_{4a,p}$ projecting into $\Omega_{d-5}^-(q)\times \Omega_4^-(q)$. Then we see that $x$ and $y$ are regular semisimple elements of $G$ and $H= \langle x,y\rangle$ acts irreducibly on $G$. It follows from Theorem~\ref{maingeneration} (iv) that, so long as $d \ge 15$,  $G=H$.

We next consider (ii). We take $x$ to project to an element of order $\lambda_{(d-1)a,p}$ projecting into $\Omega_{d-1}^-(q)$ and $y$ to be an element of order $\lambda_{(d-1)a/2,p}$ projecting into $\Omega_{d-1}^+(q)$. Then both $x$ and $y$ are regular semisimple element of $G$. Thus we may arrange for $xy$ to have order $\lambda_{ka,p}$ where $d/2<k \le d-3$ is even. As usual Theorem~\ref{maingeneration} (iv) that, so long as $d \ge 15$,  $G=H$.

Suppose $d=11$ and $d=13$ we employ Gow's Theorem with  Lemma~\ref{Pdegree} to see that their is a hyperbolic triple of type $(\lambda_{(d-1)a,p},\lambda_{(d-1)a,p}, \lambda_{(d-1)a/2,p})$ where $x$ and $y$ project into  $\Spin _{d-1}^-(q)$ and $z$ projects to an element of $\Spin_{d-1}^+(q)$ preserves singular subspaces of dimension $(d-1)/2$. This proves (iii).

Finally, we let $x$ be an element of order $\lambda_{(d-5)a,p}(q^2-1)/2$ projecting in $\Omega_{d-5}^-(q)\times \Omega_4^+(p)$. Then $x$ is regular semisimple and we choose a conjugate $y$ of $x$ such that $xy$ is conjugate to $z$ which projects into $\Omega_{d-3}(q)^-\times \Omega_2^-(q)$ and has order $\lambda_{(d-3)a,p}(q+1)/2$. Then so long as $q\neq 3$, we have that $H = \langle x,y\rangle = \langle x,z\rangle$ must act irreducibly on $V$. Hence again Theorem~\ref{maingeneration} (iv) shows that $G$ has a hyperbolic triple as in (iv).

Finally, (i) and (ii) show that $G$ and $G/Z(G)$ are Beauville groups when $d\ge 15$ and (iii) and (iv) show that $G$ is a Beauville group if $d\in \{11,13\}$ and $q\neq 3$.

\end{proof}

\begin{lemma} \label{O1113}The group $\Omega_{11}(3)$ and $\Omega_{13}(3)$ have a hyperbolic triple of type $(41,41,41)$.
\end{lemma}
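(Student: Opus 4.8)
The plan is to use the triple of type $(41,41,41)$ as the second ingredient of a Beauville structure for these two groups. For $G=\Omega_{11}(3)$ the triple already produced in Lemma~\ref{orthogonalodd}(iii) has type $(\lambda_{10,3},\lambda_{10,3},\lambda_{5,3})=(61,61,121)$, and for $G=\Omega_{13}(3)$ it has type $(\lambda_{12,3},\lambda_{12,3},\lambda_{6,3})=(73,73,7)$. In either case the product of the three element orders of the first triple is coprime to $41$, so condition (iii) of Definition~\ref{maindef} holds automatically once we exhibit a generating triple of type $(41,41,41)$. This is exactly the reason a separate argument is needed here: part (iv) of Lemma~\ref{orthogonalodd}, which would otherwise deliver a second coprime triple, is only available for $q>3$.

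To locate the relevant elements, I would observe that $41=(3^4+1)/\gcd(3+1,2)$ is the order of a Singer element of $\Omega_8^-(3)$, and that $41=\lambda_{8,3}$ is a \emph{large} Zsigmondy prime for $\langle 8,3\rangle$ since $41>8+1$. Hence an element $x$ of order $41$ in $G=\Omega_d(3)$, for $d\in\{11,13\}$, arises from the embedding $\Omega_8^-(3)\perp\Omega_{d-8}(3)\le G$, acting as a Singer element on an $8$-dimensional minus-type subspace and trivially on its nondegenerate $(d-8)$-dimensional complement. The key structural point is that generation is then forced once $H=\langle x,y\rangle$ is irreducible: both $d=11$ and $d=13$ are prime, $d/2<8<d$, and $H$ contains an element of order $r=\lambda_{8,3}=41$, which is not small. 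Therefore Lemma~\ref{Pdegree} applies and its not-small conclusion leaves only cases (i), (v) and (xx). Case (v) requires $d=7$, so it is excluded; case (xx) requires $r=2e+1=s$ with $e=8$, forcing $s=17$ and hence $d=(s\pm1)/2\in\{8,9\}$, incompatible with $d\in\{11,13\}$. Thus we are in case (i), with $F^*(H)\in\{\SL_d(3),\SU_d(3),\Omega_d(3)\}$; since $H\le\Omega_d(3)$ and the only group in this list contained in $\Omega_d(3)$ is $\Omega_d(3)$ itself, we conclude $H=\Omega_d(3)=G$.

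The main obstacle is the existence of the triple, and this is where the situation differs from the rest of the section. Because all three factors have order $41$, none of them is regular semisimple: each one centralizes the $(d-8)$-dimensional orthogonal complement, so $3$ divides its centralizer order. Consequently Gow's Theorem~\ref{G}, on which every other argument here relies, cannot be used to arrange $xy$ to again have order $41$ with $\langle x,y\rangle$ irreducible. I would therefore verify this final step by direct computation: exhibit explicit conjugate elements $x$ and $y$ of order $41$ in $\Omega_{11}(3)$ and in $\Omega_{13}(3)$, using GAP \cite{GAP} or Magma \cite{Magma}, check that $xy$ again has order $41$ and that $\langle x,y\rangle$ acts irreducibly on the natural module, and then invoke the application of Lemma~\ref{Pdegree} described above to conclude $\langle x,y\rangle=G$. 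Since $41$, and all element orders of the companion triple from Lemma~\ref{orthogonalodd}(iii), are odd, the resulting hyperbolic triple is coprime to that companion, and both triples lift through the centre of $\Spin_d(3)$, giving the Beauville structures needed to complete the two cases left open in Lemma~\ref{orthogonalodd}.
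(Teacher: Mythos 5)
Your proposal is correct and, at bottom, takes the same route as the paper: the paper's entire proof reads ``We calculated this by computer with the assistance of Theorem~\ref{maingeneration}~(iv)'', i.e.\ a direct machine verification of the existence of the triple combined with one of the Section~\ref{SS3} generation theorems to certify that the resulting subgroup is all of $G$. The one substantive difference is which generation theorem you invoke. Theorem~\ref{maingeneration}~(iv) is stated for subgroups containing elements of two \emph{distinct} Zsigmondy orders $\lambda_{ea,p}$ and $\lambda_{fa,p}$ with $e<f$, and a triple of type $(41,41,41)$ only directly supplies the single order $\lambda_{8,3}=41$; so to use it as stated one would need the computation to also exhibit a second Zsigmondy order inside $\langle x,y\rangle$. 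Your substitution of Lemma~\ref{Pdegree}, which needs only the one large Zsigmondy prime $41$ together with the primality of $d\in\{11,13\}$ and irreducibility, is better matched to the data actually available and makes the reduction to cases (i), (v), (xx) clean. Your observations that Gow's Theorem is unavailable here (elements of order $41$ centralize the complementary $\Omega_{d-8}(3)$ and so are not regular semisimple, which is precisely why the existence step must be done by computer) and that the $(41,41,41)$ triple is only needed to complete the Beauville structure left open by Lemma~\ref{orthogonalodd} for $q=3$ are both accurate and supply context the paper leaves implicit.
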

\begin{proof} We calculated this by computer with the assistance of Theorem~\ref{maingeneration} (iv).\end{proof}

\begin{lemma}\label{O9} Suppose that $G= \Spin_9(q)$ with $q=p^a$ odd. Then
\begin{enumerate}
\item $G$ has a hyperbolic triple  of type $(\lambda_{8a,p},\lambda_{8a,p},\lambda_{4a,p})$.
    \item  If $q>  3$, $G$ has a hyperbolic triple   of type $$(\lambda_{6a,p} (q-1),\lambda_{6a,p}(q-1),\lambda_{4a,p}).$$
\end{enumerate}
In particular, if $q> 3$, then $G$ and $G/Z(G)$ are Beauville groups.
\end{lemma}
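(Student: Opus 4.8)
The plan is to produce the two triples by the mechanism used throughout this subsection: realise the first element as a regular semisimple element of $G=\Spin_9(q)$ whose action on the natural orthogonal $9$-space $V$ has a very rigid invariant-subspace structure, use Gow's Theorem~\ref{G} to find a conjugate $y$ with $xy$ lying in a prescribed semisimple class $z$, and then invoke the main theorem of \cite{GPPS} together with Lemma~\ref{subfield} to force $H=\langle x,y\rangle$ to be all of $G$. Since $\Spin_9(q)$ is quasisimple, Gow's Theorem applies to it directly; I use the image $\overline G=\Omega_9(q)$ acting on $V$ only to describe the elements, and lift as in the preamble to this subsection, so that the odd-order data keep their orders in $G$.

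For (i) I take $x$ of order $\lambda_{8a,p}$ inside the subgroup $\Omega_8^-(q)$ stabilising a non-degenerate $1$-space, acting as a power of a Singer cycle on the minus-type $8$-space. Then $x$ has eight distinct eigenvalues in one Frobenius orbit together with a single fixed vector, so it is regular semisimple and its only proper nonzero invariant subspaces are the fixed $1$-space and the $8$-space. Note that $8a$ is a multiple of $8$ and so never lies in Feit's small list, whence $r=\lambda_{8a,p}$ is always large. I choose the Gow target $z$ of order $\lambda_{4a,p}$ acting irreducibly on a single minus-type $4$-space and trivially on a complementary $5$-space. The incompatibility of the invariant lattices of $x$ and $z$ makes $H$ irreducible, and $H$ contains a primitive prime divisor element for $\langle 8,q\rangle$ with $8>d/2$, so \cite{GPPS} applies. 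Lemma~\ref{subfield} removes the subfield subgroups; an extension-field subgroup would be defined over $\GF(q^b)$ with $b\mid 9$, hence $b\in\{3,9\}$, and none of these has order divisible by $\lambda_{8a,p}$ (as $\mathrm{ord}_{\zeta_{8a,p}}(q)=8$); and because $r$ is large the alternating, sporadic and cross-characteristic configurations are excluded on order grounds. Thus $H=\Omega_9(q)$ and, lifting, $H=G$.

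For (ii) I set $x=x_1x_2$ with $x_1\in\Omega_6^-(q)$ of order $\lambda_{6a,p}$ and $x_2\in\Omega_2^+(q)$ of order $q-1$, relative to a decomposition $V=V_6\perp V_2\perp V_1$; for $q>3$ the eigenvalues are distinct enough that $x$ is regular semisimple. This time I choose the Gow target $z$ of order $\lambda_{4a,p}$ to act as \emph{two} minus-type $4$-blocks, so that $z$ fixes only a $1$-space. The same argument, now driven by the primitive prime divisor element of order $\lambda_{6a,p}$ (with $6>d/2$), gives $H=G$; the extension-field subgroups over $\GF(q^b)$, $b\in\{3,9\}$, are killed by the presence of $z$, since $\mathrm{ord}_{\zeta_{4a,p}}(q)=4\nmid 6$ means $\lambda_{4a,p}$ does not divide their orders. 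The one place needing separate attention is $q=5$, where $\lambda_{6,5}=7$ is small and the exotic \cite{GPPS} examples must be eliminated by hand (or by machine).

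Finally, for the Beauville conclusion I combine the two triples and check condition (iii) of Definition~\ref{maindef}. All cross-pairs except the two third entries have coprime orders: $\lambda_{8a,p}$ and $\lambda_{6a,p}(q-1)$ are coprime because $\zeta_{8a,p},\zeta_{6a,p}$ are distinct Zsigmondy primes and neither divides $q-1=p^a-1$, and likewise $\zeta_{4a,p}$ divides none of $\lambda_{8a,p}$, $\lambda_{6a,p}$, $q-1$; so only the common prime $\zeta_{4a,p}$ in the two third entries can cause trouble. Here is the crux: because $\mathrm{ord}_{\lambda_{4a,p}}(q)=4$, every non-identity power of the order-$\lambda_{4a,p}$ element from (i) fixes a $5$-space while every non-identity power of the corresponding element from (ii) fixes only a $1$-space, so no power of one is conjugate to a power of the other. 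Hence condition (iii) holds, and since the odd-order data lift to $G=\Spin_9(q)$ with unchanged orders and meet $Z(G)$ trivially, both $G$ and $G/Z(G)$ are Beauville groups. I expect the generation steps to be routine once \cite{GPPS} is in hand; the genuine obstacle is certifying condition (iii) in spite of the shared factor $\lambda_{4a,p}$, which is precisely why the two Gow targets must be chosen with different fixed-space dimensions.
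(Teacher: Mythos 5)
Your overall architecture (Gow's Theorem plus \cite{GPPS} plus Lemma~\ref{subfield}, arguing in $\Omega_9(q)$ and lifting odd-order data to $\Spin_9(q)$) is the paper's, and your part (ii) is essentially the paper's second triple. But there is a genuine gap in part (i): you take the Gow target $z$ of order $\lambda_{4a,p}$ to act irreducibly on a single minus-type $4$-space and \emph{trivially on a complementary $5$-space}, and then assert that the invariant lattices of $x$ and $z$ are incompatible. They are not. The $x$-invariant subspaces are $U_1$ and $U_8=U_1^\perp$, while the $z$-invariant subspaces are all sums $W_4\oplus W_5'$ with $W_5'\le C_V(z)$; nothing prevents $U_1\le C_V(z)$ and $U_8=W_4\perp W_5'$. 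Concretely, the stabilizer of a non-degenerate point contains $\Omega_8^-(q)$, which contains both your class of $x$ (a Singer-type element) and your class of $z$ (an element of $\Omega_4^-(q)\times\Omega_4^+(q)$ trivial on the plus factor), and Gow's Theorem applied inside $\Omega_8^-(q)$ shows the corresponding structure constant there is already non-zero. Since Gow's Theorem in $\Omega_9(q)$ gives you no control over \emph{which} solution $(x,y)$ you obtain, you cannot conclude that $\langle x,y\rangle$ is irreducible, and the whole generation argument for (i) collapses. The paper avoids this by taking $z_1$ to be a power of a Singer element of $\GL_4(q)\le\Omega_8^+(q)$: such an element fixes only a $1$-space, its unique invariant $8$-space is of plus type, and its unique invariant $1$-space cannot be $U_1$ without forcing $U_8$ to be of plus type, so irreducibility is automatic.

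Repairing (i) this way breaks your verification of condition (iii), which rested entirely on the two order-$\lambda_{4a,p}$ elements having fixed spaces of dimensions $5$ and $1$: with the paper's $z_1$ both fixed spaces are $1$-dimensional. The paper instead distinguishes them by the \emph{type} of their invariant $4$-spaces: $z_1$ preserves a pair of totally singular $4$-spaces, whereas $s_1$ is chosen in $\Omega_4^-(q)\times\Omega_4^-(q)$ inducing non-isomorphic irreducible modules on two non-degenerate minus-type $4$-spaces, hence preserves no totally singular $4$-space; this is where the hypothesis $q>3$ (so $\lambda_{4a,p}>5$) is used. A secondary point: in dimension $9$ the configurations of \cite[Example 2.8]{GPPS} such as $\SL_3(q^2)$ acting on $W\otimes W^\sigma$ are \emph{not} excluded on order grounds (their orders are divisible by $\zeta_{6a,p}$); the paper rules them out because that module is not self-dual, and your proof should do likewise.
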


\begin{proof} We deal with both triples at once. For the first triple we let $x_1 \in \Omega_8^-(q)$ have order $\lambda_{8a,p}$ and $z_1$ have order $\lambda_{4a,p}$ be a power of a Singer element from $\GL_4(q) \le \Omega_8^+(q)$. We have that $x_1$ is regular semisimple in $G$ and  so we may suppose that $y_1$ is a conjugate of $x_1$ and that $x_1y_1=z_1$. Set $H_1 = \langle x_1,y_1\rangle$.   For the second triple, we take $r_1 \in \Omega_6^-(q) \times \Omega_2^+(q) \le  \Omega_8^+(q)$ with projection to the first factor of order $\lambda_{6a,p}$ and projection to the second factor of order $q-1$. For $s_1$ we take an element of $\Omega_4^-(q) \times \Omega_4^-(q) $ of order $\lambda_{4a,p}$ chosen so that in its action on $V$, it has two $4$-dimensional factors on which it induces non-isomorphic modules and a one space of $+$-type.  This choice is possible because $\lambda_{4a,p} > 5$ due  to our restriction that  $q>3$. Set $H_2= \langle r_1,s_1\rangle$. Suppose for a moment that
$s_1$ is conjugate to an element of $\langle z_1\rangle$. Then $s_1$ would also preserve a pair of $4$-dimensional singular spaces. However, our choice of $s_1$  shows that this is impossible. Thus once we show that $G=H_1=H_2$, we will have proved that $G$ is a Beauville group.  Notice that both $H_1$ and $H_2$ both act irreducibly on $V$. We now work through the possibilities for $H_1$ and $H_2$ given in Lemma~\ref{3.2}. Obviously Lemma~\ref{3.2} (i) does not hold.   If $H_1$ and $H_2$ are as in Example 2.1 of \cite{GPPS}, then we have that they equal $G$ by Lemma~\ref{subfield}.   If $H_1$ or $H_2$ are contained in an extension field subgroup, then it would have to normalize $\Omega_3(q^3)\cong \PSL_2(q^3)$. This group is not normalized by elements of order $\lambda_{8a,p}$ or $\lambda_{4a,p}$.
From Example 2.8 of \cite{GPPS} we have only to consider the possibility that $H_1$ or $H_2$ is equal to $\SL_3(q^2)$ or $\PSL_3(q^2)$ acting on the module $W\otimes W^\sigma$ where $W$ is the natural $\SL_3(q^2)$ module and $\sigma$ is the field automorphism of order $2$. In particular, this module is not self-dual, and so these cases cannot arise.
From Lemma~\ref{3.2} (iii), we have only one possibility with $d=9$ and then $q$ is even. So this falls also. Since the triple
 in (i) consists of elements of odd order, we have that $G$ is a Beauville group.
\end{proof}

\begin{lemma} \label{093} The groups $\Spin_9(3)$ and $\Omega_9(3)$ are Beauville groups.
\end{lemma}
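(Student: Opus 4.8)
The plan is to reuse one triple from Lemma~\ref{O9} and to supply the missing second triple by machine computation. First I would observe that the construction in Lemma~\ref{O9}(i) never uses the hypothesis $q>3$: that restriction is invoked only in part (ii), to guarantee $\lambda_{4a,p}>5$. Thus Lemma~\ref{O9}(i) already furnishes, for $q=3$, a hyperbolic triple of $\Omega_9(3)$ of type $(\lambda_{8,3},\lambda_{8,3},\lambda_{4,3})=(41,41,5)$. Since $41$ and $5$ are odd, this triple consists of elements of odd order, and hence, by the lifting discussion at the start of this subsection, it lifts to a hyperbolic triple of the same type in $\Spin_9(3)$.

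It remains to produce a second hyperbolic triple whose element orders are coprime to $41\cdot 5$. The primes dividing $|\Omega_9(3)|$ are $2,3,5,7,13$ and $41$, so I would look for a triple supported on $\{3,7,13\}$, which both avoids the primes $5$ and $41$ of the first triple and, being of odd order, again lifts cleanly to $\Spin_9(3)$. Such a triple (for instance of type $(13,13,7)$) is straightforward to find by computer, and generation can be confirmed either directly or with the assistance of Theorem~\ref{maingeneration}(iv), exactly as in Lemma~\ref{O1113}.

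Because the element orders of the two triples are coprime, condition (iii) of Definition~\ref{maindef} is automatic (by the remark following that definition), so the two triples form a Beauville structure for $\Omega_9(3)$, and the odd-order lifts do the same for $\Spin_9(3)$. The only genuine subtlety, and the point I would check most carefully, is the passage to the spin group: one must ensure that the central involution of $\Spin_9(3)$ does not spoil condition (ii) or condition (iii). Keeping both triples of odd order sidesteps this entirely, since odd-order elements of $\Omega_9(3)$ lift to odd-order elements of $\Spin_9(3)$ of the same order and meet $Z(\Spin_9(3))$ trivially; the residual verification is then purely computational.
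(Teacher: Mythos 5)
Your proposal is correct and follows essentially the same route as the paper: the paper likewise reuses the $(41,41,5)$ triple from Lemma~\ref{O9}(i) at $q=3$ (whose odd element orders make the lift to $\Spin_9(3)$ harmless) and supplies the second triple by a machine computation, finding one of type $(7,7,7)$ rather than your $(13,13,7)$. The difference in the type of the computed triple is immaterial, since both are coprime to $41\cdot 5$ and of odd order.
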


\begin{proof} We use the hyperbolic triple from Lemma~\ref{O9} (i) which has type $(41,41,5)$. Then we calculate by computer that $\Omega_9(3)$ has a triple of type $(7,7,7)$. It follows that $\Spin_9(3)$ and $\Omega_9(3)$ are Beauville groups.
\end{proof}

\begin{lemma}\label{spin7} Suppose that $G = \Spin_7(q)$ with $q=p^a$ odd and $q>3$. Then
\begin{enumerate}

\item   $G$ has a hyperbolic triple  $(x,y,z)$  of type $(\lambda_{6a,p},\lambda_{3a,p},(q-\theta)/2 )$ where $q-\theta \equiv 2 \pmod 4$ and $\theta = \pm 1$,   $x \in \Spin_6^-(q)$ and $y\in \Spin_6^+(q)$ centralize a one-dimensional $-$-space, respectively  a one-dimensional $+$-space on $V$ and are regular semisimple and $z$ is a bireflection of odd order.
\item  If $q > 17$ then $G$ has a hyperbolic triple  $(x,y,z)$  of type $((q-1)/2,(q+1)/2,\lambda_{4a,p})$  where   $x \in \Spin_6^+(q)$ and  $y\in \Spin_6^-(q)$ are regular semisimple and   $z \in \Spin_4^-(q)$ has order $\lambda_{4a,p}$.
\end{enumerate}
In particular, if $q> 17$, then $G$ and $G/Z(G)$ are Beauville groups.
\end{lemma}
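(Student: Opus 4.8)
The plan is to realise both triples inside the orthogonal image $\ov G=\Omega_7(q)$ and then lift along $\Spin_7(q)\to\Omega_7(q)$ by the odd-order device described at the start of this subsection, applying Gow's Theorem~\ref{G} in $\Spin_7(q)$ itself. In each case I construct regular semisimple elements $x$ and $y$ supported on the subgroups $\Spin_6^-(q)$ and $\Spin_6^+(q)$, each the stabiliser of a nondegenerate $1$-space of the appropriate discriminant, use Gow's Theorem to find conjugates with $xy=z$ for the prescribed third element $z$, verify that $H=\langle x,y\rangle$ acts irreducibly on $V$, and then exploit the fact that $d=7$ is prime to force $H=\ov G$. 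Since $7$ is prime, $V$ is automatically tensor indecomposable and any imprimitive decomposition is monomial; the monomial case is excluded by Lemma~\ref{imprimitive} (the distinguished element has order a large power of a Zsigmondy prime), so $H$ is primitive and irreducible and Lemma~\ref{Pdegree} applies with $d=7$.

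For part (i) I take $x\in\Spin_6^-(q)$ of order $\lambda_{6a,p}$ (acting irreducibly on the $6$-dimensional minus-type subspace and fixing the complementary $1$-space) and $y\in\Spin_6^+(q)$ of order $\lambda_{3a,p}$ (a regular element of a $\Phi_3$-torus fixing a $+$-type $1$-space); both are regular semisimple, and choosing $\theta=\pm1$ according to $q\pmod 4$ makes $(q-\theta)/2$ odd, so $x$, $y$ and the bireflection $z$ all have odd order and lift to elements of the same order in $\Spin_7(q)$. Irreducibility of $H$ follows because the bireflection $z=xy$ preserves neither the $1$-space nor the $6$-space fixed by $x$. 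With $e=6$, Lemma~\ref{Pdegree} leaves only $F^*(H)\in\{\SL_7(q),\SU_7(q),\Omega_7(q)\}$ together with $\G_2(q)$, ${}^2\G_2(q)$, $\PSU_3(q)$ and some $\PSL_2$, $\PSL_3$ and $\Sp_6(2)$ configurations. The linear and unitary groups are removed because $H$ preserves a quadratic form and is not a subfield subgroup (Lemma~\ref{subfield}). All the exotic groups except $\G_2(q)$ are eliminated at once by the order-$\lambda_{3a,p}$ element, since $\zeta_{3a,p}$ divides $\Phi_3(q)=q^2+q+1$, which divides the order of none of ${}^2\G_2(q)$, $\PSU_3(q)$, $\Sp_6(2)$ or the listed $\PSL_2(s)$. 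The remaining group $\G_2(q)$ does contain such an element, so it must be excluded by the bireflection instead: on the $7$-dimensional module the fixed-space dimension of a semisimple element of $\G_2(q)$ is $1$, $3$ or $7$ (two of the three short-root pairs cannot vanish without the third), never $5$, so $\G_2(q)$ contains no semisimple bireflection. Thus $H=\ov G$ and (i) follows. (One may alternatively invoke the bireflection theorem of \cite{GSa}, exactly as in Lemma~\ref{OPlusetcAgain}, whose list of conclusions does not contain $\G_2$.)

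For part (ii) I take $x\in\Spin_6^+(q)$ of order $(q-1)/2$ and $y\in\Spin_6^-(q)$ of order $(q+1)/2$, with $z\in\Spin_4^-(q)$ of order $\lambda_{4a,p}$ acting irreducibly on a $4$-space. The hypothesis $q>17$ is precisely what allows $x$ and $y$ to be chosen regular semisimple (there is then enough room to place their eigenvalue angles in general position in the rank-$3$ torus) and what makes $\lambda_{4a,p}$ large. Irreducibility of $H$ follows since $z$ acts irreducibly on a $4$-space while neither $x$ nor $y$ stabilises that space or its $3$-dimensional complement. Now the distinguished element is $z$, so Lemma~\ref{Pdegree} applies with $e=4$; its concluding clause shows that a not-small $\lambda_{4a,p}$ forces case (i), (v) or (xx). Case (v) requires $e=6$, and case (xx) requires $\lambda_{4a,p}=2e+1=9$, impossible for a power of a Zsigmondy prime at least $5$; hence $F^*(H)\in\{\SL_7(q),\SU_7(q),\Omega_7(q)\}$ and the form forces $H=\Omega_7(q)$.

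Finally, to conclude that $G$ and $G/Z(G)$ are Beauville groups for $q>17$, I combine the two triples. Almost all the orders are pairwise coprime: $\zeta_{6a,p}$, $\zeta_{3a,p}$ and $\zeta_{4a,p}$ are distinct Zsigmondy primes and none divides $(q-1)/2$ or $(q+1)/2$. The one coincidence is that $(q-\theta)/2$ equals the odd member of $\{(q-1)/2,(q+1)/2\}$ appearing in the second triple, so the coprimality criterion does not by itself settle condition (iii) of Definition~\ref{maindef}; this is the main obstacle. I resolve it structurally: $z_1$ is a bireflection, so every nontrivial power of $z_1$ fixes a $5$-dimensional subspace, whereas the clashing element of the second triple is regular semisimple with eigenvalues in general position, so no nontrivial power of it is a bireflection, and hence no power of $z_1$ is conjugate to a power of it. The triple of part~(i) is all of odd order and lifts to $\Spin_7(q)$ without change of orders; for part~(ii) the single even-order element may have its order doubled on lifting, but since $\Spin_7(q)$ is perfect the lifts still generate, the hyperbolic inequality is preserved, and the non-conjugacy argument is unaffected. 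This exhibits the required Beauville structure and passes to $G/Z(G)$.
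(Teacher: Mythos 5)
Your proposal is correct and follows essentially the same route as the paper: the same choices of $x$, $y$ and $z$ in both parts, Gow's Theorem to arrange $xy=z$, irreducibility from the incompatible invariant subspaces, Lemma~\ref{Pdegree} with $d=7$ to pin down $F^*(H)$, and the observation that $\G_2(q)$ has no semisimple bireflection to kill the one surviving exotic case in (i). The one place you go beyond the paper's proof is the ``in particular'' clause: the printed proof never addresses the fact that $(q-\theta)/2$ coincides with one of $(q\pm1)/2$, whereas you resolve it by comparing fixed-space dimensions of powers — this is a genuine improvement, though to make it airtight you should record that $x_2$ (resp.\ $y_2$) must be chosen with all three eigenvalue pairs of full order $(q\mp1)/2$, so that \emph{every} non-identity power moves a $6$-space and hence is never conjugate to a power of the bireflection $z_1$; the hypothesis $q>17$ gives room for such a regular choice. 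One small imprecision: your blanket claim that $\zeta_{3a,p}$ divides the order of none of the listed $\PSL_2(s)$ fails for $q=9$, where $\lambda_{6,3}=7$ divides $|\PSL_2(13)|$; that case is still excluded, but only because Lemma~\ref{Pdegree}(xx) would force $\lambda_{6a,p}=13$ while in fact $\lambda_{12,3}=73$.
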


\begin{proof}
(i)  Let $x$ have order $\lambda_{6a,p}$ be an element of $\Spin_{6}^-(q)$ and $y \in \Spin_{6}^+(q)$ has order $\lambda_{3a,p}$. Then $x$ and $y$ are regular semisimple in $G$ and so we may suppose  $xy=z$ with $z $ a bireflection of order $(q-\theta)/2$.
Note that $\lambda_{6a,p}$ is not small as $q> 5$. Let $X= \langle x,y\rangle$.  The action of $x$ and $y$ on $V$ shows that $X$ acts irreducibly on $V$.  Suppose that $X \neq G$.  Then as $X$ contains an element of order $\lambda_{6a,p}$ and $p^a>5$,  Lemma~\ref{Pdegree} implies  $F^*(X)/Z(X) = \PSL_2(13)$ or we have that $F^*(X)$ is one of $\G_2(q)$, ${}^2\G_2(3^{2n+1})$, $\U_3(3^n)$, $n \ge 2$.  However, $z$ is a semisimple bireflection and, though \cite{GSa} says that $\G_2(q)$ has bireflections they are in fact elements of order $p$. Thus in this case we have $G=X$.

(ii) Suppose that $q > 17$ then $G$ has elements $x$ of order $(q-1)/2$  and $y$ of order $(q+1)/2$ with  $x \in \Spin_6^+(q)$ and  $y\in \Spin_6^-(q)$ both regular semisimple. We suppose that  $xy=z \in \Spin_4^-(q)$ has order $\lambda_{4a,p}$. Set $X= \langle x,y\rangle$.

Then by considering the types of spaces left invariant  by $x$ and $y$ and using the fact that the elements are regular semisimple, we see that if $X$ is not irreducible, then $X  \le \Spin_4^+(q) \circ \Spin_3(q)$. Since $\lambda_{4a,p}$ doesn't divide the order of this group, we infer that $X$ acts irreducibly on $V$. Applying Lemma~\ref{Pdegree} and using the fact that $4\neq 6$, we see that $G= X$.
\end{proof}

\begin{lemma}\label{theothers} The following groups have hyperbolic triples of the types indicated.
\begin{enumerate}
\item $\Spin_7(3)$ has hyperbolic triples of type $(13,13,13)$ and $(15,15,15)$.
\item $\Spin_7(5)$ has hyperbolic triples of type $(7,7,7)$ and $(31,31,31).$
\item $\Omega_7(7)$  has a hyperbolic triple of type $(100,7,200)$.
\item $\Omega_7(9)$ has a hyperbolic triple of type $(8,328,328)$.
\item $\Omega_7(11)$  has a hyperbolic triple of type $(11,61,122)$.
\item $\Omega_7(13)$ has a hyperbolic triple of type $(17,580, 2465)$.
\item  $\Omega_7(17)$ has a hyperbolic triple of type $(34,85,340)$.
\end{enumerate}
\end{lemma}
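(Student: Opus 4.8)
The plan is to treat all of these as explicit small cases and verify them by machine computation, exactly in the spirit of Lemmas~\ref{spin83}, \ref{O1113} and \ref{093}. The first observation is that hyperbolicity is automatic: for each listed type $(l,m,n)$ one checks directly that $1/l+1/m+1/n<1$ (for instance $3/13<1$ for $(13,13,13)$ and $1/100+1/7+1/200<1$ for $(100,7,200)$), so it only remains to produce, for each group and each type, elements $x,y,z$ of the prescribed orders with $xyz=1$ and $\langle x,y\rangle=G$.

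For the two spin groups I would work with explicit matrix generators for $\Spin_7(3)$ and $\Spin_7(5)$ obtained from the online Atlas \cite{onlineatlas}. Since the triples recorded for these groups, namely $(13,13,13)$, $(15,15,15)$, $(7,7,7)$ and $(31,31,31)$, all consist of elements of odd order, the discussion at the start of this subsection applies: a triple of odd-order elements in $\Omega_7(q)$ lifts to a triple of odd-order elements in $\Spin_7(q)$ without disturbing the product relation, so it suffices to locate such triples in $\Omega_7(q)$ and then lift. I would search for a pair $(x,y)$ with $x$, $y$ and $xy$ of the required orders and check $\langle x,y\rangle=G$; where both generators are regular semisimple, Gow's Theorem (Theorem~\ref{G}) guarantees that the relevant structure constant is non-zero, so that the only substantive point to verify computationally is that some such pair actually generates $G$ rather than a proper subgroup.

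For the orthogonal groups $\Omega_7(q)$ with $q\in\{7,9,11,13,17\}$ I would compute directly inside $\Omega_7(q)$, which is simple for these odd $q$, again via matrix or permutation representations in GAP \cite{GAP} or Magma \cite{Magma}. The listed types here involve even-order elements, which is precisely why they are recorded for $\Omega_7(q)$ rather than $\Spin_7(q)$: they are intended to supply the second member of a Beauville structure whose first member is the odd-order triple of Lemma~\ref{spin7}(i), that odd-order triple being the one that lifts cleanly to the spin cover. For each such $q$ I would exhibit elements of the stated orders whose product is trivial and which generate $\Omega_7(q)$, the generation being confirmed by the machine.

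The main obstacle I anticipate is computational rather than conceptual. For $q=17$ (and to a lesser extent $q=13$) the groups $\Omega_7(q)$ are large, so a naive random search for a generating triple of the exact prescribed orders may be slow; the practical remedy is to fix conjugacy classes of the required orders, use Gow's Theorem to know \emph{a priori} that compatible $x$ and $y$ exist, and then test generation on a modest number of candidate pairs. A secondary point requiring care is bookkeeping for the spin groups: one must confirm that the odd orders claimed in $\Spin_7(3)$ and $\Spin_7(5)$ are genuinely realised in the double cover and not merely in the simple quotient, which the odd-order lifting argument guarantees but which I would nonetheless verify explicitly in the matrix representation.
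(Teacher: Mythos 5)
Your proposal matches the paper's proof, which consists of exactly the statement that these triples were all found by computer calculation (the paper adds only the implementation detail that its representation of $\Spin_7(5)$ was constructed as a subgroup of $\Omega_8^+(5)$ rather than taken directly from the online Atlas). Your supporting remarks about odd-order lifting for the spin cases and the role of Gow's Theorem in guiding the search are consistent with the surrounding discussion in that subsection.
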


\begin{proof}
These were all calculated by computer. We mention that the representation of $\Spin_7(5)$ was constructed by finding the subgroup in $\Omega_8^+(5)$.
\end{proof}

Again we summarize this subsection with our main result.

\begin{theorem}\label{Othm} Suppose that $G = \Spin_d^\varepsilon (q)$ with $d \ge 7$ and $Z \le Z(G)$. Then $G/Z$ is a Beauville group.
\end{theorem}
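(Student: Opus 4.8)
The plan is to assemble the lemmas of this subsection, organising the argument by the parity of $d$, the sign $\varepsilon$ and the size of $q$, in the style of the companion summary results Theorems~\ref{SLThm}, \ref{Uthm} and \ref{Spthm}. I would first dispose of the odd-dimensional groups $\Spin_d(q)$. When $q$ is even the centre of $\Spin_d(q)$ is trivial and $\Spin_d(q)\cong\Sp_{d-1}(q)$, so $G/Z$ is a Beauville group by Theorem~\ref{Spthm}. For $q$ odd I would subdivide according to $d$: Lemma~\ref{orthogonalodd} settles $d\ge 15$ and, for $q\neq 3$, also $d\in\{11,13\}$; Lemma~\ref{O1113} supplies the outstanding $\Omega_{11}(3)$ and $\Omega_{13}(3)$, and hence $\Spin_{11}(3)$ and $\Spin_{13}(3)$; Lemma~\ref{O9} together with Lemma~\ref{093} covers $d=9$; and Lemma~\ref{spin7} together with Lemma~\ref{theothers} covers $d=7$, the former for $q>17$ and the latter for the finitely many remaining odd $q$.

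I would then treat the even-dimensional groups. For the plus type ($\varepsilon=+$, so $d\ge 8$), Lemma~\ref{orthogonal+} handles $d\ge 10$, while $d=8$ splits into Lemma~\ref{O8} for $q\notin\{2,3\}$, Lemma~\ref{spin83} for $q=3$, and the case $q=2$, which is deferred to Theorem~\ref{excepcovers} since $\Omega_8^+(2)$ has an exceptional Schur multiplier. For the minus type ($\varepsilon=-$, again $d\ge 8$), Lemma~\ref{orthogonal-} produces the required pair of non-conjugate hyperbolic triples for every $(d,q)$ outside the explicit list
$$\{(8,2),(8,3),(8,5),(10,2),(10,3),(10,5),(12,2),(12,3),(12,5)\},$$
and the lemma immediately following Lemma~\ref{orth-extra} dispatches precisely these remaining pairs, the odd-order triples of Lemma~\ref{orth-extra} being arranged so that they may be lifted from $\Omega_d^-(q)$ and checked by machine.

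The one point demanding real care, and the reason the individual lemmas are phrased for an arbitrary $Z\le Z(G)$, is the passage from $G=\Spin_d^\varepsilon(q)$ to the quotient $G/Z$; since $Z$ is a $2$-group the quotient ranges over $\Spin_d^\varepsilon(q)$, the various intermediate quotients and $\mathrm{P}\Omega_d^\varepsilon(q)$. I would verify in each case that condition (iii) of Definition~\ref{maindef} persists in every such quotient. This is immediate whenever the two triples have coprime element orders, and otherwise rests on the observation recorded at the opening of the subsection that a triple of odd-order semisimple elements lifts isomorphically through the centre; the finer bookkeeping required when a triple carries even-order elements is exactly what Lemma~\ref{O8} carries out, by tracking which involutions of $Z(\Spin_8^+(q))$ are detected by each of its two triples and invoking triality to reduce to the two quotient types. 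Granting this, the element orders quoted in the lemmas are unchanged in $G$ and the non-conjugacy survives, so $G/Z$ is a Beauville group in every case, which completes the proof.
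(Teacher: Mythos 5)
Your proposal is correct and follows essentially the same route as the paper, which simply combines the lemmas of the subsection and defers $\Omega_8^+(2)$ to Theorem~\ref{excepcovers}; your case division by parity, sign and field size, and your explicit remarks on the isomorphism $\Spin_{2n+1}(2^a)\cong\Sp_{2n}(2^a)$ and on lifting odd-order triples through the centre, merely make explicit what the paper leaves implicit.
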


\begin{proof} Combining the lemmas of this section, we see that $\Spin_d^\varepsilon(q)$ is a Beauville group unless possibly  $G=\Omega_8^+(2)$. This group is examined in  Theorem~\ref{excepcovers} and it and its Schur covers are seen to be Beauville groups.
\end{proof}
\section{Exceptional groups}\label{SS5}

This section is devoted to the proof of our main theorem for the exceptional quasisimple Lie type groups. We recall that $\Phi_k(x)$ denotes the  $k$'th cyclotomic polynomial. Our strategy is to pick certain semisimple elements of large order and determine all the maximal subgroups that contain such a group. We do this for the large rank groups by appealing to the results from \cite{LSS} and \cite{Lies} while for the smaller rank groups we rely on the entire lists of maximal subgroups. We then select a further semisimple element which is not contained in any of the maximal overgroups we discovered for the first element. The hyperbolic triples are then assembled from these elements. Throughout we will take $q=p^a$ where $p$ is a prime.

\subsection{Exceptional groups of type $\E_8$}

Let $G= \E_8(q)$.
Pick  $\sigma_1 \in G$ of order $ \Phi_{30}(q)$ and $\sigma_2 \in G$  of order $\Phi_{14}(q)$.
For $i=1,2$, define conjugacy classes $C_i =\sigma_i^G$.
Let $\tau_1 \in G$ be an element of order $ \Phi_{15}(q)$, $\tau_2 \in G$ of order $\Phi_7(q)$ and, for $i=1,2$, define $D_i = \tau_i^G$.
 Note that the existence of elements of $G$  of  orders $\Phi_{30}(q)$ and $\Phi_{15}(q)$  follows from \cite[Table C]{LSS} while elements of order
$\Phi_7(q)$ and $\Phi_{14}(q)$ are contained in the maximal rank subgroup $\Spin_{16}^+(q)$.

\begin{lemma}\label{e8x} There exists a hyperbolic triple  $(x_1,x_2,x_3) \in C_1\times C_1 \times C_2$.
\end{lemma}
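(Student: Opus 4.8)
The plan is to build the triple directly from Gow's Theorem and then prove generation via the known overgroups of the Coxeter torus. First I would check that $C_1$ consists of \emph{regular} semisimple elements. Since $\phi(30)=8$ equals the rank of $\E_8$, an element of order $\Phi_{30}(q)$ generates a cyclic maximal torus $T$ (the Coxeter torus, as $30$ is the Coxeter number of $\E_8$). A Zsigmondy prime $\zeta$ for $\langle q,30\rangle$ exists by Theorem~\ref{Zig} (as $30\neq 2,6$) and divides $o(\sigma_1)$; since $\zeta\equiv 1\pmod{30}$, so $\zeta\ge 31$, no proper reductive subgroup of maximal rank other than a torus has order divisible by $\zeta$, whence $C_G(\sigma_1)=T$ and $\sigma_1$ is regular semisimple. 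I may therefore apply Gow's Theorem~\ref{G} with $R_1=R_2=C_1$ and the non-central semisimple element $s=\sigma_2^{-1}$ (non-central because $Z(\E_8(q))=1$) to obtain $x_1,x_2\in C_1$ with $x_1x_2=\sigma_2^{-1}$. Setting $x_3=\sigma_2\in C_2$ gives $x_1x_2x_3=1$.

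The heart of the argument is to show $H:=\langle x_1,x_2\rangle=G$; note $x_3=(x_1x_2)^{-1}\in H$, so generating with $x_1,x_2$ suffices. Here I would invoke \cite{LSS} to determine the maximal subgroups of $\E_8(q)$ of order divisible by $\zeta$. Any maximal overgroup of $x_1$ must contain a conjugate of $T$: a parabolic cannot, since its Levi has rank less than $8$; a proper subsystem or reductive-type subgroup cannot, since $\zeta\equiv1\pmod{30}$ fails to divide its order; a subfield subgroup $\E_8(q_0)$ with $q=q_0^r$, $r$ prime, cannot, since its order involves only factors $q_0^k-1$ with $k\le 30$ and $\zeta\mid q_0^k-1$ would force $30a\mid a_0k$, i.e.\ $30r\mid k$, which is impossible; and the almost simple maximal subgroups have order too small to be divisible by $\zeta$. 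Hence the unique maximal overgroup of $x_1$ is $N_G(T)=T.30$, of order $30\,\Phi_{30}(q)$.

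It then remains to rule out $H\le N_G(T)$. Let $\eta$ be a Zsigmondy prime for $\langle q,14\rangle$ (which exists by Theorem~\ref{Zig}); then $\eta\mid\Phi_{14}(q)$, $\eta\ge 29$ so $\eta\nmid 30$, and since the multiplicative order of $q$ modulo $\eta$ is $14\nmid 30$ we get $\eta\nmid q^{30}-1$ and hence $\eta\nmid\Phi_{30}(q)$. Thus $\Phi_{14}(q)\nmid 30\,\Phi_{30}(q)$, so $N_G(T)$ contains no element of order $\Phi_{14}(q)$; but $H$ contains $x_1x_2=\sigma_2^{-1}$, of order $\Phi_{14}(q)$, forcing $H\not\le N_G(T)$. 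Combined with the previous paragraph, $H$ lies in no maximal subgroup, so $H=G$. Finally $(x_1,x_2,x_3)$ is hyperbolic: $x_1x_2x_3=1$, $\langle x_1,x_2,x_3\rangle=G$, and $1/o(x_1)+1/o(x_2)+1/o(x_3)=2/\Phi_{30}(q)+1/\Phi_{14}(q)<1$ since each order exceeds $3$ for every $q$. The main obstacle is the middle paragraph, namely extracting from \cite{LSS} that $N_G(T)$ is the only maximal overgroup of the Coxeter-torus element $\sigma_1$; the coprimality bookkeeping and the hyperbolicity estimate are routine.
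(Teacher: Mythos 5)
Your proposal is correct and follows essentially the same route as the paper: Gow's Theorem produces the triple, the element of order $\Phi_{30}(q)$ has $N_G(\langle x_1\rangle)$ (of order $30\,\Phi_{30}(q)$) as its unique maximal overgroup, and coprimality of $\Phi_{30}(q)$ and $\Phi_{14}(q)$ forces $x_3\notin N_G(\langle x_1\rangle)$, whence generation. The only difference is that the paper simply cites Weigel \cite[Section 4(j)]{Weigel} for the unique-maximal-overgroup step that you re-derive from \cite{LSS}.
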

\begin{proof}  As the class $C_1$ consists of regular semisimple elements and $C_2$ consists of semisimple
elements, Gow's Theorem guarantees that there exist $(x_1,x_2,x_3) \in C_1\times C_1 \times C_2$ with  $x_1x_2x_3=1$.
Then by \cite[Section 4 (j)]{Weigel}, the only maximal subgroup of $G$ containing $x_1$ is $N_G(\langle x_1 \rangle)  $ and, from \cite{LSS}, $N_G(\langle x_1 \rangle)/\langle x_1 \rangle$ is cyclic of order $30$.
Clearly $C_2 \cap N_G(\langle x_1 \rangle) = \emptyset$ as $x_3$ has order greater than $  30$ and $\gcd(\Phi_{30}(q),\Phi_{14}(q)) = 1$ by Lemma~\ref{cyclo}. Thus
$G = \langle x_1,x_3 \rangle$ and the lemma follows.\end{proof}

\begin{lemma}\label{UniqueMax} The $N_G(\langle \tau_1\rangle)$ is the unique maximal subgroup of $G$ containing $\tau_1$.
\end{lemma}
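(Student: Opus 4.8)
The plan is to exploit the fact that $\tau_1$ generates a self-centralizing maximal torus and then to run through the classification of over-groups of maximal tori in \cite{LSS}, exactly in the spirit of the treatment of $\sigma_1$ in Lemma~\ref{e8x}. First I would record the structural facts about $\tau_1$: since $\phi(15)=8$ equals the rank of $\E_8$ and $15$ is a regular number for $W(\E_8)$, the relevant maximal torus is the cyclic one produced in \cite[Table C]{LSS}, so $\tau_1$ lies in a cyclic maximal torus $T$ with $|T|=\Phi_{15}(q)$. Because $T$ is cyclic and $\tau_1$ has order $|T|$, we get $T=\langle\tau_1\rangle=C_G(\tau_1)$; in particular $\tau_1$ is regular semisimple and, crucially, \emph{every} subgroup containing $\tau_1$ already contains the whole torus $T$, while $N_G(\langle\tau_1\rangle)=N_G(T)$. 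Thus it suffices to show that $N_G(T)$ is the unique maximal subgroup of $G$ containing $T$.

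Next I would eliminate the over-groups $M\geq T$ family by family. Since the Weyl element realizing $T$ acts with characteristic polynomial $\Phi_{15}$, it has no eigenvalue $1$, so $T$ is elliptic and lies in no proper parabolic; hence $M$ is not parabolic. For the reductive maximal-rank subgroups I would compare orders: $\Phi_{15}$ enters $|G|$ only through the fundamental degree $30$ (the degrees of $\E_8$ being $2,8,12,14,18,20,24,30$), and a direct check of the subsystem subgroups arising from the Borel--de Siebenthal algorithm, together with their twisted forms, shows that none has a fundamental degree divisible by $15$ and that $\Phi_{15}(q)$ divides $q^{d}+1$ for no $d$; consequently $\Phi_{15}(q)\nmid|X|$ for every proper subsystem subgroup $X$, so $T\not\leq X$. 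This leaves $M=N_G(T)$ as the only maximal-rank candidate, as required.

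The remaining and genuinely delicate case is when $M$ is not of maximal rank. Here I would fix a primitive prime divisor $r$ of $q^{15}-1$ (a Zsigmondy prime, so $r\equiv 1\pmod{15}$ and $r\geq 31$), note $r\mid|M|$, and run through the non-generic maximal subgroups of $\E_8(q)$ supplied by \cite{LSS} and the classification it rests on: subfield subgroups, the exotic local subgroups, and the finitely many almost simple subgroups in the $\mathcal S$-family. Each must be shown to contain no element of order $\Phi_{15}(q)$. I expect the subfield subgroups to be the main obstacle: when $q=q_0^2$ one has $\Phi_{15}(q)=\Phi_{15}(q_0)\Phi_{30}(q_0)$, so $\Phi_{15}(q)$ does divide $|\E_8(q_0)|$ and the order-divisibility test alone fails. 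The point to make is that $\E_8(q_0)$ nonetheless has no \emph{element} of order $\Phi_{15}(q)$, since such an element would require commuting parts of orders $\Phi_{15}(q_0)$ and $\Phi_{30}(q_0)$, whereas a torus of order $\Phi_{15}(q_0)$ is already a self-centralizing maximal torus of $\E_8(q_0)$; no such commuting configuration exists, so $\tau_1\notin\E_8(q_0)$. With the subfield case dispatched this way and the local and almost simple subgroups ruled out because their orders are too small to admit an element of order $\Phi_{15}(q)$, only $M=N_G(\langle\tau_1\rangle)$ survives, which also shows this subgroup is maximal. This bookkeeping is precisely what is encapsulated in \cite[Section~4]{Weigel}, which I would cite to shorten the verification.
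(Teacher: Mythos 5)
Your overall strategy coincides with the paper's: reduce to showing that $N_G(T)$, $T=\langle\tau_1\rangle$, is the unique maximal overgroup of the cyclic self-centralizing torus $T$, and then work through the classification of maximal subgroups of $\E_8(q)$ from \cite[Theorem 8]{Lies} family by family, using \cite{LSS} for the positive-dimensional case. Your treatment of the subfield subgroups is a genuinely different (and valid) route: you factor $\Phi_{15}(q)=\Phi_{15}(q_0)\Phi_{30}(q_0)$ for $q=q_0^2$ and derive a contradiction from the self-centralizing $\Phi_{15}(q_0)$-torus of $\E_8(q_0)$, whereas the paper simply invokes \cite[Proposition 2.4]{Lubeck} to bound every maximal torus of $\E_8(q_0)$ by $(q_0+1)^8<\Phi_{15}(q)$, which is shorter and avoids the coprimality discussion entirely.

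There is, however, a genuine gap in your dismissal of the $\mathcal S$-family. You rule out the almost simple maximal subgroups ``because their orders are too small to admit an element of order $\Phi_{15}(q)$,'' but for the subgroups of Lie type in the defining characteristic this is false as a matter of magnitude. The classification in \cite{Lies} allows $F^*(M)\cong \A_1(q_0)$, ${}^2\B_2(q_0)$ or ${}^2\G_2(q_0)$ with $q_0=p^b$ a large power of $p$ (bounded only by a constant times $t(\E_8)$), and such groups contain cyclic maximal tori of order $q_0\pm 1$ or $q_0\pm\sqrt{pq_0}+1$ that can be comparable in size to $\Phi_{15}(q)\approx q^8$; a pure order comparison therefore does not eliminate them. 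What is actually needed — and what the paper's proof supplies — is the arithmetic step: take $z\in T$ of order $\zeta_{15a,p}$, show $z$ must lie in $F^*(M)$ and cannot lie in a split torus, and then use Lemma~\ref{gcd} and Theorem~\ref{Zig} to show that $\zeta_{15a,p}\mid q_0+1$ forces $15a\mid 2b$, hence $q_0\geq q^{15/2}$, which contradicts the bound on $q_0$; the Suzuki/Ree tori are excluded by comparing $q_0\pm\sqrt{pq_0}+1$ with $\Phi_{15}(q)$ directly. A similar (if milder) issue affects the rank-$\le 4$ groups over $q_0\le 9$, where the paper must still chase the small values $q\in\{2,3,4\}$ and exclude, e.g., $\SL_5(8)$ inside $\E_8(2)$ by an element-order computation. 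Your fallback citation of \cite[Section 4]{Weigel} does not obviously close this: the paper cites Weigel only for the $\Phi_{30}(q)$-torus in Lemma~\ref{e8x} and proves the $\Phi_{15}(q)$ case from scratch, so you cannot assume the $\Phi_{15}$ analysis is already in the literature.
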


\begin{proof} Set $T = \langle \tau_1\rangle$ and $M$ be a maximal subgroup of $G$ containing $T$. Then $T$ is a maximal torus of $G$. In particular, $C_G(T)=T$.  Let $\ov G$ be a simple algebraic group of type $\E_8$ and $\sigma $ be a Frobenius automorphism of $\ov G$ such that $G= \ov G_\sigma$ is the fixed points of $\sigma$ in $\ov G$.
In \cite[Theorem 8]{Lies} they  split up the maximal subgroups of $G$ into various classes. The first of these are the subgroups which arise as $M_\sigma$ where $M$ is a maximal $\sigma$-stable closed subgroup of $\ov G$. If $T \le M_\sigma$ with $M$ as just described, then   $M_\sigma= N_G(T)$ by \cite{LSS}. The second possibility is that $T \le M$ where $M$ has the same type is $G$. However, we would then have that $M= \E_8(q_0)$ with $\GF(q_0)$ a subfield of $\GF(q)$ and $T$ would be a maximal torus of $M$. Thus, by \cite[Proposition 2.4]{Lubeck}, $$|T| \le (q_0+1)^8 ,$$which is impossible. The next two possibilities are that $M$ is an exotic local subgroup or that $M\cong (\Alt(5)\times \Alt(6)):2^2$. Both of these are ruled out as they do not have cyclic subgroups as large as $T$. The fifth possibility is that $F^*(M)$ is simple and that the isomorphism type of $M$ is as detailed in \cite[Table 2]{Lies}. Now note that the prime divisors of $\Phi_{15}(q)$ are all congruent $1$ mod $15$. The simple groups listed in the first 4 sections of \cite[Table 2]{Lies} have all prime divisors less than $31$. For the other groups, we note that $\Phi_{15}(2) =151$ and $\Phi_{15}(q)\ge 4561$ and this comment eliminates all the remaining groups in the table. It now follows that $F^*(M)=M(q_0)$ is Lie type group defined in characteristic $p$. Here the situation is that either \begin{enumerate} \item $q_0 \le 9$ and $M(q_0)$ has rank at most $4$; \item $M (q_0)= \A_2^\epsilon(16)$; or \item $M(q_0) \cong \A_1(q_0)$, ${}^2\B_2(q_0)$ or ${}^2\G_2(q_0)$. \end{enumerate}
In the first of these cases we have that $T \le N_G(M)$ and $|N_G(M)/M|\le 5.6.3=90$  where the $5$ is the maximum contribution of diagonal automorphisms, the $6$ is the maximal possible size of a group of graph automorphisms and the $3$ is the maximum contribution from field automorphisms. Hence $|T \cap F^*(M)|\ge\Phi_{15}(q)/90$.  On the other hand, $|T\cap F^*(M)| \le (q_0+1)^4 \le 10000$ which means that $|T| \le 900000$. This shows that $q \le  7$. For $q=5 $ or $ 7$, we have $q_0=5$ and so Zsigmondy's theorem delivers a contradiction. So $q= 2,3$ or $4$.
 If $q=2$, then $|T| = 151$ is prime, if $q=3$, then $|T|= 4561$ is also prime and if $q=4$, then $|T|= 151.331$. In the first case we get that $q_0= 4,8$ and in the second case we have $q_0 = 9$ and in the last case we have $q_0=8$. Noting that all rank $4$ groups have over a field of order $r$ have order dividing $|\F_4(r)|$ or $|\SL_5(r)|$. We  check that the only possibility is that $q=2$ and $F^*(M) = \SL_5(8)$ (which does have order dividing the order of $\E_8(2)$). However, $\SL_5(8)$ contains an element of order $151.31$ whereas $\E_8(2)$ does not. Thus cases in  (i)  are ruled out.

Obviously case (ii) is not a serious candidate for an overgroup of $T$. So assume that we are in case (iii). Suppose that $F^*(M)= M(q_0)$. Let $z \in T$ be an element of order $\zeta_{15a,p}$. If $|M(q_0)|$ is coprime to $o(z)$, then $z$ induces a field automorphism of $F^*(M)$. This means that $z$ normalizes a $p$-subgroup of $G$ and consequently $o(z)$ divides the order of a parabolic subgroup of $G$, which is a contradiction. Therefore $F^*(M)$ contains a cyclic subgroup of the same order as $o(z)$. Since $G$ contains a unique conjugacy class of such subgroups, we have that $z \in F^*(M)$. Since $z$ does not normalize a $p$-subgroup, $o(z)$ does not divide $q_0-1$.  The centralizers of semisimple elements contained in $T$ appear in the first tranche of subgroups in \cite{Lies}, we see that in fact the $C_{F^*(M)}(z) \le T$.  Assume that $|C_{F^*(M)}(z)|$ divides $q_0+1$. Then $(q_0+1)/\gcd(2,p-1)$ divides $\Phi_{15}(q)$. In particular, $q_0 \le q^8$. Since $\zeta_{15a,p}$ divides $(q_0+1)/2$, we use Lemma~\ref{gcd}(i) to get that $p^{15a}-1$ divides $p^{2b}-1$ where $q_0= p^b$. If $a$ is odd, this mean that $q_0 \ge p^{15a}= q^{15}$ which is impossible. Hence $a$ is even and we have $q_0 = q^{15a/2}$ and $q_0= q^{1/2}$. Because  $\Phi_{15}(q)$ is odd, we now have $(q_0+1)/\gcd(2,p-1)$ is odd. Since $\gcd(q^{1/2}+1,\Phi_{15}(q))= 1$, we have that $q^{15/2}+1$ does not divide $\Phi_{15}(q)$. This contradiction shows that $|C_{F^*(M)}(z)|$ does not divide $q_0+1$. Therefore $F^*(M)$ is either a Suzuki group or   a Ree group. Now we have that $o(z)$ divides $q_0\pm \sqrt {pq_0} +1$. Then $T \le C_{M}(C_{F^*(M)}(z))$ which is equal to one of the tori in $F^*(M)$ of order
$q_0\pm \sqrt {pq_0} +1$.  Hence $q_0\pm \sqrt {pq_0} +1= \Phi_{15}(q)=q^8-q^7+q^5-q^4+q^3-q+1$. This implies that $q= \sqrt {pq_0}$ and then $(q-1)q^7 > q_0$ which is absurd.

\end{proof}

\begin{lemma}\label{e8y}
There exist a hyperbolic triple in $(y_1,y_2,y_3) \in D_1\times D_1 \times D_2$.
\end{lemma}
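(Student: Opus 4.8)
The plan is to mirror the structure of Lemma~\ref{e8x}, now using the elements $\tau_1$ of order $\Phi_{15}(q)$ and $\tau_2$ of order $\Phi_7(q)$ whose classes are $D_1$ and $D_2$. First I would record that $\tau_1$ is a regular semisimple element: its order $\Phi_{15}(q)$ corresponds to a maximal torus $T=\langle\tau_1\rangle$ with $C_G(T)=T$ (this is exactly the content used in Lemma~\ref{UniqueMax}), so $C_1=D_1$ consists of regular semisimple elements, while $D_2$ consists of semisimple elements. Gow's Theorem~\ref{G} then furnishes $(y_1,y_2,y_3)\in D_1\times D_1\times D_2$ with $y_1y_2y_3=1$, so that $y_1y_2=y_3^{-1}\in D_2$ after adjusting representatives. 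It remains to verify the generation condition, namely $\langle y_1,y_2\rangle=G$, together with the hyperbolicity inequality on the orders.

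For generation I would invoke Lemma~\ref{UniqueMax}, which tells us that $N_G(\langle\tau_1\rangle)$ is the \emph{unique} maximal subgroup of $G$ containing $\tau_1$. Since $y_1\in D_1=\tau_1^G$, the only maximal subgroup containing $y_1$ is a conjugate of $N_G(\langle\tau_1\rangle)$, and by the uniqueness I may assume it is $N_G(\langle\tau_1\rangle)$ itself. From \cite{LSS} the quotient $N_G(\langle\tau_1\rangle)/\langle\tau_1\rangle$ is cyclic, acting as the group of Frobenius roots of unity on the torus, so its order divides $30$ (the order of the relevant Weyl-group element). Hence any element of $N_G(\langle\tau_1\rangle)$ has order dividing $|\tau_1|\cdot 30$, and an element of $D_2$ would have to lie in this normalizer only if its order were compatible. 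The decisive point is that $\gcd(\Phi_{15}(q),\Phi_7(q))=1$ by Lemma~\ref{cyclo} (parts (i) and (iii)), and the order $\Phi_7(q)$ of $y_3$ exceeds the cyclic-quotient bound, so $D_2\cap N_G(\langle\tau_1\rangle)=\emptyset$. Therefore $y_3\notin N_G(\langle\tau_1\rangle)$, the subgroup $\langle y_1,y_2\rangle=\langle y_1,y_3\rangle$ is not contained in the unique maximal overgroup of $y_1$, and consequently $\langle y_1,y_2\rangle=G$.

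The remaining verification is the hyperbolicity inequality $1/o(y_1)+1/o(y_2)+1/o(y_3)<1$ of Definition~\ref{maindef}(ii). This is immediate since all three orders $\Phi_{15}(q)$, $\Phi_{15}(q)$, $\Phi_7(q)$ are large (at least $\Phi_7(2)=127$ and $\Phi_{15}(2)=151$), so each reciprocal is far below $1/3$. Thus $(y_1,y_2,y_3)$ is a hyperbolic triple, proving the lemma.

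The main obstacle, as in Lemma~\ref{e8x}, is entirely front-loaded into Lemma~\ref{UniqueMax}: establishing that $N_G(\langle\tau_1\rangle)$ is the unique maximal overgroup of $\tau_1$ is where the real work lies, and here I am entitled to assume it. Given that, the present argument is a short and direct application of Gow's Theorem together with the coprimality of the two cyclotomic values; I expect no essential difficulty beyond checking the arithmetic that $o(y_3)$ genuinely cannot be absorbed into the cyclic quotient of the normalizing torus.
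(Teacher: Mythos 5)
Your proposal is correct and follows essentially the same route as the paper: apply Gow's Theorem to the regular semisimple class $D_1$ and the semisimple class $D_2$, then use Lemma~\ref{UniqueMax} together with the facts that $N_G(\langle y_1\rangle)/\langle y_1\rangle$ is cyclic of order $30$, that $o(y_3)>30$, and that $\gcd(\Phi_{15}(q),\Phi_7(q))=1$ to conclude $D_2\cap N_G(\langle y_1\rangle)=\emptyset$ and hence $G=\langle y_1,y_3\rangle$. The extra remarks on the hyperbolicity inequality are harmless and consistent with what the paper leaves implicit.
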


\begin{proof} As the class $D_1$ consists of regular semisimple elements and $D_2$ consists of semisimple
elements, Gow's Theorem gives a triple $ (y_1,y_2,y_3) \in D_1\times D_1 \times D_2$ with $y_1y_2y_3=1$. By Lemma~\ref{UniqueMax},
 $N_G(\langle y_1\rangle ) $ is the unique maximal subgroup containing $\langle y_1\rangle$ and by \cite[Table~5.2]{LSS}, $N_G( \langle y_1\rangle)/\langle y_1\rangle$ is cyclic of order $30$.
Clearly $D_2 \cap N_G(\langle y_1\rangle ) = \emptyset$ as $o(y_3) >  30$ and $\gcd(o(y_1),o(y_3)) = 1$. Thus
$G = \langle y_1,y_3\rangle $ and the result follows.\end{proof}

\begin{theorem} \label{beauE8}  $G= \E_8(q)$ is a Beauville group.
\end{theorem}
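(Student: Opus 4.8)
The plan is to assemble the required Beauville structure directly from the two hyperbolic triples already produced in Lemmas~\ref{e8x} and~\ref{e8y}. The first is a hyperbolic triple $(x_1,x_2,x_3)\in C_1\times C_1\times C_2$ of type $(\Phi_{30}(q),\Phi_{30}(q),\Phi_{14}(q))$, and the second is a hyperbolic triple $(y_1,y_2,y_3)\in D_1\times D_1\times D_2$ of type $(\Phi_{15}(q),\Phi_{15}(q),\Phi_7(q))$. Both triples already satisfy conditions (i) and (ii) of Definition~\ref{maindef} by construction, so the only thing left to verify is condition (iii). For this I would invoke the observation recorded in the introduction that (iii) holds automatically whenever the product of the three element orders in the first triple is coprime to the product of the three element orders in the second; concretely it suffices to prove
$$\gcd\bigl(\Phi_{30}(q)\Phi_{14}(q),\,\Phi_{15}(q)\Phi_7(q)\bigr)=1.$$

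To establish this coprimality I would appeal to Lemma~\ref{cyclo}(iii): if a prime $r$ divides both $\Phi_b(q)$ and $\Phi_c(q)$ with $b<c$, then $b$ divides $c$, the quotient $c/b$ is a power of $r$, and $b=r^dm_0$ with $d$ a positive integer. Running through the four relevant pairs of indices, namely $\{30,14\}$ against $\{15,7\}$, one checks that each is excluded. For $(b,c)=(15,30)$ and $(b,c)=(7,14)$ the quotient $c/b=2$ forces $r=2$, whence $b$ would have to be even, contradicting that both $15$ and $7$ are odd. For the remaining pairs $(7,30)$ and $(14,15)$ the required divisibility $b\mid c$ already fails. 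Hence no single prime divides two of the cyclotomic values $\Phi_{30}(q)$, $\Phi_{14}(q)$, $\Phi_{15}(q)$, $\Phi_7(q)$ that come from \emph{different} triples, and the displayed greatest common divisor equals $1$.

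With coprimality in hand, no non-identity power of any of $x_1,x_2,x_3$ can be conjugate in $G$ to a power of any of $y_1,y_2,y_3$, since two such elements would have orders sharing a common prime divisor. Thus condition (iii) of Definition~\ref{maindef} holds, and the two triples of Lemmas~\ref{e8x} and~\ref{e8y} together constitute an unmixed Beauville structure for $G=\E_8(q)$. I do not anticipate any serious obstacle at this stage: the genuinely hard work was already completed in Lemmas~\ref{UniqueMax}, \ref{e8x} and~\ref{e8y}, where the maximal subgroups of $G$ containing the relevant tori were pinned down using \cite{LSS}, \cite{Lies} and \cite{Weigel}. The only small point meriting attention is that all four cyclotomic values be large enough for the triples to be hyperbolic, that is for property (ii) to hold, but this is already guaranteed by the statements of Lemmas~\ref{e8x} and~\ref{e8y} and is easily confirmed even in the smallest case $q=2$.
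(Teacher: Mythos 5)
Your proposal is correct and follows essentially the same route as the paper: both combine the triples of Lemmas~\ref{e8x} and~\ref{e8y} and reduce condition (iii) of Definition~\ref{maindef} to the pairwise coprimality of $\Phi_{30}(q),\Phi_{14}(q)$ with $\Phi_{15}(q),\Phi_7(q)$, which you justify via Lemma~\ref{cyclo}(iii) exactly as intended (the paper merely asserts the gcd fact, with what appears to be a typo in its index set $b\in\{15,30\}$ where $\{14,30\}$ is meant).
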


\begin{proof} Lemmas \ref{e8x} and \ref{e8y} provide two hyperbolic triples for $G$, whereas the fact that
$\gcd(\Phi_a(q),\Phi_b(q)) = 1$ whenever $a \in \{7, 15\}$ and $b \in \{ 15 , 30\}$ gives the non-conjugacy requirement. Hence $G$ is a Beauville group.\end{proof}

\subsection{Exceptional groups of type $\E_7$}

 Let $G = {\rm E}_7(q)$ be the universal group of type $E_7(q)$. Thus $Z(G)$ has order $(p-1,2)$. Let $\sigma_1 \in G$ of order $ \Phi_{18}(q)$, $\sigma_2 \in G$  of order $(q^7+1)/\gcd(q-1,2)$, $\tau_1 \in G$ be an element of order $ \Phi_{9}(q)$ and  $\tau_2 \in G$ of order $(q^7-1)/\gcd(q-1,2)$.
For $i=1,2$, define conjugacy classes $C_i =\sigma_i^G$ and $D_i = \tau_i^G$.

\begin{lemma} \label{maxE7}  Suppose that $M$ is a maximal subgroup of $G$.
\begin{enumerate}
%\item If $M \cap C_1$  is non-empty, then $F^*(M)$ is ${}^2\E_6(q)$.
% \item If $M \cap D_1$ is non-empty and $M$, then either $M$ is an $\E_6(q)$ parabolic subgroup or $F^*(M)$ is $\E_6(q)$.
\item If  $M \cap C_2$ is non-empty, then either $F^*(M) = \SL_2(q^7)$ or $\SL^\epsilon_ 8(q)/\gcd (2,q-1)$.
\item If  $M \cap D_2$ is non-empty, then either  $M$ is  an $\A_6(q)$-parabolic subgroup, $F^*(M) = \SL_2(q^7)$ or  $\SL_8(q)/\gcd (2,q-1)$.

\end{enumerate}
\end{lemma}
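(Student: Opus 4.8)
The plan is to run the putative element $x \in M$ through the classification of the maximal subgroups of $G = E_7(q)$ due to Liebeck and Seitz \cite[Theorem 8]{Lies}, and to use the presence of a large primitive prime divisor to discard all but a handful of the possible types for $M$. Suppose first that $M \cap C_2 \neq \emptyset$ and fix a semisimple $x \in M$ of order $(q^7+1)/\gcd(q-1,2)$. Since $q^7+1 = (q+1)\Phi_{14}(q)$, the cyclic group $\langle x\rangle$ contains an element $z$ whose order is a Zsigmondy prime $r$ dividing $q^{14}-1$ but no $q^k-1$ with $k<14$; such a prime exists for every $q$ by Theorem~\ref{Zig}, the exponent $14$ lying outside the exceptional cases. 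Thus $\mathrm{ord}_r(q)=14$, so $r\mid\Phi_{14}(q)$ and $r\equiv 1\pmod{14}$. The case $M\cap D_2\neq\emptyset$ I would treat in parallel, taking $x$ of order $(q^7-1)/\gcd(q-1,2)$ and extracting a Zsigmondy prime $r$ dividing $\Phi_7(q)=(q^7-1)/(q-1)$, so that $\mathrm{ord}_r(q)=7$.

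Most of the classes of \cite[Theorem 8]{Lies} fall out from $r\mid|M|$. A subfield subgroup $E_7(q_0)$ is impossible: $r\mid|E_7(q_0)|$ would force $14$ (resp.\ $7$) to divide $\mathrm{ord}_r(q_0)$, but the invariant degrees of $E_7$ are $2,6,8,10,12,14,18$, and as $q$ is a proper power of $q_0$ this contradicts $\mathrm{ord}_r(q)=14$ (resp.\ $7$) via Lemma~\ref{gcd}. The exotic local subgroups and the almost simple subgroups whose socle is drawn from the fixed list of \cite[Table 2]{Lies} have order with prime divisors bounded independently of $q$, so they cannot accommodate $r$ once $q$ exceeds a small bound, and the finitely many remaining pairs are checked directly. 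For the class with $F^*(M)=M(q_0)$ of Lie type in characteristic $p$ and bounded rank, I would argue exactly as in Lemma~\ref{UniqueMax}: if $r\nmid|F^*(M)|$ then $z$ induces a nontrivial field or graph-field automorphism of $F^*(M)$ and hence normalizes a nontrivial $p$-subgroup of $G$, forcing $r$ to divide the order of a parabolic, which is absurd; therefore $z\in F^*(M)$, and the torus of $M(q_0)$ containing $z$ has order divisible by $\Phi_{14}(q)$ (resp.\ $\Phi_7(q)$). Feeding this into the torus-order bounds of \cite[Proposition 2.4]{Lubeck} together with the Zsigmondy constraints pins $M(q_0)$ down to $A_1(q^7)=\SL_2(q^7)$.

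It then remains to treat the parabolic and reductive maximal-rank subgroups. A semisimple $x$ lying in a parabolic $P=UL$ may be conjugated into a Levi complement $L$, and since $E_7(q)$ is split, the simple factors of $L$ are untwisted groups over $\GF(q)$ whose orders involve only factors $q^m-1$ with $m\le 7$. As $\mathrm{ord}_r(q)=14$, we have $r\nmid q^m-1$ for all such $m$, so no parabolic meets $C_2$; for $D_2$, where $\mathrm{ord}_r(q)=7$, the only Levi type whose order is divisible by $r$ is the one carrying an $A_6$ factor (a $\GL_7(q)$ Singer cycle), which yields the $A_6$-parabolic of~(ii). Among the reductive maximal-rank subgroups listed in \cite{LSS}, inspection of the invariant factors of each subsystem shows that $\Phi_{14}(q)$ divides the order only of the twisted $A_7$-subgroup $\SU_8(q)/\gcd(q-1,2)$ (through its factor $q^7+1$), while $\Phi_7(q)$ divides the order only of the untwisted $A_7$-subgroup $\SL_8(q)/\gcd(q-1,2)$ (through $q^7-1$); thus the $A_7$-type survivor in~(i) is necessarily unitary. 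Finally the normalizer of the $\Phi_{14}$- (resp.\ $\Phi_7$-) torus is not maximal, being contained in the field-extension subgroup $\SL_2(q^7)$, so it contributes no new type. Assembling these cases gives (i) and (ii).

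The genuinely delicate step I expect to be the Lie-type class $F^*(M)=M(q_0)$: ruling out every bounded-rank group of Lie type except $A_1(q^7)$, and simultaneously confirming that the relevant torus normalizer is non-maximal, is where the careful order and centralizer bookkeeping in the style of Lemma~\ref{UniqueMax}, together with Lübeck's torus bounds, is indispensable.
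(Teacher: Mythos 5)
Your overall architecture coincides with the paper's: both proofs run the element through the Liebeck--Seitz classification, dispose of the exotic local and non-defining-characteristic classes by order considerations, read the surviving reductive maximal-rank subgroups ($\SL^\epsilon_8(q)/\gcd(2,q-1)$ and $\SL_2(q^7)$) out of the tables of \cite{LSS}, and treat the bounded-rank defining-characteristic class with the field-automorphism/torus argument of Lemma~\ref{UniqueMax}. However, one step of your plan fails as written: the elimination of the subfield subgroups $\E_7(q_0)$, $q=q_0^k$, by means of the Zsigmondy prime alone. Since $14$ is one of the invariant degrees of the Weyl group of type $\E_7$, the factor $q_0^{14}-1$ divides $|\E_7(q_0)|$; if $\gcd(k,14)=1$ and $r$ divides $\Phi_{14}(q_0)$, then $\mathrm{ord}_r(q)=\mathrm{ord}_r(q_0^k)=14$, so $r$ is simultaneously a Zsigmondy prime for $\langle q,14\rangle$ and a divisor of $|\E_7(q_0)|$. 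Concretely, $43=\Phi_{14}(2)$ divides $|\E_7(2)|$ and has $\mathrm{ord}_{43}(8)=14$, so your appeal to Lemma~\ref{gcd} yields no contradiction for $\E_7(2)\le \E_7(8)$. The same phenomenon occurs for $\Phi_7$ in case (ii). The repair is the one the paper uses for $\E_8$ in Lemma~\ref{UniqueMax}: work with the full cyclic subgroup $T=\langle x\rangle$ of order $(q^7\pm 1)/\gcd(2,q-1)$ rather than just $r$, and compare with the bound $|T|\le (q_0+1)^7$ on maximal torus orders of $\E_7(q_0)$ from \cite[Proposition 2.4]{Lubeck}, which is violated for every $k\ge 2$.

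Two smaller remarks. First, in the bounded-rank defining-characteristic class your claim that the torus of $M(q_0)$ containing $z$ ``has order divisible by $\Phi_{14}(q)$'' needs the intermediate step $T\le C_M(C_{F^*(M)}(z))$ (as in Lemma~\ref{UniqueMax}), and the Liebeck--Seitz bound $q_0\le \gcd(2,p-1)\cdot 388$ means $\A_1(q^7)$ can only occur here for $q=2$; for larger $q$ the subgroup $\SL_2(q^7)$ arises instead as the twisted $(\A_1)^7$ maximal-rank subgroup in \cite[Table 5.1]{LSS}, which is where the paper finds it. Second, your sharper observation that the $\A_7$-type survivor in (i) must be unitary (since $\Phi_{14}(q)\nmid|\SL_8(q)|$) is correct and consistent with, though stronger than, the statement of the lemma.
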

\begin{proof} Let $G = \E_7(q)$, and $M$ be a maximal subgroup of $G$ containing either $\sigma_2$ or $\tau _2$. If $M$ is a parabolic subgroup, then $M$ is as described in (ii).  Note that any closed subgroup  of the algebraic group $\E_7$ contains $\sigma_2$ or $\tau_2$ is of positive rank. Referring to \cite[Theorem 8(b)]{Lies},  we  consult the list in \cite[Table 5.1]{LSS} to see that once again (i) and (ii) hold.  The groups listed in \cite[Theorem 8(c)]{Lies} are not divisible by $q^7\pm 1$ and the same is true for the groups in \cite[Table~3]{Lies}.

There are no exotic local subgroups in $\E_7(q)$.

Suppose  that $F^*(M)$ is simple group. If $F^*(M)$ is not  a Lie type group in characteristic $p$, then we note that the automorphism groups of the groups listed in \cite[Table~2]{Lies} do not have elements of order $(q^7\pm 1)/\gcd(p-1,2)$. Hence $F^*(M)$ is a Lie pe group defined in characteristic $p$.

We work through the three possibilities itemized in \cite[Theorem 8]{Lies}.
Assume first that the rank of $M$ is at most $3$ and that $M$ is defined over $q_0$ which is at most $9$. These groups are all too small to contain elements of order $(q^7\pm 1)/\gcd(p-1,2)$.
It is also impossible that $F^*(M)=\A_2^\epsilon(16)$.

Assume that $F^*(M) = \A_1(q_0)$, ${}^2\B_2(q_0)$, or ${}^2\G_2(q_0)$ with $q_0 \le \gcd (2,p-1)388$.  Then we have $q_0= p^b$ where $b \le 8$.

Let $z$ be a power of $\sigma_2$ or $\tau_2$ element of order a $\lambda_{7a,p}$ or $\lambda_{14a,p}$. Then $o(z)> 7$ by Theorem~\ref{LZ}. Hence $z \in F^*(M)$. Writing $q=p^a$, we have $7a$ divides $2b$ by Lemma~\ref{gcd} and Theorem~\ref{Zig}. Thus $7$ divides $b$ and consequently $b=7$ and $p=2$. Since $4^7\pm 1$  exceeds the order of elements in $\A_1(2^7)$ we must have $q=2$ and again (ii) holds.
If  $F^*(M)$ is a Suzuki or a Ree group, then the bound on $q_0$ implies that either $q_0= 2^3$ or $2^5$, $2^7$  or $q_0=3, 3^3$ or $3^5$ respectively.  Considering element orders gives us that $F^*(M) = {}^2\B_2(2^7)$ and $q=2$. However this group has order divisible by $113$ whereas $\E_7(2)$ does not.
This proves the lemma.
\end{proof}

\begin{corollary}  \label{maxesE7} Suppose that $M$ is a subgroup of $G$.
\begin{enumerate}
\item If both $M \cap C_1$ and $M \cap C_2$ are non-empty, then $M = G$.
\item If both $M \cap D_1$ and $M \cap D_2$ are non-empty, then $M = G$.
\end{enumerate}
\end{corollary}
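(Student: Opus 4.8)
The plan is to reduce at once to maximal subgroups and then eliminate every candidate furnished by Lemma~\ref{maxE7} via a single divisibility obstruction. Suppose, in either part, that $M \neq G$. Then $M$ lies in a maximal subgroup $M'$ of $G$, and $M'$ inherits both intersection hypotheses. In part (i), as $M' \cap C_2 \neq \emptyset$, Lemma~\ref{maxE7}(i) forces $F^*(M')$ to be $\SL_2(q^7)$ or $\SL_8^\epsilon(q)/\gcd(2,q-1)$; in part (ii), as $M' \cap D_2 \neq \emptyset$, Lemma~\ref{maxE7}(ii) allows in addition that $M'$ is an $\A_6(q)$-parabolic. It then suffices to show that no such $M'$ can also meet $C_1$ (respectively $D_1$), which I would do by producing a prime dividing the order of the first element but not $|M'|$.

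The arithmetic input is one Zsigmondy prime in each part. Put $r = \zeta_{18a,p}$ and $s = \zeta_{9a,p}$; these exist since the exponents $18a$ and $9a$ exceed $6$ and are unequal to $2$, so the exceptional cases of Theorem~\ref{Zig} do not arise. I would first record that $r \mid \Phi_{18}(q) = o(\sigma_1)$ and $s \mid \Phi_9(q) = q^6+q^3+1 = o(\tau_1)$: indeed a primitive prime divisor of $p^{18a}-1$ divides $q^{18}-1 = \prod_{d\mid 18}\Phi_d(q)$ and, being primitive, cannot divide $\Phi_d(q)$ for a proper divisor $d$ of $18$ (it would then divide $p^{da}-1$ with $da < 18a$), hence it divides $\Phi_{18}(q)$; the same argument with $9$ in place of $18$ gives $s \mid \Phi_9(q)$. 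The defining property $r \mid p^k-1 \iff 18a \mid k$ (and likewise $s \mid p^k-1 \iff 9a \mid k$), together with Lemma~\ref{gcd}, is all I need.

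Next I would verify case by case that neither $r$ nor $s$ divides $|M'|$. For $F^*(M') = \SL_2(q^7)$ the semisimple order is $p^{7a}(p^{14a}-1)$, and since $18a \nmid 14a$ and $9a \nmid 14a$, neither $r$ nor $s$ divides it; for $F^*(M') = \SL_8^\epsilon(q)/\gcd$ the relevant factors are $q^i \mp 1$ with $2 \le i \le 8$, and neither $9$ nor $18$ divides any such $i$ (in the unitary case one also checks $r,s \nmid q^i+1$, as this would force $9 \mid i$); for the $\A_6(q)$-parabolic the Levi has semisimple part $\SL_7(q)$ with factors $q^i-1$, $i \le 7$, a rank-one central torus of order dividing $q-1$, and a $p$-group unipotent radical, so $s \nmid |M'|$ directly since a parabolic is self-normalizing. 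In the two almost simple cases the quotient $M'/F^*(M')$ embeds in an outer automorphism group whose order is a product of diagonal, field and graph contributions; as $r,s$ are odd primes with $r > 18a$ and $s > 9a$, while these contributions have odd part dividing $a$ (or the prime $7$ coming from the field automorphisms of $\SL_2(q^7)$, which is far smaller than $r,s$), neither $r$ nor $s$ divides $|M'/F^*(M')|$. Hence $r \nmid |M'|$ in part (i) and $s \nmid |M'|$ in part (ii).

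Finally, $\sigma_1 \in M' \cap C_1$ would force $r \mid |M'|$, and $\tau_1 \in M' \cap D_1$ would force $s \mid |M'|$, each contradicting the previous step. So no proper $M'$ can meet both classes, and $M = G$ in both parts. The main obstacle is the bookkeeping of the third paragraph: one must make sure that the outer-automorphism contributions stay coprime to the Zsigmondy primes, so that $r,s \nmid |M'|$ survives the passage from $F^*(M')$ to the full $M'$, and one must confirm that the Levi of the $\A_6(q)$-parabolic really involves only $q^i-1$ with $i \le 7$ (not an $A_7$-factor). Both points reduce to the elementary observation that $r$ and $s$ are odd primes strictly larger than $a$.
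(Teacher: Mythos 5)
Your proposal is correct and follows essentially the same route as the paper: reduce to the maximal subgroups enumerated in Lemma~\ref{maxE7} and then observe that their orders are not divisible by $\Phi_{18}(q)$ (respectively $\Phi_9(q)$), so they cannot meet $C_1$ (respectively $D_1$). The paper states this divisibility obstruction in one line, whereas you justify it via the Zsigmondy primes $\zeta_{18a,p}$ and $\zeta_{9a,p}$; this is simply the standard verification of the same fact.
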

\begin{proof} We note that  the groups listed in the conclusions of Lemma~\ref{maxE7} are not divisible by $\Phi_{18}(q)$ or $\Phi_9(q)$.
Our claims  follow.
\end{proof}

\begin{lemma}\label{e7x}There exist a hyperbolic triple for $G$ in $C_2\times C_2 \times C_1$.
\end{lemma}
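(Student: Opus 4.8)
The plan is to reproduce the pattern of Lemmas~\ref{e8x} and \ref{e8y}, replacing the unique-maximal-overgroup step by the generation criterion of Corollary~\ref{maxesE7}. First I would record the two facts needed to invoke Gow's Theorem~\ref{G}. The class $C_2$ consists of regular semisimple elements: an element of order $(q^7+1)/\gcd(q-1,2)$ lies in a cyclic maximal torus of order $\Phi_2(q)\Phi_{14}(q)/\gcd(q-1,2)$ (note $q^7+1=\Phi_2(q)\Phi_{14}(q)$ and $\deg\Phi_2+\deg\Phi_{14}=7$), whose structure is exactly what underlies the overgroup analysis of Lemma~\ref{maxE7}; in particular its centralizer is a torus and so has order prime to $p$. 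The representative $\sigma_1$ is a non-central semisimple element, since $\Phi_{18}(q)=q^6-q^3+1$ is coprime to $p$ and, for every prime power $q$, satisfies $\Phi_{18}(q)\ge\Phi_{18}(2)=57>2\ge|Z(G)|$.

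With these in hand I would apply Theorem~\ref{G} with $R_1=R_2=C_2$ and with $s:=\sigma_1^{-1}$, obtaining $x_1,x_2\in C_2$ such that $x_1x_2=\sigma_1^{-1}$. Setting $x_3:=\sigma_1\in C_1$ gives $x_1x_2x_3=1$, so $(x_1,x_2,x_3)\in C_2\times C_2\times C_1$ is a candidate triple. It then remains to verify the two defining conditions of a hyperbolic triple. For generation, put $H=\langle x_1,x_2,x_3\rangle=\langle x_1,x_2\rangle$, the last equality because $x_3=(x_1x_2)^{-1}$. Then $H$ meets $C_2$ (it contains $x_1$) and meets $C_1$ (it contains $x_3$), so Corollary~\ref{maxesE7}(i) forces $H=G$. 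The hyperbolicity inequality
$$\frac{1}{o(x_1)}+\frac{1}{o(x_2)}+\frac{1}{o(x_3)}<1$$
is immediate, since each of the three orders is at least $57$.

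The argument is entirely formal once Corollary~\ref{maxesE7} is available, and I do not expect a genuine obstacle. The one point deserving attention is the verification that the elements of $C_2$ are regular semisimple, so that Gow's Theorem applies with $C_2$ in both slots; this is, however, already implicit in the proof of Lemma~\ref{maxE7}, where the overgroups of a $C_2$-element are pinned down through its cyclic torus. Everything else reduces to the coprimality and order bounds recorded above, so the only real work in this lemma has been pushed into the classification of maximal overgroups carried out in Lemma~\ref{maxE7} and its corollary.
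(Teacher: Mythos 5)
Your proposal is correct and follows essentially the same route as the paper: apply Gow's Theorem with both regular semisimple slots taken from $C_2$ to produce a triple with product $1$, then invoke Corollary~\ref{maxesE7}(i) to conclude generation. The extra details you supply (regular semisimplicity of $C_2$, non-centrality of $\sigma_1$, and the hyperbolicity inequality) are all consistent with what the paper leaves implicit.
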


\begin{proof} As the class $C_2$ consists of regular semisimple elements and $C_1$ consists of semisimple
elements, Gow's Theorem shows that there is a triple in $C_2\times C_2 \times C_1$ with product 1. Corollary \ref{maxesE7} then gives
$G = \langle x_1,x_3\rangle $ and our claim follows.\end{proof}

\begin{lemma}\label{e7y}There exist a hyperbolic triple for $G$ in $D_2\times D_2 \times D_1$.
\end{lemma}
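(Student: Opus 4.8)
The plan is to imitate the proof of Lemma~\ref{e7x} with the classes $C_1$, $C_2$ replaced throughout by $D_1$, $D_2$; almost all of the work has already been done in Lemma~\ref{maxE7} and Corollary~\ref{maxesE7}. First I would record the two input facts needed to invoke Gow's Theorem. The element $\tau_2$ has order $(q^7-1)/\gcd(q-1,2)$, which is coprime to $p$ and equals the order of a maximal torus of $G$ (the torus sitting inside the subsystem subgroup $\SL_8(q)/\gcd(2,q-1)$ appearing in Lemma~\ref{maxE7}); hence $C_G(\tau_2)$ is that torus, its order is prime to $p$, and $D_2=\tau_2^G$ is a class of \emph{regular} semisimple elements, exactly as for $C_2$. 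The element $\tau_1$ has order $\Phi_9(q)$, which is prime to $p$ and strictly larger than $|Z(G)|\le 2$, so $D_1=\tau_1^G$ is a class of non-central semisimple elements.

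With these in hand I would apply Gow's Theorem~\ref{G}: taking $R_1=R_2=D_2$ and the non-central semisimple element $s=y_3^{-1}$ for a fixed $y_3\in D_1$, the theorem produces $y_1,y_2\in D_2$ with $y_1y_2=y_3^{-1}$, that is, a triple $(y_1,y_2,y_3)\in D_2\times D_2\times D_1$ with $y_1y_2y_3=1$.

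Finally I would verify generation and hyperbolicity. Put $M=\langle y_1,y_2\rangle=\langle y_1,y_3\rangle$. Then $y_1\in M\cap D_2$ and $y_3\in M\cap D_1$, so both intersections are non-empty and Corollary~\ref{maxesE7}(ii) forces $M=G$. Since each of the three element orders is at least $\min(\Phi_9(q),(q^7-1)/\gcd(q-1,2))\ge 73$, the reciprocal sum $1/o(y_1)+1/o(y_2)+1/o(y_3)$ lies well below $1$, so $(y_1,y_2,y_3)$ is a hyperbolic triple for $G$, as required.

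The only genuinely new point compared with Lemma~\ref{e7x} is confirming that $D_2$ is a class of regular semisimple elements, but this is immediate once one observes that the order of $\tau_2$ matches that of a maximal torus, so I expect no real obstacle here. Everything else is a direct transcription of the $C$-case, with Corollary~\ref{maxesE7}(ii) rather than part (i) supplying the generation step.
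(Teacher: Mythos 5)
Your proposal is correct and follows essentially the same route as the paper: Gow's Theorem applied with $R_1=R_2=D_2$ (regular semisimple, since $\tau_2$ generates a maximal torus) and $s$ a non-central semisimple element of $D_1$, followed by Corollary~\ref{maxesE7}(ii) to force generation. The extra details you supply (why $D_2$ is regular semisimple, and the explicit check that the element orders make the triple hyperbolic) are correct and merely make explicit what the paper leaves implicit.
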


\begin{proof} As the class $D_2$ consists of regular semisimple elements and $D_1$ consists of semisimple
elements, Gow's Theorem guarantees that there is a triple in  $D_2\times D_2 \times D_1$ with product $1$. Now Corollary \ref{maxesE7} gives
$G = \langle y_1,y_3\rangle $. This proves the lemma.
\end{proof}

\begin{theorem} \label{beauE7}  Let  $G=\E_7(q)$. Then $G$ and $G/Z(G)$ are Beauville groups.
\end{theorem}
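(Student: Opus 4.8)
The plan is to combine Lemmas~\ref{e7x} and~\ref{e7y}, which already provide two triples $(x_1,x_2,x_3)\in C_2\times C_2\times C_1$ and $(y_1,y_2,y_3)\in D_2\times D_2\times D_1$ that generate $G$ and satisfy conditions (i) and (ii) of Definition~\ref{maindef}. So conditions (i) and (ii) are in hand, and the two things left to establish are the non-conjugacy condition (iii) for the pair of triples in $G$, and then the transfer of the whole Beauville structure down to $G/Z(G)$. The orders occurring are $(q^7+1)/\gcd(q-1,2)$ (twice) and $\Phi_{18}(q)$ for the first triple, and $(q^7-1)/\gcd(q-1,2)$ (twice) and $\Phi_9(q)$ for the second.

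For condition (iii) I would invoke the observation following Definition~\ref{maindef}: it is enough to show that the product $l_1m_1n_1$ of the first triple's orders is coprime to the product $l_2m_2n_2$ of the second's. Thus the heart of the argument is a coprimality computation. I would factor $q^7+1=\Phi_2(q)\Phi_{14}(q)$ and $q^7-1=\Phi_1(q)\Phi_7(q)$, so that the prime divisors of $l_1m_1n_1$ lie among those of $\Phi_2(q),\Phi_{14}(q),\Phi_{18}(q)$ and those of $l_2m_2n_2$ among $\Phi_1(q),\Phi_7(q),\Phi_9(q)$. For each cross pair of indices drawn from $\{2,14,18\}$ and $\{1,7,9\}$ I would apply Lemma~\ref{cyclo}(iii): a prime dividing $\Phi_b(q)$ and $\Phi_c(q)$ with $b<c$ forces $b\mid c$ with $c/b$ a prime power and $b=r^dm_0$, $m_0\mid r-1$; checking the relevant pairs (for instance $\gcd(\Phi_9,\Phi_{18})$ would force $r=2$ and $9=2^d$, which is impossible, and in all other cross pairs $b\nmid c$) shows no prime divides two of these cyclotomic values.

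The one genuinely delicate point, and the main obstacle, is the possible common factor $2$ between $\Phi_1(q)=q-1$ and $\Phi_2(q)=q+1$ when $q$ is odd; this is precisely what the normalisation $\gcd(q-1,2)$ is designed to kill. I would handle it directly rather than through the $\Phi_1,\Phi_2$ bookkeeping: by Lemma~\ref{gcd}(ii), $\gcd(q^7-1,q^7+1)=1+(q\bmod 2)$, and writing $q^7+1=2u$, $q^7-1=2v$ in the odd case gives $u-v=1$, so $\gcd(u,v)=1$; hence $(q^7+1)/\gcd(q-1,2)$ and $(q^7-1)/\gcd(q-1,2)$ are coprime. Since $\Phi_9(q)=q^6+q^3+1$ and $\Phi_{18}(q)=q^6-q^3+1$ are always odd, the remaining cyclotomic cross-gcds are genuinely $1$ by Lemma~\ref{cyclo}(iii) with no factor of $2$ to worry about. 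Combining these facts yields $\gcd(l_1m_1n_1,\,l_2m_2n_2)=1$, so condition (iii) holds and the two triples form a Beauville structure for $G$.

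Finally I would pass to $G/Z(G)$. Since $|Z(G)|=\gcd(q-1,2)\le 2$, each element order can drop by at most a factor of $2$ under the quotient map; as $\Phi_9(q)\ge 73$, $\Phi_{18}(q)\ge 57$ and $(q^7\pm1)/\gcd(q-1,2)\ge 63$, all six image orders remain well above $2$, so condition (ii) persists, and the images still generate $G/Z(G)$, giving (i). Coprimality is inherited by divisors, so the images of the two triples again have coprime order-products, and condition (iii) survives in the quotient. Therefore both $G=\E_7(q)$ and $G/Z(G)$ are Beauville groups.
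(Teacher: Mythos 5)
Your proposal is correct and follows essentially the same route as the paper: both take the triples from Lemmas~\ref{e7x} and~\ref{e7y} and verify condition (iii) by factoring $q^7\pm1$ into cyclotomic values and applying Lemma~\ref{cyclo}(iii) to the cross pairs, with the $\gcd(q-1,2)$ normalisation disposing of the only possible common prime $2$. Your write-up merely fills in details the paper leaves implicit (the consecutive-integer argument for $\gcd(u,v)=1$ and the explicit descent to $G/Z(G)$), so there is nothing substantive to flag.
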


\begin{proof}  We have  $\sigma_1$ and $\tau_1$ have coprime orders by Lemma~\ref{cyclo}(iii). Also  $\gcd ((q^7+1)/\gcd(p-1,2), (q^7-1)/\gcd(p-1,2))=1$ so $\sigma_2$ and $\tau_2$ have coprime orders.  Noting that $q^7-1 = \Phi_1(q)\Phi_7(q)$ and $q^7+1 = \Phi_2(q)\Phi_{14}(q)$, we use Lemma~\ref{cyclo} (iii) again to see that $\sigma_1$ and $\tau_2$ and $\sigma_2$ and $\tau_1$ have coprime orders. Thus taking the hyperbolic triples provided by Lemmas \ref{e7x} and \ref{e7y}, we see that the non-conjugacy requirement condition for a Beauville system is satisfied. Therefore $G$ and $G/Z(G)$ are Beauville groups.
\end{proof}

\subsection{Exceptional groups of type $\E_6$}

In this subsection, we let $G= \E_6(q)$ be the universal group of type $\E_6(q)$. Thus $Z(G)$ has order $\gcd(q-1,3)$.  We let $\sigma_1$ be an element of order $\Phi_9(q)/\gcd(q-1,3)$  (which has order coprime to $3$) and $\sigma_2$ have order $\Phi_4(q)$ (seen for example in the subgroups of $G$ isomorphic to $\SL_3(q^3)$ and $\Spin_8^+(q)$ respectively).  For $i=1,2$, set $C_i =\sigma_i^G$.
 Since $G$ has a subgroup of shape ${}^3\mathrm D_4(q) \times (q^2+q+1)$,  $G$ has an element $\tau_1$ of order $\Phi_3(q) \Phi_{12}(q)$
 as $\Phi_3(q)$ and $\Phi_{12}(q)$ are coprime by Lemma~\ref{cyclo} (iii).  We then choose $\tau_2 \in G$ of order $\Phi_5(q)$ (contained in $\Spin_{10}^+(q)$) and, for $i=1,2$, define $D_i = \tau_i^G$.

\begin{lemma} \label{maxE6}  Suppose that $M$ is a maximal subgroup of $G$.
\begin{enumerate}
\item If $M \cap C_1$ is non-empty, then $F^*(M) \cong \SL_3(q^3)$.
\item If  $M \cap D_1$ is non-empty, then $M \cong (^3\mathrm D_4(q) \times (q^2+q+1)).3$.
\end{enumerate}
\end{lemma}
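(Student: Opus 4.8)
The plan is to argue exactly as in Lemmas~\ref{UniqueMax} and \ref{maxE7}, sorting the maximal subgroups of $G$ according to the classes of \cite[Theorem~8]{Lies} and using the arithmetic of $\Phi_9(q)$ and $\Phi_{12}(q)$ to kill every candidate but the one asserted. Write $\ov G$ for a simple algebraic group of type $\E_6$ with Frobenius endomorphism $\sigma$ so that $G = \ov G_\sigma$. The two facts that drive the proof are: every prime divisor of $\Phi_9(q)/\gcd(q-1,3)=o(\sigma_1)$ is congruent to $1$ modulo $9$ (so is at least $19$), and every prime divisor of $\Phi_{12}(q)$ is congruent to $1$ modulo $12$ (so is at least $13$); moreover both $\Phi_9$ and $\Phi_3\Phi_{12}$ have degree $6=\rk\E_6$, so in each part the relevant element generates (a power generating) a maximal torus and is regular semisimple in $G$.

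First I would remove the parabolic subgroups. A primitive prime divisor $r$ of $\Phi_9(q)$ (respectively of $\Phi_{12}(q)$) divides $q^m-1$ only when $9\mid m$ (respectively $12\mid m$); since the Weyl group degrees of the Levi factor of any proper parabolic of $G$ come from a root subsystem of rank at most $5$ and so never reach a multiple of $9$ or of $12$, no proper Levi has order divisible by $r$. As $r$ is coprime to $p$ and $\sigma_1,\tau_1$ are semisimple, neither lies in a proper parabolic. Turning to the positive-dimensional subgroups of \cite[Theorem~8(b)]{Lies}, I would consult the maximal-rank list in \cite[Table~5.1]{LSS}: the only reductive maximal-rank subgroup of order divisible by $\Phi_9(q)$ is $\SL_3(q^3).3$ (whose Singer torus has order exactly $(q^9-1)/(q^3-1)=\Phi_9(q)$), and the only one of order divisible by $\Phi_3(q)\Phi_{12}(q)$ is $({}^3\mathrm D_4(q)\times(q^2+q+1)).3$ (since $\Phi_{12}(q)$ divides $q^8+q^4+1$, a factor of $|{}^3\mathrm D_4(q)|$, while the complementary $(q^2+q+1)$ supplies $\Phi_3(q)$); for every other entry one of the two cyclotomic factors fails to divide the order, by Lemma~\ref{cyclo}. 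The remaining positive-dimensional maximal subgroups of \cite[Theorem~8(c)]{Lies} and \cite[Table~3]{Lies} (types such as $\F_4(q)$, $C_4(q)$, $\A_2(q)\G_2(q)$ and $\G_2(q)$) have orders built from degrees among $2,3,4,6,8,12$ and are divisible by neither $\Phi_9(q)$ nor simultaneously by $\Phi_3(q)$ and $\Phi_{12}(q)$; the exotic local subgroups of $\E_6(q)$ are discarded at once because their prime divisors are bounded, whereas $\sigma_1$ and $\tau_1$ force a prime divisor $\equiv 1 \pmod 9$, respectively $\pmod{12}$.

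This leaves the almost simple $M$ with $F^*(M)$ simple. If $F^*(M)$ is not of Lie type in characteristic $p$, then as in Lemma~\ref{UniqueMax} the groups of \cite[Table~2]{Lies} have no automorphism of order divisible by a prime $\equiv 1 \pmod 9$ (respectively $\pmod{12}$) of the required size, and since $\Phi_9(2)=73$ and $\Phi_{12}(2)=13$ are already such primes and the values only grow, all entries are eliminated. Hence $F^*(M)=M(q_0)$ is of Lie type in characteristic $p$, and I would run through the three possibilities of \cite[Theorem~8]{Lies}: the bounded-rank groups over $\GF(q_0)$ with $q_0\le 9$, together with $\A_2^\epsilon(16)$, are too small to contain a cyclic torus as large as $\langle\sigma_1\rangle$ or $\langle\tau_1\rangle$ (bounding $|T|\le (q_0+1)^{\rk M}$ via \cite[Proposition~2.4]{Lubeck}); and for the rank-one families $\A_1(q_0)$, ${}^2\B_2(q_0)$, ${}^2\G_2(q_0)$ one takes $z$ a power of $\sigma_1$ (respectively $\tau_1$) of order a Zsigmondy prime for $\langle 9a,p\rangle$ (respectively $\langle 12a,p\rangle$), argues $z\in F^*(M)$, and deduces from Lemma~\ref{gcd} and Theorem~\ref{Zig} a divisibility $9a\mid 2b$ (respectively $12a\mid 2b$) with $q_0=p^b$ that, against the bound on $q_0$, leaves only tiny cases such as $\SL_2(64)\le\E_6(2)$, which are ruled out by observing that no torus of the candidate group has order divisible by the whole of $o(\sigma_1)$ or $o(\tau_1)$.

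I expect this final step, the removal of the rank-one groups of Lie type in characteristic $p$, to be the main obstacle, exactly as in Lemma~\ref{UniqueMax}: it needs the Zsigmondy congruences, the field-size bound from \cite{Lies}, and a careful verification (using Lemma~\ref{gcd}) that the Suzuki and Ree torus orders $q_0\pm\sqrt{pq_0}+1$ can never equal $\Phi_9(q)$ or $\Phi_3(q)\Phi_{12}(q)$. Once every competitor is gone, part~(i) gives $F^*(M)\cong\SL_3(q^3)$ and part~(ii) gives $M\cong({}^3\mathrm D_4(q)\times(q^2+q+1)).3$, as required.
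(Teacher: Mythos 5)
Your overall strategy is the paper's: run through the Liebeck--Seitz classes of maximal subgroups and kill each candidate with the arithmetic of $\Phi_9(q)$ and $\Phi_{12}(q)$, using Gow-style regular semisimple elements only later. But there are two concrete gaps. First, you never address the subfield subgroups of the same type as $G$, that is, $M$ with $F^*(M)\cong \E_6(q_0)$ or ${}^2\E_6(q_0)$ over a proper subfield; these form a separate class in \cite[Theorem~8]{Lies} and are not among the ``three possibilities'' you enumerate for the Lie-type-in-characteristic-$p$ case. They cannot be dismissed by divisibility alone: when $q$ is a square, $\Phi_9(q)=\Phi_9(q^{1/2})\Phi_{18}(q^{1/2})$, so ${}^2\E_6(q^{1/2})$ genuinely contains elements of order the Zsigmondy prime $\zeta_{9a,p}$. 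What eliminates this class is a comparison of cyclic torus orders, $|\langle\sigma_1\rangle|$ or $|\langle\tau_1\rangle|$ (of size roughly $q^6$) against the bound $3(q_0+1)^6$ from \cite[Proposition 2.4]{Lubeck}; this is exactly what the paper does, reducing to $q_0=2$, $q=4$, which is then removed using the primes $19$ and $241$.

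Second, your dismissal of the bounded-rank characteristic-$p$ subgroups as ``too small to contain a cyclic torus as large as $\langle\sigma_1\rangle$ or $\langle\tau_1\rangle$'' is false for small $q$: the bound $(q_0+1)^{\mathrm{rk}\,M}$ can be as large as $10^3$, whereas $\Phi_9(2)=73$, $\Phi_3(2)\Phi_{12}(2)=91$, $\Phi_9(3)=757$ and $\Phi_3(3)\Phi_{12}(3)=949$ are all smaller; indeed $\SL_3(8)$ has a Singer cycle of order exactly $73=(8^3-1)/(8-1)$. The paper instead uses the minimal projective representation dimension (at most $7$) of a group of rank at most $3$ to force $q<q_0$, reducing to $(q,q_0)\in\{(2,4),(2,8),(3,9)\}$, and then falls back on the Kleidman--Wilson determination of the maximal subgroups of $\E_6(2)$ for $q=2$ and an explicit element-order check in $\PSp_6(9)$ and $\POmega_7(9)$ for $q=3$. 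Without a substitute for these steps your argument does not close the cases $q\in\{2,3\}$. The rest of your outline --- parabolics, the maximal-rank table giving $\SL_3(q^3).3$ and $({}^3\mathrm D_4(q)\times(q^2+q+1)).3$, exotic locals, the non-defining-characteristic groups of \cite[Table~2]{Lies}, and the rank-one families via Zsigmondy congruences --- matches the paper and is sound.
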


\begin{proof} We view $G=\E_6(q)$ as the fixed points of the appropriate endomorphism $\sigma$ of the algebraic group $\ov G$. Again we use \cite[Theorem 8]{Lies}.  We let $M$ be a maximal subgroup of $G$ and assume that $\sigma_1$ or $\tau _1 \in M$. If $M$ is the fixed points of an $\sigma$-stable subgroup of positive dimension in $G$, then, after noting that parabolic subgroups and the subgroups listed in \cite[Table~1]{Lies}  do not have elements of  orders $\Phi_9(q)/\gcd(q-1,2)$ or $\Phi_{12}(q)\Phi_3(q)$, we get  that $M$ is the fixed points of an $\sigma$-stable reductive, maximal rank subgroup of $\ov G$. Then we  apply \cite[Tables 5.1 and 5.2]{LSS}, to see that either (i) or (ii) holds.  If $M$ has the same type as $G$, then $F*(M) \cong \E_6(q_0)$ or ${}^2\E_6(q_0)$ where $\GF(q_0)$ is a proper subfield of $\GF(q)$. We adopt the argument from the proof of Theorem~\ref{UniqueMax} and note that a torus in $M$ has order at most $$(q-1)^6 < |T|\le 3(q_0+1)^6$$ where $T = \langle \sigma_1\rangle$ or $\langle \tau_1\rangle$. It follows that $q_0=2$ and $q=4$, Since $\Phi_9(q)/\gcd(q-1,3)$ is divisible by 19 whereas $|\E_6(2)|$ is not, we have a contradiction in the case that $T= \langle \sigma_1\rangle$.  Similarly, we eliminate $T = \langle \tau_1\rangle$ as $241$ divides $\Phi_{12}(4)$ but not $|\E_6(2)|$.

Since $\Phi_9(q)/\gcd(q-1,3) \ge \Phi_9(2)= 2^6+2^3+1= 73$ and $\Phi_{12}(q)\Phi_3(q) \ge \Phi_{12}(2)\Phi_3(2)=91$, we have that $M$ has cyclic subgroups of large order and furthermore Zsigmondy's Theorem~\ref{Zig} implies that $\Phi_9(q)$ is divisible by a prime greater than $10$ and $\Phi_{12}(q)$ by a prime at least $13$. These observations immediately show that $M$ is not an exotic local subgroup of $G$.

Suppose now that $F^*(M)$ is a simple group.
If $F^*(M)$ is not a Lie type group defined in characteristic $p$, then the possibilities for $F^*(M)$ are enumerated in Table~2 of \cite{Lies}. They all appear in the Atlas \cite{ATLAS} and it is elementary to check that they do not have element of sufficiently large order to be contenders for over groups of $\sigma_1$ or $\tau_1$.

Suppose that $F^*(M)$ is a Lie type group defined in characteristic $p$. We work through the three possibilities itemized in \cite{Lies}.
Assume first that the rank of $M$ is at most $3$ and that $M$ is defined over $q_0$ which is at most $9$. Then $F^*(M)$ has a projective representation of dimension at most $7$ over $\GF(q_0)$.  It follows that $q < q_0$ as Zsigmondy primes dividing $\Phi_9(q)$ or $\Phi_{12}(q)$ have no such projective representations for $q\ge q_0$  Hence $(q,q_0) = (2,4)$, $(2,8)$ or $(3,9)$.  If $q=2$, we confirm parts (i) and (ii) by looking at the maximal subgroups of $G$ listed  in \cite{KW}. So we have $(q,q_0) = (3,9)$. It is impossible for $\sigma_1 \in M$ as this requires a cubic extension of $\GF(3)$. If $\tau_1 \in M$, then  the smallest  projective representation of $M$ over $\GF(9)$ has dimension at least $6$, hence $F^*(M) \cong \PSp_6(9)$ or $\POmega_7(9)$. In these candidates for $M$, we have no elements of order $\Phi_3(3)\Phi_{12}(3)= 949$.

The next possibility is that $F^*(M)=\A_2^\epsilon(16)$. But such groups cannot contain either $\sigma_1$ or $\tau_1$.

Assume that $F^*(M) = \A_1(q_0)$, ${}^2\B_2(q_0)$, or ${}^2\G_2(q_0)$ with $q_0 \le \gcd (2,p-1)124$.  In particular, we have $q_0= p^b$ where $b \le 7$. Let $T = \langle \sigma_1 \rangle $ or $T=\langle \tau_1\rangle$. So
Let $z\in T$ be an element of order $\zeta_{9a,p}$ or $\zeta_{12a,p}$. Then $z$ has order at least $11$ and so $z \in F^*(M)$. Since $o(z)$ does not divide the order of a parabolic subgroup of $G$, we have $z$ does not lie in  split torus of $F^*(M)$. Suppose that $z$ is contained in a torus of $F^*(M)$ of order dividing $q_0+1$.
If $o(z)=\zeta_{9a,p}$, then $o(z)$ divides $q_0+1$ and $p^{9a}-1$ hence Lemma~\ref{gcd} (i) and Theorem~\ref{Zig} imply that $b \ge 9$ which is a contradiction.  So suppose that $z$ has order $\zeta_{12a,p}$. Then Lemma~\ref{gcd} (i) and Theorem~\ref{Zig} imply that $b$ is a multiple $6a$. Thus  $q_0 = q^6$ and  $q=2$. We now apply  \cite{KW} to get that $o(z)$ does not divide $q_0+1$. Thus $F^*(M)$ is a Suzuki or a Ree group. The bound on $q_0$ implies that either $q_0= 2^3$ or $2^5$ or $q_0=3, 3^3$ or $3^5$ respectively. For the four smallest groups, we refer to the Atlas \cite{ATLAS} to see that the element orders are incompatible. For $F^*(M) = {}^2\G_2(243)$, we easily get that $q=3$ and then note that $\Phi_9(3)$ and $\Phi_{12}(3)$ are coprime to $|\E_6(3)|$. This completes the proof of the lemma. \end{proof}

\begin{corollary}  \label{maxesE6} Suppose that $M$ a subgroup of $G$.
\begin{enumerate}
\item If both $M \cap C_1$ and $M \cap C_2$ are non-empty, then $M = G$.
\item If both $M \cap D_1$ and $M \cap D_2$ are non-empty, then $M = G$.
\end{enumerate}
\end{corollary}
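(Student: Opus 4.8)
The plan is to deduce this corollary from Lemma~\ref{maxE6} in exactly the way Corollary~\ref{maxesE7} follows from Lemma~\ref{maxE7}. Suppose $M \le G$ satisfies the hypothesis of (i) or (ii) but $M \ne G$. Then $M$ is contained in some maximal subgroup $\widehat M$ of $G$, and since $M \cap C_i \subseteq \widehat M \cap C_i$ and $M \cap D_i \subseteq \widehat M \cap D_i$, I may assume $M$ itself is maximal. The goal is then to show that the maximal overgroup forced by the first intersection cannot also meet the second class, by a cyclotomic divisibility argument.

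For (i): if $M \cap C_1 \ne \emptyset$, Lemma~\ref{maxE6}(i) gives $F^*(M) \cong \SL_3(q^3)$, so $M$ lies in the maximal-rank subgroup $N_G(\SL_3(q^3)) \cong \SL_3(q^3).3$ of \cite{LSS}; its order is $3\,q^9(q^6-1)(q^9-1)$. The hypothesis $M \cap C_2 \ne \emptyset$ would force $\Phi_4(q)=q^2+1$ to divide $|M|$. By Zsigmondy's Theorem~\ref{Zig} (whose exceptional cases do not arise for $n=4$) there is a prime $\ell \mid \Phi_4(q)$ whose multiplicative order modulo $\ell$ is exactly $4$; thus $\ell \equiv 1 \pmod 4$, so $\ell \ge 5$, and $\ell \mid q^m-1$ only when $4 \mid m$. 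Since $4 \nmid 6$ and $4 \nmid 9$, the prime $\ell$ divides neither $q^6-1$ nor $q^9-1$, and $\ell \ge 5$ does not divide the outer factor $3$. Hence $\Phi_4(q) \nmid |M|$, a contradiction, so $M = G$.

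For (ii): if $M \cap D_1 \ne \emptyset$, Lemma~\ref{maxE6}(ii) gives $M \cong ({}^3\mathrm D_4(q) \times (q^2+q+1)).3$. Here I would use $|{}^3\mathrm D_4(q)| = q^{12}(q^2-1)(q^6-1)(q^8+q^4+1)$ together with the factorisation $q^8+q^4+1 = \Phi_3(q)\Phi_6(q)\Phi_{12}(q)$ (Lemma~\ref{cyclo} and $\tfrac{q^{12}-1}{q^4-1}=\Phi_3\Phi_6\Phi_{12}$). The hypothesis $M \cap D_2 \ne \emptyset$ would force $\Phi_5(q)$ to divide $|M|$. A Zsigmondy prime $\ell$ for $\langle q,5\rangle$ (again no exceptional case for $n=5$) has order $5$ modulo $\ell$, so $\ell \equiv 1 \pmod 5$, $\ell \ge 11$, and $\ell \mid q^m-1$ only when $5 \mid m$. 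As $5$ divides none of $2$, $6$, $3$, $12$, the prime $\ell$ divides none of $q^2-1$, $q^6-1$, $\Phi_3\Phi_6\Phi_{12}$, or $q^2+q+1=\Phi_3(q)$, and $\ell \ge 11$ does not divide $3$. Thus $\Phi_5(q) \nmid |M|$, a contradiction, and $M = G$.

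The argument is essentially routine given Lemma~\ref{maxE6}; the only point requiring care—and the main (modest) obstacle—is verifying the two non-divisibilities $\Phi_4(q) \nmid |\SL_3(q^3).3|$ and $\Phi_5(q) \nmid |({}^3\mathrm D_4(q)\times(q^2+q+1)).3|$ uniformly in $q$. This amounts to the cyclotomic bookkeeping above, checking that the Zsigmondy prime of the relevant order survives for all $q$ (including $q=2$, where $\Phi_4(2)=5$ and $\Phi_5(2)=31$ already have primitive prime divisors) and that the small index factors ($3$, and any central contribution absorbed into $\gcd(q-1,3)$) cannot supply the missing prime. Once these are in hand, both parts follow at once, and the statement is the direct input needed for the $\E_6$ Beauville construction in the next lemmas.
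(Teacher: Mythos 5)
Your proposal is correct and follows the paper's own route: reduce to a maximal overgroup, invoke Lemma~\ref{maxE6} to pin it down as $\SL_3(q^3)$ (resp.\ $({}^3\mathrm D_4(q)\times(q^2+q+1)).3$), and observe that these groups have no elements of order $\Phi_4(q)$ (resp.\ $\Phi_5(q)$). The only difference is that you spell out, via Zsigmondy primes and the cyclotomic factorisations of the orders, the non-divisibility that the paper simply asserts; that bookkeeping is accurate (there are no Zsigmondy exceptions for $n=4,5$, and the primes $\ell\equiv 1\pmod 4$, resp.\ $\pmod 5$, cannot divide the small outer factors).
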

\begin{proof}  We note that $ M = (^3\mathrm D_4(q) \times (q^2+q+1)).3$ contains no elements of order $\Phi_{5}(q)$ and that
$\SL_3(q^3)$ contains no elements of order $\Phi_{4}(q)$. Our claims now follow from Lemma \ref{maxE6}
\end{proof}

\begin{lemma}\label{e6x}
There exist a hyperbolic triple for $G$ in $C_1\times C_1 \times C_2$.
\end{lemma}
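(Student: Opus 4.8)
The plan is to mimic exactly the template already used for Lemmas \ref{e8x}, \ref{e7x} and \ref{e7y}, since by this point all of the substantive work is packaged in Lemma \ref{maxE6} and Corollary \ref{maxesE6}. First I would record the two facts about the classes that make Gow's Theorem applicable. The element $\sigma_1$, of order $\Phi_9(q)/\gcd(q-1,3)$, generates a cyclic maximal torus of $G$ of this order (the $\Phi_9$-torus, visible inside the subgroup $\SL_3(q^3)$), and since $9$ gives a full-rank cyclotomic factor for $\E_6$, a generator of such a torus is regular semisimple; hence $C_1$ consists of regular semisimple elements. The element $\sigma_2$, of order $\Phi_4(q)=q^2+1$, is semisimple, and as $q^2+1$ is never divisible by $3=|Z(G)|$ it is non-central. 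With these observations, Theorem~\ref{G} (Gow) applied to the regular semisimple classes $C_1,C_1$ and the non-central semisimple class $C_2$ immediately yields a triple $(x_1,x_2,x_3)\in C_1\times C_1\times C_2$ with $x_1x_2x_3=1$.

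Next I would extract generation. Setting $M=\langle x_1,x_3\rangle$, the relation $x_1x_2x_3=1$ gives $x_2=x_1^{-1}x_3^{-1}\in M$, so $M$ meets $C_1$ (via $x_1$) and $C_2$ (via $x_3$); Corollary~\ref{maxesE6}(i) then forces $M=G$, and therefore $G=\langle x_1,x_2,x_3\rangle$ with $x_1x_2x_3=1$. This is condition (i) of Definition~\ref{maindef}. For condition (ii), the orders are $o(x_1)=o(x_2)=\Phi_9(q)/\gcd(q-1,3)$ and $o(x_3)=q^2+1$; since $\Phi_9(q)=q^6+q^3+1\ge 73$ and $q^2+1\ge 5$ for every prime power $q\ge 2$, we get $1/o(x_1)+1/o(x_2)+1/o(x_3)\le 2/73+1/5<1$, so $(x_1,x_2,x_3)$ is a hyperbolic triple, completing the proof.

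I do not expect a genuine obstacle here: the heavy lifting (the determination of the maximal overgroups of $\sigma_1$, and the verification that no proper subgroup can meet both $C_1$ and $C_2$) has already been done in Lemma~\ref{maxE6} and Corollary~\ref{maxesE6}. The only points that require a line of care are confirming that $C_1$ is indeed a class of \emph{regular} semisimple elements (so that Gow's Theorem applies in the first two coordinates) and being attentive to the role reversal compared with Lemma~\ref{e7x}: here it is $C_1$, not $C_2$, that plays the regular-semisimple part, which is precisely why the triple is sought in $C_1\times C_1\times C_2$.
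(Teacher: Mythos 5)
Your proposal is correct and follows exactly the paper's argument: apply Gow's Theorem to the regular semisimple class $C_1$ and the semisimple class $C_2$ to obtain a triple with product $1$, then invoke Corollary~\ref{maxesE6}(i) to conclude generation. The extra details you supply (regularity of $\sigma_1$, non-centrality of $\sigma_2$, and the explicit check of the hyperbolicity inequality) are sound and merely make explicit what the paper leaves implicit.
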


\begin{proof} As the class $C_1$ consists of regular semisimple elements and $C_2$ consists of semisimple
elements, Gow's Theorem guarantees that there exist $x_i$ with product $1$. Now Corollary \ref{maxesE6} gives
$G = \langle x_1,x_3\rangle $ and our claim follows.\end{proof}

\begin{lemma}\label{e6y}
There exist a hyperbolic triple for $G$ in $D_1\times D_1 \times D_2$.
\end{lemma}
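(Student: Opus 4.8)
The plan is to mirror exactly the argument of the companion Lemma~\ref{e6x}, replacing the classes $C_i$ by the classes $D_i$ and calling on part (ii) rather than part (i) of Corollary~\ref{maxesE6}.

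First I would check that the two ingredient classes have the right type. The element $\tau_1$ has order $\Phi_3(q)\Phi_{12}(q)$, and since $\Phi_3(q)$ and $\Phi_{12}(q)$ are coprime by Lemma~\ref{cyclo}(iii), $\tau_1$ generates a cyclic maximal torus of the maximal-rank subgroup ${}^3\mathrm D_4(q)\times(q^2+q+1)$ of $G$, the factor $\Phi_3(q)=q^2+q+1$ coming from the cyclic direct factor and the factor $\Phi_{12}(q)$ from ${}^3\mathrm D_4(q)$. Its centralizer is therefore this torus, whose order is coprime to $p$, so $D_1=\tau_1^G$ consists of regular semisimple elements. The element $\tau_2$, of order $\Phi_5(q)$, is a non-central semisimple element, so $D_2=\tau_2^G$ is a class of non-central semisimple elements.

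Now I would apply Gow's Theorem~\ref{G}: as $D_1$ consists of regular semisimple elements and $\tau_2$ is a non-central semisimple element, there exist $y_1,y_2\in D_1$ and $y_3\in D_2$ with $y_1y_2y_3=1$. Since $y_2=(y_1y_3)^{-1}$ we have $\langle y_1,y_2,y_3\rangle=\langle y_1,y_3\rangle$. Setting $M=\langle y_1,y_3\rangle$, we see that $M$ meets both $D_1$ and $D_2$ non-trivially, whence $M=G$ by Corollary~\ref{maxesE6}(ii). Finally, the bounds $o(y_1)=o(y_2)=\Phi_3(q)\Phi_{12}(q)\ge 91$ and $o(y_3)=\Phi_5(q)\ge 31$ make the inequality in Definition~\ref{maindef}(ii) immediate, so $(y_1,y_2,y_3)$ is the desired hyperbolic triple.

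The argument presents no genuine obstacle; it is a direct reuse of the $\E_6$ machinery already assembled. The only point requiring a moment's care is the verification that $\tau_1$ is regular semisimple, that is, that its centralizer is exactly the maximal torus it generates, and this follows from its order being the full product $\Phi_3(q)\Phi_{12}(q)$ attached to that torus. Everything else, namely the existence of the factorisation and the generation conclusion, is supplied wholesale by Gow's Theorem and Corollary~\ref{maxesE6}.
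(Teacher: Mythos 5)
Your proposal is correct and follows exactly the paper's route: Gow's Theorem applied to the regular semisimple class $D_1$ and the semisimple class $D_2$, followed by Corollary~\ref{maxesE6}(ii) to force generation. The extra details you supply (that $\tau_1$ is regular semisimple because its centralizer is the full torus $\Phi_{12}(q)\times(q^2+q+1)$ of order coprime to $p$, and the hyperbolicity inequality) are correct amplifications of what the paper leaves implicit.
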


\begin{proof} As the class $D_1$ consists of regular semisimple elements and $D_2$ consists of semisimple
elements, Gow's Theorem guarantees that there exist $y_i$ with product $1$. Now Corollary \ref{maxesE6} gives
$G = \langle y_1,y_3\rangle $ and our claim follows.\end{proof}

\begin{theorem} \label{beauE6} Let $G = \E_6(q)$. Then $G$ and $G/Z(G)$ are Beauville groups.
\end{theorem}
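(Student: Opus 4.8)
The plan is to follow verbatim the template already used for $\E_8(q)$ and $\E_7(q)$: the two hyperbolic triples furnished by Lemmas~\ref{e6x} and~\ref{e6y} will serve as the two halves of a Beauville structure, and the only thing remaining to verify is the non-conjugacy condition (iii) of Definition~\ref{maindef}. First I would record the element orders occurring in the two triples. The triple from Lemma~\ref{e6x} lies in $C_1\times C_1\times C_2$, so its members have orders dividing $\Phi_9(q)/\gcd(q-1,3)$ and $\Phi_4(q)$; the triple from Lemma~\ref{e6y} lies in $D_1\times D_1\times D_2$, so its members have orders dividing $\Phi_3(q)\Phi_{12}(q)$ and $\Phi_5(q)$. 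Both lemmas already guarantee generation, product $1$, and (since all these orders are large) the hyperbolicity condition (ii), so there is nothing further to check on each individual triple.

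The key step is to show that the product of the orders in the first triple is coprime to the product of the orders in the second. By the remark following Definition~\ref{maindef}, this coprimality immediately forces condition (iii), since then no non-trivial power of an element of the first triple can even have the same order as a non-trivial power of an element of the second, let alone be conjugate to it. To establish the coprimality I would invoke Lemma~\ref{cyclo}(iii): a prime $r$ dividing two distinct cyclotomic values $\Phi_b(q)$ and $\Phi_c(q)$ with $b<c$ forces $b\mid c$, with $c/b$ a power of $r$ and $b=r^dm_0$ where $m_0\mid r-1$. Running through the six pairs $(\Phi_9,\Phi_3)$, $(\Phi_9,\Phi_{12})$, $(\Phi_9,\Phi_5)$, $(\Phi_4,\Phi_3)$, $(\Phi_4,\Phi_{12})$ and $(\Phi_4,\Phi_5)$ shows that each shares no prime, with the single exception of $r=3$ for the pair $(\Phi_9,\Phi_3)$; but this prime has been deliberately removed by passing from $\Phi_9(q)$ to $\Phi_9(q)/\gcd(q-1,3)$, which the text already noted is coprime to $3$. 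Hence the two products are coprime and condition (iii) holds in $G$.

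Finally I would pass to the quotient $G/Z(G)$, where $|Z(G)|=\gcd(q-1,3)$. Since the order of the image of any element divides its order in $G$, the coprimality of the two products descends to the quotient; the images of the two triples still generate $G/Z(G)$ and still have product $1$, and condition (ii) is preserved because all the relevant orders remain large (for instance $\Phi_9(q)\ge 73$), so each reciprocal sum stays well below $1$. Thus $G/Z(G)$ is a Beauville group as well, and the theorem follows.

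The main obstacle here is not conceptual but bookkeeping: one must be certain that $3$ is the only possible common prime divisor among the cyclotomic factors on the two sides, and that it is genuinely excised. Concretely I would double-check that $\Phi_4(q)=q^2+1$ on the first side is never divisible by $3$ (which it is not, for every residue of $q$ modulo $3$), so that although $3$ may divide $\Phi_3(q)$ on the \emph{second} side, the entire first product is already $3$-free and the two sides cannot meet at $3$. Once this single prime is dispatched, the argument is identical to the $\E_7(q)$ case.
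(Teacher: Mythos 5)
Your proposal is correct and follows essentially the same route as the paper, whose proof of this theorem simply cites Lemma~\ref{cyclo} together with the fact that $\sigma_1$ has order $\Phi_9(q)/\gcd(q-1,3)$ to obtain the non-conjugacy condition; your expanded pairwise check of the cyclotomic factors, including the observation that $3\nmid\Phi_4(q)$ and that the potential common prime $3$ between $\Phi_3(q)$ and $\Phi_9(q)$ is excised by the division by $\gcd(q-1,3)$, is exactly the bookkeeping the paper leaves implicit.
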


\begin{proof} We take the hyperbolic triples for $G$ provided by Lemmas \ref{e6x} and \ref{e6y}. Since $\sigma_1$ has order $\Phi_9(q)/\gcd(q-1,3)$ Lemma~\ref{cyclo} implies that the non-conjugacy conditions for a Beauville system are satisfied.\end{proof}

\subsection{Exceptional groups of type ${}^2\E_6$}

We consider $G={}^2\E_6(q)$. This time $Z(G)$ has order $\gcd(q+1,3)$.  Let $\sigma_1 \in G$ of order $ \Phi_{18}(q)/\gcd(q+1,3)$, $\sigma_2 \in G$  of order $\Phi_{4}(q)$.
Define $C_i =(\sigma_i)^G$. Let $\tau_1 \in G$ be an element of order $ \Phi_{12}(q)\Phi_6(q)$, $\tau_2 \in G$ of order $\Phi_{10}(q)$  and define $D_i = (\tau_i)^G$.

\begin{lemma} \label{max2E6} Suppose that  $q \ge 3$ and $M$ is a maximal subgroup of $G$. Then the following are true.
\begin{enumerate}
\item If $M \cap C_1$ is non-empty, then $ M = (^3\mathrm D_4(q) \times (q^2-q+1)).3$.
\item If $M \cap D_1$ is non-empty, then $F^*(M)$ is $\SU_3(q^3)$.
\end{enumerate}
\end{lemma}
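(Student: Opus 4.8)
The plan is to transcribe the argument of Lemma~\ref{maxE6} to the twisted group, the only change being that every cyclotomic factor is replaced by its image under the substitution $q\mapsto -q$ forced by the graph twist. Concretely, I would realise $G={}^2\E_6(q)$ as the fixed points $\ov G_\sigma$ of a Steinberg endomorphism $\sigma$ of a simple algebraic group $\ov G$ of type $\E_6$, with $\sigma$ inducing the order-$2$ graph automorphism. The governing numerical facts are that $\Phi_{18}(q)=q^6-q^3+1$ is the order of the Coxeter-type torus of the extension-field subgroup $\SU_3(q^3)$ (since $q^9+1=\Phi_2(q)\Phi_6(q)\Phi_{18}(q)$), while $\Phi_{12}(q)\Phi_6(q)$ is the order of a maximal torus of $({}^3\mathrm{D}_4(q)\times(q^2-q+1)).3$. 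Both $\Phi_{18}(q)$ and $\Phi_{12}(q)$ carry Zsigmondy primes by Theorem~\ref{Zig}, so $\sigma_1$ and $\tau_1$ are regular semisimple elements generating maximal tori of rank $6$; the coprimality of these orders to the orders of the Levi factors and of the competing tori is supplied by Lemma~\ref{cyclo} and Lemma~\ref{gcd}.

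With this in hand I would run through the five classes of maximal subgroups listed in \cite[Theorem 8]{Lies}. Parabolic subgroups and the positive-dimensional subgroups of \cite[Table 1]{Lies} are excluded because a Zsigmondy prime dividing $\Phi_{18}(q)$ (respectively $\Phi_{12}(q)$) divides the order of no proper Levi factor. Among the reductive maximal-rank subgroups $M_\sigma$, consulting \cite[Tables 5.1 and 5.2]{LSS} shows that the only one containing the $\Phi_{18}(q)$-torus is $\SU_3(q^3)$, and the only one containing the $\Phi_{12}(q)\Phi_6(q)$-torus is $({}^3\mathrm{D}_4(q)\times(q^2-q+1)).3$. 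For a subgroup of the same type as $G$, namely $F^*(M)\cong{}^2\E_6(q_0)$ or $\E_6(q_0)$ over a proper subfield, the torus bound $|T|\le\gcd(q+1,3)(q_0+1)^6$ from \cite[Proposition 2.4]{Lubeck}, exactly as in the proof of Theorem~\ref{UniqueMax}, combines with Zsigmondy's theorem to give a contradiction once $q\ge 3$. Exotic local subgroups cannot contain a cyclic group as large as either torus, and the non-defining-characteristic simple groups of \cite[Table 2]{Lies} are ruled out because none of their automorphism groups has an element of order $\Phi_{18}(q)$ or $\Phi_{12}(q)\Phi_6(q)$.

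The remaining and most delicate case is $F^*(M)$ of Lie type in characteristic $p$, which I would dispatch along the three sub-cases of \cite[Theorem 8]{Lies} as in Theorem~\ref{UniqueMax}: for bounded-rank groups over $\GF(q_0)$ with $q_0\le 9$, a Zsigmondy prime for $\Phi_{18}(q)$ or $\Phi_{12}(q)$ would force a faithful projective representation in too large a dimension unless $q<q_0$, reducing matters to a short list of pairs $(q,q_0)$ checked directly; the case $F^*(M)=\A_2^\epsilon(16)$ admits neither element; and for $F^*(M)\cong\A_1(q_0)$, ${}^2\B_2(q_0)$ or ${}^2\G_2(q_0)$ one locates a primitive-prime-divisor element $z$, notes that it lies in no split torus (else its order would divide that of a parabolic), and contradicts the candidate torus orders $q_0\pm 1$ and $q_0\pm\sqrt{pq_0}+1$ via Lemma~\ref{gcd}(i) and Theorem~\ref{Zig}. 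I expect the Suzuki--Ree sub-case to be the principal obstacle, since there one must show directly that $q_0\pm\sqrt{pq_0}+1$ can equal neither $\Phi_{18}(q)$ nor $\Phi_{12}(q)\Phi_6(q)$; this, together with the genuinely exceptional field $q=2$ (where ${}^2\E_6(2)$ acquires extra maximal subgroups such as $\Fi_{22}$ and must be treated from the explicit maximal-subgroup list, which is why the hypothesis $q\ge 3$ is imposed), is where all the careful Zsigmondy arithmetic is concentrated.
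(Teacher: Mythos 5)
Your proposal follows essentially the same route as the paper's proof: the classification of maximal subgroups from \cite[Theorem 8]{Lies}, the maximal-rank tables of \cite{LSS} to isolate the two torus normalizers, and Zsigmondy/cyclotomic arithmetic (with the bounded-rank projective-representation bound and the rank-one/Suzuki--Ree torus analysis) to eliminate the subfield, exotic local, cross-characteristic and defining-characteristic alternatives, exactly as the paper does. One remark: you attach $\Phi_{18}(q)$ to $\SU_3(q^3)$ and $\Phi_{12}(q)\Phi_6(q)$ to $({}^3\mathrm D_4(q)\times(q^2-q+1)).3$, which is the mathematically correct assignment and the one used in Corollary~\ref{maxes2E6}, whereas the printed statement of the lemma interchanges the two conclusions.
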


\begin{proof}
We view $G={}^2\E_6(q)$ as the fixed points of the appropriate endomorphism $\sigma$ of the algebraic group $\ov G$ of type $E_6$. Let $M$ be a maximal subgroup of $G$ and assume that $T =\langle \sigma_1\rangle$ or $T= \langle \tau _1\rangle $ is contained in $M$. If $M$ is the fixed points of an $\sigma$-stable subgroup of positive dimension in $G$, then, after noting that parabolic subgroups and the subgroups listed in \cite[Table~1]{Lies}  do not have elements of  orders $\Phi_{18}(q)/\gcd(q+1,3)$ or $\Phi_{12}(q)\Phi_6(q)$, we get  that $M$ is the fixed points of an $\sigma$-stable reductive subgroup of positive rank in $\ov G$. Then we  apply \cite{LSS}, to see that either (i) or (ii) holds.   If $F^*(M) = {}^2\E_6(q_0)$ with $\GF(q_0)$ a proper subfield of $\GF(q)$,  then we note that the Zsigmondy primes appearing in $\Phi_{18}(q)$ and $\Phi_{12}(q)$ do not divide $|M|$.

We have that $\Phi_{18}(q)/\gcd(q+1,3) \ge \Phi_{18}(3)= 703$ and $\Phi_{12}(q)\Phi_6(q) \ge \Phi_{12}(3)\Phi_6(3)=511$. Hence the cyclic subgroups of $M$ have  large order. Furthermore Zsigmondy's Theorem implies that $\Phi_{18}(q)$ is divisible by a prime at $19$ and $\Phi_{12}(q)$ by a prime at least $13$. This information implies that $M$ is not an exotic local subgroup of $G$.

Suppose now that $F^*(M)$ is a simple group. If $F^*(M)$ is not a Lie type group defined in characteristic $p$, then the possibilities for $F^*(M)$ are listed in Table~2 of \cite{Lies}. They all appear in the Atlas \cite{ATLAS} and we see they do not have elements of sufficiently large order to be possibilities for overgroups of $T$.

So we suppose that $F^*(M)$ is a Lie type group defined in characteristic $p$. Again we work through the three possibilities itemized in \cite{Lies}.
Suppose first that the (untwisted) rank of $M$ is at most $3$ and that $M$ is defined over $q_0$ which is at most $9$. Then, as in the proof of Lemma~\ref{maxE6} we argue that $F^*(M)$ has a projective representation of dimension at most $7$ over $\GF(q_0)$.  Thus $q < q_0$. As $q \neq 2$, by hypothesis, we have    $(q,q_0)= (3,9)$.  It is now impossible for $\sigma_1 \in M$. If $\tau_1 \in M$, then the smallest  projective representation over $\GF(9)$ for $M$ has dimension at most $6$, hence $F^*(M) \cong \PSp_6(9)$ or $\POmega_7(9)$. In these candidates for $M$, we have no elements of order $\Phi_6(3)\Phi_{12}(3)= 511$.

As usual we cannot have  $F^*(M)=\A_2^\epsilon(16)$.

Assume that $F^*(M)$ is one of $\A_1(q_0)$, ${}^2\B_2(q_0)$, or ${}^2\G_2(q_0)$ with $q_0 \le \gcd (2,p-1)124$.  Then again $q_0= p^b$ where $b \le 7$.
Let $z\in T$ have order either $\zeta_{18a,p}$ or $\zeta_{12a,p}$. Then $z$ has order at least $13$ and so $z \in F^*(M)$. As before $z$ is not contained in a split torus or else it would be in a parabolic subgroup. Assume that $o(z)$ divides $q_0+1$. By considering the $\gcd(q_0+ 1, q^{12}-1)$ and noting that this number is divisible by $\zeta_{12a,p}$ or $\zeta_{18a,p}$, we get that $q_0= p^b$ where $b$ is a multiple $6a$ and $q=p^a$. This gives us a contradiction as $q>2$.

Thus $F^*(M)$ is a Suzuki or a Ree group. The bound on $q_0$ implies that either $q_0= 2^3$ or $2^5$ or $q_0=3, 3^3$ or $3^5$ respectively. Again the small groups are eliminated by looking in the  Atlas \cite{ATLAS}.  For $F^*(M) = {}^2\G_2(243)$, we easily get that $q=3$ and then we note that $\Phi_{18}(3)$ and $\Phi_{12}(3)$ are coprime to $|{}^2\E_6(3)|$.\end{proof}

\begin{corollary}  \label{maxes2E6} Suppose that $M$ a subgroup of $G$ and $q>2$.
\begin{enumerate}
\item If both $M \cap C_1$ and $M \cap C_2$ are non-empty, then $M = G$.
\item If both $M \cap D_1$ and $M \cap D_2$ are non-empty, then $M = G$.
\end{enumerate}
\end{corollary}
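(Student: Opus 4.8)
The plan is to mimic the structure of Corollary~\ref{maxesE6}: reduce to the maximal case, identify the candidate overgroup of $C_1$ (resp. $D_1$) via Lemma~\ref{max2E6}, and then exhibit an order obstruction preventing that overgroup from also meeting $C_2$ (resp. $D_2$). Concretely, if $M$ is a proper subgroup of $G$ meeting both classes, I would enlarge $M$ to a maximal subgroup $\widehat M<G$; since $M\cap C_i\subseteq\widehat M\cap C_i$ and likewise for the $D_i$, the hypotheses persist, so it suffices to treat $M$ maximal. The assumption $q>2$ is exactly what licenses the application of Lemma~\ref{max2E6}.

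For part (i), Lemma~\ref{max2E6}(i) forces $M=({}^3\mathrm D_4(q)\times(q^2-q+1)).3$, and I must show this group has no element of order $\Phi_4(q)=q^2+1$. Writing $|M|$ in cyclotomic form, from $q^8+q^4+1=\Phi_3(q)\Phi_6(q)\Phi_{12}(q)$ and $q^6-1=\Phi_1(q)\Phi_2(q)\Phi_3(q)\Phi_6(q)$ one obtains
\[
|{}^3\mathrm D_4(q)|=q^{12}\,\Phi_1(q)^2\Phi_2(q)^2\Phi_3(q)^2\Phi_6(q)^2\Phi_{12}(q),
\]
in which $\Phi_4$ visibly does not occur, the remaining factors of $|M|$ being $3$ and $q^2-q+1=\Phi_6(q)$. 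I would then take $\zeta=\zeta_{4a,p}$, a Zsigmondy prime for $\langle 4a,p\rangle$ (available since $q>2$ excludes the exceptional pairs of Theorem~\ref{Zig}); as the order of $q=p^a$ modulo $\zeta$ is $4$, Lemma~\ref{cyclo} shows $\zeta$ divides none of $\Phi_1,\Phi_2,\Phi_3,\Phi_6$, nor $3$ (since $\zeta\ge 4a+1\ge5$). Thus $\zeta\mid|M|$ would force $\zeta\mid\Phi_{12}(q)$; but Lemma~\ref{cyclo}(iii) applied to $4<12$ then makes $12/4=3$ a power of $\zeta$, i.e. $\zeta=3$, contradicting $\zeta\ge5$. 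Hence $\zeta\nmid|M|$, so $M$ has no element of order $\Phi_4(q)$, contradicting $M\cap C_2\neq\emptyset$; therefore $M=G$.

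For part (ii), Lemma~\ref{max2E6}(ii) gives $F^*(M)=\SU_3(q^3)$, and the goal is to rule out an element of order $\Phi_{10}(q)$. Using $|\SU_3(q^3)|=q^9(q^9+1)(q^6-1)$, I would take $\zeta=\zeta_{10a,p}$ (again available by Theorem~\ref{Zig} as $q>2$): the order of $p$ modulo $\zeta$ is $10a$, so the order of $q$ is $10$ and $\zeta\mid\Phi_{10}(q)$, while by Lemma~\ref{gcd}(i) and Lemma~\ref{div} one checks $\zeta$ divides neither $q^6-1=p^{6a}-1$ nor $q^9+1\mid p^{18a}-1$ (both $\gcd$'s reduce to $p^{2a}-1$, which $\zeta$ avoids). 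Thus $\zeta\nmid|\SU_3(q^3)|$. Since $\zeta\ge 10a+1$ is larger than $3$ and than $6a$, it is coprime to the order of the outer automorphism group of $\SU_3(q^3)$, which is $\gcd(3,q^3+1)$ times a divisor of $6a$; hence any $g\in M$ of order $\Phi_{10}(q)$ would have a power of order $\zeta$ lying in $F^*(M)=\SU_3(q^3)$, which is impossible. Therefore $M=G$.

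The routine part is the cyclotomic arithmetic, all supplied by Lemmas~\ref{gcd}, \ref{div} and \ref{cyclo}. The one genuinely delicate point I anticipate is the coincidence $12=4\cdot3$ in part (i): because $4\mid12$, a primitive prime divisor of $\Phi_4(q)$ could a priori reappear in $\Phi_{12}(q)$, and this must be excluded by observing via Lemma~\ref{cyclo}(iii) that the only offending prime is $3$, which the large Zsigmondy prime $\zeta_{4a,p}$ is not. The parallel subtlety in part (ii)—that an order-$\Phi_{10}(q)$ element might sit in the field-automorphism part of $M$ rather than in $F^*(M)$—is defused by the quantitative bound $\zeta_{10a,p}\ge10a+1>6a$.
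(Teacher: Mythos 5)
Your proof is correct and follows the same route as the paper: Lemma~\ref{max2E6} pins down the unique maximal overgroup in each case, and an order-divisibility obstruction (which the paper merely asserts and you verify carefully via Zsigmondy primes and the cyclotomic factorisations of $|{}^3\mathrm D_4(q)|$ and $|\SU_3(q^3)|$) rules out the second class. Note that the paper's one-line proof states the two non-divisibility claims with the roles of $\Phi_4(q)$ and $\Phi_{10}(q)$ transposed (an apparent carry-over from the $\E_6$ case, where the overgroups attached to $C_1$ and $D_1$ are the other way around); your version pairs them correctly with what the corollary actually requires.
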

\begin{proof} We note that $ M = (^3\rm D_4(q) \times (q^2-q+1)).3$ contains no elements of order $\Phi_{10}(q)$ and that
$\PSU_3(q^3)$ contains no elements of order $\Phi_{4}(q)$. Our claims now follow from Lemma \ref{max2E6}.\end{proof}

\begin{lemma}\label{2e6x}For $q> 2$, there are hyperbolic triples in $C_1 \times C_1 \times C_2$ and $D_1\times D_1\times D_2$.
\end{lemma}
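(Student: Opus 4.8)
The plan is to follow verbatim the template already used for $\E_8(q)$, $\E_7(q)$ and $\E_6(q)$ (compare Lemmas~\ref{e6x} and~\ref{e6y}): each of the two factorisations is produced by Gow's Theorem~\ref{G}, and Corollary~\ref{maxes2E6} is then used to promote ``two elements whose product inverts the third'' into a generating pair for $G$. All the substantive work --- the determination of the maximal overgroups of $\langle\sigma_1\rangle$ and $\langle\tau_1\rangle$ --- has already been done in Lemma~\ref{max2E6} and repackaged as Corollary~\ref{maxes2E6}, so the present lemma should reduce to a short formal deduction.

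For the triple in $C_1\times C_1\times C_2$ I would argue as follows. The element $\sigma_1$ has order $\Phi_{18}(q)/\gcd(q+1,3)$, which is coprime to $p$, so $C_1$ consists of semisimple elements and, exactly as in the earlier exceptional cases, it is a class of regular semisimple elements. The element $\sigma_2$ has order $\Phi_4(q)=q^2+1$, again coprime to $p$, and since $q^2+1>3\ge|Z(G)|$ it is non-central; thus both $C_2$ and its inverse class $C_2^{-1}$ consist of non-central semisimple elements. Applying Theorem~\ref{G} with $R_1=R_2=C_1$ and $s$ a representative of $C_2^{-1}$ produces $x_1,x_2\in C_1$ with $x_1x_2=s$, and setting $x_3=s^{-1}\in C_2$ gives a triple $(x_1,x_2,x_3)\in C_1\times C_1\times C_2$ with $x_1x_2x_3=1$. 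The subgroup $\langle x_1,x_3\rangle$ meets both $C_1$ and $C_2$, so Corollary~\ref{maxes2E6}(i) forces $\langle x_1,x_3\rangle=G$; as $x_1x_2x_3=1$, this subgroup equals $\langle x_1,x_2,x_3\rangle$. Since $1/o(x_1)+1/o(x_2)+1/o(x_3)$ is manifestly less than $1$ for $q>2$, condition~(ii) of Definition~\ref{maindef} holds and the triple is hyperbolic.

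The triple in $D_1\times D_1\times D_2$ is obtained by the identical argument: $\tau_1$, of order $\Phi_{12}(q)\Phi_6(q)$ (the two cyclotomic factors being coprime by Lemma~\ref{cyclo}(iii)), provides a class of regular semisimple elements; $\tau_2$, of order $\Phi_{10}(q)$, is non-central semisimple; and one invokes Corollary~\ref{maxes2E6}(ii) in place of part~(i). The hyperbolicity inequality is again immediate because $q>2$.

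I do not expect a genuine obstacle, precisely because the hard classification of overgroups has already been absorbed into Corollary~\ref{maxes2E6}. The only two points that deserve a line of care are the passage to the inverse class $C_2^{-1}$ (respectively $D_2^{-1}$), which is what lets Gow's Theorem return a product equal to the identity rather than to a prescribed element, and the standing hypothesis $q>2$, under which alone Corollary~\ref{maxes2E6} was proved; the case $q=2$ falls outside this lemma and would be treated separately.
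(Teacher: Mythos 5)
Your proposal is correct and follows essentially the same route as the paper, whose proof of this lemma is simply the observation that the argument of Lemmas~\ref{e6x} and~\ref{e6y} (Gow's Theorem to realise the product, then Corollary~\ref{maxes2E6} to force generation) carries over verbatim. Your extra remarks about passing to the inverse class and checking the hyperbolicity inequality are sound but are left implicit in the paper.
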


\begin{proof} This is just as in Lemmas~\ref{e6x} and \ref{e6y}.\end{proof}

\begin{theorem} \label{beau2E6} Let $G = {}^2{\rm E}_6(q)$. Then $G$ and $G/Z(G)$ are Beauville groups.
\end{theorem}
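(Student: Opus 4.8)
The plan is to follow verbatim the template already used for $\E_6$, $\E_7$ and $\E_8$. For $q>2$, Lemma~\ref{2e6x} already supplies two hyperbolic triples for $G$: one lying in $C_1\times C_1\times C_2$, of type $(\Phi_{18}(q)/g,\Phi_{18}(q)/g,\Phi_4(q))$ with $g=\gcd(q+1,3)$, and one lying in $D_1\times D_1\times D_2$, of type $(\Phi_{12}(q)\Phi_6(q),\Phi_{12}(q)\Phi_6(q),\Phi_{10}(q))$. Generation is therefore not in question, and all that remains is to verify condition~(iii) of Definition~\ref{maindef} for this pair. Invoking the observation recorded after Definition~\ref{maindef}, I would first reduce this to showing that the product of the three orders of the first triple is coprime to the product of the three orders of the second; since the order of any image in $G/Z(G)$ divides the corresponding order in $G$, such coprimality is automatically inherited by the quotient and settles the statement for $G/Z(G)$ at the same time.

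The coprimality is a finite check governed by Lemma~\ref{cyclo}(iii): a prime $r$ dividing both $\Phi_b(q)$ and $\Phi_c(q)$ with $b<c$ forces $b\mid c$ and $c/b$ a power of $r$, together with $b=r^dm_0$ and $m_0\mid r-1$. Running through the six pairs with $b\in\{18,4\}$ coming from the first triple and $c\in\{12,6,10\}$ from the second, the divisibility constraint eliminates the pairs $(4,6)$, $(4,10)$, $(12,18)$ and $(10,18)$ outright, while the constraint $b=r^dm_0$ with $m_0\mid r-1$ rules out $(4,12)$ (equivalently, $3\nmid q^2+1$ for every $q$). The one surviving case is $(6,18)$ with $r=3$, and I expect this prime $3$ to be the only delicate point of the whole argument.

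To dispose of it I would note that $3$ divides $\Phi_6(q)=q^2-q+1$ — equivalently $\Phi_{18}(q)=q^6-q^3+1$ — exactly when $q\equiv 2\pmod 3$, and that in that case, and only that case, $g=\gcd(q+1,3)=3$; checking that $3$ divides $\Phi_{18}(q)$ only to the first power (the intrinsic case $18=3^2\cdot 2$), the division by $g$ built into the order of $\sigma_1$ strips out this factor, while $\Phi_4(q)$ carries no factor of $3$. Hence $3$ does not divide the first product at all, and the two products are genuinely coprime; for $q\equiv 0,1\pmod 3$ the prime $3$ divides none of the relevant cyclotomic values and there is nothing to remove. Finally, the hypothesis $q>2$ in Lemma~\ref{2e6x} leaves only the single group ${}^2\E_6(2)$, whose simple quotient carries an exceptional Schur multiplier; I would defer it, together with its covers, to the computer-assisted treatment of Section~\ref{exceptionalsec}. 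Assembling the two triples for $q>2$ with the coprimality just established then shows that both $G$ and $G/Z(G)$ are Beauville groups.
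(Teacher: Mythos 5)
Your proposal is correct and follows essentially the same route as the paper: the paper's proof simply takes the two triples from Lemma~\ref{2e6x}, invokes Lemma~\ref{cyclo} for the coprimality of the orders (with the prime $3$ handled exactly by the division by $\gcd(q+1,3)$ built into $o(\sigma_1)$, mirroring the $\E_6$ case), and defers $q=2$ to Theorem~\ref{excepcovers}. Your more explicit case analysis of the six cyclotomic pairs is a faithful expansion of what the paper leaves implicit.
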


\begin{proof} For $q>2$, this is just as in Theorem~\ref{beauE6}. For $q=2$ we refer forward to Theorem~\ref{excepcovers}.
\end{proof}

\subsection{Exceptional groups of type $\F_4$}

 In this subsection  $G = {\rm F}_4(q)$. Using \cite{LSS}, we can pick
 $\sigma_1 \in G$ of order $ \Phi_{12}(q)$ and  $\sigma_2 \in G$  be an element of order $\Phi_{4}(q)/\gcd(p-1,2)$.  For $i= 1,2$, define $C_i =\sigma_i^G$. Let $\tau_1 \in G$ be an element of order $ \Phi_{8}(q)$ and $\tau_2 \in G$ a  regular semisimple element  of order $\Phi_3(q)$. For $i=1,2$, set  $D_i = \tau_i^G$.

\begin{lemma} \label{maxF4}Suppose $q>2$ and that $M$ is a maximal subgroup of $G$.
\begin{enumerate}
\item If $C_1 \cap M $is non-empty, then  $ M \cong  {}^3\rm D_4(q).3$.
\item If $D_1 \cap M$ is non-empty, then $M \cong  \Spin_9(q)$ or possibly $q=3$ and $F^*(M) \cong \PSp_4(9)$ or $\PSL_2(81)$.
\end{enumerate}
\end{lemma}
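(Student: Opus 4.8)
The plan is to argue exactly as in Lemmas~\ref{maxE6} and~\ref{max2E6}: realize $G=\F_4(q)$ as the fixed-point group $\ov G_\sigma$ of a Frobenius endomorphism $\sigma$ of a simple algebraic group $\ov G$ of type $F_4$, and invoke the classification of maximal subgroups in \cite[Theorem 8]{Lies}. The governing numerology is that $\phi(12)=\phi(8)=4=\mathrm{rk}(\F_4)$, so $T_1:=\langle\sigma_1\rangle$ and $T_2:=\langle\tau_1\rangle$ are cyclic maximal tori of orders $\Phi_{12}(q)$ and $\Phi_8(q)$, that $\sigma_1$ and $\tau_1$ are regular semisimple with $C_G(\sigma_1)=T_1$ and $C_G(\tau_1)=T_2$, and that each of $\Phi_{12}(q)$ and $\Phi_8(q)$ divides $|G|=q^{24}\Phi_1^4\Phi_2^4\Phi_3^2\Phi_4^2\Phi_6^2\Phi_8\Phi_{12}$ exactly once (each $\Phi_i$ evaluated at $q$). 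First I would dispose of the positive-dimensional maximal subgroups. No proper parabolic subgroup contains an element of order $\Phi_{12}(q)$ or $\Phi_8(q)$, since a Zsigmondy prime divisor of either fails to divide the order of any Levi subgroup; and among the reductive maximal-rank subgroups of \cite[Tables 5.1, 5.2]{LSS} a routine order computation shows that ${}^3\mathrm D_4(q).3$ is the only one of order divisible by $\Phi_{12}(q)$, and $\Spin_9(q)=\mathrm B_4(q)$ the only one divisible by $\Phi_8(q)$ (using $|{}^3\mathrm D_4(q)|=q^{12}\Phi_1^2\Phi_2^2\Phi_3^2\Phi_6^2\Phi_{12}$ and $|\Spin_9(q)|=q^{16}\Phi_1^4\Phi_2^4\Phi_3\Phi_4^2\Phi_6\Phi_8$). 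These yield the two named conclusions.

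Next I would strip away the remaining generic families as before. A proper subfield subgroup $\F_4(q_0)$ is impossible, since its maximal tori have order at most $(q_0+1)^4$ by \cite[Proposition 2.4]{Lubeck}, which is smaller than both $\Phi_{12}(q)$ and $\Phi_8(q)$ (equivalently, a Zsigmondy prime divisor of $\Phi_{12}(q)$ or $\Phi_8(q)$, which exists by Theorem~\ref{Zig}, does not divide $|\F_4(q_0)|$). The exotic local subgroups are too small to contain a cyclic group of order $\Phi_{12}(q)\ge 73$ or $\Phi_8(q)\ge 82$ (recall $q>2$). If $F^*(M)$ is simple but not of Lie type in characteristic $p$, then $F^*(M)$ is one of the groups of \cite[Table 2]{Lies}; all appear in the Atlas \cite{ATLAS}, and since $q>2$ forces a prime at least $13$ into both $\Phi_{12}(q)$ and $\Phi_8(q)$, inspection of their element orders rules them out.

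It remains to treat $F^*(M)=M(q_0)$ of Lie type in characteristic $p$, which I would handle through the three cases of \cite[Theorem 8]{Lies} exactly as in Lemma~\ref{maxE6}. Let $z$ be a power of $\sigma_1$ (respectively $\tau_1$) of order the Zsigmondy prime $\zeta_{12a,p}$ (respectively $\zeta_{8a,p}$); then $o(z)>8$, so $z\in F^*(M)$, and since $o(z)$ divides no order of a proper parabolic subgroup of $G$, the element $z$ lies in a non-split torus of $F^*(M)$. For the rank-one groups $\A_1(q_0)$, ${}^2\B_2(q_0)$, ${}^2\G_2(q_0)$ I would compare $o(z)$ with the torus orders $q_0\pm 1$ (and $q_0\pm\sqrt{pq_0}+1$ in the Suzuki and Ree cases); Lemma~\ref{gcd}(i) together with Theorem~\ref{Zig} then forces $q_0\ge q^{6}$ in the $\Phi_{12}$ case and $q_0\ge q^{4}$ in the $\Phi_8$ case, while the Suzuki and Ree torus orders never match $\Phi_{12}(q)$ or $\Phi_8(q)$. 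Against the absolute bound on $q_0$ in \cite[Theorem 8]{Lies} this leaves nothing in the $\Phi_{12}$ case, and in the $\Phi_8$ case only $q=3$, $q_0=81$, giving $\PSL_2(81)=\A_1(81)$ with its torus of order $82=81+1$. For $F^*(M)$ of rank at most $3$ over $\GF(q_0)$ with $q_0\le 9$, the embedding into the $26$-dimensional module together with the Zsigmondy comparison of Lemma~\ref{maxE6} forces $q<q_0$, so (as $q>2$) only $(q,q_0)=(3,9)$ survives; here the regularity of $\sigma_1$ and $\tau_1$ (each with cyclic centralizer, of order $\Phi_{12}(3)=73$ and $\Phi_8(3)=82$ respectively) leaves no group in the $\Phi_{12}$ case and only $\PSp_4(9)$, carrying the $\Phi_4(9)=82$ torus, in the $\Phi_8$ case. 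The main obstacle is precisely this characteristic-$p$ bookkeeping at $q=3$: pinning down which small groups $M(q_0)$ possess a cyclic torus of order matching $\Phi_8(q)$, and confirming, via the regularity of $\tau_1$, that $\PSp_4(9)$ and $\PSL_2(81)$ are the only survivors not already excluded.
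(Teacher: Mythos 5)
Your proposal follows essentially the same route as the paper: the Liebeck--Seitz classification of maximal subgroups, the tables of \cite{LSS} for the maximal-rank reductive subgroups (yielding ${}^3\mathrm D_4(q).3$ and $\Spin_9(q)$), Zsigmondy/torus-order comparisons to kill the subfield, exotic local and cross-characteristic possibilities, and the same bookkeeping in characteristic $p$ leading to exactly the two exceptional survivors $\PSp_4(9)$ and $\A_1(81)$ at $q=3$. Your order formulas and numerical bounds all check out.

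There is, however, one omitted case in part (i). Among the maximal subgroups ``of the same type as $G$'' one must consider not only the untwisted subfield subgroups $\F_4(q_0)$ but also ${}^2\F_4(q)$ when $p=2$ and $q=2^{2n+1}\ge 8$ (which is allowed since only $q>2$ is assumed). Your exclusion criterion --- that a Zsigmondy prime divisor of $\Phi_{12}(q)$ fails to divide the order of the candidate subgroup --- is \emph{false} for this group: since $q^6+1=\Phi_4(q)\Phi_{12}(q)$ divides $|{}^2\F_4(q)|=q^{12}(q^6+1)(q^4-1)(q^3+1)(q-1)$, the prime $\zeta_{12a,p}$ does divide $|{}^2\F_4(q)|$. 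To rule this case out one needs finer information, namely that ${}^2\F_4(q)$ has no \emph{cyclic} subgroup of order $\Phi_{12}(q)$: its relevant maximal tori have the coprime orders $q^2+q+1\pm\sqrt{2q}(q+1)$, whose product is $\Phi_{12}(q)$, so no single torus carries an element of order $\Phi_{12}(q)$. This is exactly why the paper cites Malle's determination of the subgroup structure of ${}^2\F_4(q)$ \cite{malle} at this point. (The $\Phi_8$ case is unaffected, as $\Phi_8(q)$ does not divide $|{}^2\F_4(q)|$.) With this case added, your argument is complete. A cosmetic remark: for $\F_4$ the Liebeck--Seitz rank bound on characteristic-$p$ simple subgroups is $2$, not $3$ as you carried over from the $\E_6$ argument; your larger case distinction is harmless but slightly over-generous.
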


\begin{proof} We view $G$ as the fixed point of the endomorphism $\sigma$ an algebraic group of type $F_4$.  As is now familiar we follow the strategy dictated by  \cite{Lies}. Let $M$ be a maximal subgroup of $G$ which contains either $\sigma_1$ or $\tau_1$. If $M$ is the fixed points of a $\sigma$-stable subgroup of positive dimension in $G$ then we refer to \cite{LSS} to obtain (i) and (ii) as none of the parabolic subgroups of $G$ contain such elements and the groups listed in \cite[Table~3]{Lies} are too small. If $M$ has the same type as $G$, then $F^*(M) \cong \F_4(q_0)$ or ${}^2\F_4(q_0)$. In the first case we use Theorem~\ref{Zig} to see that $M$ cannot contain $T$ while in the second case we refer to \cite{malle} to get the same conclusion.

 The exotic local subgroups of $G$ and non-characteristic $p$ possibilities listed in \cite{Lies} are also too small just as in the case of $\E_6(q)$.  Thus we have $F^*(M)$ is a simple group defined in characteristic $p$ with rank at most two. In addition we may as well suppose that $\sigma_1 \not \in M$ as otherwise the more difficult arguments we employed for $\E_6(q)$ are valid. So $\tau_1\in M$ has order $q^4+1$. If $F^*(H)$ has rank $2$, then $q_0 \le 9$. Since $\zeta_{8a,p}$ must divide $|F^*(M)|$ we get $q=3$ and $q_0=9$. The only possibility is that $F^*(M) \cong \PSp_4(9)$ and we have included this case in our conclusion. Again the groups $\A_2^\epsilon(16)$ are impossible as $q>2$. Finally we have to consider the rank $1$ groups. This time $q_0 \le 68.\gcd(p-1,2)$ and $q_0= p^b $ where $b \le 6$.  As before we may assume that $\sigma_1\not \in M$. Further we get that $T= C_M(T \cap F^*(M))$ and that $C_{F^*(M)}(T \cap F^*(M))$ is a maximal torus of $F^*(M)$. Since $T$ does not normalize a $p$-subgroup, we get either $F^*(M) = \A_1(q_0)$ and $(q_0 +1)/\gcd(p-1,2)$ divides $q^4+1$ or $F^*(M)$ is a Ree or Suzuki group and $q_0\pm \sqrt{pq}+1= q^4+1$. The latter possibilities are clearly impossible. So $F^*(M) = A_1(q_0)$ and we get $q_0= q^4$. As $q> 2$, the bound on $q_0$ now forces $q=3$. So $F^*(M) = \A_1(81)$ and this is our final listed possibility.
\end{proof}

\begin{corollary}  \label{maxesF4} Suppose that $M$ is a subgroup of $G$ and $q>2$.
\begin{enumerate}
\item If $M \cap C_1$ and $M \cap C_2$ are both non-empty,  then $M = G$.
\item If $M \cap D_1$ and $M \cap D_2$ are both non-empty, then $M = G$.
\end{enumerate}
\end{corollary}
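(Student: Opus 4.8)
The plan is to follow the pattern of Corollaries~\ref{maxesE6}, \ref{maxesE7} and \ref{maxes2E6}: reduce to the maximal overgroups identified in Lemma~\ref{maxF4} and verify that the companion class in each pair misses them. If $M \neq G$ then $M$ lies in a maximal subgroup $M^*$ of $G$, and $M^* \cap C_i \supseteq M \cap C_i$ (respectively $M^* \cap D_i$) remains non-empty for $i = 1,2$, so it suffices to argue with $M$ maximal. For part~(i), Lemma~\ref{maxF4}(i) forces $M^* \cong {}^3\mathrm D_4(q).3$, and I would show this group has no element of order $o(\sigma_2) = \Phi_4(q)/\gcd(p-1,2)$. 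Writing $|{}^3\mathrm D_4(q)| = q^{12}\Phi_1(q)^2\Phi_2(q)^2\Phi_3(q)^2\Phi_6(q)^2\Phi_{12}(q)$, the polynomial $\Phi_4(q)$ is coprime to $|{}^3\mathrm D_4(q)|$. Since $q>2$, Theorem~\ref{Zig} supplies a Zsigmondy prime $\zeta_{4a,p}$ dividing $\Phi_4(q) = q^2+1$; by Lemma~\ref{cyclo} it divides none of $\Phi_1,\Phi_2,\Phi_3,\Phi_6,\Phi_{12}$ evaluated at $q$, and, being odd and at least $4a+1 \ge 5$, it neither divides the outer factor $3$ nor is lost on dividing by $\gcd(p-1,2)$. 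Thus $\zeta_{4a,p} \mid o(\sigma_2)$ while $\zeta_{4a,p} \nmid |{}^3\mathrm D_4(q).3|$, so $C_2 \cap M^* = \emptyset$, a contradiction, and $M = G$.

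For part~(ii), Lemma~\ref{maxF4}(ii) leaves $M^* \cong \Spin_9(q)$ or, when $q=3$, $F^*(M^*) \cong \PSp_4(9)$ or $\PSL_2(81)$. The two $q=3$ possibilities are dismissed at once, since $o(\tau_2) = \Phi_3(3) = 13$ is coprime to both $|\PSp_4(9)|$ and $|\PSL_2(81)|$, so neither meets $D_2$. The decisive case is $M^* \cong \Spin_9(q) = \mathrm B_4(q)$, where crude divisibility fails because $\Phi_3(q) \mid q^6-1 \mid |\Spin_9(q)|$. Here I would invoke the regular-semisimple hypothesis on $D_2$: every element of $D_2$ is regular semisimple of order $\Phi_3(q)$, so its centralizer in $G$ is a maximal torus $T$ of order $\Phi_3(q)^2$ (arising from the long and short $\A_2$ subsystems of $\F_4$, and of exponent $\Phi_3(q)$). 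If some $g \in D_2$ lay in $\Spin_9(q)$ then, being semisimple, $g$ would lie in a maximal torus $S$ of $\Spin_9(q)$, whence $S \le C_{\Spin_9(q)}(g) \le C_G(g) = T$; since $S$ is a full-rank torus of $\mathrm B_4(q)$ and $T$ has exponent $\Phi_3(q)$, this would force $|S| = \Phi_3(q)^2$ and hence $\Phi_3(q)^2 \mid |\Spin_9(q)|$. But $|\Spin_9(q)| = q^{16}\prod_{i=1}^4 (q^{2i}-1)$ is divisible by $\Phi_3(q)$ only through the single factor $q^6-1$, so $\Phi_3(q)^2 \nmid |\Spin_9(q)|$, a contradiction. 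Hence $D_2 \cap \Spin_9(q) = \emptyset$ and $M = G$.

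The computations in part~(i) and in the $q=3$ subcases of part~(ii) are routine divisibility checks. The one genuine obstacle is the $\Spin_9(q)$ subcase of part~(ii): because $\Phi_3(q)$ does divide $|\Spin_9(q)|$, one cannot conclude by order alone and must instead exploit regularity, equivalently the fact that $\mathrm B_4(q)$ possesses no maximal torus of order $\Phi_3(q)^2$ (no element of the Weyl group of type $\mathrm B_4$ has characteristic polynomial $\Phi_3(q)^2$, as two disjoint $3$-cycles cannot sit on four coordinates). This is the place where the choice of $\tau_2$ as a \emph{regular} semisimple element, rather than merely an element of order $\Phi_3(q)$, is essential.
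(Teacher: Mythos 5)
Your proof is correct and follows essentially the same route as the paper: part (i) is the divisibility argument showing $\zeta_{4a,p}$ cannot divide $|{}^3\mathrm D_4(q).3|$, and part (ii) rests on the observation that $\Spin_9(q)$ contains no \emph{regular} semisimple elements of order $\Phi_3(q)$, together with the order check $13\nmid|\Aut(\PSp_4(9))|$ for the $q=3$ exceptions. The paper merely asserts these two facts, whereas you supply the justification (the non-existence of a maximal torus of order $\Phi_3(q)^2$ in $\mathrm B_4(q)$), which is exactly the intended reason.
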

\begin{proof}  We note that $ M = {}^3\mathrm D_4(q).3$ contains no elements of maximal odd order dividing $\Phi_{4}(q)$ and that
$\Spin_9(q)$  ($\cong \Sp_8(q)$ when $q$ is even) contains no regular semisimple elements of order $\Phi_{3}(q)$. Our claims now both follow from Lemma \ref{maxF4} when $q> 3$. When $q=3$ and the exceptional case in Lemma~\ref{maxF4} occurs, we note further that $\Phi_3(3)$ does not divide $|\Aut(\PSp_4(9))|$ and we are done.   \end{proof}

\begin{theorem} \label{beauF4}Let $G= \F_4(q)$. Then \begin{enumerate}\item  there are hyperbolic triples for $G$ in $C_1 \times C_1 \times C_2$; and  \item there are hyperbolic triples for $G$ in $D_1 \times D_1 \times D_2$.\end{enumerate}
In particular, $G$ is a Beauville group.
\end{theorem}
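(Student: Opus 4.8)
The plan is to mirror the proofs already given for $\E_8(q)$, $\E_7(q)$, $\E_6(q)$ and ${}^2\E_6(q)$, producing the two triples with Gow's Theorem~\ref{G} and then reading off generation from the maximal-subgroup analysis recorded in Corollary~\ref{maxesF4}. For part (i) I would first note that $C_1=\sigma_1^G$ consists of regular semisimple elements: the element $\sigma_1$ of order $\Phi_{12}(q)=q^4-q^2+1$ generates a cyclic maximal torus of order coprime to $p$, so its centraliser is exactly that torus. The class $C_2=\sigma_2^G$ consists of non-central semisimple elements. Applying Theorem~\ref{G} with $R_1=R_2=C_1$ and $s\in C_2$ then furnishes $x_1,x_2\in C_1$ and $x_3\in C_2$ with $x_1x_2x_3=1$, and since $H=\langle x_1,x_3\rangle$ meets both $C_1$ and $C_2$, Corollary~\ref{maxesF4}(i) forces $H=G$. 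Thus $(x_1,x_2,x_3)$ is the triple required in (i). Part (ii) is the identical argument, now with the regular semisimple class $D_1=\tau_1^G$ of order $\Phi_8(q)=q^4+1$ in the role of $R_1,R_2$, the semisimple class $D_2=\tau_2^G$ in the role of $s$, and Corollary~\ref{maxesF4}(ii) supplying generation.

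With both triples in hand I would check that each is hyperbolic in the sense of Definition~\ref{maindef}: generation and the relation $x_1x_2x_3=1$ are built into the construction, and condition (ii) is immediate because every element involved has order at least $5$, so the sum of the three reciprocal orders is far below $1$. To promote the pair of triples to a genuine Beauville structure I would verify the disjointness condition (iii) through the coprimality shortcut noted in the introduction, i.e. by showing that $\Phi_{12}(q)^2\,\Phi_4(q)/\gcd(p-1,2)$ is coprime to $\Phi_8(q)^2\,\Phi_3(q)$. No common odd prime can occur: by Lemma~\ref{cyclo}(iii) a prime dividing $\Phi_b(q)$ and $\Phi_c(q)$ with $b<c$ would force $b\mid c$ with $c/b$ a power of that prime, yet for each of the cross pairs $\{3,12\}$, $\{8,12\}$, $\{4,8\}$, $\{4,3\}$ either $b\nmid c$ or $c/b$ is a power of $2$. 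The prime $2$ is excluded too, since $\Phi_{12}(q)$ is odd for every $q$ while the $\gcd(p-1,2)$ denominator removes the unique factor of $2$ from $\Phi_4(q)$ when $p$ is odd, so that every order in the first triple is odd. Hence the two types are coprime and condition (iii) follows automatically.

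The only genuine obstacle is that Corollary~\ref{maxesF4} was established under the hypothesis $q>2$, so the argument above settles the theorem only for $q\ge3$. The remaining group $\F_4(2)$ has an exceptional Schur multiplier, and I would dispose of it together with its cover by deferring to Theorem~\ref{excepcovers}, exactly as is done for ${}^2\E_6(2)$. In this way the substance of the argument is the short Gow-plus-Corollary generation step, and the ``hard part'' reduces to the separate bookkeeping for the single small field $q=2$.
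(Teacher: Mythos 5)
Your proposal is correct and follows essentially the same route as the paper: Gow's Theorem produces the two triples, Corollary~\ref{maxesF4} gives generation, and Lemma~\ref{cyclo}(iii) together with the observation that $\gcd(\Phi_4(q)/\gcd(q-1,2),\Phi_8(q))=1$ gives the non-conjugacy condition. Your explicit deferral of $\F_4(2)$ to Theorem~\ref{excepcovers} is a point the paper's own proof of this theorem leaves implicit (its Corollary~\ref{maxesF4} assumes $q>2$), so that extra care is welcome rather than a deviation.
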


\begin{proof} As the class $C_1$ consists of regular semisimple elements and $C_2$ consists of semisimple
elements, Gow's Theorem guarantees that there exist $(x_1,x_2,x_3) \in C_1 \times C_1 \times C_2$ with  $x_1x_2x_3=1$. Now Corollary \ref{maxesF4} (i) gives
$G = \langle x_1,x_3\rangle $ and (i) follows.

 As the class $D_1$ consists of regular semisimple elements and $D_2$ consists of semisimple
elements, Gow's Theorem shows that there exist $(y_1,y_2,y_3)\in D_1 \times D_1 \times D_2$ such that $y_1y_2y_3=1$. Using Corollary \ref{maxesF4} (ii) gives
$G = \langle y_1,y_3\rangle $ and (ii) holds.

Using Lemma~\ref{cyclo}(iii), we have
$\gcd (\Phi_a(q),\Phi_b(q)) = 1$ whenever
$a \in \{12,4\}$ and $b \in \{ 3, 8\}$ unless $a= 4$ and $b=8$. Since   $\gcd(\Phi_4(q)/\gcd(q-1,2),\Phi_8(q)) = 1$, we  have the required non-conjugacy condition. Hence $G$ is a Beauville group. \end{proof}

\subsection{Exceptional groups of type $^3\mathrm D_4$}

Set $G = {}^3\rm D_4(q)$. Pick $\sigma \in G$ of order $ \Phi_{12}(q)$ (see \cite{KL3D4}). Set $T= \langle \sigma \rangle$ and $N= N_G(T)$. Then, from \cite{KL3D4}, $N/T$ is cyclic of order $4$.
Set $C_1=\sigma^G$.

 Let $\tau_1 \in G$ be a regular semisimple element of order $\Phi_{3}(q)$ and $\tau_2 \in G$ regular semisimple of order $\Phi_{6}(q)$. For $i=1,2$, define $D_i = (\tau_i)^G$.

The following can be extracted from Kleidman \cite{KL3D4} where a complete list of maximal subgroups of ${}^3\mathrm D_4(q)$ is determined.

\begin{lemma} \label{max3D4}  Suppose that $M$ is a maximal  subgroup of $G$.
\begin{enumerate}
\item If $M \cap C$ is non-empty, then $ M = N$.
\item If $q > 2$, then either $M \cap D_1$ is empty or $M \cap D_2$ is empty.
\end{enumerate}
\end{lemma}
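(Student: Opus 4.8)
The plan is to derive both statements from Kleidman's determination of the maximal subgroups of $G={}^3\mathrm{D}_4(q)$ in \cite{KL3D4}, using Zsigmondy primes to control the orders of overgroups and the regular semisimple hypothesis to control centralisers. Recall that
$$|G| = q^{12}\,\Phi_1(q)^2\Phi_2(q)^2\Phi_3(q)^2\Phi_6(q)^2\Phi_{12}(q),$$
and that $G$ has seven classes of maximal tori, of orders $(q-1)(q^3-1)$, $(q+1)(q^3+1)$, $(q-1)(q^3+1)$, $(q+1)(q^3-1)$, $\Phi_3(q)^2=(q^2+q+1)^2$, $\Phi_6(q)^2=(q^2-q+1)^2$ and $\Phi_{12}(q)$. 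Throughout, $\tau_1$ and $\tau_2$ are taken to be regular elements of the tori of order $\Phi_3(q)^2$ and $\Phi_6(q)^2$, so that $|C_G(\tau_1)|=(q^2+q+1)^2$ and $|C_G(\tau_2)|=(q^2-q+1)^2$; this is the choice that the Beauville construction uses and is what makes (ii) correct.

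For (i), I would let $\zeta$ be a Zsigmondy prime for $\langle q,12\rangle$; this exists for every $q$ by Theorem~\ref{Zig}, since the excluded cases there force $n\le 6$. Then $\zeta$ divides $\Phi_{12}(q)=|T|$ but no $q^k-1$ with $k<12$, so if $M\cap C$ is non-empty then $\zeta$ divides $|M|$. Inspecting Kleidman's list, every maximal subgroup apart from $N$ has order built from the factors $q^k\pm 1$ with $k\le 6$: the Levi factors of the two parabolics involve only $\SL_2(q^3)$ and $\SL_2(q)$, the reductive maximal subgroups are $\G_2(q)$, $\PGL_3(q)$, $\PGU_3(q)$ and $\SL_2(q^3)\circ\SL_2(q)$, and the remaining maximal subgroups are the normalisers of the tori of order $\Phi_3(q)^2$ and $\Phi_6(q)^2$ together with $N=\Phi_{12}(q){:}4$. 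None of these except $N$ has order divisible by $\zeta$, so $M=N$, which is (i).

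For (ii) I would assume $q>2$ and suppose, for a contradiction, that some maximal $M$ meets both $D_1$ and $D_2$. Choose Zsigmondy primes $\zeta_3$ for $\langle q,3\rangle$ and $\zeta_6$ for $\langle q,6\rangle$; the latter exists precisely because $q>2$, the excluded pair $(q,n)=(2,6)$ of Theorem~\ref{Zig} being the reason for the hypothesis. Then $\zeta_3\mid\Phi_3(q)$, $\zeta_6\mid\Phi_6(q)$, both are primitive, and by Lemma~\ref{cyclo} they are distinct; moreover $\zeta_3$ divides no $q^k-1$ with $k<3$ and $\zeta_6$ none with $k<6$. Since $M$ contains conjugates of $\tau_1$ and of $\tau_2$, both $\zeta_3$ and $\zeta_6$ divide $|M|$. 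Running through Kleidman's list, the short-root parabolic, $\PGL_3(q)$ and the normaliser of the $\Phi_3(q)^2$-torus have order prime to $\zeta_6$, while $\PGU_3(q)$, the normaliser of the $\Phi_6(q)^2$-torus and $N$ have order prime to $\zeta_3$; hence the only candidates with both $\zeta_3$ and $\zeta_6$ dividing their order are $\G_2(q)$, the long-root parabolic $P_1=[q^{1+8}]{:}(\SL_2(q^3)\times(q-1))$ and the reductive subgroup $\SL_2(q^3)\circ\SL_2(q)$.

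It remains to eliminate these three, and here the regular semisimple hypothesis is essential. In $P_1$ every semisimple element of order $\Phi_3(q)$ or $\Phi_6(q)$ is conjugate into the Levi factor $\SL_2(q^3)$, whose derived group centralises the centre of the unipotent radical; in $\SL_2(q^3)\circ\SL_2(q)$ every such element lies in the first factor and centralises the commuting $\SL_2(q)$. In either case $C_G$ of the element contains a non-toral subgroup, so the element is not regular semisimple and lies in neither $D_1$ nor $D_2$; thus these two groups meet neither class. The genuinely delicate case is $M\cong\G_2(q)$, which does contain regular semisimple elements of orders $\Phi_3(q)$ and $\Phi_6(q)$, namely the Singer elements of its subgroups $\SL_3(q)$ and $\SU_3(q)$. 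Here I would use the torus-fusion information in \cite{KL3D4}: one shows that the maximal tori of $\G_2(q)$ of orders $\Phi_3(q)$ and $\Phi_6(q)$ embed into $G$ inside maximal tori of order $(q-1)(q^3-1)$ and $(q+1)(q^3+1)$, so that these elements have $G$-centraliser of order $(q-1)(q^3-1)$ or $(q+1)(q^3+1)$, and not $(q^2+q+1)^2$ or $(q^2-q+1)^2$. Consequently they lie in $G$-classes different from $D_1$ and $D_2$, so $\G_2(q)$ meets neither class. This contradicts the choice of $M$ and proves (ii). The main obstacle is exactly this last computation of the $G$-conjugacy type of the regular semisimple elements of $\G_2(q)$, equivalently the determination of the twisted-Frobenius type of $C_{\mathrm{D}_4}(g)$ for $g$ a Singer element of $\SL_3(q)\le\G_2(q)$; it is this fusion that must be read off from the tables of \cite{KL3D4}.
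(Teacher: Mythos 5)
The paper offers no written proof here: the lemma is presented as something to be ``extracted'' from Kleidman's determination of the maximal subgroups of ${}^3\mathrm D_4(q)$, so your job is to reconstruct that extraction, and your Zsigmondy-prime strategy is the right framework. Part (i) is essentially correct, except that your enumeration of Kleidman's list omits the subfield subgroups ${}^3\mathrm D_4(q_0)$ with $q=q_0^r$; this is harmless for (i), since a Zsigmondy prime for $\langle p,12a\rangle$ cannot divide $|{}^3\mathrm D_4(p^{12a/r})|$, but you should say so. The same omission is not harmless in (ii): when $q=q_0^2$ one has $\Phi_{12}(q_0)=q^2-q+1=\Phi_6(q)$ and $q^3-1=q_0^6-1$, so $|{}^3\mathrm D_4(q_0)|$ is divisible by both $\zeta_3$ and $\zeta_6$ and this subgroup survives your order test; it has to be eliminated by a different argument (for instance, no maximal torus of ${}^3\mathrm D_4(q_0)$ contains a cyclic subgroup of order $\Phi_3(q)=\Phi_3(q_0)\Phi_6(q_0)$).

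The more serious problem is your treatment of $M\cong\G_2(q)$, which you correctly identify as the delicate case but then resolve with a fusion claim that is false. The long-root $\SL_3(q)\le\G_2(q)$ is the subsystem subgroup appearing in the maximal-rank subgroup $((q^2+q+1)\circ\SL_3(q)).d$ of $G$, so a Singer element $s$ of this $\SL_3(q)$ has $C_G(s)$ containing a group of order $(q^2+q+1)^2/\gcd(3,q^2+q+1)$; since that order divides none of the other six maximal torus orders, $C_G(s)$ is the torus of order $\Phi_3(q)^2$, not $(q-1)(q^3-1)$. The same happens for the Singer elements of $\SU_3(q)\le\G_2(q)$ and the $\Phi_6(q)^2$-torus. (For $q=2$ this is visible in the \textsc{Atlas}: every regular class of order $7$ in ${}^3\mathrm D_4(2)$ has centraliser of order $49$, and only one of the three such classes -- the class $7D$ used in Lemma~\ref{3d4yy} -- avoids $\G_2(2)$.) Consequently $\G_2(q)$ \emph{does} contain regular semisimple elements of $G$ of order $\Phi_3(q)$ with centraliser $\Phi_3(q)^2$ and of order $\Phi_6(q)$ with centraliser $\Phi_6(q)^2$, so specifying the centraliser orders of $\tau_1,\tau_2$ does not eliminate this maximal subgroup. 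To make (ii) work you must distinguish classes \emph{within} the torus $T_5$ of order $\Phi_3(q)^2$ (equivalently, choose $\tau_1$ in an $N_G(T_5)/T_5\cong\SL_2(3)$-orbit of regular elements of order $\Phi_3(q)$ that misses $\G_2(q)\cap T_5$, which is only a cyclic subgroup of order $q^2+q+1$, so such orbits exist in abundance), or read the relevant class containments directly from Kleidman's tables. As written, your argument does not close this case, and it is precisely the case on which the lemma turns.
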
\qed

\begin{lemma}\label{3d4x}
There exists an integer $k$ such that, setting $C_2 = (\sigma^k)^G$, there is a hyperbolic triple for $G$ in  $(x_1,x_2,x_3) \in C_1\times C_1 \times C_2$.
\end{lemma}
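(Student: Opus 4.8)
The plan is to deduce this directly from Lemma~\ref{structureconstant}, applied with $x=\sigma$, $Z=T=\langle\sigma\rangle$ and $N=N_G(T)$. The conclusion of that lemma produces an element $z\in T$ and a hyperbolic triple in $\sigma^G\times\sigma^G\times z^G$; writing $z=\sigma^k$ and setting $C_2=z^G=(\sigma^k)^G$ then yields precisely the asserted triple in $C_1\times C_1\times C_2$. Thus all the work lies in verifying the four hypotheses of Lemma~\ref{structureconstant}.

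First note that $\sigma$ has order $\Phi_{12}(q)=q^{4}-q^{2}+1$, which is coprime to $p$ (indeed $\Phi_{12}(q)\equiv 1\pmod p$), so $\sigma$ is semisimple; it generates the cyclic maximal torus $T$ of this order, and since $C_G(\sigma)=C_G(T)=T$ has $p'$-order, $\sigma$ is regular semisimple and $C_1=\sigma^G$ is a class of regular semisimple elements. Hypothesis~(i) is immediate from Lemma~\ref{max3D4}(i): any maximal subgroup containing $Z$ contains $\sigma\in C_1$, hence equals $N$, so $N$ is the unique maximal overgroup of $Z$. For hypothesis~(ii), since $C_G(Z)=T$ and $N/T$ is cyclic of order $4$ by \cite{KL3D4}, we have $|N/C_G(Z)|=4$ and hence $\binom{4+1}{2}=10$; as $G$ is simple, $Z(G)=1$ and $|Z\setminus Z(G)|=\Phi_{12}(q)-1\ge 12>10$, so (ii) holds.

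For hypothesis~(iii), let $y\in\sigma^G\cap N$. Then $o(y)=\Phi_{12}(q)$ is odd, hence coprime to $|N/T|=4$; the image of $y$ in $N/T$ has order dividing both $o(y)$ and $4$, so it is trivial and $y\in T=Z$, giving $\sigma^G\cap N\subseteq Z$. Finally, for hypothesis~(iv), fix a non-trivial $z\in T$: it is a non-central (as $Z(G)=1$) semisimple element, so Gow's Theorem~\ref{G}, applied with both regular classes equal to $C_1$ and with the element $z$, provides elements of $C_1$ whose product is $z$; hence the $(\sigma^G,\sigma^G,z^G)$ structure constant is non-zero. All four hypotheses hold, and Lemma~\ref{structureconstant} completes the proof. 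The resulting triple automatically satisfies the hyperbolicity inequality, since the two outer orders equal $\Phi_{12}(q)\ge 13$ while the inner order is at least the smallest (necessarily odd) prime divisor of $\Phi_{12}(q)$, whence $2/\Phi_{12}(q)+1/o(z)\le 2/13+1/3<1$.

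The argument is routine once Lemma~\ref{structureconstant} is available; the only point demanding genuine care is hypothesis~(iii), where I rely on $\Phi_{12}(q)$ being odd, and so coprime to the order $4$ of $N/T$, to prevent a non-central conjugate of $\sigma$ from lying in $N\setminus T$. The essential external inputs are the two facts supplied by Kleidman \cite{KL3D4} (recorded in Lemma~\ref{max3D4}), namely that $N_G(T)/T$ is cyclic of order $4$ and that $N$ is the unique maximal subgroup meeting $C_1$; together with Gow's Theorem these make the structure-constant machinery directly applicable.
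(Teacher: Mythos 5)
Your proposal is correct and follows essentially the same route as the paper: both reduce the statement to Lemma~\ref{structureconstant} applied to $Z=\langle\sigma\rangle$ and $N=N_G(Z)$ with $|N/C_G(Z)|=4$, using Lemma~\ref{max3D4}(i) for the unique maximal overgroup, Gow's Theorem for the structure constants, and $\Phi_{12}(q)\ge 13>\binom{5}{2}$ for the counting hypothesis. Your explicit verification of hypothesis~(iii) via the oddness of $\Phi_{12}(q)$ is a detail the paper asserts without comment, but the argument is the same.
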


\begin{proof} As the class $C_1$ consists of regular semisimple elements and $C_2$ consists of semisimple
elements, Gow's Theorem guarantees that there exists $(x_1,x_2,x_3) \in C_1\times C_1 \times C_2$ with product $x_1x_2x_3=1$. We may suppose that $x_1= \sigma$. Then $C_1 \cap N \subseteq T$ and from Lemma~\ref{max3D4} (i), $N$ is the unique maximal subgroup of $G$ containing $T$. Therefore Lemma~\ref{structureconstant} applies with $k=4$. Since $\Phi_{12}(q) \ge 13$, we deduce that there is a hyperbolic triple in $ C_1\times C_1 \times C_2$.
\end{proof}

In the next lemma, $nX$ denotes a conjugacy class of elements  ${}^3\rm D_4(2)$ consisting of elements in \ATLAS conjugacy class $nX$ \cite{ATLAS}.

\begin{lemma}\label{3d4yy}
$^3{\rm D}_4(2)$  has a hyperbolic triple in  $7D\times 8A \times 9A$.
\end{lemma}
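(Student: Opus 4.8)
The plan is to exhibit a generating triple by a class-structure-constant computation using the ordinary character table of ${}^3\mathrm D_4(2)$ from the \ATLAS \cite{ATLAS}, together with a sieve over the maximal subgroups. The hyperbolicity condition is immediate, since $1/7+1/8+1/9<1$, so everything reduces to producing $x\in 7D$, $y\in 8A$ and $z\in 9A$ with $xyz=1$ and $\langle x,y\rangle=G$.

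First I would record centralizer orders and representatives for the three classes and compute the class-algebra constant counting the pairs $(x,y)\in 7D\times 8A$ with $xy=z_0^{-1}$ for a fixed $z_0\in 9A$, namely
$$
n_G=\frac{|7D|\,|8A|}{|G|}\sum_{\chi\in\Irr(G)}\frac{\chi(x)\chi(y)\overline{\chi(z_0)}}{\chi(1)}.
$$
Since the full character table of ${}^3\mathrm D_4(2)$ is printed in \cite{ATLAS}, this is a finite sum of known algebraic numbers; the value $n_G$ is a non-negative integer because it counts pairs, and I expect it to be strictly positive, so that a triple with product $1$ exists. This settles conditions (i) (the product relation) and (ii) (hyperbolicity) of Definition~\ref{maindef} for a single triple.

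Next comes the generation step. Writing $\Delta_H$ for the analogous count of triples $(x,y,z)\in(7D\cap H)\times(8A\cap H)\times(9A\cap H)$ with $xyz=1$ inside a subgroup $H$, a generating triple exists as soon as
$$
\Delta_G \;>\; \sum_{M}\Delta_{M},
$$
where $M$ runs over all maximal subgroups of ${}^3\mathrm D_4(2)$ (their conjugacy classes are listed in \cite{KL3D4} and \cite{ATLAS}), each $\Delta_M$ being determined by the fusion of the three classes into $M$. Most maximal subgroups are eliminated at once because they contain no element of one of the orders $7$, $8$, $9$; the one serious candidate I anticipate is the $2$-local subgroup $2^{1+8}{:}\L_2(8)$, in which $\L_2(8)$ supplies the elements of orders $7$ and $9$ while the normal $2$-group contributes elements of order $8$. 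For this $M$ I would compute $\Delta_M$ from its own character table and verify that the total contribution from maximal overgroups is strictly less than $\Delta_G$.

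The hard part will be this final sieve: I must be certain that the class fusion from ${}^3\mathrm D_4(2)$ into each maximal overgroup, and in particular into $2^{1+8}{:}\L_2(8)$, has been read off correctly, since a mistake there could hide a maximal subgroup that actually captures all the product-$1$ triples. If the margin $\Delta_G-\sum_M\Delta_M$ proves too delicate to settle cleanly from the tables, the natural fallback -- entirely in keeping with the treatment of the other small groups in this paper -- is to verify directly with \cite{GAP} or \cite{Magma} that explicit representatives $x\in 7D$, $y\in 8A$ with $(xy)^{-1}\in 9A$ generate ${}^3\mathrm D_4(2)$, which simultaneously establishes the product relation and generation.
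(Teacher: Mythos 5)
Your plan is sound and is in essence the paper's own argument: show the $(7D,8A,9A)$ structure constant is nonzero (read off from the \ATLAS character table or checked by machine) and then rule out proper overgroups via the maximal subgroups of ${}^3\mathrm D_4(2)$. The difference lies in how the second step is organised. You set up the full counting sieve $\Delta_G>\sum_M\Delta_M$ and identify $2^{1+8}{:}\L_2(8)$ as the one dangerous overgroup, leaving the class fusion into it as ``the hard part.'' The paper short-circuits exactly this point: $7D$ is the regular class of elements of order $7$ (centralizer the maximal torus of order $7^2$), and the only maximal subgroups of ${}^3\mathrm D_4(2)$ meeting the class $7D$ are the conjugates of $(7\times\PSL_2(7)){:}2$ and $7^2{:}2\Alt(4)$; neither has order divisible by $9$, so \emph{every} triple in $7D\times 8A\times 9A$ with product $1$ generates, and no quantitative comparison of structure constants is needed. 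In your notation, once the fusion is taken into account one finds $7D\cap M=\emptyset$ for $M=2^{1+8}{:}\L_2(8)$ (its order\nobreakdash-$7$ elements lie in the non-regular classes) and $9A\cap M=\emptyset$ for the two subgroups that do meet $7D$, so $\Delta_M=0$ for every maximal $M$ and your inequality degenerates to $\Delta_G>0$. So your approach works, but the delicacy you anticipate is illusory once one uses the class $7D$ rather than merely ``elements of order $7$''; your computational fallback is also acceptable and matches the paper's own hedge that the structure constant ``can be checked \dots by using the computer.''
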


\begin{proof} The conjugacy class $7D$ is regular and contained only in the maximal
subgroups conjugate to $(7\times \PSL_2(7)):2$ or $7^2:2\Alt(4)$ (see \cite{ATLAS}). Evidently none
of these maximal subgroups contain elements of order $9$. The lemma follows as
the $(7D,8A,9A)$ structure constant is non-zero as can be checked form the character table or by using the computer.\end{proof}

\begin{lemma}\label{3d4y}
Suppose that $q > 2$. Then there exists a hyperbolic triple for $G$ in $D_1\times D_1 \times D_2$.
\end{lemma}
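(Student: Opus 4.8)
The plan is to reuse verbatim the mechanism of Lemmas~\ref{e6x} and \ref{e6y}, with the two regular semisimple classes $D_1=\tau_1^G$ and $D_2=\tau_2^G$ in the roles of the tori, and to replace the case-by-case maximal subgroup analysis by a single appeal to Lemma~\ref{max3D4}(ii). The whole point of having proved that lemma (extracted from Kleidman's list \cite{KL3D4}) is that it does all of the structural work, so I expect the argument here to be very short.

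First I would invoke Gow's Theorem~\ref{G}. Since $G={}^3\mathrm{D}_4(q)$ is simple, $\tau_2$ is a non-central semisimple element, while $D_1$ is a class of regular semisimple elements. Applying Theorem~\ref{G} with $R_1=R_2=D_1$ and the non-central semisimple element $\tau_2^{-1}$ produces $y_1,y_2\in D_1$ with $y_1y_2=\tau_2^{-1}$; setting $y_3=\tau_2\in D_2$ yields a triple $(y_1,y_2,y_3)\in D_1\times D_1\times D_2$ with $y_1y_2y_3=1$. This secures the product-one requirement and places us among the classes named in the statement.

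Next I would show that $H:=\langle y_1,y_2\rangle$ equals $G$. Because $y_3=(y_1y_2)^{-1}\in H$, the subgroup $H$ meets both $D_1$ (in $y_1$) and $D_2$ (in $y_3$). If $H$ were proper it would be contained in some maximal subgroup $M$ of $G$, and then $M\cap D_1$ and $M\cap D_2$ would both be non-empty; but this is precisely the configuration excluded by Lemma~\ref{max3D4}(ii) under the standing hypothesis $q>2$. Hence $H=G$, which gives condition (i) of Definition~\ref{maindef}.

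Finally I would verify condition (ii) of Definition~\ref{maindef}: the orders are $o(y_1)=o(y_2)=\Phi_3(q)=q^2+q+1$ and $o(y_3)=\Phi_6(q)=q^2-q+1$, each of which is at least $7$ once $q>2$, so $1/o(y_1)+1/o(y_2)+1/o(y_3)$ is far below $1$ and $(y_1,y_2,y_3)$ is a hyperbolic triple. I do not anticipate a genuine obstacle: all of the difficulty is already absorbed into Lemma~\ref{max3D4}(ii), and the hypothesis $q>2$ is used exactly to make that exclusion valid. (For the excluded value $q=2$ the separate structure-constant computation of Lemma~\ref{3d4yy} supplies the required triple instead, so nothing is lost.)
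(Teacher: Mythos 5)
Your proposal is correct and follows essentially the same route as the paper: Gow's Theorem supplies the product-one triple in $D_1\times D_1\times D_2$ (both classes being regular semisimple), and Lemma~\ref{max3D4}(ii) rules out any proper overgroup meeting both classes when $q>2$. The only difference is that you also spell out the genus condition on the orders, which the paper leaves implicit.
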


\begin{proof} As the classes $D_1$ and $D_2$ consist of regular semisimple elements, Gow's Theorem guarantees that there exist $(y_1,y_2,y_3) \in D_1\times D_1 \times D_2$ with product $1$. Now Corollary \ref{max3D4} (ii) gives
$G = \langle y_1,y_3\rangle $ if $q > 2$ and our claim follows.\end{proof}

\begin{theorem} \label{beau3D4} Let $G = {}^3{\rm D}_4(q)$. Then
 $G$ is a Beauville group.
\end{theorem}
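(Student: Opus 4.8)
The plan is to assemble the two hyperbolic triples already manufactured above into a Beauville structure, the only remaining task being to verify the non-conjugacy condition (iii) of Definition~\ref{maindef}. For every $q$, Lemma~\ref{3d4x} supplies a hyperbolic triple $(x_1,x_2,x_3)$ all of whose element orders divide $\Phi_{12}(q)$, since $x_1,x_2\in C_1=\sigma^G$ have order $\Phi_{12}(q)$ and $x_3\in C_2=(\sigma^k)^G$ is a power of $\sigma$. For the second triple I would take $(y_1,y_2,y_3)\in D_1\times D_1\times D_2$ from Lemma~\ref{3d4y} when $q>2$; its elements have orders $\Phi_3(q)$, $\Phi_3(q)$ and $\Phi_6(q)$.

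To check condition (iii) I would invoke the observation recorded after Definition~\ref{maindef}: the condition holds automatically once the product of the three orders of the first triple is coprime to the product of the three orders of the second. Thus it suffices to prove $\gcd(\Phi_{12}(q),\Phi_3(q))=\gcd(\Phi_{12}(q),\Phi_6(q))=1$. This is exactly the kind of statement that Lemma~\ref{cyclo}(iii) controls: if a prime $r$ divided both $\Phi_b(q)$ and $\Phi_c(q)$ with $b<c$, then $c/b$ would be a power of $r$ and $b=r^dm_0$ with $m_0\mid r-1$. For $(b,c)=(3,12)$ we have $c/b=4$, forcing $r=2$ and hence $b=3$ even, a contradiction; for $(b,c)=(6,12)$ we have $c/b=2$, forcing $r=2$ and $b=6=2\cdot 3$, so $m_0=3$, contradicting $m_0\mid r-1=1$. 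Hence the three cyclotomic values $\Phi_{12}(q)$, $\Phi_3(q)$, $\Phi_6(q)$ are pairwise coprime and the two triples form a Beauville structure.

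The one case where this recipe breaks is $q=2$, since Lemma~\ref{3d4y} requires $q>2$; this is the only genuine obstacle, and it is handled by Lemma~\ref{3d4yy}. There the second triple lies in $7D\times 8A\times 9A$, so its elements have orders $7$, $8$, $9$, while the first triple has orders dividing $\Phi_{12}(2)=13$. Since $\gcd(13,7\cdot 8\cdot 9)=1$, the non-conjugacy condition again follows from the coprimality observation, and in both regimes the displayed triples are hyperbolic by the cited lemmas. I would conclude that $G={}^3\mathrm D_4(q)$ is a Beauville group for every prime power $q$.
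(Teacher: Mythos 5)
Your proposal is correct and follows essentially the same route as the paper: both assemble the triples from Lemmas~\ref{3d4x}, \ref{3d4y} and \ref{3d4yy} and verify condition (iii) via the pairwise coprimality $\gcd(\Phi_{12}(q),\Phi_b(q))=1$ for $b\in\{3,6\}$ (your explicit appeal to Lemma~\ref{cyclo}(iii) and the separate check of $q=2$ with orders $7,8,9$ against $\Phi_{12}(2)=13$ just spell out details the paper leaves implicit).
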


\begin{proof}  Lemmas \ref{3d4x},  \ref{3d4y} and  \ref{3d4yy} show that we have hyperbolic triples in $G$, which, as
$\gcd(\Phi_{12}(q),\Phi_b(q)) = 1$ whenever $b \in \{  3 , 6  \}$, have members of pairwise coprime orders. This proves the theorem.\end{proof}

\subsection{Exceptional groups of type $\G_2$}

Suppose that $G= \G_2(q)$.  This time using Lemma~\ref{gcd} or just by writing down the polynomials  we have that $\gcd (\Phi_3(q),\Phi_1(q)) = 3$ or $1$ depending on whether or not $q \equiv 1 \pmod 3$
and  that $\gcd(\Phi_6(q),\Phi_2(q)) = 3$ or $1$ depending on whether or not $q \equiv -1 \pmod3$. Also
$\gcd(\Phi_6(q),\Phi_3(q)) = 1 $ and $\gcd(q+1,q-1) = 1 + j$ where $j = q \pmod  2$.

Assume that $q > 7$.
We pick a 4-tuple of numbers $(k_1,k_2,k_3,k_6)$ where, for $i \in \{1,2\}$,       $k_i$ is a   divisor of $\Phi_i(q)$  chosen so that
 $\gcd(k_1,k_2)=1$, $k_3 = \Phi_3(q)/\gcd(\Phi_3(q),3)$ and $k_6= \Phi_6(q)/\gcd(\Phi_6(q),3) $ (here we note that $9$ does not divide $\Phi_3(q)$ or $\Phi_6(q)$). Evidently, then our choices guarantee that
the numbers $k_1, k_2, k_3$ and $k_6$ are pairwise coprime.

We select $\sigma_1 \in G$ of order $ k_6$ and $\sigma_2 \in G$ regular semisimple
element of order $k_{1}$. For $i=1,2$ define $C_i =(\sigma_i)^G$. Let $\tau_1 \in G$ be an element of order $ k_{3}$, $\tau_2 \in G$ be regular semisimple of order $k_2$  and, for $i=1,2$, define $D_i = (\tau_i)^G$.

Since elements of order $k_6$ and $k_3$ are not contained in any proper subfield subgroup $\G_2(q_0)$ of $G$, the following lemma can be  deduced from \cite[Corollary 11]{As1} where the maximal subgroups of $\G_2(q)$ are listed.

\begin{lemma} \label{maxG2} Let $M$ be a maximal subgroup of $G$. The following are true:
\begin{enumerate}
\item If  $  C_1\cap M$ is non-empty, then $ M \cong  {\rm SU}_3(q).2$.
\item  If  $D_1 \cap M$ is non-empty, then $ M \cong  {\rm SU}_3(q).2$.
\end{enumerate}
\end{lemma}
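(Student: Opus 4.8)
The plan is to read both statements off the classification of the maximal subgroups of $\G_2(q)$ recorded in \cite[Corollary 11]{As1}, the only extra input being that $\sigma_1$ and $\tau_1$ generate cyclic groups whose orders carry a large primitive prime divisor. Write $r_6=\zeta_{6a,p}$ and $r_3=\zeta_{3a,p}$. Since $q=p^a>7$, Theorem~\ref{Zig} ensures both primes exist, and since a Zsigmondy prime for $\langle n,p\rangle$ is congruent to $1$ modulo $n$, we have $r_6,r_3>3$. The multiplicative order of $p$ modulo $r_6$ is $6a$, hence that of $q$ is $6$, so $r_6$ divides $\Phi_6(q)$ and therefore $r_6\mid k_6$; likewise $r_3\mid\Phi_3(q)$ and $r_3\mid k_3$. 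Thus $r_6\mid|M|$ in case (i) and $r_3\mid|M|$ in case (ii), and I would use this one divisibility constraint to walk down the list of maximal subgroups.

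The eliminations run in parallel, with the two cases dual to one another. The maximal parabolic subgroups have order $q^6\,|\GL_2(q)|$, whose prime divisors all divide $q(q-1)(q+1)$; hence neither $r_6$ nor $r_3$ divides a parabolic, and the same remark removes $(\SL_2(q)\circ\SL_2(q)).2$ and $\PGL_2(q)$. The local subgroup $2^3\cdot\L_3(2)$ and the almost simple maximal subgroups $\G_2(2)$, $\PSL_2(8)$, $\PSL_2(13)$, $\J_1$, $\J_2$ that appear only at isolated $q$ have bounded order, so they cannot contain an element of order $k_6$ or $k_3$ for large $q$, and the finitely many $q>7$ at which they occur are settled by element orders. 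There remain the subfield subgroups $\G_2(q_0)$ and, when $p=3$ with $a$ odd, the Ree subgroups ${}^2\G_2(q)$. In case (i) the order of $q_0$ modulo $r_6$ exceeds $6$, so $r_6\nmid|\G_2(q_0)|$, while ${}^2\G_2(q)$ does have order divisible by $r_6$ and is excluded only by observing that $\Phi_6(q)=(q+\sqrt{3q}+1)(q-\sqrt{3q}+1)$ distributes its two coprime factors over distinct non-commuting tori, so that ${}^2\G_2(q)$ possesses no cyclic subgroup of order $k_6$. In case (ii) the roles reverse: $r_3\nmid|{}^2\G_2(q)|$, whereas for $q$ a square $r_3\mid|\G_2(q_0)|$, and here the subfield subgroup is excluded via the factorisation $\Phi_3(q)=\Phi_3(q_0)\Phi_6(q_0)$, whose factors again lie in distinct tori of $\G_2(q_0)$. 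Both delicate exclusions are precisely the assertion, made just before the lemma, that no element of order $k_6$ or $k_3$ meets a proper subfield subgroup, together with the corresponding Ree computation, and I would control the relevant greatest common divisors with Lemma~\ref{cyclo} and Lemma~\ref{gcd}.

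After these steps only the maximal-rank subgroup of type $A_2$ survives, and by \cite[Corollary 11]{As1} the one containing an element of order $k_6$, respectively $k_3$, is $\SU_3(q).2$; this gives (i) and (ii). The main obstacle, as the previous paragraph indicates, is the pair of delicate eliminations: separating the cheap hypothesis ``$r_6$ (or $r_3$) divides $|M|$'' from the genuine requirement ``$M$ contains a cyclic subgroup of order $k_6$ (or $k_3$)''. This forces one to analyse the cyclic torus structure of the Ree subgroups and of the subfield subgroups rather than to rely on a bare divisibility count, and it is exactly this point that the remark preceding the lemma packages for us.
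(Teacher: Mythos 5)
Your proposal follows essentially the same route as the paper: the paper's entire proof is the sentence preceding the lemma, namely that elements of order $k_6$ and $k_3$ lie in no proper subfield subgroup, together with the citation of Aschbacher's list of maximal subgroups \cite[Corollary 11]{As1}. Your write-up simply executes that citation in detail, and it does so correctly; in particular you isolate the two genuinely delicate points that the paper's one-line justification glosses over. In case (i) the Ree subgroup ${}^2\G_2(q)$ (present when $p=3$ and $a$ is odd) has order divisible by $r_6=\zeta_{6a,p}$, so raw divisibility does not exclude it, and one must use the factorisation $\Phi_6(q)=(q+\sqrt{3q}+1)(q-\sqrt{3q}+1)$ into orders of distinct cyclic tori to see that ${}^2\G_2(q)$ has no element of order $k_6$; and in case (ii), when $q$ is a square, $r_3=\zeta_{3a,p}$ does divide $|\G_2(q_0)|$ for $q_0=q^{1/2}$, so the subfield subgroup must be excluded via $\Phi_3(q)=\Phi_3(q_0)\Phi_6(q_0)$ and the torus structure of $\G_2(q_0)$. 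Both of these points are handled correctly in your argument.

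There is, however, one inconsistency you should repair. In case (ii) your own bookkeeping rules out the conclusion you state: since $r_3$ divides $\Phi_3(q)=q^2+q+1$, which divides $q^3-1$, while $|\SU_3(q).2|=2q^3(q^3+1)(q^2-1)$, we have $r_3\nmid|\SU_3(q).2|$. The maximal-rank subgroup of type $A_2$ that survives your eliminations in case (ii) is the split one, $\SL_3(q).2$, whose Singer torus has order $q^2+q+1$ and therefore accommodates $\tau_1$; the unitary subgroup $\SU_3(q).2$ is the one that survives in case (i), its torus of order $q^2-q+1=\Phi_6(q)$ containing $\sigma_1$. The paper's printed statement of part (ii) contains the same substitution, and it is evidently a typographical error: the proof of Corollary~\ref{maxesG2} settles the $D$-classes by observing that $\SL_3(q).2$ contains no regular semisimple elements of order $k_2$, which only makes sense if Lemma~\ref{maxG2}(ii) concludes $M\cong\SL_3(q).2$. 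So your argument is sound, but its last sentence asserts for case (ii) a conclusion that your divisibility analysis refutes; it should read $M\cong\SL_3(q).2$ there.
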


\begin{corollary}  \label{maxesG2} Let  $M$ be a subgroup of $G$.
\begin{enumerate}
\item If  $C_1\cap M$ and $C_2\cap M$ are both non-empty, then $M = G$.
\item If  $D_1\cap M$ and $D_2\cap M$ are both non-empty, then $M = G$.
\end{enumerate}
\end{corollary}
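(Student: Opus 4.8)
The plan is to follow the template of Corollaries~\ref{maxesE6}, \ref{maxes2E6} and \ref{maxesF4}: reduce to a maximal subgroup, apply Lemma~\ref{maxG2} to identify its isomorphism type, and then exclude that type by showing it cannot meet the \emph{second} class. First I would observe that it suffices to treat maximal $M$. If $M$ is a proper subgroup meeting both $C_1$ and $C_2$ (respectively both $D_1$ and $D_2$), then any maximal subgroup of $G$ containing $M$ still meets both classes, so I may assume $M$ is maximal and aim to derive $M = G$ by contradiction. Throughout I use the standing hypothesis $q>7$, which makes the orders $k_1,k_2,k_3,k_6$ large and lets me invoke the coprimality relations of Lemma~\ref{gcd}, in particular $\gcd(q-1,q+1)\le 2$.

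For part~(i), since $M \cap C_1 \neq \emptyset$, Lemma~\ref{maxG2}(i) gives $M \cong \SU_3(q).2$, and I must show $M \cap C_2 = \emptyset$. The elements of $C_2$ are regular semisimple of order $k_1 \mid \Phi_1(q) = q-1$, so for $\sigma_2 \in C_2$ the centraliser $C_G(\sigma_2)$ is the split maximal torus $T$ of $G$, of order $(q-1)^2$. The maximal tori of $\SU_3(q)$ have orders (modulo the centre) dividing $(q+1)^2$, $q^2-1$ and $q^2-q+1$; by Lemma~\ref{gcd} any element of $\SU_3(q)$ of order a nontrivial divisor of $q-1$ must lie in a torus $S$ of order $q^2-1$, since $\gcd(q-1,(q+1)^2)\le 4$ and $q^2-q+1\equiv 1 \pmod{q-1}$. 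If some $g \in M$ (which, after passing to a suitable power, I may take inside $F^*(M)$) were $G$-conjugate to $\sigma_2$, then $S \le C_{\SU_3(q)}(g) \le C_G(g) = T$, which is impossible because $|S| = q^2-1 > (q-1)^2 = |T|$. Hence $M \cap C_2 = \emptyset$ and $M = G$.

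For part~(ii) I would run the identical argument with the roles of the two $A_2$-type subsystem subgroups of $\G_2(q)$ interchanged. Here the elements of $D_1$ have order $k_3 \mid \Phi_3(q) = q^2+q+1$, a factor dividing $|\SL_3(q)|$ but not $|\SU_3(q)|$, so I read the conclusion of Lemma~\ref{maxG2}(ii) as identifying $M$ with the subsystem subgroup $\SL_3(q).2$ that actually contains such elements. The elements of $D_2$ are regular semisimple of order $k_2 \mid \Phi_2(q) = q+1$, with centraliser the maximal torus $T$ of order $(q+1)^2$. As before, for $q>7$ any element of $\SL_3(q)$ of order a nontrivial divisor of $q+1$ lies in a torus $S$ of order $q^2-1$ (the tori of $\SL_3(q)$ having orders dividing $(q-1)^2$, $q^2-1$ and $q^2+q+1$), whence $S \le C_G(g) = T$. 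This time the contradiction comes from \emph{exponents}: $S$ contains an element of order $q-1 > 2$, whereas $T$ has exponent dividing $q+1$ and $\gcd(q-1,q+1) \le 2$. Thus $M \cap D_2 = \emptyset$ and $M = G$.

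The main obstacle is precisely this torus bookkeeping in the two exclusion steps: I must correctly locate $C_2$ (respectively $D_2$) in the $\Phi_1^2$ (respectively $\Phi_2^2$) maximal torus of $G$, which sits inside only one of the two $A_2$-subsystem subgroups, and then convert the containment $C_{F^*(M)}(g) \le C_G(g)$ into a numerical contradiction. The subtlety is that the decisive comparison is not uniform: in case~(i) it is an order comparison ($q^2-1 > (q-1)^2$), but in case~(ii) the analogous order comparison is inconclusive ($q^2-1 < (q+1)^2$) and one must instead argue via the exponent of the torus $T$. A secondary point to get right is the literal statement of Lemma~\ref{maxG2}(ii), which I interpret as returning the $A_2$-subsystem subgroup containing $\Phi_3(q)$-elements, namely $\SL_3(q).2$.
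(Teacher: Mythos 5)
Your proposal follows the paper's proof exactly in outline. The paper's own argument for this corollary consists of the single assertion that $\SU_3(q).2$ contains no regular semisimple elements of order $k_1$ and that $\SL_3(q).2$ contains none of order $k_2$, after which it cites Lemma~\ref{maxG2}; you have supplied the torus bookkeeping that the paper leaves unsaid, including the correct reading of Lemma~\ref{maxG2}(ii) as returning $\SL_3(q).2$ (the paper's own proof of the corollary confirms that the printed ``$\SU_3(q).2$'' there is a slip). For an element $g$ of $F^*(M)$ your verification is right: $g$ is forced into a cyclic maximal torus $S$ of order $q^2-1$ of the subsystem subgroup, $S\le C_{F^*(M)}(g)\le C_G(g)=T$, and the contradiction is the order comparison $q^2-1>(q-1)^2$ in case (i) and the exponent comparison ($S$ has an element of order $q-1>2$ while $\exp(T)=q+1$) in case (ii); you correctly identify that the two cases need different comparisons.

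The one step that does not work as written is the parenthetical ``after passing to a suitable power, I may take $g$ inside $F^*(M)$''. If $g$ lies in the outer coset of $M\cong F^*(M).2$ and $k_i$ is even, then $g^2\in F^*(M)$ is no longer $G$-conjugate to $\sigma_2$ (resp.\ $\tau_2$), so its centralizer $C_G(g^2)$ may strictly contain the torus $T=C_G(g)$, and your containment $S\le C_G(g)=T$ is being applied to the wrong element. This only matters when $k_i$ is even and $g\notin F^*(M)$, and it is repairable --- for instance by arranging the $k_i$ to be odd where possible, or by observing that $C_G(g^2)$ would then be a proper semisimple-element centralizer of $\G_2(q)$ containing both $T$ and a cyclic group of order $q^2-1$ and eliminating the resulting $\GL_2(q)$-Levi case --- but as stated the reduction to $F^*(M)$ is a gap. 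Since the paper itself gives no justification at all for the two non-containment claims, this is a gap relative to a fully rigorous argument rather than a divergence from the paper's method.
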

\begin{proof} We note that if $M \cong  {\rm SU}_3(q).2$, then $M$ contains no regular semisimple elements of order $k_1$ and that
if $M \cong  {\rm SL}_3(q).2$, then $M$ contains no regular semisimple elements of order $k_2$. Our claims now follow from Lemma \ref{maxG2}.\end{proof}

\begin{theorem} \label{beauG2} Let $G = {\rm G}_2(q)$, $q > 7$. Then there exist hyperbolic triples for $G$ in $C_1 \times C_2\times C_2$ and in $D_1\times D_1 \times D_2$.
In particular,  $G$ is a Beauville group.
\end{theorem}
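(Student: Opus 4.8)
The plan is to follow the template used for the other exceptional groups, in particular the proof of Theorem~\ref{beauF4}: I would manufacture the two required triples by combining Gow's Theorem~\ref{G} with the overgroup analysis already recorded in Corollary~\ref{maxesG2}, and then deduce the non-conjugacy condition (iii) of Definition~\ref{maindef} from the pairwise coprimality of $k_1,k_2,k_3,k_6$ built into the construction. Since $\G_2(q)$ is simple for $q>7$ we have $Z(G)=1$, so every element of the classes $C_1,C_2,D_1,D_2$ is automatically non-central and a Beauville structure on $G$ is what we are after directly.

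For the triple in $C_1\times C_2\times C_2$, the class $C_2=\sigma_2^G$ consists of regular semisimple elements (of order $k_1$) and $C_1=\sigma_1^G$ consists of non-central semisimple elements of order $k_6$. Hence Gow's Theorem applies with $R_1=R_2=C_2$ and the distinguished element $s$ taken in $C_1$, producing $(x_1,x_2,x_3)\in C_1\times C_2\times C_2$ with $x_1x_2x_3=1$. Setting $H=\langle x_1,x_2\rangle=\langle x_1,x_2,x_3\rangle$, the group $H$ meets both $C_1$ and $C_2$; were $H$ proper it would lie in a maximal subgroup meeting both classes, contradicting Corollary~\ref{maxesG2}(i), so $H=G$. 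The second triple is entirely parallel: the elements of order $k_3$ lie in the cyclic maximal torus of order $\Phi_3(q)$ and are regular semisimple, whence with $R_1=R_2=D_1$ and $s\in D_2$ Gow's Theorem yields $(y_1,y_2,y_3)\in D_1\times D_1\times D_2$ with product $1$, and Corollary~\ref{maxesG2}(ii) forces $\langle y_1,y_2\rangle=G$.

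It then remains to verify that these two triples constitute a Beauville structure. For the hyperbolicity condition (ii) the element orders are $(k_6,k_1,k_1)$ and $(k_3,k_3,k_2)$; since $k_3,k_6$ grow like $q^2$ while $k_1,k_2\ge 3$ once $q>7$, both reciprocal sums $1/k_6+2/k_1$ and $2/k_3+1/k_2$ are strictly less than $1$. For condition (iii) I would invoke the coprimality observation made in the introduction: the product of the orders in the first triple is $k_6k_1^2$ and in the second $k_3^2k_2$, and these are coprime because $k_1,k_2,k_3,k_6$ are pairwise coprime (established through Lemma~\ref{gcd} and Lemma~\ref{cyclo} in the set-up preceding the theorem); as conjugate elements have equal order, no non-trivial power from the first triple can be conjugate to a power from the second.

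The substantive work has already been carried out in Lemma~\ref{maxG2} and Corollary~\ref{maxesG2}, where the maximal overgroups of the relevant cyclic subgroups are determined from the maximal subgroup list of \cite{As1}; granting these, what remains is a routine assembly. The two points I would be careful about are confirming that the repeated classes $C_2$ and $D_1$ genuinely consist of regular semisimple elements (so that Gow's Theorem is applicable in each case), and checking that the hypothesis $q>7$ can always be used to choose the coprime divisors with $k_1,k_2\ge 3$, which is exactly what keeps each hyperbolicity inequality strict rather than an equality.
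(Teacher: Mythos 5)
Your argument is correct and is essentially the proof the paper gives: Gow's Theorem manufactures the triples, Corollary~\ref{maxesG2} forces generation, and the pairwise coprimality of $k_1,k_2,k_3,k_6$ supplies condition (iii) of Definition~\ref{maindef}. The only divergence is that the paper's own proof actually builds the first triple in $C_1\times C_1\times C_2$ (doubling the regular semisimple class of order $k_6$) rather than in $C_1\times C_2\times C_2$ as the statement is worded; your literal reading also works, but, as you rightly flag, it requires choosing $k_1\ge 3$ (rather than merely $k_1\ge 2$) so that $1/k_6+2/k_1<1$ remains strict.
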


\begin{proof} As the class $C_1$ consists of regular semisimple elements and $C_2$ consists of semisimple
elements, Gow's Theorem guarantees that there exist $(x_1,x_2,x_3)\in C_1 \times C_1 \times C_3$ with product $x_1x_2x_3=1$. Now Corollary \ref{maxesG2} gives
$G = \langle x_1,x_3\rangle $ and our first claim follows. The same argument with $C_1$ replaced by $D_1$ and $C_2$ replaced by $D_2$ proves the second claim.
Since  $k_1, k_2, k_3$ and $k_6$ are pairwise coprime, we have the non-conjugacy condition required to show that $G$ is a Beauville group.\end{proof}

For small fields we have the following facts.

\begin{lemma}\label{G2calc}
\begin{enumerate}
\item $\G_2(7)$ is $(2,3,7)$ and $(43,43,19)$ generated.
\item $\G_2(5)$ is $(2,3,7)$ and $(31,31,5)$ generated.
\item $\G_2(4)$ is $(7,7,7)$ and $(13,13,13)$ generated.
\item $\G_2(3)$ is $(7,7,7)$ and $(13,13,13)$ generated.
\item $\G_2(2)' \cong \SU_3(3)$ is $(7,7,7)$ and $(8,8,8)$ generated.
\end{enumerate}
\end{lemma}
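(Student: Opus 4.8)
The plan is to produce, for each of the five groups, a pair of hyperbolic triples of the stated types and to verify that each triple generates the group, the bulk of the verification being computational. I begin with the two observations that reconcile the list with Definition~\ref{maindef}. First, every type listed satisfies the hyperbolicity inequality (ii): for instance $1/2+1/3+1/7 = 41/42 < 1$, $1/7+1/7+1/7 = 3/7 < 1$ and $1/8+1/8+1/8 = 3/8 < 1$, while the remaining types have even smaller reciprocal sums. Second, in each row the element orders of the first triple are coprime to those of the second (for example $\{2,3,7\}$ and $\{43,19\}$ for $\G_2(7)$, $\{31,5\}$ versus $\{2,3,7\}$ for $\G_2(5)$, and $\{7\}$ versus $\{8\}$ for $\SU_3(3)$), so condition (iii) of Definition~\ref{maindef} is automatic by the coprimality observation recorded in the introduction. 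Hence it suffices to exhibit a single generating triple of each stated type.

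For the triples built from semisimple elements of large order --- the $(43,43,19)$, $(31,31,5)$, $(13,13,13)$, $(8,8,8)$ and $(7,7,7)$ systems --- I would use the structure-constant strategy employed throughout this paper. Given conjugacy classes $X,Y,Z$ of the prescribed orders, the class multiplication coefficient
$$\frac{|X|\,|Y|}{|G|}\sum_{\chi\in\Irr(G)}\frac{\chi(x)\chi(y)\chi(z)}{\chi(1)}$$
counts the triples $(x,y,z)\in X\times Y\times Z$ with $xyz=1$, and reading the relevant character values from the \ATLAS \cite{ATLAS} (for $\G_2(3)$, $\G_2(4)$ and $\SU_3(3)\cong\G_2(2)'$) or directly from the computer shows this coefficient is non-zero. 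To pass from ``there exists such a triple'' to ``there exists a \emph{generating} such triple'', I would bound, for each maximal subgroup meeting all three classes, the number of triples $(x,y,z)$ with $xyz=1$ lying in a conjugate of that subgroup, and check that the total subgroup contribution is strictly smaller than the global count. The maximal subgroups of $\G_2(q)$ are listed in \cite[Corollary 11]{As1}, and for the large-order classes (with orders such as $43$, $31$ or $13$) only very few maximal overgroups survive, so the subtraction is easily seen to be positive and a generating triple of the required type exists.

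For the two $(2,3,7)$ systems, in $\G_2(5)$ and $\G_2(7)$, the same formula applies, but now the three element orders are small and are met by many maximal subgroups (the $\SU_3(q)$, $\SL_3(q)$ and $\PSL_2(q)$ type subgroups of \cite{As1}), so the subtraction is much more delicate; this is the main obstacle. I would resolve it by fixing the specific involution and order-$3$ classes whose product is forced into an order-$7$ class, computing the relevant structure constant in $G$, and then bounding the triples trapped in each maximal overgroup to confirm the difference remains strictly positive. In practice all five assertions are most efficiently confirmed by a short calculation in \textsc{Gap} \cite{GAP} or \textsc{Magma} \cite{Magma}, constructing each group in a convenient representation (using the $\SU_3(3)$ data and, where needed, embeddings into the relevant $\Spin$ or $\SL$ groups) and exhibiting explicit generators; the structure-constant discussion above serves to make the computational outcome transparent and independent of the random search.
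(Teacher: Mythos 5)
Your overall strategy---nonzero structure constants plus an analysis of which maximal subgroups can trap a triple, with explicit machine computation for the small cases---is sound and would establish the lemma, but the paper takes several shortcuts that are worth contrasting with your plan. For the $(2,3,7)$ generation of $\G_2(5)$ and $\G_2(7)$ the paper simply cites Malle's theorem that these groups are Hurwitz groups, whereas you propose to redo the delicate structure-constant subtraction over the many maximal overgroups meeting classes of orders $2$, $3$ and $7$; that subtraction is exactly the hard content of Malle's paper, so your fallback to a direct computer search is really carrying the weight there. For the $(43,43,19)$ triple the paper does not count at all: the classes of order $43$ and $19$ are regular semisimple, so Gow's Theorem already produces a triple, and Aschbacher's list shows no maximal subgroup of $\G_2(7)$ has order divisible by both $43$ and $19$, so \emph{every} such triple generates and no subtraction is needed. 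The one place where your generic ``the subtraction is easily seen to be positive'' is genuinely too quick is the $(31,31,5)$ case: the maximal subgroup $\PSL_3(5)$ of $\G_2(5)$ \emph{does} contain elements of both orders, and the paper escapes not by a counting bound but by observing that the $(31,31,5)$ structure constant is nonzero for every class of order-$5$ elements of $\G_2(5)$ while $\PSL_3(5)$ sees only two of those classes, so one chooses a third class. Your argument would either need this class-selection trick or an honest upper bound for the $\PSL_3(5)$ contribution; since you ultimately defer to a \textsc{Magma}/\textsc{Gap} verification (which is also how the paper handles $\G_2(2)'$, $\G_2(3)$ and $\G_2(4)$, via permutation representations of degrees $28$, $351$ and $416$), the conclusion still stands, but the theoretical part of your writeup should be repaired at that point.
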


\begin{proof} Malle \cite{Malle} has proved the $(2,3,7)$ generation of $\G_2(7)$ and $\G_2(5)$ (they are Hurwitz groups).
The elements of order $43$ and the elements of order $19$ in $\G_2(7)$ are regular semisimple, so Gow's Theorem guarantees the existence of two elements of order $43$ whose product has order $19$. Now
we check using Aschbacher's list of maximal subgroups \cite{As1} that no maximal subgroup of
$\G_2(7)$ is divisible by both $43$ and $19$. This proves (i). Now from the character table of $\G_2(5)$, we get that for all classes
of elements of order $5$ in $\G_2(5)$ and all classes of elements of order $31$ we have
a nontrivial $(31,31,5)$ structure constant. The only maximal subgroup of $\G_2(5)$ which
contains elements of order $31$ is $\PSL_3(5)$, but $\PSL_3(5)$ only has two conjugacy classes of elements of order $5$.
 Thus (ii) is proved. Parts (iii), (iv) and (v)  were verified using the permutation
representations of degree 416,  351 and 28 respectively. The requisite systems were found by
random search in a matter of milliseconds.\end{proof}

This completes the investigation of $\G_2(q)$.
\subsection{Exceptional groups of type ${}^2\B_2$, ${}^2\G_2$, and  ${}^2\F_4$}

If $G$ is of type  ${}^2\B_2(q)$, ${}^2\G_2(q)$, or ${}^2\F_4(q)$, then complete lists of maximal subgroups are known and are conveniently listed in \cite[Theorem 4.1, Theorem 4.2 and Theorem 4.5]{WilsonBook}. In particular, so long as $q>p$ which we now assume, such $G$ contain two classes of maximal
subgroups $N_1=N_G(T_1)$ and $N_2=N_G(T_2)$ such that $T_1$ and $T_2$ are cyclic. Specifically, if $G = {}^2\B_2(q)$, then $T_1$ has order $q+ \sqrt {2q}+1$, $T_2 $ has order  $q- \sqrt {2q}+1$ and $N_G(T_1)/T_1 \cong N_G(T_2)/T_2$ is cyclic of order $4$, if  $G = {}^2\G_2(q)$, then $T_1$ has order $q+ \sqrt {3q}+1$, $T_2 $ has order  $q- \sqrt {3q}+1$ and $N_G(T_1)/T_1 \cong N_G(T_2)/T_2$ is cyclic of order $8$ and if $G = {}^2\F_4(q)$, then $|T_1|= q^2+q+1+\sqrt{2q}(q+1)$, $|T_2|= q^2+q+1-\sqrt{2q}(q+1)$ and $N_G(T_1)/T_1 \cong N_G(T_2)/T_2$ is cyclic of order $12$.

In all cases the lists of maximal subgroups of $G$ show that the normalizers of  $T_1$ and $T_2$ are the unique maximal subgroups of $G$ which contain $T_1$ and $T_2$ respectively.
Let $\sigma$ be a generator for $T_1$ and $\tau$ be a generator for $T_2$, and  set $C_1 = \sigma^G$   and $D_1 = \tau^G$.  If $y \in C_1\cap N$, then $\langle y \rangle = T_1$ and so $y$ and $\sigma$ are conjugate in $N_1$. Similarly, we have if $y \in D_1 \cap T_2$, then $y$ and $\tau $ are conjugate in $N_2$. Also, by Gow's Theorem, we have that the $(C_1,C_1,X_1)$ and the $(D_1,D_1,Y_1)$ structure constants are non-zero where $X_1$ and $Y_1$ are arbitrary semisimple conjugacy classes of $G$.  Therefore, so long as $q>2^3$, $q>3^3$ and $q>2^5$ for $G$ of type ${}^2\B_2(q)$, ${}^2\G_2(q)$, or ${}^2\F_4(q)$ respectively, we may apply Lemma~\ref{structureconstant}. This  shows, under the specified conditions on $q$, that there exists $k_1$ and $k_2$ such that setting  $C_2= (\sigma^{k_1})^G$ and
$D_2=(\tau^{k_2})^G$, we have hyperbolic triples in $C_1 \times C_1 \times C_2$ and $D_1\times D_1\times D_2$. Thus, as $|T_1|$ and $|T_2|$ are coprime, we have the following theorem.

\begin{theorem} \label{beauexsm} If $G$ of type ${}^2\B_2(q)$, ${}^2\G_2(q)$, or ${}^2\F_4(q)$ and $q>2^3$,$q>3^3$ and $q>2^5$ respectively, then
$G$ is a Beauville group.
\end{theorem}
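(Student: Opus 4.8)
The approach is to read off from the discussion preceding the statement two hyperbolic triples for $G$, one whose three element orders all divide $|T_1|$ and one whose three orders all divide $|T_2|$, and then to check that $|T_1|$ and $|T_2|$ are coprime so that these two triples constitute a Beauville structure.

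First I would record that the stated bounds $q>2^3$, $q>3^3$ and $q>2^5$ each force $q>p$, so the maximal subgroup lists of \cite{WilsonBook} apply and, in each family, the normalizers $N_1=N_G(T_1)$ and $N_2=N_G(T_2)$ are the unique maximal subgroups of $G$ containing the cyclic tori $T_1$ and $T_2$. I then apply Lemma~\ref{structureconstant} twice, with $x=\sigma$ a generator of $T_1$ and with $x=\tau$ a generator of $T_2$. Hypothesis (i) is the uniqueness just recalled; hypothesis (iii), that $\sigma^G\cap N_1\subseteq T_1$, holds because any $G$-conjugate of $\sigma$ lying in $N_1$ generates a cyclic group of order $|T_1|$ and so equals $T_1$; and hypothesis (iv) is Gow's Theorem~\ref{G}, since $\sigma$ and $\tau$ are regular semisimple while every non-trivial element of $T_i$ is non-central semisimple. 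The point at which the lower bounds on $q$ are consumed is hypothesis (ii): here $k=|N_i/T_i|\in\{4,8,12\}$, so one needs $|T_i|-1>\binom{k+1}{2}\in\{10,36,78\}$, an inequality whose tightest instance comes from the smaller torus $T_2$ and which fails only for the smallest members of each family, thereby accounting for the stated cutoffs. Lemma~\ref{structureconstant} then yields exponents $k_1,k_2$ and hyperbolic triples in $C_1\times C_1\times C_2$ and $D_1\times D_1\times D_2$, where $C_2=(\sigma^{k_1})^G$ and $D_2=(\tau^{k_2})^G$; every member of the first triple has order dividing $|T_1|$ and every member of the second has order dividing $|T_2|$.

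The only remaining, and the only genuinely computational, step is to verify $\gcd(|T_1|,|T_2|)=1$. In each family I would form $|T_1|+|T_2|$ and $|T_1|-|T_2|$: for ${}^2\B_2(q)$ these are $2(q+1)$ and $2\sqrt{2q}$, for ${}^2\G_2(q)$ they are $2(q+1)$ and $2\sqrt{3q}$, and for ${}^2\F_4(q)$ they are $2(q^2+q+1)$ and $2\sqrt{2q}(q+1)$. Since $|T_1|$ is odd, any common divisor is odd and is forced to divide a power of $p$ together with $q+1$; reducing $|T_1|$ modulo $p$, or reducing $q^2+q+1$ modulo $q+1$, then forces the common divisor to be $1$. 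Consequently the six orders occurring in the two triples are pairwise coprime, so condition (iii) of Definition~\ref{maindef} holds automatically by the observation recorded after that definition, while the genus condition is immediate because $|T_1|$ and $|T_2|$ are large; thus $G$ is a Beauville group. I expect no conceptual obstacle, the heavy lifting having been done by the maximal subgroup classification and Gow's Theorem; the only care needed is the numerical check in hypothesis (ii) against the stated bounds and the coprimality computation just sketched.
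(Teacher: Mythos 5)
Your proposal is correct and follows essentially the same route as the paper: the paper likewise invokes the uniqueness of $N_G(T_i)$ as a maximal overgroup of $T_i$ (for $q>p$), applies Lemma~\ref{structureconstant} via Gow's Theorem to produce hyperbolic triples in $C_1\times C_1\times C_2$ and $D_1\times D_1\times D_2$, and concludes from the coprimality of $|T_1|$ and $|T_2|$. Your explicit verification of hypothesis (ii) against the bounds on $q$ and your coprimality computation simply fill in details the paper leaves implicit.
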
\qed

The remaining cases can either be checked with computer, or  using the facts above about $N_i$ and the existence
of a third semisimple element of requisite order.  We thus obtain the following lemma.

\begin{lemma}\label{exccalc}
\begin{enumerate}
\item ${}^2\B_2(8)$ is $(5,5,5)$ and $(13,13,13)$ generated.
%\item ${}^2\B_2(32)$ is $(2,4,31)$ and $(41,41,25)$ generated.
\item ${}^2\G_2(3)' = \PSL_2(8)$ is $(7,7,7)$ and $(9,9,9)$ generated.
\item ${}^2\G_2(27) $ is $(37,37,7)$ and $(19,19,13)$ generated.
%\item ${}^2\G_2(3^7)$ is $(271,271,271)$ and $(217,217,244)$ generated.
\item ${}^2\F_4(2)'$ is $(13,13,13)$ and $(16,16,16)$ generated.
\item ${}^2\F_4(8)$ is $(37,37,7)$ and $(101,109,13)$ generated.
\item ${}^2\F_4(32)$ is $(793,793,33)$ and $(1321,1321,31)$ generated.
\end{enumerate}
\end{lemma}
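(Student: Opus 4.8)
The plan is to produce, for each of the six groups, two generating triples realizing the stated types, and to note first that in every item the three orders appearing in the first triple are collectively coprime to the three orders appearing in the second (for instance $\gcd(37\cdot 7,\,19\cdot 13)=1$ for ${}^2\G_2(27)$ and $\gcd(793\cdot 33,\,1321\cdot 31)=1$ for ${}^2\F_4(32)$, the relevant numbers being products of distinct primes). Each displayed type is plainly hyperbolic, and because of this coprimality the two triples automatically satisfy the non-conjugacy requirement (iii) of Definition~\ref{maindef}. Hence it suffices to establish the asserted $(a,b,c)$-generation, and I would split the six groups into two families according to method.

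For the larger groups ${}^2\G_2(27)$, ${}^2\F_4(8)$ and ${}^2\F_4(32)$ I would reuse the torus-normalizer set-up prepared in the discussion immediately preceding Theorem~\ref{beauexsm}, argued exactly as for the large exceptional groups (compare Lemma~\ref{e8x}). In each triple of the shape $(|T_i|,|T_i|,c)$ the repeated entry is the order of a generator of one of the two self-centralizing maximal tori: thus $37=|T_1|$ and $19=|T_2|$ for ${}^2\G_2(27)$, while $37=|T_2|$ and $109=|T_1|$ for ${}^2\F_4(8)$, and $1321=|T_1|$, $793=|T_2|$ for ${}^2\F_4(32)$. Such a generator $x$ is regular semisimple, and by the facts recalled before Theorem~\ref{beauexsm} the normalizer $N_G(T_i)$ is the \emph{unique} maximal subgroup of $G$ containing $\langle x\rangle=T_i$. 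The third entry $c$ is the order of a non-central semisimple element, and in each case a direct check shows $c$ is coprime to $|N_G(T_i)|=|T_i|\cdot|N_G(T_i)/T_i|$ (e.g.\ $|N_G(T_2)|=793\cdot 12$ is divisible by neither $3\cdot 11$ nor $31$, and $109\cdot 12$ is not divisible by $13$). I would then apply Gow's Theorem~\ref{G} to the regular semisimple class $x^G$ and a chosen semisimple element of order $c$ to obtain conjugates $x,y$ of $x$ with $xy$ of order $c$; since $(xy)^{\pm1}\notin N_G(T_i)$, the subgroup $\langle x,y\rangle=\langle x,xy\rangle$ lies in no maximal subgroup of $G$ and therefore equals $G$.

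The genuinely small groups ${}^2\B_2(8)$, ${}^2\G_2(3)'\cong\PSL_2(8)$ and ${}^2\F_4(2)'$, together with the remaining triple of ${}^2\F_4(8)$ whose two large entries differ, I would settle by direct machine computation in \cite{GAP} or \cite{Magma}: one searches the small permutation or matrix representations for a pair of elements of the prescribed orders whose product has the prescribed order and which generate, the non-vanishing of the relevant structure constants being confirmed from the character tables in \cite{ATLAS}. The maximality and uniqueness statements for $T_1$ and $T_2$ invoked in the previous paragraph are read off the complete maximal-subgroup lists, namely \cite[Theorems 4.1, 4.2 and 4.5]{WilsonBook}.

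I expect the main obstacle to be bookkeeping rather than theory. For ${}^2\G_2(27)$, ${}^2\F_4(8)$ and ${}^2\F_4(32)$ one must verify, case by case, both that the prescribed third element of order $c$ exists as an honest non-central semisimple class and that $c$ is coprime to the full normalizer order $|N_G(T_i)|$ (not merely to $|T_i|$), so that the unique-overgroup step actually closes. The delicate point is that these values of $q$ lie \emph{below} the thresholds of Theorem~\ref{beauexsm}, so one cannot simply quote that theorem; instead each torus order and each third-element order must be checked by hand, and the mixed-order triple and the three small groups must be disposed of by explicit computation.
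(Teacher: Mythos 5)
Your proposal is correct and takes essentially the same route as the paper, whose entire justification for this lemma is the sentence that the remaining cases ``can either be checked with computer, or using the facts above about $N_i$ and the existence of a third semisimple element of requisite order'' --- i.e.\ exactly your torus-normalizer/Gow argument for the large cases and machine computation for the small ones. (One aside: the entry $101$ in part (v) cannot be an element order of ${}^2\F_4(8)$, whose order is $2^{36}\cdot 3^5\cdot 5^2\cdot 7^2\cdot 13^2\cdot 19\cdot 37\cdot 109$, so this is a misprint in the statement and your planned computation for that triple would have to target the corrected type.)
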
\qed

\section{Double covers of $\Alt(n)$}\label{SS6}

The alternating groups for $n \ge 6$ have previously been shown to be Beauville groups in \cite{FG} however it is not clear how to lift their hyperbolic triples to the double cover of $\Alt(n)$ while maintaining the conjugacy criteria. Thus we present a further proof of their result.
We first present the following lemma.
\begin{lemma}\label{Alt(n)} The following hold:
\begin{enumerate}
\item If $n$ is odd with $n \ge 7$, $\Alt(n)$ contains a hyperbolic triple of type $(n-2,n-2,5)$.
\item If $n$ is even with $n \ge 6$, $\Alt(n)$ contains a hyperbolic triple of type $(\lcm(3,n-3),n-2,3)$.
\end{enumerate}
\end{lemma}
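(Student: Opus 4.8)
The plan is to reduce the generation requirement to a primitivity statement, since in each case $\langle x_1,y_1,z_1\rangle=\langle x_1,y_1\rangle$ (because $z_1=(x_1y_1)^{-1}$), and then to invoke Jordan's theorem: a primitive subgroup of $\Sym(n)$ containing a cycle of prime length $p$ with at least three fixed points contains $\Alt(n)$. Accordingly, in both parts I would build the triple so that the element of smallest order, $z_1$, is a \emph{single} short cycle of prime length fixing at least $n-5\ge 3$ points, and so that one of $x_1,y_1$ carries a cycle of length greater than $n/2$ to drive the primitivity argument. In each case all three elements will be visibly even, so that $\langle x_1,y_1\rangle\le\Alt(n)$, and equality will follow once primitivity is established. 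The hyperbolicity condition~(ii) of Definition~\ref{maindef} is immediate: in part~(i) it reads $2/(n-2)+1/5<1$, which holds for all $n\ge 7$, and in part~(ii) it reads $1/\lcm(3,n-3)+1/(n-2)+1/3<1$, whose worst case $n=6$ gives $\tfrac{11}{12}<1$.

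For part~(i), with $n$ odd, I would take $x_1=(1\,2\,\cdots\,n-2)$, an $(n-2)$-cycle fixing $n-1$ and $n$ (even, since $n-2$ is odd), and choose a $5$-cycle $z_1$ whose support contains $n-1$, $n$ and at least one point of $\{1,\dots,n-2\}$; setting $y_1=x_1^{-1}z_1^{-1}$ gives $x_1y_1z_1=1$. A direct computation shows that the support of $z_1$ can be placed so that $y_1$ is again an $(n-2)$-cycle, giving the type $(n-2,n-2,5)$, and transitivity of $\langle x_1,y_1\rangle=\langle x_1,z_1\rangle$ is clear because $z_1$ links the fixed points $n-1,n$ of $x_1$ to its long orbit. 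For primitivity I would analyse a hypothetical block system: since the group is transitive and contains the $(n-2)$-cycle $x_1$, the single long $\langle x_1\rangle$-orbit must lie inside one orbit-union of blocks, and a short bookkeeping argument forces the only nontrivial blocks to have size $2$ (with $\{n-1,n\}$ a block), which in turn requires $n$ to be even. As $n$ is odd, the group is primitive, and since $z_1$ is a $5$-cycle fixing $n-5\ge 3$ points (for odd $n\ge 9$), Jordan's theorem gives $\langle x_1,y_1\rangle\ge\Alt(n)$, hence $=\Alt(n)$.

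For part~(ii), with $n$ even, I would take $y_1$ of cycle type $(n-2,2)$ (an $(n-2)$-cycle times a transposition on the remaining two points, which is even as $n-2$ is even), $z_1$ a single $3$-cycle, and $x_1=(y_1z_1)^{-1}$; I would arrange the supports so that $x_1$ has cycle type $(n-3,3)$ when $3\nmid n-3$ and is an $(n-3)$-cycle when $3\mid n-3$, so that $o(x_1)=\lcm(3,n-3)$, giving the type $(\lcm(3,n-3),n-2,3)$, with transitivity again visible from the chosen supports. The primitivity step now proceeds in three moves: the element $y_1$ of type $(n-2,2)$ restricts any nontrivial block size to $2$ or $n/2$; the $3$-cycle $z_1$ cannot preserve a partition into blocks of size $2$; and the odd cycle of length $n-3>n/2$ inside $x_1$ cannot respect a partition into two blocks of size $n/2$ (for $n>6$). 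Hence $\langle x_1,y_1\rangle$ is primitive, and as $z_1$ is a $3$-cycle fixing $n-3\ge 3$ points, Jordan's theorem again yields $\Alt(n)$.

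The main obstacle is the purely combinatorial heart of the construction: producing explicit $x_1,y_1$ whose product has \emph{exactly} the prescribed small order while simultaneously forcing the third cycle type (that $y_1$ is an $(n-2)$-cycle in~(i), or that $x_1$ has the stated type in~(ii)) and keeping the group transitive. I expect this to require splitting into residue classes of $n$ modulo $6$ and a careful, if routine, verification of the product. Finally, the constructions degenerate in the smallest cases --- $n=7$ in part~(i), where $n-5=2<3$ and the type is $(5,5,5)$, and $n=6$ in part~(ii) --- and these I would dispatch directly (by an explicit triple or by computer) for $\Alt(7)$ and $\Alt(6)$.
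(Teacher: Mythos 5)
Your proposal is correct and follows essentially the same route as the paper: both arguments write down explicit even permutations of the prescribed cycle types whose product is a short cycle of prime length, and then invoke Jordan's theorem, with the small cases $n=7$ and $n=6$ handled separately. The only real difference is that the paper obtains the hypothesis of Jordan's theorem by exhibiting $2$-transitivity (conjugating the short cycle by a power of the long one), whereas you rule out nontrivial block systems directly; both work, and the explicit cycle computations you defer are routine and require no case split modulo $6$ (the paper's choices $x=(1,\dots,n-2)$, $y=(n,n-1,\dots,3)$ and $x=(1,2)(n,\dots,3)$, $y=(1,\dots,n-3)(n-2,n-1,n)$ instantiate your construction uniformly).
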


\begin{proof} Let $\Omega= \{1,2,\dots,n\}$. Suppose that $n$ is odd. Let $x = (1,2, \dots, n-2)$ and $y = (n,n-1,\dots, 3)$. Then $z=xy =(1,2,n,n-1,n-2)$ is a $5$-cycle. On the other hand $xy^{-1}=(1,2,4,\dots, n-1,n, 3, 5, \dots n-2)$ is an $n$-cycle. Let $H =\langle x,y\rangle$. Then  $H$ is transitive on $\Omega$. Since $ z^{x^2}= (2,3,4,n-1,n-2)$, we have that $H$ is $2$-transitive. As $n \ge 9$,  Jordan's Theorem \cite[Theorem 13.9]{Wielandt} implies that $H= \Alt(n)$. We check by hand that when $n=7$, then the same result holds. This proves (i).

Suppose that $n$ is even. Let $x = (1,2)(n, \dots, 3)$ and $y = (1,\dots,n-3)(n-2,n-1,n)$. Then $z=xy =(1,3,n-2)$.  Set $H= \langle x,y\rangle$. Then $H$ is transitive on $\Omega$.
Since $xyx^2$ is an $n-1$-cycle fixing $2$, we have that $H$ acts $2$-transitively on $\Omega$. Thus $H= \Alt(n)$ by  \cite[Theorem 13.9]{Wielandt}. This proves (ii).
\end{proof}

Now we have to consider the double cover of $\Alt(n)$.
We take as our standard copy of the double cover of $\Sym(n)$ the one described by the presentation

\begin{eqnarray*}\langle t_1, \dots, t_{n-1}, z &\mid &z= t_i^2=(t_it_j)^2= z, z^2, t_kt_{k+1}t_k = t_{k+1}t_kt_{k+1}, \cr &&1\le i,j \le n-1, |i-j| > 1, 1\le k \le n-2\rangle
\end{eqnarray*}
Here we have that the image in $\Sym(n)$ of $t_i$ is the transposition $(i,i+1)$ and $z$ is the involution in the centre. We denote the image of elements of $2\udot \Sym(n)$ in $\Sym(n)$ by using a tilde above the letter.

Using Jordan's Theorem \cite[Theorem 13.9]{Wielandt} when $n$ is odd, we have a hyperbolic triple $(\wt x, \wt y, \wt {xy})$ for $\Alt(n)$ of type $(n,3,n)$ where $\wt x= (1, \dots,n)$ and $\wt y=(1,2,n)$, we have $\wt x \wt y = (1,n,2, \dots, n-1)$.

Now we consider $x = t_{n-1} \dots t_1$, $y = t_1t_{1}^{t_{2}\dots t_{n-1}}z$. Then $x$ and $y$ project to $\wt x$ and $\wt y$ respectively.

\begin{lemma}\label{order3} We have that $o(y) = 6$ if $n$ is even and $o(y)= 3$ if $n$ is odd.
\end{lemma}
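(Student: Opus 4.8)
The plan is to show that $y^3=z^{\,n+1}$, which settles the lemma at once: since $\wt y=(1,2,n)$ has order $3$, the element $y$ has order $3$ or $6$ and $y^3\in\langle z\rangle$, so $o(y)=3$ exactly when $y^3=1$, i.e. when $n$ is odd, and $o(y)=6$ when $n$ is even. Throughout I would use the following consequences of the defining relations: $t_i^{-1}=zt_i$; for $|i-j|\ge 2$ one has $t_jt_it_j=t_i$ and hence $t_i^{\,t_j}=zt_i$; and from the braid relation together with $t_i^2=z$ one computes $(t_it_{i+1})^3=z$.

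Write $w=t_2t_3\cdots t_{n-1}$, so that $y=t_1\,t_1^{\,w}\,z$. First I would compute $v:=t_1^{\,w}$, claiming that for $2\le k\le n-1$ one has $t_1^{\,t_2\cdots t_k}=z^{\,k-1}\Pi_k$, where $\Pi_k=t_1t_2\cdots t_{k-1}t_kt_{k-1}\cdots t_2t_1$ is the palindromic lift of the transposition $(1,k+1)$. This is proved by induction on $k$: conjugating $\Pi_{k-1}$ by $t_k$, every letter $t_i$ with $i\le k-2$ contributes a factor $z$ through $t_i^{\,t_k}=zt_i$, while the central letter $t_{k-1}$ contributes $z\,t_{k-1}t_kt_{k-1}$ via the braid relation; the number of letters of $\Pi_{k-1}$ is $2k-3$, an odd number, so the net effect is to multiply by a single surplus $z$ and to turn $\Pi_{k-1}$ into $\Pi_k$, raising the exponent by $1$. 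Taking $k=n-1$ gives $v=z^{\,n-2}\Pi_{n-1}$, and it is precisely this parity that drives the statement.

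Next, using $t_1^2=z$ one has $t_1\Pi_{n-1}=z\,R$ with $R=t_2t_3\cdots t_{n-1}\cdots t_2t_1$, so $t_1v=z^{\,n-1}R$ and hence $y^3=(t_1v)^3z=z^{\,n-1}R^3z=z^{\,n}R^3$. It remains to evaluate $R^3$. Writing $R=Pt_1$ with $P=t_2t_3\cdots t_{n-1}\cdots t_3t_2$ the palindromic lift of $(2,n)$, both $P$ and $t_1$ square to $z$ and their product lifts the $3$-cycle $(1,2,n)$; a short calculation with $a=P$, $b=t_1$ shows $(ab)^3$ equals $z$ or $1$ according to whether the braid relation $Pt_1P=t_1Pt_1$ holds exactly or only up to the factor $z$. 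A direct check, conjugating $R$ by a lift of a permutation carrying $(1,2,n)$ to $(1,2,3)$ so as to reduce it, with no surplus sign, to the standard lift $t_1t_2$ with $(t_1t_2)^3=z$, shows the braid relation holds exactly, whence $R^3=z$. Therefore $y^3=z^{\,n+1}$, as required.

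The main obstacle is the consistent bookkeeping of the central factors $z$: every conjugation and every use of the braid relation threatens to introduce a $z$, and the entire statement is nothing but the parity of the total count. The two delicate points are the inductive sign count in the formula for $v$, where the surplus is controlled by $2k-3$ being odd, and the verification that the residual sign contributed by $R^3$ is independent of $n$; I would cross-check both against the base cases $n=3$ (where one computes $y^3=1$) and $n=4$ (where $y^3=z$), which already exhibit the required $3$ versus $6$ dichotomy.
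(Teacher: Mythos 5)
Your strategy is correct but genuinely different from the paper's, and considerably more laborious. The paper avoids all closed-form computation by a one-step induction on $n$: from $t_1^{t_{n-1}}=zt_1$ and the centrality of $z$ one gets $y_n=\bigl(t_1t_1^{t_2\cdots t_{n-2}}\bigr)^{t_{n-1}}=\bigl(y_{n-1}z\bigr)^{t_{n-1}}$, so $o(y_n)=o(y_{n-1}z)$; since $z$ is a central involution this flips the order between $3$ and $6$ at each step, and the base case $n=3$ (where $y=(t_1t_2)^{t_1}z$ and $(t_1t_2)^3=z$) starts the alternation. Your explicit formula $t_1^{t_2\cdots t_k}=z^{k-1}\Pi_k$ is correct (I checked the parity count: $2k-4$ commuting letters contribute $z^{2k-4}=1$ and the central braid contributes one $z$), as is the reduction $y^3=z^{n}R^3$ with $R=Pt_1$, $P=t_2\cdots t_{n-1}\cdots t_2$. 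What your computation buys is the exact value $y^3=z^{n+1}$ rather than just the alternation; what it costs is a much longer sign-chase.

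The one step you have not actually proved is the crux: that $R^3=z$ (equivalently, that $Pt_1P=t_1Pt_1$ holds on the nose) \emph{for every} $n$. Your proposed justification -- conjugate $R$ to the standard lift $t_1t_2$ ``with no surplus sign'' -- begs the question, because whether a surplus $z$ appears is exactly the dichotomy $R^3=z$ versus $R^3=1$; conjugation preserves order, so exhibiting a conjugation to $t_1t_2$ rather than $zt_1t_2$ requires the very sign bookkeeping you are trying to avoid. Moreover $R$ involves all of $t_1,\dots,t_{n-1}$, so a priori its cube could depend on $n$, and checking $n=3,4$ does not settle the general case. The claim is true and your proof is completable: for instance, prove by induction on $n$ that $P=t_2\cdots t_{n-1}\cdots t_2$ satisfies $P^2=z$, $Pt_1P=t_1Pt_1$ and $t_2Pt_2=$ (the corresponding palindrome one step longer) with controlled signs, or verify the identity in the spin representation $t_i\mapsto \frac{1}{\sqrt2}(e_i-e_{i+1})$ of the presented group, where $P\mapsto\frac1{\sqrt2}(e_2-e_n)$ and the relation $Pt_1P=t_1Pt_1=\frac1{\sqrt2}(e_1-e_n)$ is a two-line Clifford-algebra computation independent of $n$. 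Until one of these is written down, the argument has a genuine hole exactly where all the difficulty of the lemma is concentrated.
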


\begin{proof} If $n= 3$, then $y = t_1t_1^{t_2}z = t_1t_2^{t_1}z= (t_1t_2)^{t_1}z$. Now $$y^3=(t_1t_2)^3z = t_1t_2t_1t_2t_1t_2z= t_2t_1t_2t_2t_1t_2z= z^4= 1.$$ Hence $o(y)= 3$ in this case. Now suppose that $n\ge 4$ and that the result holds from $n-1$. Then, as $t_{1}^{t_{n-1}} = t_1z$,  we have

$$y= t_1t_{1}^{t_{2}\dots t_{n-1}}z= (t_1t_{1}^{t_{2}\dots t_{n-2}})^{t_{n-1}}.$$
Hence if $n$ is odd, we have that $n-1$ is even and $t_1t_{1}^{t_{2}\dots t_{n-2}}z$ has order $6$, which means that $y$ has order $3$ and, if $n$ is even, $t_1t_{1}^{t_{2}\dots t_{n-2}}z$ has order $3$ and $y$ has order $6$. This proves the lemma.
\end{proof}

\begin{lemma}\label{xsimz} The elements  $x$ and $xy$ are conjugate.
\end{lemma}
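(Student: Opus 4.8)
The plan is to avoid any abstract class-counting and instead exhibit an explicit conjugator. The soft information available — that $\wt x$ and $\wt{xy}$ are conjugate $n$-cycles — is not by itself enough, because the centraliser $\langle\wt x\rangle$ of an $n$-cycle lifts to a subgroup of $2\udot\Sym(n)$ that commutes with $x$, so the $n$-cycle class splits in the double cover; hence I must produce a conjugating element that lifts with the correct central sign. I would set $s = t_{n-1}t_{n-2}\cdots t_2$, which projects to the $(n-1)$-cycle $(2,3,\dots,n)$ and satisfies $x = s\,t_1$, and then show that $s$ itself conjugates $x$ to $xy$.

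First I would rewrite the conjugating word occurring in the definition of $y$. Since $t_i^2=z$ we have $t_i^{-1}=t_i z$, so
\[
s^{-1}=t_2^{-1}t_3^{-1}\cdots t_{n-1}^{-1}=t_2t_3\cdots t_{n-1}\,z^{\,n-2};
\]
as $z$ is central, $t_2t_3\cdots t_{n-1}$ and $s^{-1}$ induce the same inner automorphism, so $t_1^{\,t_2\cdots t_{n-1}}=t_1^{\,s^{-1}}=s\,t_1\,s^{-1}$ and therefore $y = t_1\,(s\,t_1\,s^{-1})\,z$.

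Next I would compute $xy$ directly, carrying the central factors along:
\[
xy = (s\,t_1)\,t_1\,(s\,t_1\,s^{-1})\,z = s\,t_1^{2}\,s\,t_1\,s^{-1}\,z = s\,z\,s\,t_1\,s^{-1}\,z = s^{2}\,t_1\,s^{-1},
\]
where I have used $t_1^2=z$, the centrality of $z$, and $z^2=1$ so that the two factors of $z$ cancel. Finally, since $s\,x\,s^{-1}=s\,(s\,t_1)\,s^{-1}=s^{2}t_1 s^{-1}=xy$, the elements $x$ and $xy$ are conjugate via $s$, proving the lemma. (When $n$ is odd, $\wt s=(2,\dots,n)$ is an even permutation, so $s$ lies in the double cover of $\Alt(n)$ and the conjugacy already takes place inside $2\udot\Alt(n)$.)

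The one genuinely delicate point — and the step I expect to demand the most care — is the bookkeeping of the central factors $z$ forced by the relations $t_i^2=z$. Every $z$ introduced by inverting a $t_i$ or by the square $t_1^2$ must be tracked, and the lemma is true precisely because they all cancel, leaving $xy=s^2t_1s^{-1}$ with \emph{no} residual central twist. This exact cancellation is what allows $s$ to act as an honest conjugator and is the double-cover manifestation of the fact that $x$ and $xy$ lie in the \emph{same} one of the two classes of $2\udot\Sym(n)$ lying over the $n$-cycle class of $\Sym(n)$.
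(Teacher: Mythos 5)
Your computation is correct and is essentially the paper's own proof: both arguments conjugate $x$ to $xy$ by the element $t_{n-1}\cdots t_2$ (your $s$), using $t_1^2=z$, the centrality of $z$, and $z^2=1$ to make the central factors cancel exactly. The only slip is your final parenthetical: when $n$ is odd the $(n-1)$-cycle $\wt s=(2,\dots,n)$ has even length and is therefore an \emph{odd} permutation, so the conjugation takes place in $2\udot\Sym(n)$ rather than $2\udot\Alt(n)$ --- but this is harmless here, since the lemma is only invoked to conclude that $x$ and $xy$ (equivalently $xz$ and $xyz$) have the same order.
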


\begin{proof}
We have \begin{eqnarray*}
xy &=&(t_{n-1} \dots t_1) t_1 t_1^{t_2\dots t_{n-1}}z\\
&=& t_{n-1} \dots t_2t_1 t_1 t_{n-1}^{-1} \dots t_2^{-1} t_1 t_2 \dots t_{n-1}z\cr
&=& t_{n-1} \dots t_2 t_{n-1}^{-1} \dots t_2^{-1} t_1 t_2 \dots t_{n-1}\cr
&=& t_{n-1}^{-1} \dots t_2^{-1}t_{n-1} \dots t_2  t_1 t_2 \dots t_{n-1}\cr
&=& (t_{n-1}\dots t_1)^{t_2 \dots t_{n-1}}= x^{t_2 \dots t_{n-1}}.
\end{eqnarray*}
\end{proof}

\begin{lemma} \label{nodd}Suppose that $n$ is odd. Then either $(x,y, xy)$ is a hyperbolic triple of type $(n,3,n)$ for $2\udot \Alt(n)$  or $(xz,y,xyz)$ is a hyperbolic triple of type $(n,3,n)$ for $2 \udot \Alt(n)$.
\end{lemma}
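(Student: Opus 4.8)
The plan is to transfer the hyperbolic triple $(\wt x,\wt y,\wt{xy})$ for $\Alt(n)$ up to the double cover by tracking orders through the central kernel $\langle z\rangle$ and then upgrading generation using that $2\udot\Alt(n)$ is a non-split central extension. First I would record the orders. Since $\wt x$ is an $n$-cycle, $x^n$ lies in $\ker(2\udot\Sym(n)\to\Sym(n))=\langle z\rangle=\{1,z\}$, while $o(x)$ is a multiple of $o(\wt x)=n$; hence $o(x)=n$ if $x^n=1$ and $o(x)=2n$ if $x^n=z$. By Lemma~\ref{xsimz} the elements $x$ and $xy$ are conjugate, so $o(xy)=o(x)$, and, the relevant powers being central, $(xy)^n=x^n$. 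Finally $o(y)=3$ by Lemma~\ref{order3}. Thus everything is governed by the single dichotomy $x^n\in\{1,z\}$.

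Next I would split into the two cases, using crucially that $n$ is odd, so $z^n=z$. If $x^n=1$, then $o(x)=o(xy)=n$ and the candidate triple is $(x,y,xy)$, of type $(n,3,n)$. If instead $x^n=z$, then $o(x)=o(xy)=2n$; here I replace $x$ by $xz$ and compute, using the centrality of $z$ and $z^2=1$, that $(xz)^n=x^nz^n=z\cdot z=1$ and $(xyz)^n=(xy)^nz^n=x^n z=z\cdot z=1$, so that $o(xz)=o(xyz)=n$. Since $(xz)y=xyz$ with $z$ central, the candidate triple in this case is $(xz,y,xyz)$, again of type $(n,3,n)$. In either case the product relation of Definition~\ref{maindef} is immediate, and the hyperbolicity condition reads $\tfrac1n+\tfrac13+\tfrac1n=\tfrac{2}{n}+\tfrac13<1$, which holds for all odd $n\ge 7$.

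The main obstacle is generation: I must show that the lifted pair generates the \emph{full} cover $2\udot\Alt(n)$ and not merely a complement projecting isomorphically onto $\Alt(n)$. Let $K$ denote $\langle x,y\rangle$ in the first case and $\langle xz,y\rangle$ in the second. Since $\wt x,\wt y$ (equivalently $\wt{xz},\wt y$) generate $\Alt(n)$, the restriction to $K$ of the projection $2\udot\Alt(n)\to\Alt(n)$ is surjective, so $K\langle z\rangle=2\udot\Alt(n)$. If $z\notin K$ then $K\cap\langle z\rangle=1$ and $K\cong\Alt(n)$ would be a complement, splitting the central extension; but a split central extension would give $2\udot\Alt(n)$ a nontrivial abelian quotient, contradicting perfectness, since $2\udot\Alt(n)$ is quasisimple for $n\ge 7$. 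Hence $z\in K$ and $K=2\udot\Alt(n)$.

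Combining the order computation, the hyperbolicity bound, and this generation statement shows that exactly one of $(x,y,xy)$ (when $x^n=1$) or $(xz,y,xyz)$ (when $x^n=z$) is a hyperbolic triple of type $(n,3,n)$ for $2\udot\Alt(n)$, which is the assertion of the lemma.
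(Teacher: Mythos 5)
Your proof is correct and follows essentially the same route as the paper: it combines Lemma~\ref{order3} and Lemma~\ref{xsimz} with the dichotomy $x^n\in\{1,z\}$ (using that $z^n=z$ for $n$ odd) to decide which of the two triples has type $(n,3,n)$. The only difference is that you spell out the generation step --- that a subgroup of $2\udot\Alt(n)$ surjecting onto $\Alt(n)$ must be all of $2\udot\Alt(n)$ because the extension is perfect and hence non-split --- which the paper leaves implicit in the phrase ``as $\Alt(n)=\langle \wt x,\wt y\rangle$, we have the result.''
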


\begin{proof} By Lemma~\ref{order3}, $y$ has order $3$ and by Lemma~\ref{xsimz} $x$ and $xy$ are conjugate. It follows that $xz$ and $xyz$ are also conjugate. Now, as $n$ is odd, either $x$ has order $n$ or $xz$ has order $n$, so as $\Alt(n)= \langle \wt x, \wt y\rangle$, we have the result.
\end{proof}

\begin{lemma}\label{neven} Suppose that $n \ge 6$ is  even. Then $2\udot \Alt(n)$ has a hyperbolic triple of type $(5,n-1,n-1)$. \end{lemma}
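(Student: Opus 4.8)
The plan is to follow the template of the odd case (Lemmas~\ref{order3}--\ref{nodd}): I will exhibit, inside $\Alt(n)$, a $5$-cycle together with two mutually conjugate $(n-1)$-cycles that generate $\Alt(n)$, and then lift this configuration to $2\udot\Alt(n)$ in such a way that the conjugacy, and hence the equality of the two large orders, survives. The organising principle for the lift is the one already implicit in Lemma~\ref{nodd}: an element of odd order $m$ in $\Alt(n)$ has a \emph{unique} preimage of order $m$ in $2\udot\Alt(n)$, the other preimage having order $2m$. Since $n$ is even, both $5$ and $n-1$ are odd, so every member of the triple we are after has odd order and is therefore determined up to multiplication by the central involution $z$.

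Concretely, I would set $x=t_{n-2}t_{n-3}\cdots t_1$, so that $\wt x=(1,2,\dots,n-1)$ is an $(n-1)$-cycle (an even permutation, as $n-1$ is odd) fixing the point $n$. The decisive design choice is to \emph{define} the second large element as a conjugate of the first: picking a short word $g$ in the $t_i$ whose image $\wt g$ moves $n$, I set $y:=x^{-1}x^{g}$, so that $xy=x^{g}$ is by construction conjugate to $x$. This makes the analogue of Lemma~\ref{xsimz} automatic, with no braid computation needed, and forces $\wt x\wt y=\wt x^{\wt g}$ to be an $(n-1)$-cycle lying in the same $\Alt(n)$-class as $\wt x$. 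The role of $g$ is to arrange that $\wt y=\wt x^{-1}\wt x^{\wt g}$ is a $5$-cycle; since $\wt x^{\wt g}$ differs from $\wt x$ only on the points moved by $\wt g$, and conjugation by a single transposition alters $\wt x$ by a $3$-cycle, a word $g$ whose image is supported on a bounded number of points (independent of $n$, and including $n$) will produce a $5$-cycle.

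Generation is then cheap. Because $\wt x$ is an $(n-1)$-cycle, the stabiliser in $\langle\wt x,\wt y\rangle$ of the fixed point of $\wt x$ already acts transitively on the remaining $n-1$ points; since $\wt g$, and hence $\wt y$, moves that point, $\langle\wt x,\wt y\rangle$ is transitive and therefore $2$-transitive. Jordan's Theorem \cite[Theorem~13.9]{Wielandt} then gives $\langle\wt x,\wt y\rangle=\Alt(n)$. As $2\udot\Alt(n)$ is a non-split central extension, any subgroup mapping onto $\Alt(n)$ must be the whole of $2\udot\Alt(n)$, so $\langle x,y\rangle=2\udot\Alt(n)$.

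It remains to control the orders, and this is where the real work lies. Replacing $x$ by $xz$ if necessary --- an operation that leaves $y=x^{-1}x^{g}$ unchanged and merely replaces the relation $xy=x^{g}$ by $(xz)y=(xz)^{g}$, because $z$ is central of order $2$ --- I may assume $o(x)=n-1$; then $o(xy)=o(x^{g})=o(x)=n-1$ automatically. Thus the triple $(x,y,(xy)^{-1})$ satisfies $x\cdot y\cdot(xy)^{-1}=1$, generates $2\udot\Alt(n)$, and has orders $(n-1,5,n-1)$; since $\tfrac15+\tfrac{2}{n-1}\le\tfrac15+\tfrac25<1$ for $n\ge 6$, it is a hyperbolic triple of the claimed type $(5,n-1,n-1)$. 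The main obstacle is the single remaining order computation, namely that $o(y)=5$ rather than $10$: this is precisely the point at which $z$ can intrude, and it must be verified uniformly in $n$ by a presentation (braid and squaring relation) argument of the kind used in Lemma~\ref{order3}, most likely by an induction on $n$ that reduces $y$ for $\Alt(n)$ to the corresponding element for $\Alt(n-2)$. A few small values of $n$ (in particular $n=6$, where the type degenerates to $(5,5,5)$) can be confirmed directly should the uniform argument require a base case.
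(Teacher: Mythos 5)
Your architecture is genuinely different from the paper's, and in one respect cleaner: by defining the second large element through $xy=x^{g}$ you get the conjugacy, hence equality of orders, of the two $(n-1)$-elements for free, and the adjustment $x\mapsto xz$ correctly fixes their common order at $n-1$ without disturbing $y=x^{-1}x^{g}$. The generation argument via $2$-transitivity and Jordan is fine for $n\ge 8$, and a suitable $\wt g$ does exist (for instance $\wt g=(1,2)(n-1,n)$ makes $[\wt x,\wt g]$ the $5$-cycle $(1,n-1,n,2,3)$). The paper takes the opposite anchor: it starts from a computer-verified order-$5$ lift in $2\udot\Alt(6)$ and transports the order information to the two $(n-1)$-cycles by writing each as a conjugate order-$5$ element times a common factor $w$, conjugation happening away from $w$; the whole point of that design is to avoid ever having to decide, for general $n$, which of the two preimages of a given cycle a particular word represents.

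That decision is exactly where your proof has a genuine gap. The element $y=x^{-1}x^{g}=[x,g]$ is a well-defined element of $2\udot\Alt(n)$ (independent of the choices of lifts of $\wt x$ and $\wt g$, since $z$ is central), so its order is a fixed number, either $5$ or $10$, which you cannot adjust by multiplying anything by $z$; and nothing in your argument determines which it is. You defer this to ``a presentation argument of the kind used in Lemma~\ref{order3}, most likely by an induction on $n$,'' but for the even case this \emph{is} the content of the lemma: without a concrete $g$ and an actual computation (by induction in the presentation, in the spin representation, or by reduction to a bounded subgroup as the paper does), the claim that the type is $(5,n-1,n-1)$ rather than $(10,n-1,n-1)$ is unproven. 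The distinction is not cosmetic: if $y^{5}=z$ then $\langle y\rangle$ meets the centre, which changes the non-conjugacy analysis when this triple is paired with a second one in Theorem~\ref{AltThm}. So the proposal is a viable skeleton with the key computation missing, not a complete proof.
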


\begin{proof}
We first consider $2\udot \Alt(6)$. We take elements $\wt x= (1,2,3,4,5), \wt y=(2,3,5,4,6)$  and $\wt z= \wt x\wt y= (2,5,1,3,6)$ in $\Alt(6)$. These three elements generate $Alt(6)$ and so we have a $(5,5,5)$ hyperbolic triple for $\Alt(6)$ and we check using either Gap or Magma that in $2\udot \Alt(6)$, we have elements $a$, $b$ and $c=ab$ of order $5$ which project to these elements and product correctly. Thus we may suppose that $n \ge 8$.
We now take  elements $x$ and $y$ of $2\udot \Alt(n)$ whose images  in $\Alt(n)$ are $\wt x=(1,2,3,4,5)$ and $\wt y= (2,3,5,4,7,8, \dots, n, 6)= (2,3,5,4,6)(7, \dots, n,6)$.  Note that $H=\langle x,y\rangle$ is $2$-transitive and contains a $5$-cycle.  Thus $H= \Alt(n)$. Note that we have $\wt x \wt y = (2,5,1,3,6)(7,\dots, n,6)$. Now let $a$ and $b$ be the elements in $2\udot \Alt(6)$ (considered as a subgroup of $2\udot \Alt(n)$) of order $5$ as in the first paragraph. Let $\wt w = (7,\dots n, 6)$. Note that $a$ and $b$ are conjugate by any element from $2\udot \Alt(n)$ which projects to $(2,3)(1,5)$. It follows that if $w$ is any lift of $\wt w$, then we have that the elements  $yw$ and $zw$ are in fact conjugate and so have the same order. Since we can select  a lift of $\wt w$ such that $yw$ has order $n-1$, we now see that there is a triple of elements  $(xyw,(xyw)^{-1})$ of type $(5,n-1,n-1)$ as claimed.
\end{proof}

\begin{theorem}\label{AltThm} If $G = 2\udot\Alt(n)$ with $n \ge 6$, then $G/Z(G)$ and $G$ are Beauville groups.
\end{theorem}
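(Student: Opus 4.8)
The plan is to exhibit, for each of $\Alt(n)=G/Z(G)$ and $G=2\udot\Alt(n)$, two hyperbolic triples satisfying condition (iii) of Definition~\ref{maindef}, splitting the argument on the parity of $n$. For each parity I would assemble one triple of ``cycle-type'' shape (from Lemma~\ref{Alt(n)} or Lemma~\ref{neven}) and a second of ``long-cycle'' shape, and verify (iii) either by the coprimality observation following Definition~\ref{maindef} or, where coprimality narrowly fails, by the robust fact that two elements of $\Alt(n)$ cannot be conjugate if their non-trivial powers move different numbers of points.

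For the quotient $\Alt(n)$ this is almost immediate. When $n$ is odd I would take the type $(n-2,n-2,5)$ triple of Lemma~\ref{Alt(n)}(i) together with the type $(n,3,n)$ triple built from the $n$-cycle $\wt x$ and the $3$-cycle $\wt y$ introduced before Lemma~\ref{order3}. A non-trivial power of an $n$-cycle moves all $n$ points, whereas every power occurring in the first triple moves at most $n-2$ points, so no two such powers are conjugate and (iii) holds. When $n$ is even I would pair the type $(\lcm(3,n-3),n-2,3)$ triple of Lemma~\ref{Alt(n)}(ii) with the projection to $\Alt(n)$ of the type $(5,n-1,n-1)$ triple of Lemma~\ref{neven}, verifying (iii) by the same support-size comparison. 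A short list of small $n$ for which these shapes degenerate or accidentally share a class would be settled by the machine computations already used in Lemma~\ref{neven}.

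For the cover $G=2\udot\Alt(n)$ the extra ingredient is the behaviour of $Z(G)=\langle z\rangle$, and the mechanism I would exploit is that the extension is non-split: any subgroup of $G$ mapping onto $\Alt(n)$ is all of $G$, so \emph{every} lift of a generating pair already generates $G$, and one is free to adjust a chosen preimage by $z$ without losing generation. I would arrange that at least one of the two triples consists entirely of odd-order elements, so that $z$ is never one of its powers and (iii) can only fail through a coincidence already visible in $\Alt(n)$. When $n$ is odd, Lemma~\ref{nodd} already supplies an all-odd triple of type $(n,3,n)$; for the second triple I would lift the $(n-2,n-2,5)$ triple of Lemma~\ref{Alt(n)}(i), noting that a cyclic group of odd order $m$ has preimage $\Z/m\times\Z/2$ and hence a canonical lift of order $m$, then fixing the product relation by the $z$-adjustment and the conjugacy bookkeeping of Lemma~\ref{xsimz}. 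When $n$ is even, Lemma~\ref{neven} supplies the all-odd triple of type $(5,n-1,n-1)$, and I would lift the triple of Lemma~\ref{Alt(n)}(ii); here $z$ may appear as a power of the even-order lift, but since it is not a power of any element of the all-odd first triple, (iii) survives, the remaining (non-central) comparison again being the support-size argument in $\Alt(n)$.

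The main obstacle is precisely this verification of (iii) in the cover, where two things must be controlled at once: the orders of the chosen lifts, since an element of order $m$ lifts to order $m$ or $2m$ according to its cycle type and the chosen $z$-coset, and the preservation of the relation $xyz=1$ under those choices. This is exactly the content of Lemmas~\ref{order3}--\ref{neven}, so in the theorem itself it remains only to record which lift is taken in each triple and to confirm that the two triples cannot share a central or a non-central power. I expect the residual work to be entirely bookkeeping together with the finitely many exceptional small $n$ --- including $\Alt(6)$ and $\Alt(7)$, whose larger multipliers place their remaining covers in Section~\ref{exceptionalsec} --- all of which are dispatched by direct computation.
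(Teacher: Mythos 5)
Your proposal is correct and takes essentially the same route as the paper, whose proof of Theorem~\ref{AltThm} consists precisely of pairing the triple from Lemma~\ref{nodd} (respectively Lemma~\ref{neven}) with a preimage of the appropriate triple from Lemma~\ref{Alt(n)}. The support-size comparison you use to verify condition (iii) where coprimality fails, and the observation that a perfect central extension is generated by any lift of a generating set, are exactly the bookkeeping the paper leaves implicit.
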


\begin{proof} For $G$ we take the triples given by Lemma~\ref{nodd}, \ref{neven} and any preimages of the two triples given in Lemma~\ref{Alt(n)}. These triples demonstrate that $G$ is a Beauville group.
\end{proof}

Finally, for this section,  we note that the exceptional covers of $\Alt(6)$ and $\Alt(7)$ are dealt with in Section~\ref{exceptionalsec}.
\section{Sporadic Beauville groups}\label{SS7}

\begin{center}
\begin{table}[h]\caption{The types of the Beauville structures defined by the words given in Table \ref{sporadic2}, Lemma \ref{Baby} and \ref{Monster}.}
\label{sporadic1}
\begin{tabular}{|c|c|c|c|}
\hline
$G$&Types&$G$&Types\\
\hline
$\M_{11}$&((6,6,6),(11,11,11))&$3\dot\ON$&((33,33,33),(19,19,12))\\
$2\udot \M_{12}$&((11,11,11),(6,6,6))&$\Co_3$&((14,14,24),(5,5,22))\\
$\J_1$&((7,7,7),(19,19,19))&$\Co_2$&((5,5,23),(28,28,24))\\
$12\udot\M_{22}$&((5,5,6),(33,33,33))&$6\udot \Fi_{22}$&((13,13,33),(42,42,30))\\
$2\udot \J_2$&((7,7,7),(3,3,12))&$\HN$&((22,22,25),(19,19,15))\\
$\M_{23}$&((4,4,11),(23,23,23))&$\Ly$&((67,67,7),(37,37,24))\\
$2\udot \HS$&((5,5,12),(11,11,11))&$\Th$&((19,19,27),(31,31,24))\\
$3\udot \J_3$&((3,3,57),(9,9,9))&$\Fi_{23}$&((3,3,14),(5,5,5))\\
$\M_{24}$&((3,3,23),(12,12,12))&$2\udot \Co_1$&((15,15,15),(14,14,14))\\
$3\udot \McL$&((5,5,33),(7,7,12))&$\J_4$&((37,37,66),(43,43,23))\\
$\He$&((7,7,17),(12,12,12))&$3\udot \Fi'_{24}$&((3,3,29),(5,5,5))\\
$2\udot \Ru$&((4,4,20),(13,13,13))&$2\udot \mathbb{B}$&((31,23,3),(47,47,5))\\
$6\udot \Suz$&((3,3,26),(8,8,10))&$\mathbb{M}$&((2,3,7),(94,71,71))\\
\hline
\end{tabular}
\end{table}
\end{center}
%\bigskip

\begin{center}
\begin{table}\caption{Words in terms of the standard generators $a$ and $b$ defining a Beauville structure for the full covering group of each of the sporadic simple groups of smaller order than that of the baby monster. The elements $g_i$ have the property that if we define the elements $y_i=x_i^{g_i}$ for $i=1,2$ then $\{\{x_1,y_1\},\{x_2,y_2\}\}$ is a Beauville structure for the given group.}
\label{sporadic2}
\begin{tabular}{|c|c|c|c|c|}
\hline
$G$&$x_1$&$g_1$&$x_2$&$g_2$\\
\hline
$\M_{11}$&$ab$&$ba$&$ab^2$&$b$\\
$2\udot \M_{12}$&$ab$&$bab^2$&$ab^2ab$&$b$\\ $
\J_1$&$ab$&$bab^2$&$ab^2ab$&$b$\\ $
12\udot \M_{22}$&$ab^2$&$b$&$ab$&$b^2$\\ $
2\udot \J_2$&$ab$&$(ab)^2b(ab^2)^2$&$b$&$(ab)^2a$\\ $
\M_{23}$&$b$&$aba$&$ab$&$ba$\\ $
2\udot \HS$&$b$&$ab^2a$&$ab$&$b^2ab^2$\\ $
3\udot \J_3$&$b$&$a$&$abab^2$&$b$\\ $
\M_{24}$&$b$&$a$&$ab^2ab$&$b$\\ $
3\udot \McL$&$b$&$a$&$(abab^2)^2$&$b$\\ $
\He$&$b$&$a$&$ab^2$&$b$\\ $
2\udot\Ru$&$b$&$aba$&$ab$&$b^2$\\ $
6\udot \Suz$&$b$&$a$&$(ab)^3(ba)^2b^2$&$ab$\\ $
3\udot \ON$&$ab$&$b^2$&$ab^2$&$b$\\ $
\Co_3$&$ab$&$a$&$(ab^3)^2$&$aba$\\ $
\Co_2$&$b$&$ab^3ab^2a$&$ab$&$b^2$\\ $
6\udot \Fi_{22}$&$b$&$a$&$ab^2$&$b$\\ $
\HN$&$(ab)^3$&$ab^2abab^2$&$(ab)^2(ba)^6(b^2a)^2(ba)^5b^2ab^2$&$ab^2ab$\\ $
\Ly$&$(ab)^2b^2$&$a$&$(ab)^3(ba)^2b^3$&$a$\\ $
\Th$&$ab$&$(ab^2)^3$&$(ab)^6a(b^2a)(ba)^4b(ba)^8b^2$&$b$\\ $
\Fi_{23}$&$b$&$a$&$ab^2ab$&$b$\\ $
2\udot \Co_1$&$(ab^2)^2(ab)^2$&$b$&$ab^2(ab)^3$&$(ab^2)^2$\\ $
\J_4$&$ab$&$bab^2$&$aba(b^2a)^2(ba)^2b^2$&$a$\\ $
3\udot \Fi'_{24}$&$b$&$a$&$ab^2ab$&$b$\\
\hline
\end{tabular}
\medskip

\end{table}
\end{center}
%\bigskip

For the full covering groups of each of the sporadic simple groups of order smaller than that of the baby monster we give explicit words in terms of the standard generators in Table \ref{sporadic2} that define a Beauville structure for that group (see \cite{WilsonStandardGenerators} for background information on standard generators). These may be readily converted to explicit permutations and matrices using the data on the online {\sc Atlas} \cite{onlineatlas}. In some cases specific information on the conjugacy class is required--this is readily obtained in \cite{ATLAS,onlineatlas}. In some cases additional computation is required to check the non-conjugacy of certain elements. Beauville structures for other covering groups (for example, $6\udot \M_{22}$) may be obtained by taking images of these generators modulo subgroups of the centre. For the groups $2\udot \mathbb{B}$ and $\mathbb{M}$ we have the following.

\begin{lemma}\label{Baby}
The double cover of the baby monster group $2\udot \mathbb{B}$ has hyperbolic triples of type $(31,23,3)$ and $(47,47,5)$. In particular, $2\udot \mathbb B$ and $\mathbb B$ are Beauville groups.
\end{lemma}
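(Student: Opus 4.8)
The plan is to exhibit the two required triples directly in $G=2\udot\mathbb{B}$ using the known character table, and to deduce generation from the classification of maximal subgroups of $\mathbb{B}$ together with the non-splitness of the cover. First I would dispose of the easy conditions of Definition~\ref{maindef}. Hyperbolicity is immediate: $\tfrac1{31}+\tfrac1{23}+\tfrac13<1$ and $\tfrac1{47}+\tfrac1{47}+\tfrac15<1$, so triples of these two types satisfy (ii). Moreover the orders occurring in the first type multiply to $3\cdot23\cdot31$ and those in the second to $5\cdot47^2$, and these two products are coprime; hence condition (iii) holds automatically by the coprimality remark following Definition~\ref{maindef}. Thus the entire content of the lemma is the \emph{existence of generating triples} of each type.

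For existence I would compute the relevant class-multiplication coefficients from the character table of $\mathbb{B}$ (equivalently of $2\udot\mathbb{B}$), which is available in the \ATLAS \cite{ATLAS} and in GAP \cite{GAP}. Choosing conjugacy classes of element orders $47,47,5$ (respectively $31,23,3$), the structure constant $\sum_{\chi}\chi(x)\chi(y)\overline{\chi(z)}/\chi(1)$ is a finite sum over the irreducible characters and is readily checked to be nonzero; this produces $x,y$ with $xy$ in the prescribed third class. Generation is where the maximal-subgroup classification enters. Among the maximal subgroups of $\mathbb{B}$ (see \cite{WilsonBook}), the prime $47$ divides the order of only $47{:}23$, whose order is coprime to $5$; hence a pair $x,y$ of elements of order $47$ whose product has order $5$ lies in no proper subgroup and therefore generates $\mathbb{B}$. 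Likewise every maximal subgroup of $\mathbb{B}$ of order divisible by $31$ (for instance $\Th$ and $\L_2(31)$) has order coprime to $23$, and conversely every maximal subgroup divisible by $23$ has order coprime to $31$; so any pair containing an element of order $31$ and one of order $23$ generates $\mathbb{B}$. In each case the hard part is reduced to the purely arithmetic bookkeeping of ``$31$ versus $23$'' and ``$47$ versus $5$'' divisibility across the maximal subgroup list.

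Finally I would pass to the double cover. All the orders involved ($47,31,23,5,3$) are odd, so each odd-order element of $\mathbb{B}$ has a unique odd-order preimage in $2\udot\mathbb{B}$, and the structure constants above may be evaluated directly in $2\udot\mathbb{B}$, selecting the odd-order class in each fibre. A subgroup of $G=2\udot\mathbb{B}$ surjecting onto $\mathbb{B}$ has index $1$ or $2$; index $2$ would force $\langle x,y\rangle\cap Z(G)=1$ and hence a splitting of the non-split perfect central extension, which is impossible. Therefore the lifted pairs generate $G$, and their images in $\mathbb{B}$ retain the same (odd) orders and the same coprimality, giving the Beauville structure for $\mathbb{B}$ as well.

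The main obstacle is genuinely computational: evaluating the two structure constants using the large character table, and, for the $(31,23,3)$ type, checking that the third element can be taken of odd order $3$ (rather than $6$) in the cover. This last point is settled by reading off the class fusion in $2\udot\mathbb{B}$ from its character table, which determines whether the product of the odd-order lifts of $x$ and $y$ lands on the order-$3$ or the order-$6$ class over $3A$.
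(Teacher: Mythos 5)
Your proposal is correct and follows essentially the same route as the paper: nonvanishing structure constants (computed from the character table in GAP) produce the triples, and the maximal subgroup data for $\mathbb{B}$ — the prime $47$ living only in $47{:}23$, and no maximal subgroup having order divisible by both $23$ and $31$ — forces generation, with coprimality of $3\cdot 23\cdot 31$ and $5\cdot 47^2$ giving condition (iii). The only cosmetic difference is that the paper carries out the whole argument directly inside $2\udot\mathbb{B}$ rather than arguing in $\mathbb{B}$ and then lifting, but your lifting step (odd-order preimages plus non-splitness of the perfect central extension) is sound.
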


\begin{proof} We use the maximal subgroup structure of $\mathbb B$ as given in \cite{baby}.
If $x_1\in 2\udot \mathbb{B}$ has order 47, then the only maximal subgroup of $2\udot \mathbb{B}$ containing $x_1$ is the normalizer of $\langle x_1\rangle$ which is isomorphic to the Frobenius group $2\times 47:23$. Therefore if $y_1\in 2\udot \mathbb{B}$ is an element of order 47 such that $o(x_1y_1)\not=47$ then $\langle x_1,y_1\rangle=2\udot \mathbb{B}$. Straightforward structure constant calculations using GAP show that we can choose $y_1$ so that $o(x_1y_1)=5$.

Let $x_2\in 2\udot\mathbb{B}$ have order 31 and let $y_2\in 2\udot \mathbb{B}$ have order 23. No maximal subgroup of $2\udot \mathbb{B}$ contains elements of both of these orders, thus $\langle x_2,y_2\rangle=\mathbb{B}$. Finally, a structure constant calculation shows that $x_2$ and $y_2$ may be chosen such that $x_2y_2$ has order $3$.
\end{proof}

\begin{lemma}\label{Monster}
The Monster group $\mathbb{M}$ has hyperbolic triples of type $ (2,3,7)$ and $(94,71,71)$. In particular, the monster is a Beauville group.
\end{lemma}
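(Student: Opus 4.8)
The plan is to exhibit the two triples separately and then verify condition (iii) of Definition~\ref{maindef} by hand, since the usual coprimality shortcut is unavailable: here $o(x_1)o(y_1)o(z_1)=2\cdot 3\cdot 7$ while $o(x_2)o(y_2)o(z_2)=94\cdot 71\cdot 71$, and because $94=2\cdot 47$ these products share the prime $2$. Consequently the only genuine work in (iii) is to show that the involution occurring in the first triple is not conjugate to the involution $x_2^{47}$ obtained as a power of the element of order $94$; the classes $3,7,47,71$ cannot collide since none of $3,7$ divides $94$ or $71$ and none of $47,71$ divides $42$.

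For the type $(2,3,7)$ I would invoke Wilson's theorem that $\mathbb{M}$ is a Hurwitz group: there are $x_1,y_1\in\mathbb{M}$ with $o(x_1)=2$, $o(y_1)=3$ and $o(x_1y_1)=7$ such that $\langle x_1,y_1\rangle=\mathbb{M}$. Setting $z_1=(x_1y_1)^{-1}$ gives a generating triple with $x_1y_1z_1=1$, and $1/2+1/3+1/7=41/42<1$ shows it is hyperbolic. The essential point for later is that this generation can be taken with $x_1$ in class $2B$.

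For the type $(94,71,71)$ the plan has two steps. First, using the character table of $\mathbb{M}$ as implemented in GAP \cite{GAP}, I would compute the structure constant for the triple of classes $(94A,71A,71B)$ and check that it is non-zero; this produces $x_2$ of order $94$ and $y_2$ of order $71$ with $x_2y_2$ of order $71$, so that $z_2=(x_2y_2)^{-1}$ has order $71$ and $x_2y_2z_2=1$. Second, I would prove $\langle x_2,y_2\rangle=\mathbb{M}$ from the maximal subgroup structure: every maximal subgroup of $\mathbb{M}$ of order divisible by $71$ has order dividing that of $\PSL_2(71)$ (the normaliser $71{:}35$ of a Sylow $71$-subgroup lies inside such a copy), and since $47\nmid|\PSL_2(71)|=2^3\cdot 3^2\cdot 5\cdot 7\cdot 71$ no such subgroup contains an element of order $94$. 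Hence $\langle x_2,y_2\rangle$ lies in no maximal subgroup and equals $\mathbb{M}$, while $1/94+1/71+1/71<1$ gives hyperbolicity.

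It remains to verify (iii), and here lies the delicate point. The involution $x_2^{47}$ is centralised by the order-$47$ element $x_2^{2}$; among the two involution centralisers $C_{\mathbb M}(2A)\cong 2\udot\mathbb{B}$ and $C_{\mathbb M}(2B)\cong 2^{1+24}\udot\Co_1$ only the former has order divisible by $47$ (indeed $47\mid|\mathbb{B}|$ whereas $47\nmid|\Co_1|$), so $x_2^{47}\in 2A$. Since $x_1$ was chosen in class $2B\neq 2A$, no non-identity power of $x_1,y_1,z_1$ is conjugate to a power of $x_2,y_2,z_2$, and (iii) holds; thus the two triples form a Beauville structure and $\mathbb{M}$ is a Beauville group. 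I expect the main obstacle to be the $(94,71,71)$ triple: both the non-vanishing of the structure constant, which is a computation with the full character table of the Monster, and the generation argument rely on externally supplied data — the character table and the (essentially complete) classification of the maximal subgroups of $\mathbb{M}$ — and throughout the subtle step is matching the two involution classes so that the non-conjugacy condition is satisfied.
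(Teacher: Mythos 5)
Your proposal is correct and follows essentially the same route as the paper: Wilson's Hurwitz generation (with the involution in class $2B$) for the $(2,3,7)$ triple, a structure constant computation plus maximal subgroup analysis for the $(94,71,71)$ triple, and the observation that the $47$th power of the order-$94$ element lies in class $2A$ (its centraliser must contain an element of order $47$, forcing $2\udot\mathbb{B}$ rather than $2^{1+24}\udot\Co_1$) to secure condition (iii). The only variation is that for generation you classify the maximal overgroups of the element of order $71$ (copies of $\PSL_2(71)$, which contain no element of order $94$), whereas the paper dually classifies the maximal overgroups of the element of order $94$ (copies of $2\udot\mathbb{B}$, which contain no element of order $71$, citing Norton--Wilson); both arguments rest on externally supplied subgroup data and both are valid.
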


\begin{proof}
In \cite{Wilson} Wilson proves that $\mathbb{M}$ can be generated by an element $x_1$ of class 2B and an element $y_1$ of class 3B such that $x_1y_1$ is of class 7B.

Let $x_2\in\mathbb{M}$ be an element of order 94 (so $x_1^{47}$ is in class 2A). It is known that the only maximal subgroups of $\mathbb{M}$ containing elements of order 94 are copies of $2\udot \mathbb{B}$ (see \cite[Theorem 21]{AM}). Let $y_2\in\mathbb{M}$ be an element of order 71. Since $\mathbb{B}$ contains no elements of order 71 we have that $\langle x_2,y_2\rangle=\mathbb{M}$. Finally, using structure constants  we calculate that $y_2$ may be chosen so that $o(x_2y_2)=71$.
\end{proof}

\begin{theorem}\label{sporthm}
Suppose that $G$ is a quasisimple and $G/Z(G)$ is a sporadic simple group. Then $G$ is a Beauville group.
\end{theorem}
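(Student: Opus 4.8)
The plan is to reduce the statement to a finite verification on full covering groups and then descend. First I would note that if $S=G/Z(G)$ is one of the $26$ sporadic simple groups, then $G$, being a perfect central extension of $S$, is a central quotient $\hat S/N$ of the universal covering group $\hat S$ of $S$. Since a generating pair of $\hat S$ maps to a generating pair of any such quotient, it suffices to exhibit a Beauville structure on $\hat S$ for each $S$ and then to check that conditions (i)--(iii) of Definition~\ref{maindef} survive passage to the intermediate covers. The required data is already assembled: Table~\ref{sporadic2} records explicit words $x_1,\,y_1=x_1^{g_1}$ and $x_2,\,y_2=x_2^{g_2}$ in the standard generators for every sporadic group of order below $\mathbb B$, Lemma~\ref{Baby} treats $2\udot\mathbb B$ and $\mathbb B$, and Lemma~\ref{Monster} treats $\mathbb M$; the resulting types appear in Table~\ref{sporadic1}.

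Next I would dispatch conditions (i) and (ii). For generation, in the cases handled by Table~\ref{sporadic2} I would convert the standard generators into the permutation or matrix representations on the online \ATLAS \cite{onlineatlas, WilsonStandardGenerators} and confirm by direct machine computation that $\langle x_i,y_i\rangle$ is the full covering group. For $\mathbb B$ and $\mathbb M$, where such a computation is infeasible, generation instead rests on the maximal subgroup analysis carried out in Lemmas~\ref{Baby} and \ref{Monster}: choosing elements of carefully selected order (for instance of order $47$ in $\mathbb B$, or of orders $94$ and $71$ in $\mathbb M$) forces the subgroup they generate to lie in no proper maximal subgroup, and Gow's Theorem~\ref{G} together with a structure-constant calculation supplies a conjugate making the product have the prescribed order. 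Hyperbolicity, condition (ii), is then immediate from the orders in Table~\ref{sporadic1}, since in each row $1/o(x_i)+1/o(y_i)+1/o(z_i)<1$.

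I expect condition (iii) to be the main obstacle. Whenever the product of the three orders of the first triple is coprime to that of the second, the non-conjugacy requirement holds automatically by the observation recorded just after Definition~\ref{maindef}, and inspection of Table~\ref{sporadic1} shows that most rows are of this kind. The remaining rows, where the two types share a common prime (for example $\Co_3$, $\Co_2$, and several of the larger groups such as $6\udot\Fi_{22}$), demand an explicit verification that no non-identity power of $x_1,y_1,z_1$ is conjugate to a power of $x_2,y_2,z_2$; this I would settle using the power maps and conjugacy-class fusion data in \cite{ATLAS, onlineatlas}, which is precisely the ``additional computation'' flagged in the discussion preceding the tables.

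Finally I would handle the descent to the proper covers. For a central subgroup $N\le Z(\hat S)$ the images of the chosen generators generate $G=\hat S/N$ and can only have smaller order, so hyperbolicity persists provided the orders do not collapse; since $Z(\hat S)$ is a small group of known structure, this is a short case check, and covers such as $12\udot\M_{22}$, $6\udot\Suz$ and $2\udot\Co_1$ arise simply as images of the full cover. The genuinely delicate point is that conjugacy can be \emph{created} on passing to a quotient, so for condition (iii) I would confirm that the element orders separating the two triples continue to do so modulo $N$; here it is convenient that the triples were chosen with orders that detect the central elements, so that the non-conjugacy established in $\hat S$ transfers to $G$. Assembling the generation, hyperbolicity and non-conjugacy checks with this descent argument then yields a Beauville structure on every quasisimple $G$ with sporadic quotient, which proves the theorem.
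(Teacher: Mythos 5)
Your proposal follows essentially the same route as the paper: the proof there simply cites the explicit words in standard generators from Table~\ref{sporadic2} (verified by machine computation and \ATLAS data, with non-conjugacy checked via power maps or coprimality of the types in Table~\ref{sporadic1}), together with Lemmas~\ref{Baby} and \ref{Monster} for $2\udot\mathbb{B}$ and $\mathbb{M}$, and obtains the smaller covers as central quotients of the full covering group. Your only addition is to spell out the descent to intermediate covers more explicitly than the paper does, but the underlying argument is identical.
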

\begin{proof}
This follows from the data presented in  Table~\ref{TSp} and  Lemmas~\ref{Baby} and \ref{Monster}.
\end{proof}

\section{Exceptional covers of simple groups}\label{exceptionalsec}

In Table \ref{exceptional2} we give words in terms of the standard generators for hyperbolic triples  in the exceptional covers of the simple groups of Lie type. It follows that each of these groups is a Beauville group.  In Table \ref{exceptional2} we give the types of the Beauville structures defined by the words given in Table \ref{exceptional1}.

In each case it is straightforward to verify that the words given generate $G$ and that the elements have order as stated, with the exception of $(2^2\times3)\udot {}^2\E_6(2)$. Standard generators for  this group as $3462$ dimensions over $\GF(2)$ can be found on the website of the third author. In this case we see from the list of maximal subgroups given in the {\sc Atlas} \cite[pg. 191]{ATLAS} (noting the minor corrections given in \cite[pg. 304]{Moat}) that the only maximal subgroups of $^2$E$_6$(2) that has elements of order 19 are copies of $U_3(8):3$, which contains no elements of order 5 or 66. Direct computations show that $o(x_1y_1x_1^4y_1^4)=o(x_2y_2x_2y_2^{13})=19$.

\begin{center}
\begin{table}
\begin{tabular}{|c|c|}
\hline
$G$&Type\\
\hline
$6\udot \Alt(6)$&((5,5,5),(8,8,8))\\
$6\udot \Alt(7)$&((7,7,14),(5,5,3))\\
$2^2\udot {}^2\B_2(8)$&((5,5,5),(7,7,26))\\
$2\udot \Sp_6(2)$&((7,7,8),(9,9,15))\\
$(2^2\times3)\udot \U_6(2)$&((7,7,7),(33,33,33))\\
$2\udot \G_2(4)$&((5,5,10),(13,13,13))\\
$3\udot \G_2(3)$&((13,13,13),(8,8,7))\\
$2\udot \F_4(2)$&((3,3,17),(14,14,5))\\
$6\udot \Omega_7(3)$&((7,7,13),(120,120,120))\\
$2^2\udot \Omega_8^+(2)$&((5,5,15),(14,14,8))\\
$(3^2\times4)\udot \U_4(3)$&((6,6,14),(5,5,5))\\
$(2^2\times3)\udot {}^2\E_6(2)$&((5,5,5),(66,66,3))\\
\hline
\end{tabular}
\medskip

\caption{The types of the Beauville structures defined by the words given in Table \ref{exceptional2} for exceptional quasisimple groups.}
\label{exceptional1}
\end{table}
\end{center}

\begin{center}
\begin{table}
\caption{Words in terms of the standard generators $a$ and $b$ defining a Beauville structure for the full covering group for the simple groups of Lie type with exceptional Schur multipliers. The elements $g_i$ have the property that if we define the elements $y_i=x_i^{g_i}$ for $i=1,2$ then $(x_1,y_1, x_1,y_1) $ and $(x_2,y_2,x_2y_2)$ are hyperbolic triples which exhibit $G$ as a Beauville group.}
\label{exceptional2}
\begin{tabular}{|c|c|c|c|c|}
\hline
$G$&$x_1$&$g_1$&$x_2$&$g_2$\\
\hline
$6\udot \Alt(6)$&$bb^{aba}$&$ab$&$b$&$(abab^2a)^2$\\
$6\udot \Alt(7)$&$ab$&$b^2ab^3$&$b$&$(ab)^2a^2b^2a$\\
$2^2\udot {}^2\B_2(8)$&$ab$&$b^2$&$ab^2$&$b$\\
$2\udot \Sp_6(2)$&$b$&$a$&$ab$&$b^3ab^2$\\
$(2^2\times3)\udot \U_6(2)$&$b$&$aba$&$ab$&$b^2$\\
$2\udot \G_2(4)$&$b$&$(ab)^2a$&$ab$&$bab^2$\\
$3\udot \G_2(3)$&$ab$&$bab^2$&$ab^2(ab)^2$&$aba$\\
$2\udot \F_4(2)$&$b$&$a$&$ab^2(ab)^4$&$b$\\
$6\udot \Omega_7(3)$&$b$&$(ab)^4a$&$ab^2$&$b^3$\\
$2^2\udot \Omega_8^+(2)$&$b$&$ab(ba)^2$&$(ba)^3b^3a$&$ab^2a$\\
$(3^2\times4)\udot \U_4(3)$&$b$&$a$&$ab^3$&$b(ab^4)^2bab^4$\\
$(2^2\times3)\udot {}^2\E_6(2)$&$(ab)^2b$&$b$&$ab^2(ab)^2$&$b$\\
\hline
\end{tabular}
\medskip

\end{table}
\end{center}
We summarize the results of this section by stating the main result.
\begin{theorem}\label{excepcovers}
The exceptional covers of the groups of Lie type and their central quotients are all Beauville groups.\qed
\end{theorem}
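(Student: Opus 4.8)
The plan is to prove Theorem~\ref{excepcovers} by a finite, group-by-group verification. First I would appeal to the classification of the finite simple groups together with the determination of their Schur multipliers recorded in the \ATLAS \cite{ATLAS} to reduce the assertion to the explicit finite list of groups whose Schur multiplier is larger than the generic one: these are exactly the full covering groups named in the first columns of Tables~\ref{exceptional1} and~\ref{exceptional2}, namely $6\udot\Alt(6)$, $6\udot\Alt(7)$, $2^2\udot{}^2\B_2(8)$, $2\udot\Sp_6(2)$, $(2^2\times3)\udot\U_6(2)$, $2\udot\G_2(4)$, $3\udot\G_2(3)$, $2\udot\F_4(2)$, $6\udot\Omega_7(3)$, $2^2\udot\Omega_8^+(2)$, $(3^2\times4)\udot\U_4(3)$ and $(2^2\times3)\udot{}^2\E_6(2)$ (the covers of $\Alt(6)$ and $\Alt(7)$ being the ones deferred from Section~\ref{SS6}). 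Every exceptional cover is a central quotient of one of these full covering groups, so it suffices to exhibit on each full cover a Beauville structure whose two triples have coprime types: coprimality gives condition (iii) of Definition~\ref{maindef} for free, and since divisors of coprime integers are coprime, the non-conjugacy condition survives passage to any central quotient, where one has only to note that the surviving element orders still satisfy the hyperbolicity inequality.

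For each group $G$ in the list I would take the standard generators $a,b$ of \cite{WilsonStandardGenerators} and the words $x_1,g_1,x_2,g_2$ recorded in Table~\ref{exceptional2}, set $y_i=x_i^{g_i}$ and $z_i=x_iy_i$, and verify for $i=1,2$ that $\langle x_i,y_i\rangle=G$, that $(o(x_i),o(y_i),o(z_i))$ agrees with the type listed in Table~\ref{exceptional1}, and that the hyperbolicity inequality (ii) holds. Since explicit matrix and permutation representations of all these covers are available on the online \ATLAS \cite{onlineatlas}, for every group except the last these checks are direct machine computations: one confirms generation, reads off the three element orders (so that hyperbolicity is immediate), and settles non-conjugacy from the coprimality of the two displayed types, supplemented by a short conjugacy-class computation in the rare rows where the two types share a prime.

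The main obstacle is the final group $(2^2\times3)\udot{}^2\E_6(2)$, whose order puts brute-force generation out of reach. Here I would work in the $3462$-dimensional representation over $\GF(2)$ and use the maximal subgroup structure of ${}^2\E_6(2)$ rather than a direct search. From the list of maximal subgroups in \cite[pg.~191]{ATLAS}, with the corrections of \cite[pg.~304]{Moat}, the only maximal subgroups containing an element of order $19$ are the copies of $\U_3(8){:}3$, and these contain no element of order $5$ or of order $66$. The words of Table~\ref{exceptional2} are arranged so that $x_1y_1x_1^4y_1^4$ and $x_2y_2x_2y_2^{13}$ both have order $19$; since the first structure has type $(5,5,5)$ and the second type $(66,66,3)$, the subgroups $\langle x_1,y_1\rangle$ and $\langle x_2,y_2\rangle$ contain elements of order $5$ and $66$ respectively, so neither can be contained in $\U_3(8){:}3$, forcing $\langle x_i,y_i\rangle=G$ for $i=1,2$. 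The coprimality of the types $(5,5,5)$ and $(66,66,3)$ then gives condition (iii) at once, and the structure descends to every central quotient. Collecting the per-group data, together with this argument for ${}^2\E_6(2)$, shows that each full covering group and all its central quotients are Beauville groups, which is the content of Theorem~\ref{excepcovers}.
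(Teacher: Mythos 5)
Your proposal is correct and follows essentially the same route as the paper: Section~\ref{exceptionalsec} proves Theorem~\ref{excepcovers} precisely by machine verification of the words in Table~\ref{exceptional2} against the online \ATLAS representations, with the single non-routine case $(2^2\times3)\udot{}^2\E_6(2)$ handled by the identical maximal-subgroup argument (elements of order $19$ lie only in copies of $\U_3(8){:}3$, which contain no elements of order $5$ or $66$). Your additional remarks on descent to central quotients and on coprimality of the two types are the same observations the paper relies on implicitly.
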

\section{Proof of Theorem~\ref{main}}

Here we finally gather our partial theorems  to present the proof of our main theorem.

\begin{theorem}\label{main1}
With the exceptions of $\SL_2(5)$ and $\PSL_2(5)(\cong \Alt(5)\cong \SL_2(4)$), every non-abelian finite quasisimple group is a Beauville group.
\end{theorem}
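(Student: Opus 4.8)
The plan is to assemble the proof of Theorem~\ref{main1} as a pure case-analysis argument that invokes the classification of the finite simple groups, together with the structure theory of quasisimple groups, and then delegates every case to one of the partial theorems already established in Sections~\ref{SS4} through~\ref{exceptionalsec}. The first step is to recall that if $G$ is quasisimple, then $G/Z(G)$ is a non-abelian simple group, so by the classification $G/Z(G)$ is either an alternating group $\Alt(n)$ with $n\ge 5$, a simple group of Lie type, or one of the $26$ sporadic simple groups. Moreover $G$ is a quotient of the universal (full) covering group of $G/Z(G)$, so it suffices to treat the full covers in each family and then observe that a Beauville structure descends to any central quotient: the coprimality of the two hyperbolic triples found, or more precisely the non-conjugacy condition (iii) of Definition~\ref{maindef}, is preserved under passage to $G/Z$ for $Z\le Z(G)$, exactly as remarked in each of the partial theorems.

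The second step is to dispose of the families one at a time. For the classical groups, $\SL_d(q)$ and its quotients are handled by Theorem~\ref{SLThm}, the unitary groups $\SU_d(q)$ by Theorem~\ref{Uthm}, the symplectic groups $\Sp_d(q)$ by Theorem~\ref{Spthm}, and the spin groups $\Spin_d^\varepsilon(q)$ by Theorem~\ref{Othm}; together with the low-dimensional coincidences these cover all classical quasisimple groups. The exceptional groups of Lie type are covered by Theorems~\ref{beauE8}, \ref{beauE7}, \ref{beauE6}, \ref{beau2E6}, \ref{beauF4}, \ref{beau3D4}, \ref{beauG2} (with Lemma~\ref{G2calc} for small $\G_2(q)$), and Theorem~\ref{beauexsm} together with Lemma~\ref{exccalc} for the Suzuki and Ree families. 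The double covers of the alternating groups (and the alternating groups themselves for $n\ge 6$) are Theorem~\ref{AltThm}, the sporadic groups and their covers are Theorem~\ref{sporthm}, and the exceptional Schur covers of both alternating groups and Lie type groups are Theorem~\ref{excepcovers}. At each invocation I would explicitly record which groups are excluded by hypothesis in the cited theorem and confirm that the exclusions are accounted for elsewhere.

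The third step is the bookkeeping of the genuine exceptions and the smallest groups. Here the hard part is \emph{not} producing a generation argument but rather verifying completeness: one must confirm that $\SL_2(5)$ and $\PSL_2(5)\cong\Alt(5)\cong\SL_2(4)$ are the only quasisimple groups left untreated, and that every other small group falling outside the ranges of the structural lemmas has been dispatched by an explicit computer calculation (as recorded in the tables of Section~\ref{SS4}, in Lemmas~\ref{spin83}, \ref{orth-extra}, \ref{O1113}, \ref{093}, \ref{theothers}, and in the sporadic and exceptional-cover tables). In particular, I would note that $\Alt(5)$ is genuinely excluded because no hyperbolic triple of the right kind exists, matching the Bauer--Catanese--Grunewald conjecture, and that $\SL_2(5)$ is excluded because its central quotient is $\Alt(5)$. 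The main obstacle I anticipate is ensuring that the exceptional isomorphisms among small classical and alternating groups (for instance $\SU_4(2)\cong\PSp_4(3)$, $\Omega_8^+(2)$ with its triality-related covers, $^2\G_2(3)'\cong\PSL_2(8)$, $^2\F_4(2)'$) are each assigned to exactly one treating theorem with no group either double-counted in a way that creates a contradiction or, worse, omitted; this is precisely the role of the cross-references already built into the statements of Theorems~\ref{Uthm}, \ref{Othm}, and \ref{excepcovers}.

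Once these three steps are in place the proof is essentially a one-line deduction: given a quasisimple $G$, the classification places $G/Z(G)$ in one of the three families, the relevant partial theorem produces two hyperbolic triples for $G$ whose orders (or conjugacy classes of powers) satisfy the non-conjugacy condition, and hence $G$ is a Beauville group unless $G/Z(G)\cong\Alt(5)$, i.e.\ unless $G\cong\SL_2(5)$ or $G\cong\PSL_2(5)$. I would therefore phrase the final proof as a short paragraph stating that the result is the cumulative consequence of the theorems of the preceding sections together with the classification, and explicitly exhibiting the exclusion of the two named groups as the single boundary case.
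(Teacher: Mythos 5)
Your overall architecture is exactly that of the paper: reduce to the classification, observe that Beauville structures descend to central quotients of the universal cover, and delegate each family to the partial theorems of Sections~\ref{SS4}--\ref{exceptionalsec}, leaving $\SL_2(5)$ and $\PSL_2(5)$ as the sole exceptions. However, there is one concrete gap in your case analysis: the groups $\SL_2(q)$ and $\PSL_2(q)$ for $q>5$. Theorem~\ref{SLThm} is stated only for $d\ge 3$, and for most $q$ the group $\PSL_2(q)$ is not isomorphic to any alternating group, higher-rank classical group, or sporadic group, so it is not picked up by any ``low-dimensional coincidence'' or by any other theorem you cite. These groups therefore fall through your net entirely.

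The paper closes this hole by invoking the external result of Fuertes and Jones \cite[Theorem 2.2]{FJ}, which for $q\ge 11$ exhibits hyperbolic triples of types $((q+1)/2,(q+1)/2,p)$ and $((q-1)/2,(q-1)/2,(q-1)/2)$ for $\SL_2(q)$ and $\PSL_2(q)$, supplemented by computer checks for $\SL_2(7)$ and $\SL_2(11)$, the isomorphism $\SL_2(9)\cong 2\udot\Alt(6)$ (so that Theorem~\ref{AltThm} applies), and the fact that $\PSL_2(8)\cong\SL_2(8)$ is handled in the same reference. You need to add this case explicitly; without it the claim that every classical quasisimple group is covered is false. Apart from this omission your bookkeeping of the exceptional isomorphisms and exceptional Schur multipliers matches the paper's, and the rest of the argument goes through as you describe.
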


\begin{proof}  We first suppose that $G$ is a quasisimple group of Lie type. If $G = \SL_2(q)$ with $q>5$, then we refer to the proof of \cite[Theorem 2.2]{FJ} where for $q \ge 11$, they present hyperbolic pairs of type $((q+1)/2, (q+1)/2,p)$ and $((q-1)/2,(q-1)/2,(q-1)/2)$ for $G$ which they also note provide hyperbolic triples for $\PSL_2(q)$. They also prove that $\PSL_2(8) \cong \SL_2(8)$ is a Beauville group.  By computer we determine $\SL_2(11)$ has hyperbolic triples of type $(5,5,11)$ and $(12,12,12)$, $\SL_2(7)$ has hyperbolic triples of type $(8,8,8)$ and $(7,7,3)$. Of course $\SL_2(9) \cong 2\udot \Alt(6)$ and so this case is covered by Theorem~\ref{AltThm}. The other linear groups are Beauville groups by Theorem~\ref{SLThm}. The unitary groups are Beauville groups  by Theorem~\ref{Uthm}, the symplectic groups are Beauville groups by Theorem~\ref{Spthm}. Finally the spin groups and their quotients are covered by Theorem~\ref{Othm}.  Theorems~\ref{beauE8}, \ref{beauE7}, \ref{beauE6}, \ref{beau2E6}, \ref{beauF4}, \ref{beau3D4}, \ref{beauG2} and \ref{beauexsm} together with Lemma~\ref{G2calc} and \ref{exccalc} prove that the exceptional quasisimple groups of Lie type are Beauville groups.
The alternating groups $\Alt(n)$ for $n \ge 6$ and their double covers are Beauville groups by
Theorem~\ref{AltThm}.  The sporadic simple groups and their covers are Beauville groups by Theorem~\ref{sporthm}. This leaves the exceptional covers of the alternating groups and the groups of Lie type and these are the subject of Theorem~\ref{excepcovers}.
\end{proof}


\begin{thebibliography}{99}
\bibitem{As1} Michael Aschbacher, Chevalley groups of type $G\sb 2$ as the group of a trilinear
 form,
 J. Algebra  109  (1987),  no. 1, 193--259.

\bibitem{Bang} A. S. Bang,  Talteoretiske unders{\o}lgelser,  Tidskrifft Math. (5)4 (1886),130--137.



\bibitem{2}  I.C Bauer, F. Catanese and F.  Grunewald, Beauville surfaces without real structures,
in \emph{Geometric Methods in Algebra and Number Theory}, Progr. Math. 235, Birkh\"{a}user
Boston, Boston, 2005, 1–-42.
\bibitem{3} I. C. Bauer, F. Catanese and F. Grunewald, Chebycheff and Belyi polynomials, dessins
d'enfants, Beauville surfaces and group theory, Mediterr. J. Math. 3 (2006), 121–-146.
\bibitem{BCP} I.C. Bauer, F. Catanese and R. Pignatelli, Complex surfaces of general type: some recent progress", in \emph{Global Aspects of Complex Geometry},(eds. F.Catanese et al.), 1--58 Springer (Berlin, Heidelberg) (2006).
\bibitem{BCG}  I. Bauer, F. Catanese and F. Grunewald, Chebycheff and Belyi polynomials, dessins
d'enfants, Beauville surfaces and group theory, Mediterr. J. Math. 3 (2006), 121--146.
\bibitem{BrayHoltRoney} John Bray, Derek Holt and Colva Roney-Dugal, Maximal subgroups of low dimesnional classical groups, preprint 2010.

\bibitem{4} A. Beauville, Surfaces alg\'{e}briques complexes, \emph{Ast\'{e}risque} 54, Soc. Math. France, Paris,
1978.

\bibitem{AB} A. Bereczky,  Maximal overgroups of Singer elements in classical groups,
 J. Algebra  234  (2000),  no. 1, 187--206.
\bibitem{Magma} Wieb   Bosma, John Cannon and Catherine  Playoust,   The Magma algebra system. I. The user language.
Computational algebra and number theory (London, 1993).
 J. Symbolic Comput.  24  (1997),  no. 3-4, 235--265.

\bibitem{Carter}
  Roger W. Carter,  Finite groups of Lie type.
Conjugacy classes and complex characters,
Pure and Applied Mathematics (New York). A Wiley-Interscience Publication.
John Wiley \& Sons, Inc., New York,  1985.



\bibitem{5} F. Catanese, Fibered surfaces, varieties isogenous to a product and related moduli
spaces, Am. J. Math. 122 (2000), 1–-44.
\bibitem{Cat03} F. Catanese, Moduli Spaces of Surfaces and Real Structures, Ann. Math. 158
no. 2, 577–592 (2003).
\bibitem{CW04} F. Catanese and B. Wajnryb,  Diffeomorphism of simply connected algebraic
surfaces, preprint 2004, available at the time of writing at \texttt{http://arxiv.org/abs/math/0405299}.
\bibitem{6} P.B. Cohen, C. Itzykson and J. Wolfart, Fuchsian triangle groups and Grothendieck
dessins: Variations on a theme of Bely\u{\i}, Comm. Math. Phys. 163 (1994), 605–-627.








\bibitem{ATLAS} J.H. Conway, R.T. Curtis, S.P. Norton, R.A. Parker and
R.A. Wilson. \emph{An ATLAS of finite groups}, Oxford University
Press, 1985.

\bibitem{fund} T. Dedieu and F. Perroni, The fundamental group of quotients of a product of curves, preprint 2010, available at the time of writing at \texttt{http://arxiv.org/abs/1003.1922v2}.


\bibitem {Feit}Walter Feit, On large Zsigmondy primes,
 Proc. Amer. Math. Soc.  102  (1988),  no. 1, 29--36.

\bibitem{FM} R. Friedmann and J.W. Morgan, Algebraic surfaces and four-manifolds: some conjectures and speculations, Bull. Amer. Math. Soc. 18 (1988) 1--19.


\bibitem{FG}  Y. Fuertes and   G. Gonz\'{a}lez-Diez,  On Beauville structures on the groups $S_n$ and $A_n$, Math. Z. V. 264, No. 4,  2010, 	959-968.


\bibitem{FJ} Y. Fuertes and G. Jones,  Beauville surfaces and finite groups, preprint 2009, available at the time of writing from \texttt{http://arxiv.org/abs/0910.5489}.
\bibitem{GAP}  The GAP Group. GAP — Groups, Algorithms, and Programming, Version 4.1. Aachen,
St Andrews, 1999. (http://www-gap.dcs.st-and.ac.uk/gap).

\bibitem{GLL} S. Garion, M. Larsen and A. Lubotzky,  Beauville surfaces and finite simple groups. 	 \texttt{http://arxiv.org/abs/1005.2316v1}.
\bibitem{GP} S. Garion and M. Penegini, New Beauville surfaces, moduli spaces and finite groups, preprint 2009, available at the time of writing from \texttt{http://arxiv.org/abs/0910.5402}
\bibitem{GL} Daniel Gorenstein,  and  Richard Lyons,   The local structure of finite groups of characteristic $2$ type,
 Mem. Amer. Math. Soc.  42  (1983),  no. 276.

 \bibitem{Gow}  Rod Gow, Commutators in finite simple groups of Lie type.
 Bull. London Math. Soc.  32  (2000),  no. 3, 311--315.

\bibitem{11} A. Grothendieck, Esquisse d'un Programme, pp. 5–84 in \emph{Geometric Galois Actions 1.
Around Grothendieck's Esquisse d'un Programme}, ed. P. Lochak, L. Schneps, London
Math. Soc. Lecture Note Ser. 242, Cambridge University Press, 1997.

\bibitem{GurMal} Robert Guralnick and Gunter Malle, Simple groups admit Beauville structures, http://arxiv.org/abs/1009.6183. 
\bibitem {GPPS}Robert Guralnick,  Tim Penttila, Cheryl   Praeger and  Jan Saxl, Linear groups with orders having certain large prime divisors,
 Proc. London Math. Soc. (3)  78  (1999),  no. 1, 167--214.
\bibitem{GSa}  Robert Guralnick  and Jan  Saxl,  Generation of finite almost simple groups by conjugates,
J. Algebra  268  (2003),  no. 2, 519--571.
\bibitem{Humphreys}James E Humphreys,  Modular representations of finite groups of Lie type,
London Mathematical Society Lecture Note Series, 326. Cambridge University Press, Cambridge,  2006.
\bibitem{Huppert} Bertram Huppert,  Singer-Zyklen in klassischen Gruppen,
 Math. Z.  117  1970 141--150.


\bibitem{Isaacs} Martin Isaacs,   Character theory of finite groups.
Pure and Applied Mathematics, No. 69,
Academic Press , New
 York-London,  1976.

\bibitem{Moat}Christoph Jansen,  Klaus Lux, Richard  Parker and Robert  Wilson, An atlas of Brauer characters,
Appendix 2 by T. Breuer and S. Norton.
London Mathematical Society Monographs. New Series, 11. Oxford Science Publications.
The Clarendon Press, Oxford University Press, New York,  1995.

\bibitem{14} G. A. Jones and D.  Singerman, Belyi functions, hypermaps and Galois groups, Bull. London Math. Soc. 28 (1996), 561–-590
\bibitem{KK02} V. Kharlamov and V. Kulikov, On real structures of rigid surfaces, Izv. Ross.
Akad. Nauk Ser. Mat. 66 no. 1, 133–152 (2002)
\bibitem{KL}  Peter B. Kleidman and Martin  Liebeck, The subgroup structure of the finite classical groups,
London Mathematical Society Lecture Note Series, 129. Cambridge University Press, Cambridge,  1990.
\bibitem{KL08}  Peter B Kleidman,  The maximal subgroups of the finite $8$-dimensional orthogonal groups
 $P\Omega\sp +\sb 8(q)$ and of their automorphism groups,
 J. Algebra  110  (1987),  no. 1, 173--242.

\bibitem{KL3D4}Peter B.  Kleidman,  The maximal subgroups of the Steinberg triality groups $\sp 3\mathrm D\sb
 4(q)$ and of their automorphism groups,
 J. Algebra  115  (1988),  no. 1, 182--199.










 \bibitem{LSS} Martin W. Liebeck,  Jan   Saxl, and Gary M. Seitz,   Subgroups of maximal rank in finite exceptional groups of Lie
 type,
 Proc. London Math. Soc. (3)  65  (1992),  no. 2, 297--325.

 \bibitem{Lies}  Martin W. Liebeck and Gary M. Seitz, A survey of maximal subgroups of exceptional groups of Lie type.  Groups, combinatorics \& geometry (Durham, 2001),  139--146, World Sci. Publ., River Edge, NJ, 2003.

\bibitem{Lubeck}Frank L\"ubeck,   Finding $p'$-elements in finite groups of Lie type.
 Groups and computation, III (Columbus, OH, 1999),
 249--255, Ohio State Univ. Math. Res. Inst. Publ., 8, de Gruyter, Berlin,  2001.

\bibitem{Man01} M. Manetti, On the moduli space of diffeomorphic algebraic surfaces,  Invent.
Math. 143, no. 1, 29–76 (2001).

\bibitem{malle} G\"unter Malle, The maximal subgroups of ${}^2\F_4(q^2)$, J. Algebra 139 (1991), 52-69.
\bibitem{Malle} G\"unter Malle,  Hurwitz groups and $G_2(q)$, Canadian Mathematical Bulletin 33 (3):(1990) 349–357.
 \bibitem{Mitchell}  H. Mitchell Howard, The subgroups of the quaternary abelian linear group,
 Trans. Amer. Math. Soc.  15  (1914),  no. 4, 379--396.

\bibitem{AM} Simon P. Norton and   Robert A. Wilson, Anatomy of the Monster. II,
 Proc. London Math. Soc. (3)  84  (2002),  no. 3, 581--598.
\bibitem{SymplecticAmalgams}  Christopher Parker and  Peter Rowley,  Symplectic amalgams,
Springer Monographs in Mathematics. Springer-Verlag London, Ltd., London,  2002.


\bibitem{Wielandt} Helmut Wielandt,  Finite permutation groups,
Translated from the German by R. Bercov Academic Press, New York-London  1964.
\bibitem{WilsonStandardGenerators}Robert A.  Wilson,  Standard generators for the sporadic simple groups, J. Algebra 184, (1996) 505--515.
\bibitem {baby} Robert A. Wilson,  The maximal subgroups of the Baby Monster. I,
 J. Algebra  211  (1999),  no. 1, 1--14.


\bibitem{Wilson} Robert A. Wilson, The Monster is a Hurwitz group, J. Group Theory 4 (2001), no. 4, 367-374.





\bibitem{onlineatlas}
 Robert A.  Wilson,\emph{et al}, {\sc Atlas} of Finite Group Representations v3, available at \texttt{http://brauer.maths.qmul.ac.uk/Atlas/v3/}.

\bibitem{WilsonBook}  Robert A. Wilson,  The finite simple groups,
Graduate Texts in Mathematics, 251. Springer-Verlag London, Ltd., London,  2009.
\bibitem{Weigel} Thomas S.  Weigel, Generation of exceptional groups of Lie-type,
 Geom. Dedicata  41  (1992),  no. 1, 63--87.

\bibitem{KW}  Peter Kleidman and   Robert Wilson, The maximal subgroups of $E\sb 6(2)$ and ${\rm Aut}(E\sb 6(2))$.
 Proc. London Math. Soc. (3)  60  (1990),  no. 2, 266--294.

%\bibitem{WilsonNorton} S.P. Norton, and  Robert Wilson, The maximal subgroups of $F\sb 4(2)$ and its automorphism group.
% Comm. Algebra  17  (1989),  no. 11, 2809--2824.




\bibitem{23} J. Wolfart, ABC for polynomials, dessins d'enfants, and uniformization — a survey,
pp. 313–345 in \emph{Elementare und Analytische Zahlentheorie (Tagungsband), Proceedings
ELAZ-–Conference May 24–28, 2004} (ed. W Schwarz, J Steuding), Steiner Verlag
Stuttgart 2006 (\texttt{http://www.math.uni-frankfurt.de/}$\sim$\texttt{wolfart/}).

\bibitem {Z} K. Zsigmondy,  Zur Theorie der Potenzreste, Journal Monatshefte f\"ur Mathematik 3 (1) (1892), 265--284.



\end{thebibliography}
\end{document}